\documentclass{article}
\usepackage[utf8]{inputenc}
\usepackage{amsmath}
\usepackage{amssymb}
\usepackage{amsthm}
\usepackage{authblk}
\usepackage{enumitem}
\usepackage{thm-restate}
\usepackage{hyperref}
\usepackage{color,soul}

\newtheorem{theorem}{Theorem}
\newtheorem{conjecture}[theorem]{Conjecture}
\newtheorem{corollary}[theorem]{Corollary}
\newtheorem{lemma}[theorem]{Lemma}
\newtheorem{proposition}[theorem]{Proposition}
\newtheorem*{theoremL(1)}{Theorem~\ref{structure of L(1)}}
\newtheorem*{theoremL(n)}{Theorem~\ref{L(n) for large n}}
\newtheorem*{conjectureL(2)}{Conjecture~\ref{structure of L(2)}}

\theoremstyle{definition}
\newtheorem*{definition*}{Definition}
\newtheorem*{example*}{Example}
\newtheorem*{notation*}{Notation}
\newtheorem*{remark*}{Remark}
\newtheorem{remark}[theorem]{Remark}

\newcommand{\N}{\mathbb{N}}

\newcommand{\seq}[1]{\cite[\href{http://oeis.org/#1}{#1}]{OEIS}}

\title{The lexicographically least square-free word \\ with a given prefix\thanks{This work was done in the 2021 Polymath Jr.\ program and was partially supported by NSF award DMS-2113535.}}

\author[1]{Siddharth~Berera}
\affil[1]{University of Edinburgh}
\author[2]{Andr\'es~G\'omez-Colunga}
\affil[2]{Pennsylvania State University}
\author[3]{Joey~Lakerdas-Gayle}
\affil[3]{University of Waterloo}
\author[4]{John~L\'opez}
\affil[4]{Tulane University}
\author[5]{Mauditra~Matin}
\affil[5]{Delft University of Technology}
\author[6]{Daniel~Roebuck}
\affil[6]{University of St Andrews}
\author[7]{Eric~Rowland}
\affil[7]{Hofstra University}
\author[8]{Noam~Scully}
\affil[8]{Yale University}
\author[9]{Juliet~Whidden}
\affil[9]{Vassar College}

\date{\today}

\begin{document}

\maketitle

\begin{abstract}
The lexicographically least square-free infinite word on the alphabet of non-negative integers with a given prefix $p$ is denoted $L(p)$.
When $p$ is the empty word, this word was shown by Guay-Paquet and Shallit to be the ruler sequence.
For other prefixes, the structure is significantly more complicated.
In this paper, we show that $L(p)$ reflects the structure of the ruler sequence for several words $p$. We provide morphisms that generate $L(n)$ for letters $n=1$ and $n\geq3$, and $L(p)$ for most families of two-letter words $p$.
\end{abstract}

\section{Introduction}

A word is \emph{square-free} if it contains no block of letters that occurs twice consecutively.
In 2009, Guay-Paquet and Shallit~\cite{Guay-Paquet--Shallit} established the structure of the lexicographically least square-free infinite word on the alphabet $\N:=\{0, 1, 2,\dots\}$. This word is $0102010301020104\cdots$.
Its letters comprise the ruler sequence~\seq{A007814}, and it is the fixed point $\rho^\infty(0)$ of the \emph{ruler morphism} $\rho$ defined by $\rho(n) = 0 \, (n + 1)$.

However, this result is not robust; a minor variation produces words that are quite different.
Given a word $w$, let $L(w)$ denote the lexicographically least infinite word on $\N$ beginning with $w$ whose only square factors are contained in the prefix $w$. In particular, if $w$ is square-free, then so is $L(w)$. If $w$ is infinite, then $L(w)=w$. For example,
\[
    L(1) = 10120102012021012010201203010201\cdots\quad\textrm{\seq{A356677}},
\]
\[
    L(2) = 20102012021012010201202102010210\cdots\quad\textrm{\seq{A356678}},
\]
and 
\[
    L(33) = 33010201030102012021012010201202\cdots\quad\textrm{\seq{A356679}}.
\]
Unlike $L(\varepsilon) = \rho^\infty(0)$, the letters in these words do not alternate between $0$s and positive integers.
Moreover, the letters $3, 4, 5, \dots$ take much longer to appear, and there is no clear pattern for these words, as in the case for the ruler sequence.

In this paper we determine the structure of $L(w)$ for certain simple words $w$.
Our main results are that $L(1)$ and $L(n)$ for $n\ge 3$ reflect the structure of $L(\varepsilon)$ as follows.

\begin{theorem}\label{structure of L(1)}
There exists a morphism $\alpha$ and a word $Y_1$ with length $5177$ such that $L(1) = Y_1 \, \alpha(L(\varepsilon))$.
\end{theorem}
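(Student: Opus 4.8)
The plan is to first pin down the morphism $\alpha$ and the transient $Y_1$ by computing a long prefix of $L(1)$ (well past position $5177$, far enough to see several image blocks), identifying the block structure, and then proving that the resulting word $W := Y_1\,\alpha(L(\varepsilon))$ coincides with $L(1)$. Since $L(1)$ is built by the greedy rule ``at each position take the least letter that keeps an infinite square-free continuation possible,'' it suffices to establish two things: (I) $W$ is an infinite square-free word beginning with $1$, so that $W$ is a legitimate continuation and hence $L(1)_i \le W_i$ for all $i$ by induction on $i$; and (II) for every $i$ and every letter $d < W_i$, the word $W[1\mathbin{.\,.}i-1]\,d$ admits no infinite square-free extension, so that $L(1)_i \ge W_i$. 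Together these give $W = L(1)$.

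For (I) I would use that $L(\varepsilon) = \rho^\infty(0)$ is square-free and has a very restricted set of short factors (its length-$3$ factors are exactly $0k0$ for $k\ge1$ and $10k$, $k01$ for $k\ge2$, since positive letters sit at odd positions and every other odd position carries a $1$). The main tool is a \emph{recognizability} argument: one checks that $\alpha$ is injective and that the images $\alpha(n)$ are marked enough that, inside $\alpha(L(\varepsilon))$, the block boundaries are uniquely recoverable from a window of bounded radius. Then any square in $\alpha(L(\varepsilon))$ of period exceeding a fixed bound desubstitutes to a square in $L(\varepsilon)$, which is impossible; the finitely many short squares, and every square meeting the finite prefix $Y_1$, are then ruled out by direct computation. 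Verifying recognizability requires comparing the $\alpha(n)$: a finite check for small $n$, and for large $n$ it follows from the uniform shape of $\alpha(n)$, which mirrors the shape $0\,(n+1)$ of $\rho$.

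Part (II) is the heart of the argument and the main obstacle. Fix $i$ with $L(1)[1\mathbin{.\,.}i-1] = W[1\mathbin{.\,.}i-1]$ (induction hypothesis) and let $d < W_i$. If $W[1\mathbin{.\,.}i-1]\,d$ already contains a square we are done, so assume not; we must show it still has no infinite square-free extension. The key point is that any such extension would be \emph{forced}: after a ``wrong'' letter, the least-letter-avoiding-an-immediate-square rule makes each subsequent letter deterministic, and one shows this deterministic process produces a square within a bounded number of steps. To see that only finitely many cases arise, note that $i$ lies either strictly inside some block $\alpha(n)$ of the decomposition $W = Y_1\,\alpha(L(\varepsilon))$ or at a block boundary. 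Interior positions depend only on a bounded suffix of $W[1\mathbin{.\,.}i-1]$ together with the identity of the current block, and the restricted factor set of $L(\varepsilon)$ limits which blocks, and which neighboring blocks, can occur, so these reduce to finitely many local configurations. Boundary positions are where the forced letter of $W$ may exceed $0$, and here one exploits the self-similarity of $L(\varepsilon)$: the forced value tracks the ruler value at the corresponding position, and excluding each smaller candidate reduces, through the block structure, to a statement about $L(\varepsilon)$ itself, handled by a secondary induction echoing Guay-Paquet and Shallit's analysis of $L(\varepsilon)$.

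So the proof amounts to an explicit description of $\alpha$ and $Y_1$, a finite battery of square-freeness and recognizability checks on bounded windows, and a structural induction whose inductive step is a finite case analysis bounding the depth of the forced ``dead ends.'' I expect the genuine difficulty to be entirely in that last item: establishing a uniform bound on how far a wrong choice can be extended before a square is forced, and confirming that the resulting case analysis is finite and closes. The length $5177$ of $Y_1$ simply reflects how long the transient is before $L(1)$ enters its self-similar regime; because every step reduces to verifications on bounded windows, the argument is amenable to (and in practice carried out with) computer assistance.
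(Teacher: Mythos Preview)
Your Part~(I) is roughly the paper's approach: the paper proves $\alpha$ is square-free over grounded words by a chunk/recognizability analysis (their Theorem~\ref{alpha square-free over grounded words} via Theorem~\ref{chunk-identifiable condition for square-freeness}), and then reduces the square-freeness of $Y_1\,\alpha(L(\varepsilon))$ to a finite check (their Lemma~\ref{lemma for adding a prefix to alpha of ruler}). So far so good.

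The genuine gap is in Part~(II). On the alphabet $\mathbb{N}$, \emph{every} finite square-free word has an infinite square-free extension (a fresh large letter is always available, and iterating the greedy rule gives exactly $L$ of that word). Consequently your fork ``either $W[1..i-1]\,d$ already contains a square, or it has no infinite square-free extension'' collapses: the second branch is empty, and the entire ``forced dead end within bounded steps'' program is aimed at a phenomenon that does not occur over $\mathbb{N}$. What actually must be shown is that for every $i>|Y_1|$ and every $d<W_i$, the word $W[{:}\,i]\,d$ already has a square suffix---in the paper's language, that the tail $\alpha(L(\varepsilon))$ is \emph{irreducible} after $Y_1$. This is precisely where the unbounded letters bite: at block-transition positions the forced letter $W_i$ grows with the ruler value, so there are infinitely many candidates $d$ to exclude, and you have given no concrete mechanism to produce the required squares uniformly.

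The paper's mechanism, which you are missing, is to prove that $\alpha$ is \emph{$L$-commuting} over even-grounded square-free words (their Theorems~\ref{L-commuting over grounded words template} and~\ref{alpha lcommuting}): from four local conditions on $\alpha$ --- that $\alpha(0)$ generates $\alpha(01)$, that $\alpha(0n)$ generates $\alpha(0n0)$, and that $\alpha(0n)^+$ generates $\alpha(0(n+1))$ --- one deduces $L(\alpha(0))=\alpha(L(\varepsilon))$. The minimality of $W$ then follows in one stroke: compute that $L(1)=L(Y_1 G)$ for the short word $G$ that generates $\alpha(0)$, check $Y_1\,\alpha(L(\varepsilon))$ is square-free, and apply the prepending principle (their Remark~\ref{Appending to front of lex-least words}). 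Your sketch ``excluding each smaller candidate reduces, through the block structure, to a statement about $L(\varepsilon)$ itself'' is gesturing at exactly this $L$-commuting reduction, but without the explicit conditions above and the inductive argument that ties them together (their Theorem~\ref{L-commuting over grounded words template}, which crucially uses that $\rho$ itself is $L$-commuting), you do not have a proof.
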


\begin{theorem}\label{L(n) for large n}
For each $n \geq 3$, there exists a finite word $Y_n$ such that $L(n) = Y_n\, \rho(\alpha(L(\varepsilon)))$, where $\alpha$ is the morphism in Theorem~\ref{structure of L(1)}.
\end{theorem}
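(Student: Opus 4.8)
The plan is to reduce Theorem~\ref{L(n) for large n} to Theorem~\ref{structure of L(1)} by exhibiting, for each $n \geq 3$, an explicit finite prefix $Y_n$ after which $L(n)$ ``synchronizes'' with a shifted copy of $L(1)$. Concretely, I would first use the greedy/lexicographic-least characterization directly: compute enough letters of $L(n)$ by hand (or by the same backtracking argument used for $L(1)$) to locate a position $k_n$ and verify that the tail of $L(n)$ from position $k_n$ onward is forced to equal $\rho(\alpha(L(\varepsilon)))$. The appearance of the outer $\rho$ is the natural structural reason: whereas $L(1)$ begins to look like $\alpha(L(\varepsilon))$ after its prefix $Y_1$, the letter $n \geq 3$ sits ``one ruler-level up,'' so the relevant self-similar tail is $\rho$ applied to that word. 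I would make this precise by showing that $\rho$ commutes appropriately with $\alpha$ on the relevant fixed point, or more directly that $\rho(\alpha(L(\varepsilon)))$ is itself of the form handled by the square-free-forcing lemmas established for $L(1)$.

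The key steps, in order, are: (1) isolate the combinatorial lemma from the proof of Theorem~\ref{structure of L(1)} that says ``any square-free word extending a given context by $\alpha(L(\varepsilon))$ is lexicographically least,'' and state it in a context-parametrized form so it can be reused; (2) for fixed $n \geq 3$, determine the finite word $Y_n$ explicitly — this means running the greedy construction of $L(n)$ until the state (the relevant suffix/context controlling future squares) matches the state occurring inside $Y_1\,\alpha(L(\varepsilon))$ at the point where the $\alpha(L(\varepsilon))$-tail takes over, except shifted through $\rho$; (3) prove a matching/synchronization claim: once the contexts agree, the forced continuations agree, hence $L(n) = Y_n\,\rho(\alpha(L(\varepsilon)))$; and (4) handle the uniformity in $n$ — show that $Y_n$ stabilizes in shape (e.g.\ $Y_n = n \cdot (\text{fixed word})$ or $Y_n$ depends on $n$ only through one initial letter) so that a single argument covers all $n \geq 3$ rather than infinitely many case analyses.

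I expect the main obstacle to be step (2)–(3): proving that the greedy construction of $L(n)$ actually \emph{reaches} the synchronizing state, and that from there the continuation is genuinely forced to be $\rho(\alpha(L(\varepsilon)))$ with no smaller alternative. This requires two things simultaneously — a lower-bound argument (every square-free extension of $Y_n$ is $\geq \rho(\alpha(L(\varepsilon)))$ letterwise at the first point of difference) and an upper-bound argument (the word $Y_n\,\rho(\alpha(L(\varepsilon)))$ is itself square-free, so the greedy algorithm never backtracks past $Y_n$). The square-freeness of $Y_n\,\rho(\alpha(L(\varepsilon)))$ should follow from square-freeness of $Y_1\,\alpha(L(\varepsilon))$ together with the fact that $\rho$ is a square-free morphism on the relevant alphabet and that the junction between $Y_n$ and the $\rho$-image introduces no new square; checking junctions is the routine-but-delicate part. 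The lower bound is the genuinely hard direction, and I anticipate it reuses, essentially verbatim, the forcing lemmas proven for $L(1)$ — which is exactly why the theorem is phrased in terms of the same morphism $\alpha$: the intent is that once $Y_n$ is pinned down, Theorem~\ref{structure of L(1)} (or its internal lemmas) does the heavy lifting, and only the finite, $n$-dependent prefix needs fresh computation.

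Finally, I would double-check the edge structure: confirm that for each $n \geq 3$ the prefix $Y_n$ never accidentally contains a square that the definition of $L$ would disallow (the definition permits squares only inside the prefix $w = n$, which is a single letter, hence square-free), so $Y_n$ must itself be square-free and the whole construction is consistent; and confirm that $n = 2$ is genuinely excluded — presumably the state reached by the greedy construction of $L(2)$ does \emph{not} synchronize with the $\rho(\alpha(L(\varepsilon)))$-tail, which is consistent with $L(2)$ being singled out as a separate conjecture (Conjecture~\ref{structure of L(2)}) rather than a theorem.
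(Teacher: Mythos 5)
There is a genuine gap, and it sits exactly where you predicted the difficulty would be --- but it is worse than you anticipate. Your plan treats $Y_n$ as something reachable by a finite greedy computation whose ``shape stabilizes'' so that $Y_n$ depends on $n$ only through an initial letter. In the paper, $Y_n = n\,T(n)\,A$ where $A$ is a fixed word of length $13747$ but $T(n) = P_0(n)P_1(n)P_2(n)$ has length $(4n^2+22n+159)2^{n-2}-200$, i.e.\ \emph{exponential} in $n$, with a genuinely $n$-dependent recursive structure. So there is no synchronizing state reachable by a uniform finite computation: the entire content of the theorem for the prefix part is to prove that $L(n)$ begins with $n\,T(n) = \psi_2\bigl((n-2)P_0(n-2)\bigr)$, and this occupies a full section of the paper. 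It requires introducing two auxiliary morphisms $\psi_1$ and $\psi_2$, proving each is square-free over grounded words (via a chunk-decomposition criterion), and running nested inductions (over $n$ and over $k$) to show that $\psi_i(n)$ generates $\psi_i(nR_k)$, which is what forces the greedy construction through the exponentially long prefix. None of this is a reuse of the $L(1)$ machinery: $Y_1$ really is a constant $5177$-letter word checked by computer, whereas the $n\geq 3$ case needs this separate, non-computational forcing argument. Your steps (2)--(4) as written cannot be executed.

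Your instincts about the tail are closer to the paper's route: after the prefix $n\,T(n)\,A\,\rho(G)$ is established, the paper does conclude by combining $L(G)=\alpha(L(\varepsilon))$ with the fact that $\rho$ is $L$-commuting over square-free words, which is the precise form of your ``$\rho$ commutes appropriately with $\alpha$ on the relevant fixed point.'' But even the square-freeness of the junction is not obtained the way you suggest (from square-freeness of $Y_1\alpha(L(\varepsilon))$ plus $\rho$ being a square-free morphism); instead the paper proves a letter-bound lemma (any square in $w\,\rho(\alpha(L(\varepsilon)))$ for square-free $w$ involves no letter exceeding $\max(w\,\rho(\alpha(0)))$), reducing that junction to a finite check, and then separately proves $n\,T(n)\,A$ square-free by locating designated affixes of $A$ that occur exactly once in the whole word. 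The missing idea in your proposal, concretely, is the pair of morphisms $\psi_1,\psi_2$ and the induction showing $n$ generates the exponentially long word $n\,T(n)$; without it there is no candidate $Y_n$ to synchronize from.
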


These results imply that the suffix $\alpha(L(\varepsilon))$ and the related suffix $\rho(\alpha(L(\varepsilon)))$ are two attractors for infinite square-free extensions of words on $\N$. We have
\[
    \alpha(L(\varepsilon)) = 01020301201020120210120102012023\cdots\quad\textrm{\seq{A356676}}.
\]

The structure of $L(2)$ appears to also reflect the structure of $L(\varepsilon)$ but, surprisingly, appears not to be related to the morphism $\alpha$.
Instead, we give a morphism $\gamma$ in Section~\ref{L(2) section} for which we conjecture the following.

\begin{conjecture}\label{structure of L(2)}
There exists a morphism $\gamma$ such that $L(2) = 2 \gamma(L(\varepsilon))$.
\end{conjecture}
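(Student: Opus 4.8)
The plan is to adapt the method behind Theorems~\ref{structure of L(1)} and~\ref{L(n) for large n}, using the morphism $\gamma$ of Section~\ref{L(2) section} in place of $\alpha$. Write $w := 2\,\gamma(L(\varepsilon))$. Since the prefix $2$ is a single letter and hence contains no square, it suffices to prove two things: that $w$ is square-free, and that $w$ is lexicographically least among infinite square-free words beginning with $2$. For the latter, it is enough to show that at every position $i$, keeping $w_1\cdots w_{i-1}$ fixed and replacing $w_i$ by any smaller letter creates a square.

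First I would pin down the self-similar structure of $\gamma(L(\varepsilon))$. Because $L(\varepsilon) = \rho^\infty(0)$, the natural step is to look for a morphism $\tau$ with $\gamma \circ \rho = \tau \circ \gamma$, or, if no exact commutation holds, for a presentation of $\gamma(L(\varepsilon))$ as a coding of a fixed point of some morphism. With such a presentation, $w$ is a morphic word, and every factor of $w$ lies inside an image $\gamma(u)$ of a short factor $u$ of $L(\varepsilon)$ in a controlled way.

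Square-freeness would then follow from a synchronization argument: one shows that any sufficiently long occurrence of a $\gamma$-image inside $w$ is aligned with the block decomposition induced by $\gamma$, so that a square $vv$ in $w$ longer than a small threshold forces a square (or near-square) in the ruler sequence $L(\varepsilon)$, which is impossible; the finitely many short candidates are dispatched by inspection. Equivalently, one can invoke a standard test, verifying that the generating morphism maps every square-free word up to a bounded length to a square-free word, to conclude square-freeness of its fixed point, and hence of $w$.

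The crux is lexicographic leastness. Via the morphic structure the positions of $w$ fall into finitely many classes according to local context (the surrounding block of the $\gamma$-decomposition together with the corresponding letter pattern in $L(\varepsilon)$), and for each class and each letter $c < w_i$ one must exhibit an explicit short square created by placing $c$ at position $i$. The difficulty, and the reason this is still only a conjecture, is converting this finite-looking analysis into a proof: one needs a finite certificate (a finite set of local conditions, or a finite automaton reading $L(\varepsilon)$) whose verification on a bounded window propagates to all of $w$ through the self-similarity, together with an induction on the morphic structure that closes. Unlike $L(1)$, whose long transient $Y_1$ absorbs the irregular initial behaviour, here the regular structure must already hold from position $2$ onward, leaving no slack; I expect establishing that such a certificate exists and suffices to be the main obstacle, rather than any single routine computation.
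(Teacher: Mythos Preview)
The paper does not prove this statement; it is labeled and presented as a \emph{conjecture}, not a theorem. Section~\ref{L(2) section} merely defines the morphism $\gamma$, observes the identity $\lim_{n\to\infty} c_n = \gamma(L(\varepsilon))$, and then states Conjecture~\ref{structure of L(2)} without proof. There is therefore no ``paper's own proof'' against which to compare your proposal.

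Your write-up is not a proof either, and you say so yourself: you outline a strategy (square-freeness via a synchronization or Crochemore-type test, lexicographic minimality via a finite local certificate propagated by self-similarity) and then explicitly identify why the argument does not close. That is an honest assessment of the situation, but as a proof attempt it has a genuine gap at exactly the point you name. In the $\alpha$ case the paper succeeds because the recursive shape $\alpha(n)=\alpha(n-1)^+R_{n+3}C\alpha(n-1)^+R_{n+3}$ makes the four conditions of Theorem~\ref{L-commuting over grounded words template} checkable by induction, with the constant word $C$ playing a uniform role. For $\gamma$, the images $\gamma(n)$ for $n\ge 1$ are telescoping products $R_4 b_4^+ \cdots R_{n+2} b_{n+2}^+ R_{n+3} b_{n+3}$ rather than a single ``previous-plus-constant'' recursion, and $\gamma(0)=c_3 c_3^+$ is structurally unlike the other images; neither your sketch nor the paper supplies the analogue of Lemmas~\ref{sigma irreducible} and~\ref{sigma generates alpha(k)+} needed to verify Conditions~\ref{L-commuting over grounded condition 0n to 0n0} and~\ref{L-commuting over grounded condition 0n-plus to 0n+1} for $\gamma$. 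Until those (or some replacement) are established, the argument remains a plan, which is precisely why the paper leaves this as a conjecture.
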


In Corollary~\ref{L(012)}, we show that $L(012)=01201\lim_{n\to\infty}\rho^{-1}(\alpha(n))$ where $\alpha$ is the morphism described above. We also describe $L(p)$ for many two-letter words. For example, we show that $L(nn)=nL(n)$ for all letters $n$. On the other hand, we do not have conjectures for the structures of $L(1n)$ when $n>1$ and $L(2n)$ when $n\not\in\{0,2\}$. For example we have
\begin{align*}
    L(12) &= 12010201202101201020120212010201\cdots\quad\textrm{\seq{A356680}},\\
    L(13) &= 13010201030102012021012010201202\cdots\quad\textrm{\seq{A356681}},\\
    L(21) &= 21012010201202101201020121012010\cdots\quad\textrm{\seq{A356682}},\\
    L(23) &= 23010201030102012021012010201202\cdots\quad\textrm{\seq{A356683}}.
\end{align*}

Section~\ref{preliminary results} contains the main definitions and some preliminary results we will use to prove Theorems~\ref{structure of L(1)} and \ref{L(n) for large n}.
Section~\ref{section P prefixes} establishes the structure of finite words $T(n)$ for which $Y_n=nT(n)A$ for a constant word $A$ when $n\geq3$. We see in Section~\ref{L(1) section} that $T(n)$ is the only component of $L(n)$ that depends on $n$. In Section~\ref{section: one-letter prefixes} we give explicit constructions of $\alpha$ and the words $Y_n$, and we prove Theorems~\ref{structure of L(1)} and \ref{L(n) for large n}. Unsurprisingly, the proofs are fairly technical.

In Section~\ref{section: extending} we give two conditions under which $L(u v) = u \, L(v)$.
For example, we show that $L(n_1 n_2) = n_1 L(n_2)$ for all $n_1 \geq 3$ and $n_2 \geq 3$. We use this to describe the structure of $L(p)$ for several families of two-letter words $p$.
Finally, in Section~\ref{section: inducing factors}, we study the inverse problem of finding a prefix $p$ that induces a given finite square-free word $w$ under $L$.
We show that this can always be done and present an algorithm that computes a prefix $p$ such that $p w$ is a prefix of $L(p)$. Section~\ref{glossary} is a glossary of all of the functions, morphisms, and constants defined in the paper.

This work was motivated by results of several papers~\cite{Rowland--Shallit, Pudwell--Rowland, Rowland--Stipulanti} studying the lexicographically least word on $\N$ that avoids $\frac{a}{b}$-powers, for various rational numbers in the interval $1 < \frac{a}{b} < 2$.
These words exhibit a remarkable diversity of behaviors.
Some of these words (for example, when $\frac{a}{b} = \frac{24}{17}$~\cite[Figure~4 on page~36]{Pudwell--Rowland}) alternate between two different modes before settling into their long-term behavior, suggesting that there may be multiple attractors in the dynamical systems that generate them. The current paper is the first exploration of the set of attractors for the alphabet $\N$.
By varying the prefix, we show there are multiple attractors for square-free words.
In contrast, on a binary alphabet there are patterns with only one attractor; Allouche, Currie, and Shallit~\cite{Allouche--Currie--Shallit} showed that the lexicographically least overlap-free word on $\{0, 1\}$ with a fixed prefix, if it exists, always has a suffix that is a suffix of $10010110\cdots$ (the complement of the Thue--Morse word).

Frequently in this paper, we use computations to show that a particular finite word possesses a certain property. Often, this involves verifying that the word is square-free. As an example, in the proof of Lemma~\ref{psi_1(0n0) square-free}, we verify that the word $\psi_1(010)$ is square-free. These words can easily be computed from their definitions. All computations in this paper that verify that a word is square-free can be done quickly using the Main--Lorentz algorithm described in their 1985 paper \cite{Main--Lorentz}.

We sometimes require the computation of finite prefixes of words $L(w)$. For example, the word $Y_1$ is defined as the $5177$-letter prefix of $L(1)$. These computations can be completed quickly using the following simple greedy algorithm: To find the next letter of $L(w)$ after the prefix $wv$, check whether $wvn$ has a square suffix for non-negative integers $n$, increasing from zero. Then the least $n$ for which $wvn$ has no square suffix is the next letter of $L(w)$. If $w$ contains a square, we test all even-length suffixes of $wvn$. If $w$ is square-free, then we can use a more efficient variation of the Main--Lorentz algorithm~\cite{Main--Lorentz} that only searches for squares that are suffixes. This variation is easiest to implement by first reversing the string and then using the algorithm described in their paper with ``Step 3" altered so that only the first block of length $l$ is examined.
\section{Definitions and preliminary results}\label{preliminary results}

We assume the reader is familiar with basic definitions and notation regarding words and morphisms. See the survey by Allouche~\cite{Allouche} for a short introduction.
All words in the paper are words on the alphabet $\N$.

We say that $v$ is a \emph{factor} of $w$ if $w = x v y$ for some words $x, y$.
If $v$ is a factor of $w$, we also say that \emph{$v$ occurs in $w$} and \emph{$w$ contains $v$}.
A \emph{square} is a nonempty word of the form $yy$.
A word is \emph{square-free} if it contains no square factors.

We index letters in a word beginning with $0$, and we use Python notation to extract factors: Suppose a word $w$ has length $\ell$ and is written as a sequence of letters $w = w_0 w_1 w_2 \cdots w_{\ell-2} w_{\ell-1}$. If $0 \leq i \leq j \leq \ell - 1$, then $w[i] := w_i$ and $w[i:j] := w_i w_{i+1} \cdots w_{j-1}$, with ``default values'' for $i$ and $j$ being 0 and $\ell$, respectively. If $i=j$, then $w[i:j]=\varepsilon$. Negative values index letters from the end of the word.
For example, if $w = \textsf{hydrant}$, then
\begin{align*}
        w[3:6] &= \textsf{ran} = w[3:-1] \\
        w[4:] &= \textsf{ant} = w[-3:] \\
        w[:5] &= \textsf{hydra} = w[:-2].
\end{align*}
Note also that $w[:0]=\varepsilon$ for any word $w$.

\begin{definition*}
A word $w$ is \emph{even-grounded} if $w_i = 0$ for even $i$ and $w_i \neq 0$ for odd $i$. A word $w$ is \emph{odd-grounded} if $w_i = 0$ for odd $i$ and $w_i \neq 0$ for even $i$. A word is \emph{grounded} if it is even-grounded or odd-grounded.
\end{definition*}

For example, the words $010$ and $0102$ are even-grounded, $301$ and $3010$ are odd-grounded, and the words $0120$ and $0100$ are not grounded.

\begin{definition*}
The \emph{ruler morphism} $\rho\colon \N^* \cup \N^\omega \to \N^* \cup \N^\omega$ is defined by $\rho(n) = 0(n+1)$ for letters $n \in \N$.
\end{definition*}

\begin{notation*}
We will denote by $R_{n}$ the prefix of the ruler sequence up to the first occurrence of the letter $n$, i.e.\ $R_n=\rho^n(0)=L(\varepsilon)[:2^{n}]$.
For example, $R_0 = 0$, $R_1 = 01$, $R_2 = 0102$, and so on.
\end{notation*}

\begin{notation*}
For a nonempty finite word $w$, we define $w^+$\!, the \emph{successor} of $w$, to be the word that is identical to $w$ except for the last letter, which is increased by $1$. For example, if $w = 0102$ then $w^+ = 0103$. Formally, $w^+ = w[:-1](w[-1]+1)$.
\end{notation*}

\begin{definition*}
Let $\phi$ be a morphism.
\begin{itemize}
    \item $\phi$ is \emph{non-erasing} if $\phi(k) \neq \varepsilon$ for all letters $k$.
    \item Let $\Delta$ be a set of words. We say that $\phi$ is \emph{square-free over $\Delta$} if $\phi$ is non-erasing and $\phi(w)$ is square-free for all square-free words $w \in \Delta$. We say that $\phi$ is \emph{square-free} if $\phi$ is square-free over $\N^*\cup\N^\omega$.
    \item $\phi$ is \emph{letter-injective} if, given letters $k$ and $\ell$, $\phi(k) = \phi(\ell)$ implies $k = \ell$.
\end{itemize}
\end{definition*}

\begin{definition*}
Given two words $u$ and $v$, we say that $u$ is \emph{lexicographically less than $v$} and we write $u\prec v$ if there is an index $i$ such that $u[:i]=v[:i]$ and $u[i]<v[i]$ as letters. It can be seen that $\prec$ is a partial ordering on $\N^*\cup\N^\omega$. The only case when $u$ and $v$ are not comparable by $\prec$ is when one word is a prefix of the other. We can also see that if $w$ is a nonempty word of finite length and $v\prec w^+$, then either $v\prec w$ or $w$ is a prefix of $v$.
\end{definition*}

\begin{definition*}
Given words $w$ and $u$, we say that $u$ is \emph{irreducible in $wu$} if for all words $s\prec u$, $s$ introduces a square in $ws$. That is, there is a square in $ws$ that ends at a letter in $s$.
\end{definition*}
Notice that if $u$ is irreducible in $wu$, it can still introduce a square. In this case, $u^+$ is irreducible in $wu^+$.

\begin{example*}
Consider $w=0102010$, $u=23$, and $v=301$. Any word $y$ beginning with a 0, 1, or 2 introduces a square in $wy$, so $u$ is irreducible and also introduces a square in $wu$. Thus, $u^+=24$ is also irreducible. The word $v=301$ does not introduce a square, but every word lexicographically less than $v$ either begins with 0, 1, or 2, or includes a square. So $v$ is irreducible, but $v^+=302$ is not.
\end{example*}

\begin{definition*}
We say that \emph{$p$ generates $ps$} if $L(p)=L(ps)$. In other words, $L(p)$ starts with $ps$.
\end{definition*}

The main results of this paper, that $L(1)=Y_1\alpha(L(\varepsilon))$ and $L(n)=Y_n\rho(\alpha(L(\varepsilon)))$ for $n\geq3$ can be restated as 1 generates $Y_1\alpha(L(\varepsilon))$, and $n$ generates $Y_n\rho(\alpha(L(\varepsilon)))$ for $n\geq3$.

\begin{remark}\label{Appending to front of lex-least words}
To prove that a square-free word $p$ generates another word $w=ps$, we must show that $w$ is square-free and that $s$ is irreducible in $w=ps$.

In particular, we have the useful property that if $p$ generates $w=ps$ and $uw$ is square-free for another word $u$, then $up$ generates $uw$. The square-free condition is known by assumption, and the fact that $s$ is irreducible in $w=ps$ implies that $s$ is irreducible in $uw=ups$.

One common application of this property is when $w=L(p)$. Then if $uL(p)$ is square-free, we get $L(up)=L(uL(p))=uL(p)$. Or in other words, $up$ generates $uL(p)$.
\end{remark}

\begin{example*}
The above remark is often used implicitly in this paper. For example in Lemma~\ref{psi_1(n) generates psi_1(n0)}, we prove that for $n\geq1$, $\psi_1(n)$ generates $\psi_1(n)202101$. To do this, we first show that $\psi_1(n)202101$ is square-free and that 2 is irreducible in $\psi_1(n)2$. This proves that $\psi_1(n)$ generates $\psi_1(n)2$.

We then show that $\psi_1(n)2$ has suffix $R_2R_12$, and via computation show that $R_2R_12$ generates $R_2R_1202101$. Letting $p=R_2R_12$, $s=02101$, and $u$ be the word for which $up=\psi_1(n)2$, we have that $p$ generates $ps$, and $ups=\psi_1(n)202101$ is square-free. So Remark~\ref{Appending to front of lex-least words} says that $up=\psi_1(n)2$ generates $ups=\psi_1(n)202101$.

We now have that $\psi_1(n)$ generates $\psi_1(n)2$, and $\psi_1(n)2$ generates $\psi_1(n)202101$. By definition, this means that $L(\psi_1(n))=L(\psi_1(n)2)=L(\psi_1(n)202101)$, so $\psi_1(n)$ generates $\psi_1(n)202101$.
\end{example*}

\begin{notation*}
For any word $w$, $\max(w)$ denotes the maximum letter value in $w$ if it exists.
\end{notation*}

The remainder of this section introduces the notion of \emph{chunks} which will be key in Section~\ref{section P prefixes} to prove Theorem~\ref{tau prefix theorem} regarding the structure of a certain prefix of $L(n)$ and in Section~\ref{section: one-letter prefixes} to prove Theorem~\ref{alpha square-free over grounded words} related to the square-freeness of the morphism $\alpha$, determining the structure of $L(1)$.

Given a morphism $\phi$ and a word $w=w_0w_1w_2\cdots $ we can write $\phi(w)$ as
\[
	\phi(w)=[\phi(w_0)] \ [\phi(w_1)] \ [\phi(w_2)] \cdots,
\]
where we use square brackets to delineate the contributions of each individual letter of $w$. A factor of $\phi(w)$ that arises as the image $\phi(k)$ of the letter $k$ under $\phi$ is a \emph{chunk} or, more specifically, a \emph{$k$-chunk}.
For example, consider the ruler morphism $\rho$ and the word $0121$. We would break $\rho(0121)$ into chunks as
\[
	\rho(0121) = [01] \ [02] \ [03] \ [02].
\]
Each factor $02$ is a $1$-chunk.

Sometimes, when we take a particular occurrence of a factor of $\phi(w)$, we find that the factor starts or ends partway through a chunk. For example $\rho(0121)[1:6]$ can be written as
\[
	\rho(0121)[1:6] = 1] \ [02] \ [03] \ [0.
\]
We refer to $1]$ and $[0$ as \emph{partial chunks}. $1]$ is the \emph{initial} partial chunk and $[0$ is the \emph{final} partial chunk.

In this paper, we frequently consider words of the form $\phi(w)$, and then characterize the possible locations of certain factors of $\phi(w)$ with respect to its chunks. For example, in Lemma~\ref{E is prefix or suffix of alpha(0)}, we show that for any grounded square-free word $w$, the constant word $E$ can only occur in $\alpha(w)$ as a prefix or a suffix of a 0-chunk. This is related to the morphism property that \emph{$\phi$ locates words of length $\ell$} introduced by Pudwell and Rowland~\cite{Pudwell--Rowland}, which restricts the possible starting index for a word of length $\ell$, relative to the chunks in $\phi(w)$.
It is also related to the \emph{synchronization point} of a word $x$ introduced by Cassaigne~\cite{Cassaigne} which describes a point in a word $x$ with respect to a morphism $\phi$ that must occur at a chunk boundary whenever $x$ occurs in $\phi(w)$.

\begin{definition*}
Given a word $w$ and a morphism $\phi$, we say that two (possibly partial) chunks in $\phi(w)[i:j]$ \emph{come from the same letter} to mean that both chunks arise as images of the same letter in $w$. For example, in $\rho(0121)[1:6]=1][02][03][0$, the first whole chunk, $[02]$, and the final partial chunk, $[0$, come from the same letter, 1.
If a word $v$ occurs twice in $\phi(w)$, we say that the occurrences have the same \emph{chunk decomposition} if every whole chunk in one occurrence corresponds to a whole chunk in the other occurrence and both chunks come from the same letter.
\end{definition*}

\begin{example*}
Consider the morphism $\phi\colon \N^* \cup \N^\omega \to \N^* \cup \N^\omega$ defined by
\[\phi(n)=\begin{cases}
	0 & \text{if $n=0$} \\
	01 & \text{if $n=1$}\\
	n-1 & \text{if $n\ge 2$}.
\end{cases}
\]
The single occurrence of $v=103$ in $\phi(2041) =[1] \ [0] \ [3]\ 01$ has the same chunk decomposition as its occurrence in $\phi(2204)=1 \ [1] \ [0]\ [3]$. On the other hand, $w=01$ in $\phi(01) =0 \ [01]$ does not have the same chunk decomposition as its occurrence in $\phi(024)= [0] \ [1]\ 3$.

In $\rho(01012)$, we consider the square $1020\ 1020$:
\[\rho(01012)=[0\ (1][02][0)\ (1][02][0)\ 3].\]
The two halves of the square have the same chunk decomposition because in both, $02$ is a whole 1-chunk and there are no other whole chunks. The two initial partial chunks come from the same letter because they are both part of 0-chunks. The two final partial chunks do not come from the same letter because the final partial chunk of the first half is part of a 0-chunk, and the final partial chunk of the second half is part of a 2-chunk.

Consider a morphism $\phi$ where $\phi(0)=123$, and $\phi(1)=13$. Then, $\phi(010)=[12(3][1)(3][1)23]$. In the square $3131$, both halves vacuously have the same chunk decomposition since there are no whole chunks. But neither their initial nor final partial chunks come from the same letter.
\end{example*}

The following theorem is used frequently in this paper to show that a morphism is square-free.

\begin{theorem}\label{chunk-identifiable condition for square-freeness} Let $\phi$ be a letter-injective morphism such that $\phi(\ell)$ is square-free for all letters $\ell$. Suppose $w$ is a word such that $\phi(w)$ contains a square $yy$ for which the following three properties hold.
\begin{enumerate}
	\item Each half of the square contains at least one whole chunk.
	\item The two halves of the square have the same chunk decomposition.
	\item If either half has a partial chunk, then either their initial partial chunks or their final partial chunks come from the same letter in $w$.
\end{enumerate}
Then $w$ contains a square.
\end{theorem}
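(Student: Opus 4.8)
The plan is to locate the square $yy$ inside $\phi(w)$ relative to the chunk boundaries, and then to pull it back to an explicit square in $w$. Write $yy=\phi(w)[I:I+2\ell]$ with $\ell=|y|$, and look at the three breakpoints: the start of the first $y$, the start of the second $y$ (which is the end of the first $y$), and the end of the second $y$. Each breakpoint falls at some offset --- call them $e$, $e'$, $e''$ --- inside a chunk $\phi(w_p)$, $\phi(w_{p'})$, $\phi(w_{p''})$. Then the whole chunks of the first half are $\phi(w_a),\dots,\phi(w_b)$, where $a=p$ or $a=p+1$ according as $e=0$ or $e>0$, and $b=p'-1$; similarly the whole chunks of the second half are $\phi(w_c),\dots,\phi(w_d)$, where $c=p'$ or $c=p'+1$ according as $e'=0$ or $e'>0$, and $d=p''-1$. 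Property (1) forces $a\le b$ and $c\le d$. The decisive bookkeeping observation is that $c=b+1$ when $e'=0$ and $c=b+2$ when $e'>0$; in the second case $\phi(w_{b+1})=\phi(w_{c-1})$ is the lone chunk straddling the boundary between the two halves, its prefix being the final partial chunk of the first half and its suffix the initial partial chunk of the second half. Property (2), together with letter-injectivity (a chunk determines the letter it came from), gives $b-a=d-c$ and $w_{a+j}=w_{c+j}$ for all $0\le j\le b-a$.

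Next I would split on whether a partial chunk is present. If none is, then $e=e'=e''=0$, so $c=b+1$, the first half is the block of chunks $\phi(w_a\cdots w_b)$, and the second is $\phi(w_{b+1}\cdots w_d)$; equality of the two halves forces $d=2b+1-a$, and Property (2) yields $w_a\cdots w_b=w_{b+1}\cdots w_d$, so $w[a:d+1]$ is a square factor of $w$. If a partial chunk is present, Property (3) puts us in one of two sub-cases, and --- this is the one point that needs care --- each of them forces $e'>0$, hence $c=b+2$. If the initial partial chunks come from the same letter, then in particular the first half has an initial partial chunk (so $a\ge1$), and $w_{a-1}=w_{c-1}=w_{b+1}$; combining this with $w_{a+j}=w_{(b+2)+j}$ shows that $w[a-1:d+1]$ is a square with period $w_{a-1}\cdots w_b$ (here $d=2b+2-a$). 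If instead the final partial chunks come from the same letter, then the second half has a final partial chunk and $w_{b+1}=w_{d+1}$, and combining with the chunk correspondence shows that $w[a:d+2]=w_a\cdots w_{d+1}$ is a square with period $w_a\cdots w_{b+1}$. In every case $w$ contains a square, as claimed.

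The main obstacle is organizational rather than conceptual: faithfully translating statements such as ``the breakpoint at position $I+\ell$ lies at offset $e'$ in $\phi(w_{p'})$'' into the relations among $a,b,c,d,p,p',p''$, and then checking that Property (3) has exactly the strength required. A one-sided partial chunk with $e'=0$ would violate Property (3) --- a partial chunk would be present, yet neither could both halves have initial partial chunks nor could both have final partial chunks --- so that configuration never arises and the case split is exhaustive. One must also respect the convention that two chunks ``come from the same letter'' means they are images of equal letters, not necessarily of the same position of $w$; that is precisely why the pulled-back square carries the period it does. Once the indices are fixed, each case is a short computation: the equalities $w_{a+j}=w_{c+j}$ propagate the period along a shifted copy, displaying the square explicitly as one of the contiguous factors $w[a-1:d+1]$, $w[a:d+1]$, or $w[a:d+2]$ of $w$.
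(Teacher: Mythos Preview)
Your proof is correct and follows essentially the same approach as the paper's. The paper packages the index bookkeeping more compactly by writing the square as $(b]\,[\phi(u)]\,[c)(b]\,[\phi(u)]\,[c)$ with $u$ the common factor of $w$ giving the whole chunks, and then uses $\phi^{-1}$ on the completed boundary chunks to name the relevant letters; but the case split and the resulting squares in $w$ are exactly the ones you exhibit as $w[a{-}1:d{+}1]$, $w[a:d{+}1]$, and $w[a:d{+}2]$.
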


\begin{proof}
By the first and second conditions, the corresponding whole chunks in both halves come from the same letters in $w$. So there is a nonempty factor $u$ of $w$ that yields all whole chunks in both halves. That is, we can write the square as
\[yy=(b]\ [\phi(u)]\ [c)\ (b]\ [\phi(u)]\ [c)\]
where $b]$ and $[c$ are possibly empty partial chunks.

If there are no partial chunks ($b=c=\varepsilon$), then $yy=[\phi(u)][\phi(u)]$ and $uu$ is a square factor of $w$.

Now, suppose there is a partial chunk in either half. Then since the halves have the same chunk decomposition, there must be an initial and final partial chunk in both halves (neither $b$ nor $c$ is empty). Let $a$ and $d$ be the smallest words that complete the partial chunks of $yy$. That is, $\phi(w)$ has the factor $[a(b]\ [\phi(u)]\ [c)(b]\ [\phi(u)]\ [c)d]=ayyd$. Since $\phi$ is letter-injective, $\phi^{-1}$ is well defined on chunks. The third condition says that either $\phi^{-1}(ab)=\phi^{-1}(cb)$ or $\phi^{-1}(cb)=\phi^{-1}(cd)$. In the first case, $\phi^{-1}(ab)u\phi^{-1}(cb)u$ is a square factor of $w$. In the second case, $u\phi^{-1}(cb)u\phi^{-1}(cd)$ is a square factor of $w$.
\end{proof}

As a corollary to this theorem, suppose $\Gamma$ is a family of square-free words, and $\phi$ is a letter-injective morphism with $\phi(\ell)$ square-free for all letters $\ell$. For any word $w\in\Gamma$, $\phi(w)$ cannot contain a square that possesses all three properties in Theorem~\ref{chunk-identifiable condition for square-freeness}, or else $w$ would contain a square. So we can prove that $\phi$ is square-free over $\Gamma$ by showing that for all $w\in\Gamma$, if $\phi(w)$ contains a square, then it contains a square with these three properties. In this paper, we use this technique to show that the morphisms $\psi_1$, $\psi_2$, and $\alpha$ are square-free over grounded words.

\section{A certain prefix of $L(n)$} \label{section P prefixes}

For all $n\geq1$, it is clear that $L(n)$ always starts with the letter $n$ followed by a prefix of the ruler sequence $L(\varepsilon)$. In this section, we show that for $n\geq3$, $L(n)$ has prefix $nT(n)$ which has length exponential in $n$. In Section~\ref{L(1) section} we will show that this is the only part of $L(n)$ that depends on $n$.

\subsection{The morphism $\psi_1$}
We begin by showing that $L(n)$ has a shorter prefix, $nP_0(n)P_1(n)$, which is proved in Theorem~\ref{P1 theorem}. Next we define the words $P_0(n)$, $P_1(n)$ and the morphism $\psi_1$.

\begin{definition*}
For $n\geq 0$, let $P_0(n)$ be the maximum prefix of the ruler sequence such that $nP_0(n)$ is a prefix of $L(n)$. Define the morphism $\psi_1 \colon \N^\ast\to\N^\ast$ by
\[\psi_1(n)=\begin{cases}
	202101 & \text{if $n=0$},\\
	(n+1) \, P_0(n+1) & \text{otherwise},
\end{cases}
\]and for $n\geq3$ define
\[P_1(n)=\psi_1(P_0(n-1)).\]
\end{definition*}

For $n\geq3$, after the word $L(n)$ deviates from the ruler sequence prefix, we will show that it continues with the word $P_1(n)$. These definitions can be referenced in the glossary, Section~\ref{glossary}.

\begin{remark}\label{structure of P0}
Note that $P_0(0)=\varepsilon$. Also, it is not hard to see that the length of $P_0(n)$ is $2^{n+1}-2$, hence for all $n$ we have $$P_0(n)=R_{n+1}[:-2]=R_nR_n[:-2]=R_nR_{n-1}\cdots R_3R_2R_1.$$ And for $n\geq 1$, $$P_0(n)=R_nP_0(n-1).$$ By repeated application of this argument, for $1\le k \le n$, we have that $P_0(n)$ has suffix $P_0(k)$.
\end{remark}

We would like to show that $\psi_1$ is square-free over grounded words, since that will imply that $P_1(n)$ is square-free. The next three lemmas describe some of the important behavior of $\psi_1$ over grounded square-free words.

\begin{lemma}\label{psi_1(0n0) square-free}
Let $n\geq1$. Then $\psi_1(0n0)$ is square-free.
\end{lemma}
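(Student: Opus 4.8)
The plan is to prove that $\psi_1(0n0)$ is square-free by reducing to a finite computation. First I would use the explicit formulas for $\psi_1$. By definition, $\psi_1(0) = 202101$, and $\psi_1(n) = (n+1)\,P_0(n+1)$ for $n \geq 1$. By Remark~\ref{structure of P0}, $P_0(n+1) = R_{n+1}R_n\cdots R_2 R_1$, so $\psi_1(n) = (n+1)\,R_{n+1}R_n\cdots R_2R_1$ for $n\geq 1$. Hence
\[
\psi_1(0n0) = 202101\,\cdot\,(n+1)\,R_{n+1}R_n\cdots R_2R_1\,\cdot\,202101.
\]
The key observation is that the structure of this word depends on $n$ only through the single large letter $n+1$ and the length of the ruler-prefix block $R_{n+1}$; all letters $2, 3, \dots$ appearing inside $R_{n+1}R_n\cdots R_1$ are bounded, and the "tail" $R_n\cdots R_2 R_1$ together with the flanking copies of $202101$ is essentially independent of $n$.

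The main step is to argue that any square in $\psi_1(0n0)$ must be short and localized, so that it suffices to check finitely many cases. I would split into cases according to whether a putative square $yy$ straddles the unique occurrence of the letter $n+1$. If $yy$ lies entirely to the left of $n+1$ (inside $202101$) or entirely to the right (inside $R_nR_{n-1}\cdots R_1\,202101$, a word not depending on $n$), square-freeness is a fixed finite check. If $yy$ contains the letter $n+1$, then because $n+1$ occurs exactly once, both halves would have to contain it, which is impossible for a square; so in fact no square can cover $n+1$. The remaining case is a square straddling the boundary but not covering $n+1$: such a square sits inside $202101\,(n+1)\,R_{n+1}R_n\cdots$ near the junction, and since $R_{n+1} = 0\,1\,0\,2\,\cdots$ begins with a block independent of $n$ and the letter $n+1$ acts as a barrier on the left, any such square has bounded length and lies in a bounded-length window; one checks finitely many of these. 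Actually the cleanest packaging uses the first occurrence of the letter $n+1$: since $R_{n+1}$ is the first place the ruler sequence hits $n+1$, and $\psi_1(0n0)$ introduces $n+1$ only once (at the start of the $\psi_1(n)$ block), the letter $n+1$ synchronizes any occurrence of a long factor, forcing alignment that reduces to the small cases above.

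I would then complete the argument by an explicit computation: verify that $\psi_1(010) = 202101\,3\,R_3R_2R_1\,202101 = 202101\,3\,0102\,01\,1\,202101$ (or whatever the correct short word is) is square-free using the Main--Lorentz algorithm, and check the handful of boundary windows; by the case analysis above, this establishes square-freeness for all $n\geq 1$ simultaneously, since enlarging $n$ only inserts more ruler-prefix material that is itself square-free (the ruler sequence is square-free) and cannot create a new square without one of the bounded-window configurations already ruled out.

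The hard part will be making the "square straddling the junction has bounded length" claim fully rigorous: one must carefully use that the single occurrence of the large letter $n+1$ forces the two halves of any square to be positioned on the same side of it, and then control how far a square can extend past the boundary given that the ruler blocks $R_{n+1}, R_n, \dots$ repeat short prefixes of each other. I expect this is exactly where the authors invoke the chunk machinery or a direct ruler-sequence combinatorial lemma, and it is the only step that is not a routine finite check.
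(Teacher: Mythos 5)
There is a genuine gap, and it starts with a false factual premise. You claim the letter $n+1$ occurs exactly once in $\psi_1(0n0)$ and therefore acts as a barrier that no square can cover. In fact it occurs exactly \emph{twice}: once as the leading letter of $\psi_1(n)=(n+1)P_0(n+1)$, and once as the last letter of the factor $R_{n+1}$ inside $P_0(n+1)=R_{n+1}R_n\cdots R_2R_1$ (recall $R_{n+1}$ is the ruler prefix \emph{up to and including} the first occurrence of $n+1$). So a square $yy$ can perfectly well contain the letter $n+1$, with one occurrence in each half; this case is exactly the one your case analysis declares impossible and never examines. A second error: the portion to the right of the first $n+1$, namely $R_{n+1}R_n\cdots R_1\,202101$, is not ``essentially independent of $n$'' --- its length is about $2^{n+2}$ --- so square-freeness there is not a fixed finite check. (It does hold, but because $(n+1)P_0(n+1)$ is by definition a prefix of $L(n+1)$ and hence square-free; that is the fact you would need to invoke.) Finally, the ``any straddling square lies in a bounded window'' reduction is asserted, acknowledged as the hard part, and never carried out, so even setting aside the two errors above the argument is not complete.

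For comparison, the paper's proof avoids all of this with a short synchronization argument: after checking $\psi_1(010)$ by computer and noting each chunk $\psi_1(0)=202101$ and $\psi_1(n)=(n+1)P_0(n+1)$ is individually square-free, it observes that the two-letter words sitting across the chunk boundaries of $[202101][(n+1)P_0(n+1)][202101]$ are $1(n+1)$ and $12$, neither of which occurs anywhere else in the word (because $\psi_1(n)$ is grounded, so it never has two consecutive nonzero letters, and $202101$ contains neither factor). Hence neither boundary word can be a factor of a half $y$ of a square, which forces each copy of $y$ to lie inside a single chunk; then a prefix of $\psi_1(n)$ would have to equal a suffix of $\psi_1(0)$ (or vice versa), which fails by inspecting first and last letters. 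If you want to salvage your approach, the occurrences of the large letter can still be used, but you must count them correctly (two in $\psi_1(n)$, and the parity/position argument the paper uses in the analogous Lemma~\ref{psi_2(n0) square-free} shows how delicate that becomes); the boundary-factor argument is considerably cleaner.
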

\begin{proof}
We can verify computationally that $\psi_1(010)$ is square-free. So let $n\geq2$ and suppose there is a square $yy$ in
\[\psi_1(0n0)=\big[202101\big]\big[(n+1)P_0(n+1)\big]\big[202101\big].\]
Since $(n+1)P_0(n+1)$ is a prefix of $L(n+1)$, $\psi_1(n)$ is square-free for all $n$. So $yy$ lies over at least one of the chunk boundaries.

In the first chunk boundary we find the word $1(n+1)$, and in the second the word $12$. Since $n+1\geq3$ and $\psi_1(n)$ is grounded, these words appear nowhere else in $\psi_1(0n0)$. Therefore, neither chunk boundary can be completely contained in $y$. This means that $yy$ must be a factor of $\psi_1(0n)$ or $\psi_1(n0)$ and each occurrence of $y$ must be completely contained in a different chunk. Every prefix of $\psi_1(n)$ begins with $n+1\geq3$ but no suffix of $\psi_1(0)$ does, and every suffix of $\psi_1(n)$ is grounded and ends with a 1 which is not true for any prefix of $\psi_1(0)$. Therefore, such a square cannot exist.
\end{proof}

\begin{lemma}\label{psi_1(n) not factor of psi_1(k)}
Let $n>k\geq0$. Then neither of $\psi_1(n)$ and $\psi_1(k)$ is a factor of the other.
\end{lemma}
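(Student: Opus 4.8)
The plan is to compare the two words $\psi_1(n)$ and $\psi_1(k)$ structurally, exploiting the fact that each is either a short explicit word (when the argument is $0$) or of the form $(m+1)\,P_0(m+1)$, which begins with a letter $m+1\geq 2$ and is followed by a prefix of the ruler sequence. First I would dispose of the cases where one of $n,k$ equals $0$. If $k=0$ and $n\geq 1$, then $\psi_1(k)=202101$ has length $6$, while $\psi_1(n)=(n+1)P_0(n+1)$ has length $1+(2^{n+2}-2)=2^{n+2}-1\geq 7$, so $\psi_1(k)$ cannot contain $\psi_1(n)$ on length grounds; conversely $\psi_1(n)$ begins with $n+1\geq 2$ and is grounded after its first letter, so the factor $202101$, which contains the block $21$ at an even-to-odd transition followed by $101$, either cannot appear in it or — more carefully — one checks directly that $202101$ occurs in $(n+1)P_0(n+1)$ only in positions incompatible with being a factor (a short finite verification, since $P_0(n+1)$ is a ruler prefix whose factor structure is completely understood). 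So from now on assume $n>k\geq 1$.

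In this main case, $\psi_1(n)=(n+1)P_0(n+1)$ and $\psi_1(k)=(k+1)P_0(k+1)$ with $n+1>k+1\geq 2$. The key observation is that the letter $n+1$ appears in $\psi_1(n)$ exactly once, namely as its first letter: indeed $P_0(n+1)=R_{n+1}R_n\cdots R_2 R_1$ is a concatenation of ruler prefixes, and the maximum letter occurring in $P_0(n+1)$ is the maximum letter of $R_{n+1}=\rho^{n+1}(0)$, which is $n+1$ — wait, this needs care; $R_{n+1}=L(\varepsilon)[:2^{n+1}]$ is the ruler prefix up to the \emph{first} occurrence of $n+1$, so $n+1$ does occur in $P_0(n+1)$. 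The cleaner statement is that $n+2$ does not occur in $\psi_1(n)$ at all, whereas the first letter of $\psi_1(n)$ is $n+1$ which is strictly larger than every letter of $\psi_1(k)$ (since $\psi_1(k)$ contains no letter exceeding $k+1<n+1$). Hence $\psi_1(n)$ cannot be a factor of $\psi_1(k)$, because $\psi_1(n)$ contains the letter $n+1$ and $\psi_1(k)$ does not. That settles one direction immediately.

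For the other direction — that $\psi_1(k)$ is not a factor of $\psi_1(n)=(n+1)P_0(n+1)$ — I would argue as follows. Any occurrence of $\psi_1(k)=(k+1)P_0(k+1)$ inside $\psi_1(n)$ must start after the initial letter $n+1$ (since $k+1\neq n+1$), hence lies entirely within $P_0(n+1)=R_{n+1}R_n\cdots R_1$. Now $\psi_1(k)$ begins with the letter $k+1\geq 2$ and is immediately followed by $P_0(k+1)=R_{k+1}\cdots R_1$, which begins with $R_{k+1}=0\cdots$; so $\psi_1(k)$ begins $(k+1)\,0\,\cdots$. In the ruler sequence, a nonzero letter is always immediately followed by a $0$, and the letter $k+1$ at an occurrence is followed by exactly the block that the self-similar structure dictates; one checks that the suffix of the ruler sequence following any occurrence of $k+1$ within $P_0(n+1)$ does not match $P_0(k+1)$ in full, because $P_0(k+1)$ ends in the block $R_2R_1 = 0102\cdot 01$, hmm — rather, one pins down where $k+1$ can occur in $P_0(n+1)$ (it occurs once in each $R_j$ with $j\geq k+1$, at a position determined by the ruler recursion) and verifies in each case that the continuation disagrees with $\psi_1(k)$, using Remark~\ref{structure of P0} and the explicit recursion $R_j = R_{j-1}\,\rho^{j-1}(\,\cdot\,)$. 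I expect \textbf{this last step to be the main obstacle}: it requires a clean, case-free description of the positions of the letter $k+1$ in the concatenation $R_{n+1}R_n\cdots R_1$ together with their local continuations, and then a short argument (or finite computation reducible to bounded $k$) that none of these continuations equals the word $P_0(k+1)$. The cleanest route is probably to observe that immediately \emph{before} any occurrence of $k+1$ in the ruler sequence sits a nonzero letter $\geq 1$, whereas in $\psi_1(k)=(k+1)P_0(k+1)$ the letter $k+1$ is preceded — within $\psi_1(n)$ — by whatever precedes the occurrence, and to derive a contradiction from comparing a slightly longer window, but I would be prepared to fall back on bounding the relevant data and checking finitely many cases.
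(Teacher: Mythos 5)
Your easy direction and your $k=0$ case are fine: the observation that $\psi_1(n)$ contains the letter $n+1$ while $\max(\psi_1(k))=k+1<n+1$ is a valid (and slightly different) substitute for the paper's length argument, and the fact that $\psi_1(0)=202101$ is not grounded while $\psi_1(n)=(n+1)R_{n+2}[:-2]$ is odd-grounded disposes of the $k=0$ subcase cleanly --- you should state that groundedness argument outright rather than retreating to ``a short finite verification,'' since $n$ is unbounded and no literally finite check covers all $n$. You also make the correct reduction in the main case: an occurrence of $\psi_1(k)$ in $\psi_1(n)$ cannot start at position $0$, so it suffices to show that $(k+1)R_{k+2}[:-2]$ never occurs in $R_{n+2}$.

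The genuine gap is that you never prove this last non-occurrence, and the two routes you sketch for it do not work. Your proposed ``cleanest route'' rests on the claim that the letter $k+1$ in the ruler sequence is immediately preceded by a nonzero letter; this is false --- the ruler sequence is even-grounded, so every nonzero letter sits at an odd index and is preceded by $0$, which is exactly why the claim gives no contradiction with the $0$ that follows $k+1$ inside $\psi_1(k)$. Your fallback of ``checking finitely many cases'' is also unavailable, since both $k$ and $n$ range over all non-negative integers. The paper closes the gap by looking at what precedes the \emph{tail} $R_{k+2}[:-2]$ rather than at what precedes the letter $k+1$: writing $R_{n+2}$ as a concatenation of length-$2^{k+2}$ blocks, each equal to $R_{k+2}$ with its final letter increased zero or more times, every block ends in a letter $\geq k+2$, while $\max(R_{k+2}[:-2])=k+1$; hence any occurrence of $R_{k+2}[:-2]$ must avoid all block endings and can only sit as a prefix of a block, so it is either a prefix of $R_{n+2}$ or preceded by a letter $\geq k+2$ --- never by $k+1$. (Equivalently, in your ``track the occurrences of $k+1$'' framing: if $k+1$ occurs at index $i$, then index $i+2^{k+1}$ of the ruler sequence carries a letter $\geq k+2$, whereas $R_{k+2}[:-2]$ has the letter $k+1$ at that offset; but some such uniform argument must actually be supplied.)
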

\begin{proof}
From the definition, it is clear that since $n>k$, $|\psi_1(n)|>|\psi_1(k)|$, so $\psi_1(n)$ cannot be a factor of $\psi_1(k)$. If $k=0$, then $\psi_1(k)$ is not grounded but $\psi_1(n)$ is. Thus, assume $n>k>0$. Then since $\psi_1(n)=(n+1)P_0(n+1)=(n+1)R_{n+2}[:-2]$, it is sufficient to show that $\psi_1(k)=(k+1)R_{k+2}[:-2]$ cannot occur in $R_{n+2}$.

Since $R_{n+1}=R_nR_n^+$ for all $n$, we have that
\begin{align*}
    R_{n+2} &= R_{k+2}R_{k+2}^+R_{k+2}R_{k+2}^{++}R_{k+2}R_{k+2}^{+}\cdots R_{k+2}R_{k+2}^{++\cdots +}\\
    & =R_{k+2}^*R_{k+2}^*\cdots R_{k+2}^{*},
\end{align*}
where $^*$ represents the application of zero or more $^+$'s according to the ruler sequence pattern. 

The largest letter in $R_{k+2}[:-2]$ is $k+1$ and the last letter of each $R_{k+2}^*$ is at least $k+2$. So every occurrence of $R_{k+2}[:-2]$ in $R_{k+2}^*R_{k+2}^*\cdots R_{k+2}^*$ must be as a prefix of an $R_{k+2}^*$. This means that any occurrence of $R_{k+2}[:-2]$ in $R_{n+2}$ is either a prefix of $R_{n+2}$ or is preceded by a letter that is at least $k+2$. Therefore, $(k+1)R_{k+2}[:-2]$ can never occur in $R_{n+2}$, and $\psi_1(k)$ cannot occur in $\psi_1(n)$ which proves the result.
\end{proof}

\begin{lemma}\label{psi_1(l) is a l-chunk}
Let $w$ be a grounded square-free word. Then for $\ell\geq0$, every occurrence of $\psi_1(\ell)$ in $\psi_1(w)$ is a $\ell$-chunk.
\end{lemma}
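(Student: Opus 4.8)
The plan is to show that any occurrence of $\psi_1(\ell)$ in $\psi_1(w)$ must align exactly with one of the chunk boundaries, and that the chunk it lands on is an $\ell$-chunk. I would split into the two cases $\ell = 0$ and $\ell \geq 1$, since $\psi_1(0) = 202101$ behaves quite differently from $\psi_1(n) = (n+1)P_0(n+1)$ for $n \geq 1$.

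First I would handle $\ell \geq 1$. Here $\psi_1(\ell) = (\ell+1) P_0(\ell+1)$ begins with the letter $\ell+1 \geq 2$. Since $w$ is grounded and square-free, every chunk $\psi_1(w_i)$ with $w_i \neq 0$ begins with $w_i + 1 \geq 2$ and every chunk $\psi_1(0) = 202101$ has its large letters (the two $2$s) only in specific positions. The key point is to locate where the letter $\ell+1$ can appear in $\psi_1(w)$. Inside any $0$-chunk the only letters $\geq 2$ are the two $2$s, so an occurrence of $\psi_1(\ell)$ starting inside a $0$-chunk would force $\ell + 1 = 2$, i.e.\ $\ell = 1$, and one would then check by hand (there are only two candidate starting positions in $202101$, and the following letters must continue as $P_0(2) = 02$) that this cannot extend to a full copy of $\psi_1(1) = 202$. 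Inside a nonzero chunk $\psi_1(n) = (n+1)P_0(n+1) = (n+1) R_{n+2}[:-2]$, the only letter $\geq 2$ other than the leading $n+1$ lie within $R_{n+2}[:-2]$, whose largest letter is $n+1$; so an occurrence of $\psi_1(\ell)$ beginning strictly inside such a chunk would be a proper factor of $\psi_1(n)$ beginning with a letter $\leq n+1$. Because $\ell \geq 1$, $\psi_1(\ell) = (\ell+1) R_{\ell+2}[:-2]$, and the argument in the proof of Lemma~\ref{psi_1(n) not factor of psi_1(k)} (that $(k+1)R_{k+2}[:-2]$ can only occur in $R_{m+2}$ as a prefix or preceded by a letter $\geq k+2$) shows this is impossible unless the occurrence is exactly the chunk itself, forcing $n = \ell$. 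So every occurrence of $\psi_1(\ell)$ starts at a chunk boundary, coincides with a nonzero chunk $\psi_1(n)$, and Lemma~\ref{psi_1(n) not factor of psi_1(k)} (or just comparing lengths and leading letters) forces $n = \ell$.

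For $\ell = 0$, I would argue that $\psi_1(0) = 202101$ cannot start inside a nonzero chunk: such a chunk is $(n+1)R_{n+2}[:-2]$ with $n+1 \geq 2$, and one checks that the factor $202101$ — which contains the two adjacent patterns "$20$" and "$01$" with a grounded-looking middle — cannot appear inside $R_{n+2}[:-2]$ (whose structure is rigidly controlled by the ruler recursion $R_{m+1} = R_m R_m^+$) nor straddle the boundary into an adjacent chunk, because the letter following any nonzero chunk in $\psi_1(w)$ is determined by groundedness of $w$ and does not produce $202101$. The cleanest route is probably: the string $21$ occurs in $202101$, and to see where $21$ can occur in $\psi_1(w)$ — inside a $0$-chunk it occurs once, at a fixed offset; it never occurs inside a nonzero chunk (the largest letter there is $n+1$ but it is immediately followed, in $R_{n+2}[:-2]$, by a $0$, not never giving $21$ except possibly when $n+1 = 2$, handled separately); and it cannot straddle two nonzero chunks. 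Pinning down these finitely many cases shows $202101$ can only occur as a whole $0$-chunk.

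The main obstacle I expect is the bookkeeping in the $\ell = 0$ case and the small-$\ell$ subcase of the first case: ruling out the handful of ways $202101$ or $\psi_1(1) = 202$ could appear by crossing a chunk boundary or sitting partway inside a $0$-chunk. These are all finite checks — the relevant windows have bounded length because the distinguishing letter $\ell+1$ (or the pattern $21$) pins down the offset within a chunk — but they need to be enumerated carefully, using groundedness of $w$ to control what letter sits on the far side of each boundary. Once those cases are dispatched, the structural facts from Remark~\ref{structure of P0} and Lemma~\ref{psi_1(n) not factor of psi_1(k)} do the rest.
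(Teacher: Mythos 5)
Your $\ell=0$ case is essentially the paper's own argument (track the factor $21$, which can only occur at one fixed offset inside a $0$-chunk and can neither sit inside a grounded nonzero chunk nor straddle a boundary, since every chunk ends in $1$), and your endgame for $\ell\geq1$ (once the occurrence is known to fill a whole chunk, compare first letters and lengths, or invoke Lemma~\ref{psi_1(n) not factor of psi_1(k)}) is fine. But there is a genuine gap in the middle of the $\ell\geq1$ case: you assert that an occurrence of $\psi_1(\ell)$ beginning strictly inside a nonzero chunk $\psi_1(n)$ ``would be a proper factor of $\psi_1(n)$,'' and nothing in your argument justifies that containment. The occurrence could begin inside $\psi_1(n)$ and spill over into the following chunk(s); in that situation only a prefix of $\psi_1(\ell)$ lies inside $R_{n+2}[:-2]$, so the prefix-or-preceded-by-a-large-letter argument from Lemma~\ref{psi_1(n) not factor of psi_1(k)} does not apply. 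The missing idea is the one the paper's proof leads with: for $\ell\geq1$ the word $\psi_1(\ell)=(\ell+1)R_{\ell+2}[:-2]$ is odd-grounded, whereas every chunk $\psi_1(n)$ (including $\psi_1(0)=202101$) begins and ends with a nonzero letter, so any factor of $\psi_1(w)$ crossing a chunk boundary contains two adjacent nonzero letters and cannot be grounded. That single observation rules out all straddling at once, forces the occurrence to be wholly contained in one chunk, and makes the letter-by-letter tracking of $\ell+1$ unnecessary; Lemma~\ref{psi_1(n) not factor of psi_1(k)} then finishes exactly as you intend.

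Two computational slips are worth flagging, even though they do not change the verdicts of your finite checks: $\psi_1(1)=2\,P_0(2)=2010201$, not $202$, and $P_0(2)=R_3[:-2]=010201$, not $02$.
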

\begin{proof}
It is not hard to see that the word $21$ can only occur in $\psi_1(w)$ in the middle of 0-chunks. Thus, $202101$ occurs in $\psi_1(w)$ only as a 0-chunk.

Suppose $\ell\geq1$. Then $\psi_1(\ell)=(\ell+1)P_0(\ell+1)=(\ell+1)R_{\ell+1}R_{\ell+1}[:-2]$ which is odd-grounded. For $n\geq0$, $\psi_1(n)$ begins and ends with a non-zero letter, so any word lying over a chunk boundary in $\psi_1(w)$ cannot be grounded. Therefore, every occurrence of $\psi_1(\ell)$ in $\psi_1(w)$ must be totally contained in some $k$-chunk where $k\geq\ell$. By Lemma~\ref{psi_1(n) not factor of psi_1(k)} $\psi_1(\ell)$ cannot be a factor of $\psi_1(k)$ when $k>\ell$, so $\psi_1(\ell)$ can only occur as an $\ell$-chunk.
\end{proof}

\begin{proposition}\label{psi_1 square-free over grounded}
$\psi_1$ is square-free over grounded words.
\end{proposition}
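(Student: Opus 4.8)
The plan is to apply the corollary to Theorem~\ref{chunk-identifiable condition for square-freeness} with $\phi = \psi_1$ and $\Gamma$ the set of grounded square-free words. First I would note that $\psi_1$ is letter-injective: the images $\psi_1(0) = 202101$ and $\psi_1(n) = (n+1)P_0(n+1)$ for $n\ge 1$ all have distinct lengths (by Remark~\ref{structure of P0}, $|P_0(n)| = 2^{n+1}-2$), so no two can coincide. Also, $\psi_1(\ell)$ is square-free for every letter $\ell$: $\psi_1(0) = 202101$ is square-free by inspection, and for $\ell\ge 1$, $\psi_1(\ell) = (\ell+1)P_0(\ell+1)$ is a prefix of $L(\ell+1)$, hence square-free. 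So the hypotheses of Theorem~\ref{chunk-identifiable condition for square-freeness} are met, and by the corollary it suffices to show: if $w$ is a grounded square-free word and $\psi_1(w)$ contains a square $yy$, then $\psi_1(w)$ contains a square satisfying properties (1)–(3) of that theorem.

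So suppose $yy$ is a square in $\psi_1(w)$. The main case analysis is on how many whole chunks each half of $yy$ contains. If $yy$ is contained within a single chunk $\psi_1(\ell)$, that contradicts square-freeness of $\psi_1(\ell)$. If $yy$ lies across exactly one chunk boundary but touches no third chunk — i.e. $yy$ is a factor of $\psi_1(k\ell)$ for adjacent letters $k,\ell$ of $w$ — I would argue directly that no such square exists, generalizing the argument of Lemma~\ref{psi_1(0n0) square-free}: since $\psi_1(n)$ begins and ends with a nonzero letter for every $n$, the border between the two chunks is the unique place in $\psi_1(k\ell)$ where two nonzero letters are adjacent, except possibly inside $\psi_1(0)=202101$; one then checks that the prefixes of a chunk and the suffixes of the preceding chunk are structurally incompatible (a suffix of $\psi_1(\ell)$ for $\ell\ge1$ is grounded ending in $1$ with large letters; a suffix of $\psi_1(0)$ ends in $1$ but is short; prefixes of $\psi_1(\ell)$ for $\ell\ge1$ begin with $\ell+1\ge2$, etc.), so the two halves of $yy$ cannot match. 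This is essentially the content of Lemmas~\ref{psi_1(0n0) square-free} and \ref{psi_1(l) is a l-chunk}, extended to arbitrary pairs of adjacent chunks; groundedness of $w$ keeps the 0-chunks and nonzero-chunks alternating, which is what makes this tractable.

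In the remaining case, each half of $yy$ spans at least two chunk boundaries, so in particular each half contains at least one whole chunk — property (1) holds. Here the key structural input is Lemma~\ref{psi_1(l) is a l-chunk}: every occurrence of the chunk word $\psi_1(\ell)$ inside $\psi_1(w)$ is genuinely an $\ell$-chunk of the decomposition. Write $yy = (b]\,[\psi_1(u_1)]\cdots[\psi_1(u_r)]\,[c)(b']\,[\psi_1(u_1')]\cdots[\psi_1(u_s')]\,[c')$ using the whole chunks of each half. I would use the periodicity $|y| = $ (length of one half) together with Lemma~\ref{psi_1(l) is a l-chunk} to force $r = s$ and $u_i = u_i'$ for all $i$, i.e. the two halves have the same chunk decomposition (property (2)): shifting $\psi_1(w)$ by $|y|$ maps the first half's whole chunks onto whole chunks of the decomposition, which by letter-injectivity and Lemma~\ref{psi_1(l) is a l-chunk} must be exactly the second half's chunks. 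For property (3): the initial partial chunks $b],b']$ are tails of chunks $\psi_1(m),\psi_1(m')$ and the final partial chunks $[c,[c'$ are heads of chunks $\psi_1(p),\psi_1(p')$; the letter-value structure of $\psi_1$ (small letters recur, but the specific arrangement around a partial chunk determines which chunk it belongs to — again because $w$ is grounded so the chunk-types alternate) forces $m=m'$ or $p=p'$. Applying the corollary to Theorem~\ref{chunk-identifiable condition for square-freeness} then yields a square in $w$, contradicting that $w$ is square-free. Hence $\psi_1(w)$ is square-free, so $\psi_1$ is square-free over grounded words.

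The main obstacle I anticipate is the single-chunk-boundary case (the second paragraph above): unlike the clean situation of $\psi_1(0n0)$, one must handle every pair of adjacent chunk types — $(0,\ell)$, $(\ell,0)$, and, crucially, what groundedness does and does not forbid — and rule out a matching square purely from the incompatibility of chunk-prefixes with preceding-chunk-suffixes. This is where the bookkeeping about "begins with a large letter" versus "ends with $1$ and is grounded" has to be made airtight; the multi-chunk case, by contrast, is largely a formal consequence of Lemma~\ref{psi_1(l) is a l-chunk} plus Theorem~\ref{chunk-identifiable condition for square-freeness}.
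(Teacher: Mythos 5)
Your overall strategy is the paper's: verify letter-injectivity and square-freeness of each $\psi_1(\ell)$, use Lemma~\ref{psi_1(l) is a l-chunk} to get the same chunk decomposition, and feed the square into Theorem~\ref{chunk-identifiable condition for square-freeness}. But your case analysis has a hole at exactly the point where the paper has to work hardest. After disposing of squares inside a single chunk and squares that are factors of some $\psi_1(k\ell)$, you declare that ``in the remaining case, each half of $yy$ spans at least two chunk boundaries, so in particular each half contains at least one whole chunk.'' That does not follow. The remaining case is only that $yy$ touches at least three chunks; if it touches exactly three, say it is a factor of $\psi_1(n 0 k)$ with the center of the square lying inside the middle $0$-chunk, then each half crosses only one boundary and neither half contains a whole chunk, yet the square is not a factor of any $\psi_1(k\ell)$. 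Property~(1) fails in this configuration, so Theorem~\ref{chunk-identifiable condition for square-freeness} cannot be invoked, and some ad hoc argument is required.

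This is precisely the sub-case the paper's proof spends its first substantive paragraph on: by groundedness the three adjacent letters are $0n0$ or $n0k$; the first is Lemma~\ref{psi_1(0n0) square-free}, and for $\psi_1(n0k)$ one observes that the factor $21$ occurs only in the middle of $0$-chunks, forcing the center of the square to sit at the $21$ inside $202101$; the second half of the square then begins with $101$, which cannot occur in $\psi_1(n)=(n+1)R_{n+2}[:-2]$ since $101$ is not a factor of the ruler sequence. You need to add this argument (or an equivalent one) to close the gap. A secondary, repairable imprecision: even when $yy$ touches four or more chunks, what you get directly is that \emph{at least one} half contains a whole chunk; that both halves do is then a consequence of the same-chunk-decomposition property, not of a boundary count, and the paper deduces it in that order. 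The rest of your outline --- the two-chunk case via groundedness, and the matching of final partial chunks using the first letters of chunks together with the equality of the last whole chunks --- matches the paper's reasoning.
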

\begin{proof}
Suppose $w$ is a grounded square-free word and that $\psi_1(w)$ contains a square $yy$. From its definition, $\psi_1$ is letter injective and $\psi_1(n)$ is square-free for all $n$. Also, Lemma~\ref{psi_1(l) is a l-chunk} implies that both halves of $yy$ have the same chunk decomposition. We will show that each half of the square contains a whole chunk, and that if either half contains a partial chunk, then the final partial chunk of both halves comes from the same letter. Then Theorem~\ref{chunk-identifiable condition for square-freeness} will imply that $w$ contains a square, which is a contradiction.

Suppose neither half of the square contains a whole chunk. Then the whole square contains no more than one whole chunk. By Lemma~\ref{psi_1(0n0) square-free}, $\psi_1(0n0)$ is square-free, so $yy$ must be a proper factor of $\psi_1(n0k)$ for $n,k>0$, $n\neq k$. The factor $21$ only occurs in the middle of 0-chunks, so the square in $\psi_1(n0k)$ has its center at the $21$ in the 0-chunk. The square cannot be totally contained in the 0-chunk, so the second half of the square begins with $101$ which cannot occur in the first half because $\psi_1(n)=(n+1)R_{n+2}[:-2]$. This is a contradiction so at least one half of the square contains a whole chunk. Since both halves have the same chunk decomposition, both halves contain a whole chunk.

Suppose either half contains a partial chunk. Then since the halves have the same chunk decomposition, they must both end with a partial chunk. The final partial chunks begin with the same letter, so they must be equal chunks or one of them must be a 0-chunk. But since the halves have the same chunk decomposition and contain whole chunks, their last whole chunk is equal. So the final partial chunks are either both 0-chunks or both equal nonzero chunks.
\end{proof}

The next three lemmas are used to prove the irreducibility condition in Theorem~\ref{P1 theorem}.

\begin{lemma}\label{psi_1(n) generates psi_1(n0)}
Let $n\geq1$. Then $\psi_1(n)$ generates $\psi_1(n0)$.
\end{lemma}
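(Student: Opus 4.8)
The goal is to show that $\psi_1(n)$ generates $\psi_1(n0) = \psi_1(n)\,202101$, which by Remark~\ref{Appending to front of lex-least words} amounts to two tasks: (i) verify that $\psi_1(n)\,202101$ is square-free, and (ii) verify that $202101$ is irreducible in $\psi_1(n)\,202101$. The plan is to handle both by reducing to a bounded computation using the suffix structure of $\psi_1(n)$. Recall from Remark~\ref{structure of P0} that $\psi_1(n) = (n+1)P_0(n+1) = (n+1)R_{n+1}R_n\cdots R_3R_2R_1$, so in particular $\psi_1(n)$ ends with the fixed suffix $R_2 R_1 = 0102\cdot 01$. Actually, since the example following Remark~\ref{Appending to front of lex-least words} already tells us the intended argument, I will follow that route: show $\psi_1(n)2$ has suffix $R_2R_12$, check by computation that $R_2R_12$ generates $R_2R_1202101$, and then apply the Remark with $p = R_2R_12$, $u$ the word with $up = \psi_1(n)2$, $s = 02101$.

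For the square-freeness in step (i): first I would argue that $\psi_1(n0) = \psi_1(n)[202101]$ has no square, using the chunk machinery already developed. Since $\psi_1$ is letter-injective with each $\psi_1(\ell)$ square-free (Proposition~\ref{psi_1 square-free over grounded} applies once we know $n0$ is grounded — and $n0$ is indeed grounded when $n\neq 0$, being odd-grounded), any square in $\psi_1(n0)$ would force a square in $n0$ by Theorem~\ref{chunk-identifiable condition for square-freeness}; but $n0$ is square-free. The one subtlety is that Proposition~\ref{psi_1 square-free over grounded} as stated already gives this directly: $\psi_1$ is square-free over grounded words, and $n0$ is grounded and square-free, so $\psi_1(n0)$ is square-free. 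That settles (i) immediately.

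For the irreducibility in step (ii): I need to show that for every $s \prec 202101$, the word $\psi_1(n)s$ contains a square ending in $s$. Following the example's strategy, it suffices to (a) show $\psi_1(n)$ generates $\psi_1(n)2$, i.e.\ that $2$ is irreducible in $\psi_1(n)2$ — equivalently that $\psi_1(n)0$ and $\psi_1(n)1$ each end in a square; here $\psi_1(n)$ ends in $\cdots R_2R_1 = \cdots 0102\,01$, so $\psi_1(n)0$ ends in $010\,0$? no — rather $\psi_1(n)1$ ends in $\cdots 011$? I would just check the short suffix $R_2R_1 = 010201$: appending $0$ gives suffix $2010$, not obviously squared, so one must look slightly further back; this is exactly why a finite computation on a suffix of bounded length (independent of $n$, since the relevant squares must be short) is needed. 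Then (b) reduce via Remark~\ref{Appending to front of lex-least words}: $\psi_1(n)2$ has suffix $R_2R_12 = 0102\,01\,2$, verify computationally that $R_2R_12$ generates $R_2R_1202101$, and conclude that $\psi_1(n)2$ generates $\psi_1(n)2\,02101 = \psi_1(n0)$. Combining (a) and (b) gives that $\psi_1(n)$ generates $\psi_1(n0)$.

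**Main obstacle.** The technical heart is step (ii)(a): showing $2$ is irreducible in $\psi_1(n)2$, uniformly in $n\geq 1$. Because $\psi_1(n)$ has an $n$-dependent prefix, one must confirm that any square introduced by appending a small letter ($0$ or $1$) to $\psi_1(n)$ uses only a bounded suffix of $\psi_1(n)$ — which holds because $\psi_1(n)$ ends in the fixed word $R_2R_1$ and short squares can be checked against that fixed suffix, while longer candidate squares would have to reach into the $R_k$-blocks and can be ruled out by the grounded/ruler structure (the large letters $k+2$ at the ends of the $R_{k+2}^*$ blocks, exactly as in Lemma~\ref{psi_1(n) not factor of psi_1(k)}). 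Once that uniform bound is in place, everything else is either a finite computation ($R_2R_12$ generates $R_2R_1202101$) or a direct invocation of earlier results.
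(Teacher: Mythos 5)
Your overall skeleton matches the paper's proof: square-freeness of $\psi_1(n0)$ comes directly from Proposition~\ref{psi_1 square-free over grounded} since $n0$ is grounded and square-free, and the tail $02101$ is handled by the bounded computation that $R_2R_12$ generates $R_2R_1\psi_1(0)$ together with Remark~\ref{Appending to front of lex-least words}. Those parts are fine.

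The gap is in step (ii)(a), ruling out the letter $0$ after $\psi_1(n)$, and your proposed fix points in exactly the wrong direction. You assert that ``the relevant squares must be short'' and can be detected by ``a finite computation on a suffix of bounded length (independent of $n$),'' with longer candidates to be \emph{ruled out} by the ruler structure. In fact the square that forbids appending $0$ is not short and is not something to rule out --- it is the witness you need to \emph{exhibit}, and it is the entire word. Writing $\psi_1(n)=(n+1)P_0(n+1)=(n+1)R_{n+1}[:-1]\,(n+1)\,R_{n+1}[:-2]$ and using that $R_{n+1}$ ends in $0\,(n+1)$ (so $R_{n+1}[:-2]\cdot 0=R_{n+1}[:-1]$), one gets
\[
\psi_1(n)\,0=\bigl((n+1)R_{n+1}[:-1]\bigr)\bigl((n+1)R_{n+1}[:-1]\bigr),
\]
a square whose half has length $2^{n+1}$, growing with $n$. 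Moreover $R_{n+1}[:-1]$ is a square-free prefix of the ruler sequence of odd length $2^{n+1}-1$, so any square suffix of $\psi_1(n)0$ must have half-length at least about $2^{n}$; no suffix check of length bounded independently of $n$ can ever find it. So your procedure as described would simply fail to certify that $0$ is forbidden. (The letter $1$ is easy: $\psi_1(n)$ ends in $1$, so $\psi_1(n)1$ ends in the square $11$ --- your hesitation there resolves immediately.) To repair the proposal, replace the bounded-suffix heuristic in (ii)(a) with the explicit global-square identity above.
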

\begin{proof}
Since $n0$ is square-free and grounded, $\psi_1(n0)$ is square-free by Proposition~\ref{psi_1 square-free over grounded}, so we only need to show that $\psi_1(0)$ is irreducible in $\psi_1(n0)=\psi_1(n)202101$. We will first show that 2 is an irreducible suffix of $\psi_1(n)2$, and then that $02101$ is an irreducible suffix of $\psi_1(n)202101$.

Recall that $\psi_1(n)=(n+1)P_0(n+1)=(n+1)R_{n+1}[:-1](n+1)R_{n+1}[:-2]$. From the structure of $R_n$, we know that $R_{n+1}$ has suffix $010(n+1)$ when $n\geq1$. So $\psi_1(n)0=(n+1)R_{n+1}[:-1](n+1)R_{n+1}[:-1]$ which is a square. The last letter of $\psi_1(n)$ is a 1, so $\psi_1(n)1$ has square suffix $11$. Therefore, 2 is irreducible at the end of $\psi_1(n)2$. Since $\psi_1(n)$ ends with $R_2R_1$, then $\psi_1(n)2$ ends with $R_2R_12$. We can computationally verify that $R_2R_12$ generates $R_2R_1\psi_1(0)$, which implies the result by using Remark~\ref{Appending to front of lex-least words}.
\end{proof}

\begin{lemma}\label{adding k+1 in psi1}
For $0\leq k\leq n$, $k+1$ is irreducible at the end of ${\psi_1(nR_k[:-1])(k+1)}$.
\end{lemma}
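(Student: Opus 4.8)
The plan is to unwind the definition of irreducibility and then proceed by induction on $k$ (with $n\ge k$ arbitrary). Since $k+1$ is a single letter, $u=k+1$ is irreducible at the end of $\psi_1(nR_k[:-1])(k+1)$ exactly when every word $s$ with $s[0]\le k$ introduces a square in $\psi_1(nR_k[:-1])\,s$, and for this it is enough to exhibit, for each letter $j\in\{0,1,\dots,k\}$, a square suffix of $\psi_1(nR_k[:-1])\cdot j$: such a square then ends at the first letter of any $s$ beginning with $j$. So the whole lemma reduces to producing these $k+1$ square suffixes.

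For the base case $k=0$ I only need a square suffix of $\psi_1(n)\cdot 0$. Using $\psi_1(n)=(n+1)R_{n+1}R_{n+1}[:-2]$ and the fact (from the structure of $R_{n+1}$) that $R_{n+1}[:-2]\,0=R_{n+1}[:-1]$, this word is $\big((n+1)R_{n+1}[:-1]\big)^2$ when $n\ge 1$; the degenerate case $n=0$ gives $2021010$, with square suffix $1010$.

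For the inductive step, fix $k\ge 1$ and $n\ge k$. The structural fact I would establish first is the decomposition
\[
    R_k[:-1]=R_{k-1}[:-1]\,(k-1)\,R_{k-1}[:-1],
\]
which follows by combining $R_k=R_{k-1}R_{k-1}^+=R_{k-1}\,R_{k-1}[:-1]\,k$ with $R_{k-1}=R_{k-1}[:-1]\,(k-1)$. Consequently $nR_k[:-1]=\big(nR_{k-1}[:-1]\big)\big((k-1)R_{k-1}[:-1]\big)$, so $\psi_1(nR_k[:-1])=\psi_1(nR_{k-1}[:-1])\,\psi_1((k-1)R_{k-1}[:-1])$. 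Now for $j\in\{0,\dots,k-1\}$ I apply the induction hypothesis with $n$ replaced by $k-1$: it gives that $k$ is irreducible at the end of $\psi_1((k-1)R_{k-1}[:-1])(k)$, so $\psi_1((k-1)R_{k-1}[:-1])\cdot j$ has a square suffix, and that square is also a suffix of $\psi_1(nR_k[:-1])\cdot j$. Only $j=k$ remains. If $k=1$ then $\psi_1(nR_1[:-1])\cdot 1=\psi_1(n)\,202101\,1$ ends in $11$. If $k\ge 2$ then $\psi_1(k-1)=k\,P_0(k)$, so with $C:=\psi_1(R_{k-1}[:-1])$ we get $\psi_1(nR_k[:-1])\cdot k=\psi_1(n)\,C\,k\,P_0(k)\,C\,k$; since $\psi_1(n)=(n+1)P_0(n+1)$ has $P_0(k)$ as a suffix by Remark~\ref{structure of P0} (as $1\le k\le n+1$), this word has the square suffix $\big(P_0(k)\,C\,k\big)^2$.

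The main obstacle is the $j=k$ case of the inductive step: the cases $j<k$ are formal once the decomposition above is in hand, and the base case is a direct calculation. For $j=k$ the delicate points are identifying the center of the square — it is the leading letter $k$ of the unique $\psi_1(k-1)$-chunk — and checking that the copy of $P_0(k)$ sitting at the end of $\psi_1(n)$ is long enough and correctly aligned for the two halves to coincide; both facts are supplied by the suffix structure of $P_0$ recorded in Remark~\ref{structure of P0}, so no new estimate is needed, but pinning down the alignment exactly (and separating out the genuinely different $k=1$ case, where $\psi_1(0)$ does not have the form $k\,P_0(k)$) is where the care goes.
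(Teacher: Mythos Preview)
Your proof is correct and rests on the same key observation as the paper's: for $j=k\ge 2$ (equivalently $m=k$), the word $\psi_1(nR_k[:-1])\cdot k$ ends in the explicit square $\big(P_0(k)\,\psi_1(R_{k-1}[:-1])\,k\big)^2$, using that $\psi_1(n)$ has suffix $P_0(k)$ and $\psi_1(k-1)=k\,P_0(k)$. The organizational difference is that the paper handles every $m\in\{2,\dots,k\}$ directly by the same computation (noting that $nR_k[:-1]$ has suffix $mR_m[:-1]$ for each such $m$), whereas you do induction on $k$, treating only $j=k$ explicitly and delegating $j<k$ to the inductive hypothesis at $(k-1,k-1)$; both arrive at the same squares. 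As a minor bonus, you cover the degenerate case $n=0$ (where $\psi_1(0)\cdot 0=2021010$ has square suffix $1010$), which the paper's appeal to Lemma~\ref{psi_1(n) generates psi_1(n0)} (stated for $n\ge 1$) does not literally address, though that case is never used downstream.
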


\begin{proof}
Let $0\le m\le k$, we will show that $w_m:=\psi_1(nR_k[:-1])m$ has a square suffix. If $k=0$, then $m=0$ and $\psi_1(nR_k[:-1])=\psi_1(n)$. Then $\psi_1(nR_k[:-1])m=\psi_1(n)0$ which has a square suffix by Lemma~\ref{psi_1(n) generates psi_1(n0)}.

Now note that for $k\ge 1$, the last letter of $R_k[:-1]$ is $0$ and $\psi_1(0)=202101$, so $w_m$ has a square suffix for $m<2$.

Now assume $2\le m\le k$. By definition $\psi_1(n)=(n+1)P_0(n+1)$, and by Remark~\ref{structure of P0}, $P_0(n+1)$ has suffix $P_0(m)$. Hence, if $m=k$, $w_m$ has suffix $P_0(m)\psi_1(R_{m-1}[:-1])m$.

On the other hand, if $m<k$ it is easy to see that $nR_k[:-1]$ has suffix $mR_m[:-1]$. By definition $\psi_1(m)=(m+1)P_0(m+1)$, and since $m\ge 2$, Remark~\ref{structure of P0} implies that $P_0(m+1)$ has suffix $P_0(m)$. So $w_m$ has suffix $P_0(m)\psi_1(R_m[:-1])m$ for any $2\le m\leq k$. Finally note that
\begin{align*}
    P_0(m)\psi_1(R_m[:-1])m&=P_0(m)\psi_1(R_{m-1}[:-1])\psi_1(m-1)\psi_1(R_{m-1}[:-1])m\\
    &=P_0(m)\psi_1(R_{m-1}[:-1])mP_0(m)\psi_1(R_{m-1}[:-1])m,
\end{align*}
which is a square.
\end{proof}

\begin{lemma}\label{psi_1(n) generates psi_1(nR_k)}
For $n\geq1$ and $0\leq k\leq n$, $\psi_1(n)$ generates $\psi_1(nR_k)$.
\end{lemma}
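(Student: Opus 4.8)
The plan is to verify the two conditions packaged in Remark~\ref{Appending to front of lex-least words}: that $\psi_1(nR_k)=\psi_1(n)\,\psi_1(R_k)$ is square-free, and that $\psi_1(R_k)$ is irreducible in it. Square-freeness I would get from Proposition~\ref{psi_1 square-free over grounded} once I check that $nR_k$ is grounded and square-free. It is odd-grounded because $n\geq 1$ and $R_k$ is even-grounded. For square-freeness: since $R_k$ itself is square-free, any square in $nR_k$ must begin at index $0$; comparing the first letters of its two halves forces the letter $n$ to reoccur inside $R_k$, but $\max(R_k)\leq k\leq n$, so this forces $n=k$ and a reoccurrence of the unique letter $k$ in $R_k=\rho^k(0)$, which a length count rules out. (The case $k=0$ is $nR_0=n0$, trivial.)

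For irreducibility I would induct on $k$. The base case $k=0$ is exactly Lemma~\ref{psi_1(n) generates psi_1(n0)}. For $k\geq 1$, write $R_k=R_{k-1}R_{k-1}^{+}$ with $R_{k-1}^{+}=R_{k-1}[:-1]\,k$; by the inductive hypothesis (valid since $k-1\leq n$) $\psi_1(n)$ generates $\psi_1(nR_{k-1})$, so it remains to append $\psi_1(R_{k-1}^{+})=\psi_1(R_{k-1}[:-1])\,\psi_1(k)$. I would do this letter by letter along $R_{k-1}^{+}$, keeping the invariant that $\psi_1(n)$ generates $\psi_1(nR_{k-1}v)$ for each prefix $v$. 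When the next letter is $0$, the word built so far ends in a chunk $\psi_1(\ell)$ with $\ell\geq 1$ (a $0$ in $R_{k-1}^{+}$ is preceded by a nonzero letter), so Lemma~\ref{psi_1(n) generates psi_1(n0)} gives that $\psi_1(\ell)$ generates $\psi_1(\ell)\,202101$, and since appending $202101$ stays inside the square-free word $\psi_1(nR_k)$, Remark~\ref{Appending to front of lex-least words} lets me append the $0$-chunk. When the next letter is a nonzero $\ell$, its chunk is $\psi_1(\ell)=(\ell+1)P_0(\ell+1)$: the leading $\ell+1$ is handled by Lemma~\ref{adding k+1 in psi1} — here one uses the ruler fact that $R_k[{:}j]$ ends with $R_\ell[:-1]$ whenever $R_k[j]=\ell$, so that the relevant suffix matches the hypothesis of that lemma — and the trailing $P_0(\ell+1)$ is absorbed by Remark~\ref{Appending to front of lex-least words} after recognizing a long enough suffix of the current word whose lexicographically-least continuation is known.

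The hard part will be the bookkeeping in the inductive step, on two fronts. First, one cannot shortcut by re-applying the inductive hypothesis to append a second copy of $\psi_1(R_{k-1})$, because $\psi_1(R_{k-1})\psi_1(R_{k-1})$ is a square, so the square-free hypothesis of Remark~\ref{Appending to front of lex-least words} fails precisely at the ruler-block boundary; this forces the extension to proceed one ruler block at a time. Second, when appending the chunk of a letter $\ell\geq 2$, the factor $P_0(\ell+1)$ is long and the square that must be exhibited for a candidate $t'\prec P_0(\ell+1)$ can reach back a distance comparable to $|P_0(\ell+1)|$, so a single fixed bounded suffix will not suffice; staying within finitely many computations requires exploiting the self-similarity $P_0(\ell+1)=R_{\ell+1}P_0(\ell)$ of the ruler prefixes and recursing. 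Once this structure is set up, every "$q$ generates $qs$" claim about a bounded word reduces to a Main--Lorentz square-freeness check together with the greedy computation of a short prefix of $L(q)$, as described in the introduction.
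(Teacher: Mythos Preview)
Your approach is correct but organized differently from the paper's. The paper runs a \emph{double} induction: an outer induction on $n$ (base case $n=1$ checked computationally for $k=0,1$) and an inner induction on $k$. In the inner step it writes $R_k[:-1]=R_{k-1}R_{k-2}\cdots R_1R_0$, uses the inner hypothesis for the first block $R_{k-1}$, and then---since the last letter of each $R_j$ is $j<n$---invokes the \emph{outer} hypothesis to append each subsequent block $R_{j-1}$. The final letter $k$ is handled by Lemma~\ref{adding k+1 in psi1} and the observation that $(k+1)$ generates $\psi_1(k)=(k+1)P_0(k+1)$ directly from the definition of $P_0$. Your single induction on $k$ with a letter-by-letter pass through $R_{k-1}^+$ trades the outer induction for an explicit ruler-sequence fact (each nonzero letter $\ell$ in $nR_k$ is preceded by $mR_\ell[:-1]$ with $m\geq\ell$), which is what makes Lemma~\ref{adding k+1 in psi1} applicable at every step. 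A pleasant side effect is that you need no separate computational base case at $n=1$.

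One simplification you are missing: your third paragraph treats the tail $P_0(\ell+1)$ as the hard part and proposes recursing via $P_0(\ell+1)=R_{\ell+1}P_0(\ell)$. This is unnecessary. By the very definition of $P_0$, the single letter $(\ell+1)$ already generates $(\ell+1)P_0(\ell+1)=\psi_1(\ell)$, so once Lemma~\ref{adding k+1 in psi1} places $(\ell+1)$ irreducibly, Remark~\ref{Appending to front of lex-least words} with $p=(\ell+1)$ and $s=P_0(\ell+1)$ finishes the chunk in one stroke (square-freeness of the target being supplied by Proposition~\ref{psi_1 square-free over grounded}). The paper uses exactly this move for the final letter $k$; you can use it at every nonzero letter and drop the self-similarity recursion entirely.
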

\begin{proof}
Since $k\le n$ we know that $nR_k$ is square-free and grounded, so Proposition~\ref{psi_1 square-free over grounded} implies that $\psi_1(nR_k)$ is square-free. It is now sufficient to show that $\psi_1(R_k)$ is irreducible in $\psi_1(nR_k)$.

We prove this inductively over $n$. The base case $n=1$ implies that $k=0$ or $k=1$. We can computationally verify that $\psi_1(1)$ generates $\psi_1(1R_0)=\psi_1(10)$ and that $\psi_1(1)$ generates $\psi_1(1R_1)=\psi_1(101)$.

Fix $n>1$ and suppose the result holds for all $1\leq n_0<n$. That is,
\begin{equation}
    \psi_1(R_k)\textrm{ is irreducible in }\psi_1(n_0R_k)\textrm{ for all }0\leq k\leq n_0\textrm{ and }1\leq n_0<n.\tag{i}\label{psi_1(n) generates psi_1(nR_k) induction-1}
\end{equation}
We will show that the result holds for $n_0=n$. That is, $\psi_1(R_k)$ is irreducible in $\psi_1(nR_k)$ for all $0\leq k\leq n$.

We can prove this intermediate step by a second induction, now over $k$. The base case is $k=0$, which holds by Lemma~\ref{psi_1(n) generates psi_1(n0)} since $R_0=0$. Now fix $k>0$ and suppose
\begin{equation}
    \psi_1(R_{k_0})\textrm{ is irreducible in }\psi_1(nR_{k_0})\textrm{ for all }0\leq k_0<k\tag{ii}\label{psi_1(n) generates psi_1(nR_k) induction-2}
\end{equation}
We will show that the result holds for $k_0=k$. That is, $\psi_1(R_k)$ is irreducible in $\psi_1(nR_k)$.

We have that $\psi_1(nR_k)=\psi_1(nR_{k-1}R_{k-2}\cdots R_2R_1R_0k)$. By the second inductive hypothesis~(\ref{psi_1(n) generates psi_1(nR_k) induction-2}), $\psi_1(R_{k-1})$ is irreducible in $\psi_1(nR_{k-1})$. The last letter of $R_{k-1}$ is $k-1$, so the first inductive hypothesis~(\ref{psi_1(n) generates psi_1(nR_k) induction-1}) says that $\psi_1(R_{k-2})$ is irreducible in $\psi_1((k-1)R_{k-2})$ and so $\psi_1(R_{k-1}R_{k-2})$ is irreducible in $\psi_1(nR_{k-1}R_{k-2})$. Repeating this argument shows that $\psi_1(R_{k-1}R_{k-2}\cdots R_2R_1R_0)=\psi_1(R_k[:-1])$ is irreducible in $\psi_1(nR_k[:-1])$. Lemma~\ref{adding k+1 in psi1} implies that $k+1$ is irreducible in $\psi_1(nR_k[:-1])(k+1)$. And $\psi_1(k)=(k+1)P_0(k+1)$ is a prefix of $L(k+1)$ by the definition of $P_0$, so $k+1$ generates $\psi_1(k)$, meaning that $\psi_1(k)$ is irreducible in $\psi_1(nR_k[:-1])\psi_1(k)=\psi_1(nR_k)$, which proves the result.
\end{proof}

In particular, this lemma implies that for all $n\geq1$, $\psi_1(n)$ generates $\psi_1(nR_n)$.

\begin{theorem}\label{P1 theorem}
    For all $n\geq3$, let $P_1(n)=\psi_{1}(P_0(n-1))$. Then for $n\geq3$, $L(n)$ has prefix $n \, P_0(n) \, P_1(n)$.
\end{theorem}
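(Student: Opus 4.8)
The plan is to prove the stronger statement that $n$ generates $nP_0(n)P_1(n)$, i.e.\ that $L(n) = L(nP_0(n)P_1(n))$, by assembling a chain of ``generates'' relations from Lemma~\ref{psi_1(n) generates psi_1(nR_k)}. The starting point is an algebraic identity: since $\psi_1(n-1) = nP_0(n)$ and $P_1(n) = \psi_1(P_0(n-1))$ by definition, we have $nP_0(n)P_1(n) = \psi_1(n-1)\,\psi_1(P_0(n-1)) = \psi_1\big((n-1)P_0(n-1)\big)$. For $n \geq 3$ the word $(n-1)P_0(n-1)$ equals $\psi_1(n-2)$, which is a prefix of $L(n-1)$ by the definition of $P_0$; hence it is square-free, and since $P_0(n-1)$ is a prefix of the even-grounded ruler sequence and begins with $0$, prepending the nonzero letter $n-1$ makes $(n-1)P_0(n-1)$ odd-grounded. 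By Proposition~\ref{psi_1 square-free over grounded}, every $\psi_1$-image of a prefix of $(n-1)P_0(n-1)$ is square-free; in particular $nP_0(n)P_1(n) = \psi_1\big((n-1)P_0(n-1)\big)$ is square-free, and so are all the intermediate words appearing below.

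For the irreducibility, I would use Remark~\ref{structure of P0} to write $P_0(n-1) = R_{n-1}R_{n-2}\cdots R_2 R_1$, so that $(n-1)P_0(n-1) = (n-1)R_{n-1}R_{n-2}\cdots R_1$ is obtained from the letter $n-1$ by successively appending the blocks $R_{n-1}, R_{n-2}, \dots, R_1$. The first append is the special case ``$\psi_1(n-1)$ generates $\psi_1\big((n-1)R_{n-1}\big)$'' of Lemma~\ref{psi_1(n) generates psi_1(nR_k)}, valid since $0 \le n-1 \le n-1$. For each subsequent block $R_j$, with $j$ running from $n-2$ down to $1$, the partially assembled word $(n-1)R_{n-1}\cdots R_{j+1}$ ends in the letter $j+1$; writing it as $X(j+1)$, its $\psi_1$-image is $\psi_1(X)\psi_1(j+1)$, and Lemma~\ref{psi_1(n) generates psi_1(nR_k)} gives that $\psi_1(j+1)$ generates $\psi_1\big((j+1)R_j\big) = \psi_1(j+1)\psi_1(R_j)$, valid since $0 \le j \le j+1$. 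Taking $p = \psi_1(j+1)$, $s = \psi_1(R_j)$, $u = \psi_1(X)$ in Remark~\ref{Appending to front of lex-least words}, and using that $uw = \psi_1\big(X(j+1)R_j\big) = \psi_1\big((n-1)R_{n-1}\cdots R_{j+1}R_j\big)$ is square-free by the previous paragraph, we conclude that $\psi_1\big((n-1)R_{n-1}\cdots R_{j+1}\big)$ generates $\psi_1\big((n-1)R_{n-1}\cdots R_{j+1}R_j\big)$. Since ``generates'' is transitive, chaining these relations yields that $\psi_1(n-1)$ generates $\psi_1\big((n-1)P_0(n-1)\big) = nP_0(n)P_1(n)$. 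Finally, $\psi_1(n-1) = nP_0(n)$ is a prefix of $L(n)$, so $L(n) = L(nP_0(n)) = L(\psi_1(n-1)) = L(nP_0(n)P_1(n))$, which is the claim.

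I expect the difficulty here to be organizational rather than conceptual: the two substantive inputs — that $\psi_1$ is square-free over grounded words and that $\psi_1(n)$ generates $\psi_1(nR_k)$ for $0 \le k \le n$ — are already available, so what remains is to track the indices carefully (each block $R_j$ is appended to a word ending in exactly $j+1$, so the hypothesis $k \le n$ of Lemma~\ref{psi_1(n) generates psi_1(nR_k)} holds at every step) and to invoke Remark~\ref{Appending to front of lex-least words} with the correct decomposition $p,s,u$ at each step. A minor point to verify at the outset is the identity $(n-1)P_0(n-1) = \psi_1(n-2)$ together with the structural facts about $P_0$ from Remark~\ref{structure of P0} that make the block decomposition legitimate.
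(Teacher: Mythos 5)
Your proposal is correct and follows essentially the same route as the paper: the identity $nP_0(n)P_1(n)=\psi_1((n-1)P_0(n-1))$, square-freeness via Proposition~\ref{psi_1 square-free over grounded}, and irreducibility by repeated use of Lemma~\ref{psi_1(n) generates psi_1(nR_k)} together with Remark~\ref{Appending to front of lex-least words}. The only difference is cosmetic: the paper invokes the lemma twice via the decomposition $P_0(n-1)=R_{n-1}R_{n-1}[:-2]$, while you unroll the same chain explicitly over the blocks $R_{n-1},R_{n-2},\dots,R_1$.
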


\begin{proof}
Note that $nP_0(n)P_1(n) = \psi_{1}((n-1)P_0(n-1))$. Then, since ${(n-1)P_0(n-1)}$ is square-free and grounded, Proposition~\ref{psi_1 square-free over grounded} implies that $nP_0(n)P_1(n)$ is square-free. It remains to show that $P_1(n)=\psi_1(P_0(n-1))=\psi_1(R_{n-1}R_{n-1}[:-2])$ is irreducible in $nP_0(n)P_1(n)$. Indeed, note that \[nP_0(n)P_1(n)=\psi_1((n-1)P_0(n-1))=\psi_1((n-1)R_{n-1}R_{n-1}[:-2]).\]
Lemma~\ref{psi_1(n) generates psi_1(nR_k)} implies that $\psi_1(n-1)$ generates $\psi_1((n-1)R_{n-1})$, which has suffix $\psi_1(n-1)$. Hence, by applying Lemma~\ref{psi_1(n) generates psi_1(nR_k)} a second time we obtain in particular that $\psi_1(R_{n-1}[:-2])$ is irreducible in $\psi_1((n-1)R_{n-1}[:-2])$, and so it is also irreducible in 
\[nP_0(n)P_1(n)=\psi_1((n-1)R_{n-1}R_{n-1}[:-2]).\]
 Therefore, the whole factor $P_1(n)$ is an irreducible suffix of $nP_0(n)P_1(n)$.
\end{proof}

\begin{remark}\label{structure of P1}
Using Remark~\ref{structure of P0}, for all $n\ge 4$ we have
\begin{align*}
    P_1(n) &= \psi_{1}(P_0(n-1))\\
          &=\psi_{1}(R_{n-1}[:-1](n-1)R_{n-1}[:-2])\\
        &= \psi_{1}(R_{n-1}[:-1]) \, \psi_1(n-1) \, \psi_1(P_0(n-2))\\
        &= \psi_{1}(R_{n-1}[:-1]) \, n \, P_0(n) \,\,\,\,\,\, P_1(n-1)\\
        &=\psi_{1}(R_{n-1}[:-1]) \, n \, R_n P_0(n-1)\, P_1(n-1),
\end{align*}
so $P_1(n)$ has $nP_0(n-1) \, P_1(n-1)$ as suffix.

Repeated application of this argument implies that for all $2\leq k<n$, the word $P_1(n)$ has the suffix $(k+1)P_0(k)P_1(k)$.
\end{remark}

\subsection{The morphism $\psi_2$}

For all $n\geq3$ after the prefix given by the previous result, $L(n)$ continues with another sequence, $P_2(n)$ defined as follows.

\begin{definition*}
Define the morphism $\psi_2 \colon \N^\ast\to\N^\ast$ by
\begin{align*}
	\psi_2(0) = {}
	& 2021020102101201020120210120102013010201030102012021012010201 \\
	& 2021013010201030102012021012010201202301020103010201202101201 \\
	& 0201203010201030102030103020102030102010301020301030201202101 \\
	& 2010201202101202,\\
	\psi_2(n) = {}
	& (n+2)P_0(n+2)P_1(n+2),\text{ if }n>0
\end{align*}
and let
\[P_2(n)=\psi_2(P_0(n-2))\]
which can be referenced in Section~\ref{glossary}.
\end{definition*}
In particular, we note that for $n\geq1$, $\psi_2(n)=\psi_1((n+1)P_0(n+1))=\psi_1^2(n)$, but $\psi_2(0)\neq\psi_1^2(0)$. Similarly to $\psi_1$, we would like to show that $\psi_2$ is square-free over grounded words because that will imply that $nT(n)=\psi_2((n-2)P_0(n-2))$ is square-free. The next four lemmas prove some properties of $\psi_2$ that are used to prove this condition.

\begin{lemma}\label{psi_2(n0) square-free}
Let $n\geq1$. Then $\psi_2(n0)$ is square-free.
\end{lemma}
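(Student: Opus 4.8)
The plan is to mirror the structure of the proof of Lemma~\ref{psi_1(0n0) square-free}, using the chunk decomposition of $\psi_2(n0)=\big[\psi_2(n)\big]\big[\psi_2(0)\big]$ and the fact that each chunk $\psi_2(k)$ is individually square-free. For $n\ge 1$ we have $\psi_2(n)=(n+2)P_0(n+2)P_1(n+2)$, which is a prefix of $L(n+2)$ (by Theorem~\ref{P1 theorem}, since $(n+2)P_0(n+2)P_1(n+2)$ is a prefix of $L(n+2)$), hence square-free; and $\psi_2(0)$ is square-free by a direct computation using the Main--Lorentz algorithm, as the paper allows. Therefore any square $yy$ in $\psi_2(n0)$ must straddle the single chunk boundary between $\psi_2(n)$ and $\psi_2(0)$.

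First I would pin down what the chunk boundary looks like. The word $\psi_2(n)=(n+2)P_0(n+2)P_1(n+2)$ ends with $P_1(n+2)$, whose final letters I can read off from Remark~\ref{structure of P1} (ultimately $P_1(n+2)$ ends like the $\psi_1$-image of a suffix of a ruler prefix, hence ends with a $1$ and is grounded near its tail); and $\psi_2(0)$ begins with $2021020\ldots$. So across the boundary we see a specific short factor — something like $\ldots1\,2021\ldots$ — and the key point is that $\psi_2(n)$ begins with the large letter $n+2\ge 3$, while $\psi_2(0)$ contains no letter that large in its relevant prefix region; more precisely, the large letter $n+2$ occurs in $\psi_2(n0)$ only as the very first letter. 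The idea, exactly as in Lemma~\ref{psi_1(0n0) square-free}, is: (a) a square whose half completely contains the chunk boundary would force the boundary factor to recur, but it cannot because the relevant distinguishing factor appears only at the boundary; (b) hence the square lies within $\psi_2(n)\,[\text{prefix of }\psi_2(0)]$ or within $[\text{suffix of }\psi_2(n)]\,\psi_2(0)$ with its center on the boundary, and then I derive a contradiction from grounding/parity and from the first letter of $\psi_2(n)$ being $n+2\ge 3$. Concretely, if the center of $yy$ sits on the boundary, the second half begins with a prefix of $\psi_2(0)$ and the first half ends with a suffix of $\psi_2(n)$ of equal length read backwards; comparing the first letter of $\psi_2(0)$ (a $2$) against the corresponding letter inside $\psi_2(n)$, or using that $\psi_2(0)$ starts with the factor $202$ which — because $\psi_2(n)$ is grounded and built from $\psi_1$-images — can only occur inside $0$-chunks there, should force the two halves to be incompatible.

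The main obstacle I anticipate is handling the case where the square is long enough to contain a chunk of $\psi_2$ entirely — but here the situation is simpler than for a general grounded word, because $\psi_2(n0)$ has only \emph{two} chunks and at most one chunk boundary, so no square can contain a whole chunk of $\psi_2$ plus material on both sides (that would require two boundaries). Thus the genuinely delicate part is purely the "center on the boundary" analysis: I need to confirm that a suitable short factor around the boundary (the concatenation of the last few letters of $P_1(n+2)$ with the first few letters $2021020$ of $\psi_2(0)$) does not reappear, uniformly in $n$. I would verify this by using Remark~\ref{structure of P1} to describe the tail of $P_1(n+2)$ explicitly, observing it is grounded and its letters are bounded (by roughly $n+1$) away from the very front, and noting that the factor $\ldots 1\,2\,0\,2$ straddling the boundary is odd/even out of phase with how such a pattern could sit inside the grounded word $\psi_1^2(n)$; the $n=1$ small case, where $\psi_2(1)=\psi_1^2(1)$ might interact more closely with $\psi_2(0)$, I would dispatch by direct computation that $\psi_2(10)$ is square-free. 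Once the boundary factor is shown non-recurring, the argument closes exactly as in Lemma~\ref{psi_1(0n0) square-free}.
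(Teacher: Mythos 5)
There is a genuine gap here: your central structural claim --- that ``the large letter $n+2$ occurs in $\psi_2(n0)$ only as the very first letter'' --- is false. Since $\psi_2(n)=(n+2)P_0(n+2)P_1(n+2)$ and $P_0(n+2)=R_{n+2}R_{n+2}[:-2]$, the letter $n+2$ occurs as the last letter of each copy of $R_{n+2}$, and unwinding $\psi_2(n)=\psi_1((n+1)P_0(n+1))$ one finds exactly \emph{four} occurrences of $n+2$ in $\psi_2(n)$ (this count is stated explicitly in the paper's proof of Lemma~\ref{psi_2(0n0) square-free}). The paper's proof of the present lemma is, at its core, a bookkeeping argument about these occurrences: it first verifies computationally that the suffix $\psi_1(3R_2R_1)\psi_2(0)$ of $\psi_2(n0)$ is square-free, which forces any square to reach back far enough into $\psi_2(n)$ to contain a $4$ (coming from $\psi_1(3)$); it then lets $k$ be the largest letter of the square and splits into the cases $k<n+2$ and $k=n+2$, in each case locating the last few occurrences of $k$ (or of $k+1$) precisely and deriving a length or position contradiction. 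None of this is recoverable from the claim that $n+2$ appears once.

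Your reduction to ``the center of the square sits on the boundary'' also does not go through. It rests on the boundary factor (essentially $1\,2$, since $\psi_2(n)$ ends in $1$ and $\psi_2(0)$ begins with $2$) being unique, but $12$ occurs at every $\psi_1(0)\psi_1(1)$ junction inside $\psi_2(n)=\psi_1((n+1)P_0(n+1))$, i.e.\ wherever $01$ occurs in $(n+1)P_0(n+1)$ --- which is everywhere. So a half of the square can perfectly well contain the chunk boundary, and the square need not have its center there; indeed in the paper's analysis the square's center lies well inside $\psi_2(n)$. Relatedly, your remark that no square can contain a whole chunk is not needed and not quite right either: a square ending at the end of $\psi_2(n0)$ could in principle swallow all of the short chunk $\psi_2(0)$ ($|\psi_2(0)|=199$, far shorter than $\psi_2(n)$). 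The missing ingredients are precisely the computational fact about $\psi_1(3R_2R_1)\psi_2(0)$ and the largest-letter case analysis; without them the local grounding/parity observations near the boundary do not close the argument.
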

\begin{proof}
We can verify computationally that $\psi_2(10)$ is square-free. So let $n\geq2$ and suppose there is a square $yy$ in $\psi_2(n0)$. Since $\psi_2(n)=\psi_1((n+1)P_0(n+1))$, $\psi_2(n)$ is square-free by Proposition~\ref{psi_1 square-free over grounded}. This implies that $yy$ overlaps both chunks. We have that $\psi_2(n)$ ends with
\[P_1(n+2)=\psi_1(P_0(n+1))=\psi_1(R_{n+2}[:-2])=\psi_1(R_{n+1}R_n\cdots R_3R_2R_1).\]
Since $n\geq2$, this ends with $\psi_1(3R_2R_1)$. We can computationally verify that $\psi_1(3R_2R_1)\psi_2(0)$ is square-free which means that any square in $\psi_2(n0)$ must contain all of $\psi_1(3R_2R_1)][2$ at the chunk boundary. Since $\psi_1(3)$ contains 4's, the square must contain 4's.

Let $k$ be the largest letter in the square $yy$. We know that $4\leq k\leq n+2$ since $\max(\psi_2(n0))=n+2$. Since $\max(\psi_2(0))=3$, all occurrences of $k$ in $\psi_2(n0)$ are in the $n$-chunk. From above, $\psi_2(n)$ ends with $\psi_1(R_m[:-2])$ for $m\leq n+2$. We consider two cases:

\textbf{Case 1: }$k<n+2$. Since $k+1\leq n+2$, $\psi_2(n)$ ends with
\begin{align*}
    \psi_1(R_{k+1}[:-2])&=\psi_1(R_k[:-1])&&\psi_1(k)&&\psi_1(R_k[:-2])\\
    &=\psi_1(R_k[:-1])&&\underline{(k+1)}R_{k+1}[:-1]\underline{(k+1)}R_{k+1}[:-2]&&\psi_1(R_k[:-2])
\end{align*}
which contains the last two occurrences of $k+1$ in $\psi_2(n)$. Since $k$ is the largest letter in the square, $yy$ must be contained in the suffix of $\psi_2(n0)$ after the last occurrence of $k+1$ which is
\begin{align*}
&R_{k+1}[:-2]&&\psi_1(R_k[:-2])&&&&\psi_2(0)\\
={}&R_kR_k[:-2]&&\psi_1(R_{k-1}R_{k-1}[:-2])&&&&\psi_2(0)\\
={}&R_k[:-1]\ k\ R_k[:-2]&&\psi_1(R_{k-1}[:-1])\psi_1(k-1)&&\psi_1(R_{k-1}[:-2])&&\psi_2(0)\\
={}&R_k[:-1]\ \underline{k}\ R_k[:-2]&&\psi_1(R_{k-1}[:-1])\ \underline{k}\ R_k[:-1]\ \underline{k}\ R_k[:-2]&&\psi_1(R_{k-1}[:-2])&&\psi_2(0)
\end{align*}
These are the only three occurrences of $k$ that can occur in $yy$. The square must contain an even number of $k$ occurrences, so since $yy$ overlaps part of $\psi_2(0)$, it cannot include the first occurrence of $k$. Therefore, the square contains only the last two occurrences of $k$. Since the last letter of $R_m[:-1]$ is 0 for all $m$, the second last occurrence of $k$ is preceded by $\psi_1(0)$ which ends in 1, and the last occurrence of $k$ is preceded by a 0. This means that $k$ must be the first letter of $y$ in the square. It can be seen from the above equation that this square would not reach $\psi_2(0)$ which is a contradiction.

\textbf{Case 2: }$k=n+2$. We have that
\begin{align*}
    \psi_2(n)&=\psi_1(n+1)\cdot \psi_1(P_0(n+1))\\
    &=\psi_1(n+1)\cdot \psi_1(R_{n+1}R_{n+1}[:-2])\\
    &=\psi_1(n+1)\cdot\psi_1(R_{n+1}[:-1])\cdot\psi_1(n+1)\cdot\psi_1(R_{n+1}[:-2])\\
    &=(n+2)P_0(n+2)\cdot\psi_1(R_{n+1}[:-1])\cdot(n+2)P_0(n+2)\cdot \psi_1(R_{n+1}[:-2]).
\end{align*}
And so $\psi_2(n)$ is the ``almost square"
\begin{align}\label{psi_2(n) decomposition}
\psi_2(n)={}&\underline{(n+2)}\ \underline{R_{n+2}}\ R_{n+2}[:-2]\psi_1(R_{n+1}[:-1])\notag\\
{}&\underline{(n+2)}\ \underline{R_{n+2}}\  R_{n+2}[:-2] \psi_1(R_{n+1}[:-2]).
\end{align}

and these are all four of the occurrences of $n+2$ in $\psi_2(n0)$. The square must contain an even number of occurrences of $n+2$.

The square cannot contain all four occurrences of $n+2$ since then $\psi_2(0)$ would need to begin with $\psi_1(R_{n+1}[-2])=\psi_1(0)$ which it does not.

So the square only contains the last two occurrences of $n+2$. The second last occurrence is preceded by $\psi_1(0)$ which ends in a 1, and the last occurrence is preceded by a 0. This means that $n+2$ must be the first letter of $y$ in the square. It can be seen from the equation for $\psi_2(n)$ that this square would not reach $\psi_2(0)$ which is a contradiction.
\end{proof}

\begin{lemma}\label{psi_2(0n0) square-free}
Let $n\geq1$. Then $\psi_2(0n0)$ is square-free.
\end{lemma}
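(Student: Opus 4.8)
The plan is to follow the proof of Lemma~\ref{psi_1(0n0) square-free}. Writing
\[\psi_2(0n0)=\big[\psi_2(0)\big]\big[\psi_2(n)\big]\big[\psi_2(0)\big],\]
I would first dispose of the case $n=1$ by checking computationally that $\psi_2(010)$ is square-free; this case must be handled separately because the argument below uses $n+2>\max(\psi_2(0))=3$. So assume $n\ge 2$ and that $\psi_2(0n0)$ contains a square $yy$. The word $\psi_2(0)$ is square-free (a direct computation, or because it is a factor of $\psi_2(10)$, shown square-free in Lemma~\ref{psi_2(n0) square-free}), and $\psi_2(n)=\psi_1((n+1)P_0(n+1))$ is square-free by Proposition~\ref{psi_1 square-free over grounded}; hence $yy$ overlaps at least one of the two chunk boundaries.

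The heart of the argument is ruling out that $yy$ overlaps the boundary between the first $\psi_2(0)$ and $\psi_2(n)$. For this I would use the decomposition~\eqref{psi_2(n) decomposition}. Since $\max(\psi_2(0))=3<n+2$, every occurrence of the letter $n+2$ in $\psi_2(0n0)$ lies inside the middle chunk, and~\eqref{psi_2(n) decomposition} exhibits exactly four of them. Checking the letter immediately to the left of each — the two occurrences lying inside the factors $R_{n+2}$ are preceded by $0$ (as $R_{n+2}$ ends in $0\,(n+2)$); the occurrence opening the second half of~\eqref{psi_2(n) decomposition} is preceded by the final $1$ of a $\psi_1(0)$-chunk; and only the occurrence at the very start of $\psi_2(n)$ is preceded by a $2$, namely the last letter of the first $\psi_2(0)$ — shows that the factor $2\,(n+2)$ occurs exactly once in $\psi_2(0n0)$, at this boundary. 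If $yy$ overlapped the boundary, then $2\,(n+2)$ would occur inside $yy$; being a unique factor, it cannot lie entirely within one half of $yy$ (otherwise it would recur in the other half), so it straddles the center of $yy$, forcing that center to coincide with the boundary. But then the first half of $yy$ is a suffix of the first $\psi_2(0)$, while (being equal to the second half) it must begin with the first letter of $\psi_2(n)$, namely $n+2\ge 4$ — impossible since $\max(\psi_2(0))=3$.

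With the first boundary excluded, the proof finishes immediately: $yy$ cannot lie inside the first $\psi_2(0)$ (it is square-free), so $yy$ lies inside $\big[\psi_2(n)\big]\big[\psi_2(0)\big]=\psi_2(n0)$, which is square-free by Lemma~\ref{psi_2(n0) square-free} — a contradiction. Together with the computation for $n=1$, this proves the lemma.

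The step I expect to be the main obstacle is the bookkeeping in the middle paragraph: one must verify carefully, from the explicit shape of~\eqref{psi_2(n) decomposition} and from $R_{n+2}$ ending in $0\,(n+2)$, that none of the three interior occurrences of $n+2$ in $\psi_2(n)$ is preceded by a $2$, so that $2\,(n+2)$ really is a unique factor of $\psi_2(0n0)$. Everything else reduces cleanly to the square-freeness of $\psi_2(0)$ and of $\psi_2(n0)$, both already available.
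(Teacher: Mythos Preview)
Your proposal is correct and follows essentially the same approach as the paper: handle $n=1$ by computation, then for $n\ge 2$ use the decomposition~\eqref{psi_2(n) decomposition} to locate the four occurrences of $n+2$, verify that only the first is preceded by a $2$ so that $2\,(n+2)$ is a unique factor sitting at the first chunk boundary, rule out that boundary by the max-letter argument, and reduce to Lemma~\ref{psi_2(n0) square-free}. The bookkeeping you flag as the main obstacle is exactly what the paper does as well, with the same verifications about the letters preceding each occurrence of $n+2$.
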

\begin{proof}
We can verify computationally that $\psi_2(010)$ is square-free. So let $n\geq2$ and suppose there is a square $yy$ in $\psi_2(0n0)$. We have
\[\psi_2(0n0)=[\psi_2(0)][(n+2)P_0(n+2)P_1(n+2)][\psi_2(0)]\]
Since $\psi_2(n)$ is square-free, $yy$ lies over at least one of the chunk boundaries.

The largest letter of $\psi_2(0)$ is 3, so the largest letter of $\psi_2(0n0)$ is $n+2$ which occurs exactly four times:
\[\psi_2(n)=\underline{(n+2)}\ \underline{R_{n+2}}\ R_{n+2}[:-2]\psi_1(R_{n+1}[:-1])\underline{(n+2)}\ \underline{R_{n+2}}\ R_{n+2}[:-2]\psi_1(R_{n+1}[:-2])\]
The second and fourth occurrences of $n+2$ are the last letter of an $R_{n+2}$, so they are preceded by a 0. The third occurrence is after $\psi_1(R_{n+1}[:-1])$ which ends with $\psi_1(0)=202101$.

The first chunk boundary in $\psi_2(0n0)$ has the letters $2(n+2)$. The second, third, and fourth occurrences of $n+2$ all immediately follow a 0 or a 1, so this is the only occurrence of $2(n+2)$ in $\psi_2(0n0)$. This means that it cannot be contained in either half of the square. Since $\max(\psi_2(0))=3$ and $n+2\geq4$, $2(n+2)$ cannot be the middle of the square either. So the square is contained in $\psi_2(n0)$ which contradicts Lemma~\ref{psi_2(n0) square-free}.
\end{proof}

\begin{lemma}\label{psi_2(n) and psi_2(k) not suffix}
For $n>k>0$, neither $\psi_2(n)$ nor $\psi_2(k)$ is a suffix of the other.
\end{lemma}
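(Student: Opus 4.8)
The plan is to treat the two directions separately. Since $n>k$, it is clear from the definitions that $|\psi_2(n)|>|\psi_2(k)|$ (both $|P_0(m+2)|$ and $|P_1(m+2)|=|\psi_1(P_0(m+1))|$ strictly increase with $m$, the latter because $\psi_1$ is non-erasing and $P_0(m+1)$ is a proper prefix of $P_0(m+2)$), so $\psi_2(n)$ cannot be a suffix of $\psi_2(k)$. It then remains to show that $\psi_2(k)$ is not a suffix of $\psi_2(n)$, which I would prove by induction on $n$.

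The input for that argument is a bookkeeping of the two largest letters of $\psi_2(m)$. From the decomposition~\eqref{psi_2(n) decomposition} one reads off, for $m\ge1$, that $m+2=\max(\psi_2(m))$ occurs exactly four times in $\psi_2(m)$, and that the suffix of $\psi_2(m)$ after its last occurrence of $m+2$ equals $R_{m+2}[:-2]\,\psi_1(R_{m+1}[:-2])=P_0(m+1)P_1(m+1)$. One then does the analogous bookkeeping one level down, for $m\ge2$: tracing the single copy of the letter $m$ inside $P_0(m)$ (it sits at the end of the $R_m$-block, and $\psi_1$ sends it to $\psi_1(m)=(m+1)P_0(m+1)$, which contains $m+1$ exactly twice) and using $P_0(m)=R_mP_0(m-1)$ and $P_1(m)=\psi_1(P_0(m-1))$, one gets that $P_0(m+1)P_1(m+1)$ contains the letter $m+1$ exactly three times — once in $P_0(m+1)=R_{m+1}P_0(m)$ and twice in $P_1(m+1)=\psi_1(P_0(m))$ — and that the suffix of $\psi_2(m)$ after its last occurrence of $m+1$ equals $P_0(m)P_1(m)$. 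All of this reduces to the fact that each ruler prefix $R_j$ contains its maximal letter $j$ exactly once (at the very end), propagated through $\psi_1$.

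For the induction, assume $\psi_2(k)$ is a suffix of $\psi_2(n)$ with $n>k>0$. Since $k+2\le n+1<n+2$, the suffix $\psi_2(k)$ contains no occurrence of $n+2$, hence it is a suffix of the part of $\psi_2(n)$ after its last $n+2$, namely $P_0(n+1)P_1(n+1)$. If $k=n-1$, then $\psi_2(k)=\psi_2(n-1)$ contains the letter $n+1$ four times, whereas $P_0(n+1)P_1(n+1)$ contains it only three times — a contradiction. If $k\le n-2$, then $k+2\le n$, so $\psi_2(k)$ also contains no $n+1$ and is therefore a suffix of the part of $\psi_2(n)$ after its last $n+1$, namely $P_0(n)P_1(n)$; here $n\ge3$, so $P_0(n)P_1(n)$ is a proper suffix of $\psi_2(n-2)=n\,P_0(n)P_1(n)$. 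If $k=n-2$ this contradicts $|\psi_2(k)|=|\psi_2(n-2)|>|P_0(n)P_1(n)|$; if $k<n-2$ then $\psi_2(k)$ is a suffix of $\psi_2(n-2)$ with $n-2>k>0$, contradicting the inductive hypothesis. The base case $n=2$ forces $k=1=n-1$ and is covered by the first case.

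The main obstacle is the letter-counting in the second paragraph: pinning down the exact multiplicities of $m+1$ and $m+2$ in $\psi_2(m)$ and, crucially, locating their last occurrences precisely. This is somewhat technical but routine, reducing entirely to ``$R_j$ ends in $j$ and contains no other $j$'' together with ``$\psi_1(j)=(j+1)P_0(j+1)$ contains $j+1$ exactly twice'' and the recursions for $P_0$ and $P_1$. A more conceptual alternative would apply Lemma~\ref{psi_1(l) is a l-chunk} to $\psi_2=\psi_1\circ\psi_1$ to force any suffix occurrence of $\psi_2(k)$ in $\psi_2(n)$ to begin at a chunk boundary, and then try to cancel the outer $\psi_1$ to deduce that $\psi_1(k)$ is a factor of $\psi_1(n)$, contradicting Lemma~\ref{psi_1(n) not factor of psi_1(k)}; but that last step needs an injectivity property of $\psi_1$ on grounded words that is not immediate, so I would favor the concrete counting argument above.
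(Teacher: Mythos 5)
Your proposal is correct, but it takes a substantially longer route than the paper. The paper's entire argument for the nontrivial direction is one observation: $\psi_2(n)$ ends with $P_1(n+2)$, and Remark~\ref{structure of P1} already records that $P_1(n+2)$ has suffix $(k+3)\,P_0(k+2)\,P_1(k+2)$ for $2\le k+2<n+2$; since $\psi_2(k)=(k+2)\,P_0(k+2)\,P_1(k+2)$, the suffix of $\psi_2(n)$ of length $|\psi_2(k)|$ disagrees with $\psi_2(k)$ in exactly its first letter, and we are done --- no induction, no letter counting. Your argument instead re-derives the relevant suffix structure by hand: you count the occurrences of $m+2$ and $m+1$ in $\psi_2(m)$, identify the suffixes following their last occurrences as $P_0(m+1)P_1(m+1)$ and $P_0(m)P_1(m)$ respectively, and then run an induction on $n$ split into the cases $k=n-1$, $k=n-2$, $k<n-2$. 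I checked the counts: four occurrences of $m+2$ for $m\ge1$ follow from the decomposition~\eqref{psi_2(n) decomposition}, and your restriction of the ``one level down'' count to $m\ge2$ is in fact necessary (for $m=1$ the chunk $\psi_1(0)=202101$ would contribute extra copies of the letter $2=m+1$), and it is satisfied wherever you invoke it. So the proof goes through. What your approach buys is independence from Remark~\ref{structure of P1}; what it costs is that the counting and the case analysis essentially reconstruct, in a more fragile form, exactly the recursive suffix identity that the remark packages once and for all. Your closing instinct is right that the chunk/injectivity route is the wrong tool here; the direct suffix comparison is the intended one.
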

\begin{proof}
Since $n>k$, $|\psi_2(n)|>|\psi_2(k)|$, so $\psi_2(n)$ cannot be a suffix of $\psi_2(k)$.

From the definition of $\psi_2$, $\psi_2(k)={(k+2)P_0(k+2)P_1(k+2)}$ and $\psi_2(n)$ ends with $P_1(n+2)$. By Remark~\ref{structure of P1}, $P_1(n+2)$ has suffix $(k+3)P_0(k+2)P_1(k+2)$. Therefore, $\psi_2(k)$ cannot be a suffix of $\psi_2(n)$.
\end{proof}

\begin{lemma}\label{psi_2(0) is a 0-chunk}
Let $w$ be a grounded square-free word. Then in $\psi_2(w)$, every occurrence of $\psi_2(0)[:6]$ is a prefix of a $0$-chunk, and every occurrence of $\psi_2(0)[6:]$ is a suffix of a $0$-chunk.
\end{lemma}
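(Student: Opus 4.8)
The plan is to mirror the structure of the proof of Lemma~\ref{psi_1(l) is a l-chunk}, but now the obstacle is that $\psi_2(0)$ is \emph{not} of the form $\psi_1^2(0)$, so the clean ``$\psi_2(\ell)$ is an $\ell$-chunk'' statement fails; instead we must separately pin down the two halves $A:=\psi_2(0)[:6]$ and $B:=\psi_2(0)[6:]$. First I would identify short factors near the two ends of $\psi_2(0)$ that are ``rare'' in $\psi_2(w)$. Reading off the definition, $\psi_2(0)$ begins $2021\cdots$, so $A = 202102$ (the first six letters), and $\psi_2(0)$ ends $\cdots 01202$. The key observation is that every nonzero chunk $\psi_2(\ell)$ for $\ell\ge 1$ equals $(\ell+2)P_0(\ell+2)P_1(\ell+2) = \psi_1((\ell+1)P_0(\ell+1))$, hence starts with $\ell+2\ge 3$ and is grounded, so no prefix of a positive chunk can equal $A$ (which starts with a $2$ in an odd, i.e.\ ``ungrounded-at-that-position'', arrangement — more precisely $A=202102$ contains the ungrounded factor $2021$). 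Similarly $B$ starts with whatever $\psi_2(0)[6]$ is and ends in $01202$, an ungrounded pattern not appearing at the end of any $\psi_1$-image of a grounded word.

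The main steps would be: (1) show $A$ can only occur in $\psi_2(w)$ starting at a chunk boundary, by exhibiting a length-$\le 6$ factor of $A$ — e.g.\ $2021$ or $2102$ — that never occurs inside a positive chunk $\psi_2(\ell)$ and never straddles a boundary for grounded $w$; here I would use that positive chunks are grounded (being $\psi_1$-images) together with explicit inspection of $\psi_2(0)$ for internal occurrences of that factor. (2) Show that once $A$ starts at a boundary, the chunk it starts is a $0$-chunk: a positive chunk $\psi_2(\ell)$, $\ell\ge 1$, begins with the letter $\ell+2\ge 3\ne 2$, so the chunk whose prefix is $A$ (which begins with $2$) must be a $0$-chunk, and since $|A|=6<|\psi_2(0)|$, $A$ is a proper prefix of that $0$-chunk. (3) Symmetrically, show every occurrence of $B$ ends at a chunk boundary and is a suffix of a $0$-chunk: the last letter of $\psi_2(0)$ is $2$, while the last letter of a positive chunk $\psi_2(\ell)$ is the last letter of $P_1(\ell+2)$, which (using Remark~\ref{structure of P1} and the definition of $\psi_1$, ending in $\dots 202101$) is a $1$, not $2$; and a short suffix of $B$, such as $1202$, cannot occur internally in a positive chunk (grounded) nor straddle a boundary.

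The delicate point — and the main obstacle — is step (3) combined with the straddling analysis: because $\psi_2(0)$ is long and complicated, I must be careful that the ``rare factor'' chosen (for $A$: something like $2021$; for $B$: something like $1202$) genuinely does not occur a second time inside $\psi_2(0)$ itself at an offset that would allow $A$ or $B$ to appear not aligned with the boundary. This is exactly the kind of claim the paper handles by direct computation on the explicit word $\psi_2(0)$, so I would state ``one verifies computationally that $\psi_2(0)$ contains $2021$ only as a prefix and $1202$ only as a suffix'' (or the appropriate analogous factors), and then the grounded-ness of positive chunks rules out all other placements. A secondary subtlety is boundaries of the form $0$-chunk followed by $0$-chunk (i.e.\ the factor $\psi_2(0)\psi_2(0)$, which occurs when $w$ contains $00$ — but $w$ grounded square-free forbids $00$, so consecutive equal chunks never arise, which I would note explicitly) and $0$-chunk adjacent to positive chunk; in each case the letters appearing at the junction ($\dots 2 \mid (\ell{+}2)\dots$ or $\dots 1 \mid 2\dots$) are checked against $A$ and $B$. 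Once these placements are all excluded, the lemma follows.
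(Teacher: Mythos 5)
Your overall strategy (choose a short ``rare'' witness factor at each end of $\psi_2(0)$, rule out straddling a chunk boundary, then rule out internal occurrences in positive chunks) matches the paper's, and your boundary analysis in step (2) and the $0n$/$n0$ junction checks are essentially the paper's arguments. But there is a genuine gap in steps (1) and (3): your exclusion of internal occurrences rests on the claim that the positive chunks $\psi_2(\ell)=\psi_1((\ell+1)P_0(\ell+1))$ are grounded. They are not. The word $(\ell+1)P_0(\ell+1)$ is grounded, but its image under $\psi_1$ is not, because $\psi_1(0)=202101$ is not grounded (its letter at position $0$ is $2\neq 0$ and its letter at position $3$ is $1\neq 0$, ruling out both parities), and $\psi_2(\ell)$ contains a $\psi_1(0)$-chunk for every $\ell\geq 1$ since $(\ell+1)P_0(\ell+1)$ begins $(\ell+1)\,0\,1\cdots$. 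Consequently both of your proposed witnesses actually do occur inside every positive chunk: $2021$ is a prefix of $\psi_1(0)=202101$, and $1202$ occurs at each internal junction $\psi_1(k)\psi_1(0)=\cdots 01\cdot 202101$. So neither factor forces alignment with a $\psi_2$-chunk boundary, and the lemma does not follow from your argument.

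The repair requires a finer analysis of how $202$ sits inside $\psi_2(n)$, which is what the paper does: no $\psi_1(k)=(k+1)R_{k+2}[:-2]$ contains $202$, and every $\psi_1(k)$ with $k\geq 1$ ends in $201$, so every occurrence of $202$ in $\psi_2(n)$, $n\geq 1$, is the start of an instance of $\psi_1(0)=202101$ and is therefore followed by $101$; hence the prefix $p=202102$ cannot occur internally. For the suffix one must take a longer witness --- the paper uses $s=\psi_2(0)[-9:]=202101202$ --- precisely because short suffixes such as $1202$, and even $202101$ itself, do occur inside positive chunks; one then uses Lemma~\ref{psi_1(l) is a l-chunk} to see that $202101$ occurs in $\psi_2(n)$ only as a whole $\psi_1$-$0$-chunk, and since no $\psi_1(k)$ begins with $202$, the word $202101202$ cannot occur internally either. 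With those two witnesses, the computational checks that $202102$ occurs in $\psi_2(0)$ only as a prefix and $202101202$ only as a suffix, together with your boundary arguments, complete the proof.
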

\begin{proof}
Let $p:=\psi_2(0)[:6]=202102$ and $s:=\psi_2(0)[-9:]=202101202$. Since $s$ is a suffix of $\psi_2(0)[6:]$, proving the result for $p$ and $s$ is sufficient.

For $n\geq1$, $\psi_2(n)$ ends with 1 and $\psi_2(0)$ begins with 2 and $12$ does not occur in $p=202102$, so $p$ cannot lie over a $\psi_2(n0)$ chunk boundary. The first letter of $\psi_2(n)$ is $n+2\geq3$ which is larger than any letter in $p$, so $p$ cannot lie over a $\psi_2(0n)$ chunk boundary. Therefore, any occurrence of $p$ in $\psi_2(w)$ must be contained in a single chunk.

Since $\psi_1(n)=(n+1)R_{n+2}[:-2]$ does not contain $202$, $p$ does not occur in any $\psi_1(n)$. Also, $\psi_1(n)$ ends with $201$, $\psi_2(n)=\psi_1((n+1)P_0(n+1))$ and $(n+1)P_0(n+1)$ is grounded, $202$ only occurs in $\psi_2(n)$ at the beginning of instances of $\psi_1(0)=202101$. This means that $p=202102$ can never occur in $\psi_2(n)$. We can verify computationally that $202102$ only occurs in $\psi_2(0)$ as a prefix. Therefore, $p$ only occurs in $\psi_2(w)$ as a prefix of 0-chunks.

If $s=202101202$ lies over a $\psi_2(n0)$ chunk boundary, the then the $12$ would need to be at the boundary. But this cannot happen since no $\psi_2(n)$ ends with $202101$. It also cannot lie over a $\psi_2(0n)$ boundary since the first letter of $\psi_2(n)$ is $n+2\geq3$ which does not occur in $s$.

By Lemma~\ref{psi_1(l) is a l-chunk}, $\psi_1(0)=202101$ only occurs in $\psi_2(n)=\psi_1((n+1)P_0(n+1))$ as an 0-chunk. So since no $\psi_1(n)$ begins with $202$, $s=202101202$ does not occur in $\psi_2(n)$. We can verify computationally that $s$ only occurs in $\psi_2(0)$ as a suffix. Therefore, $s$ only occurs in $\psi_2(w)$ as a suffix of 0-chunks.
\end{proof}

We can easily see that this lemma implies that any occurrence of $\psi_2(0)$ in $\psi_2(w)$ is a 0-chunk when $w$ is square-free and grounded.

\begin{proposition}\label{psi_2 square-free over grounded}
$\psi_2$ is square-free over grounded words.
\end{proposition}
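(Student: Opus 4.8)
The plan is to adapt the proof of Proposition~\ref{psi_1 square-free over grounded}, with the four $\psi_2$-lemmas above playing the roles that Lemmas~\ref{psi_1(0n0) square-free}, \ref{psi_1(n) not factor of psi_1(k)}, and \ref{psi_1(l) is a l-chunk} played there. Suppose $w$ is a grounded square-free word and $\psi_2(w)$ contains a square $yy$; the goal is to invoke Theorem~\ref{chunk-identifiable condition for square-freeness} to conclude that $w$ contains a square, a contradiction.

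First I would dispatch the hypotheses of Theorem~\ref{chunk-identifiable condition for square-freeness} that are independent of $yy$. The morphism $\psi_2$ is letter-injective because $\psi_2(n)$ begins with the letter $n+2$ (so, in particular, a chunk of $\psi_2(w)$ is determined by its first letter). Also $\psi_2(n)$ is square-free for every $n$: for $n\geq1$ we have $\psi_2(n)=\psi_1\big((n+1)P_0(n+1)\big)$ and $(n+1)P_0(n+1)$ is a grounded square-free word, being a prefix of $L(n+1)$, so Proposition~\ref{psi_1 square-free over grounded} applies; and for $n=0$ it is a finite computation on $\psi_2(0)$. It then remains to verify, for $yy$, the three geometric conditions of the theorem: (1) each half contains a whole chunk; (2) the two halves have the same chunk decomposition; and (3) if some half has a partial chunk, then its initial or its final partial chunk comes from the same letter of $w$ as in the other half.

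For (2), the crucial input is that in $\psi_2(w)$ an occurrence of $\psi_2(\ell)$ can only be an $\ell$-chunk. For $\ell=0$ this is exactly the remark following Lemma~\ref{psi_2(0) is a 0-chunk}. For $\ell\geq1$ I would argue that $\psi_2(\ell)$ begins and ends with nonzero letters, so it cannot straddle a chunk boundary of the grounded word $w$; that it cannot lie inside a $0$-chunk since $\max\psi_2(\ell)=\ell+2>3=\max\psi_2(0)$; and that it cannot lie strictly inside a larger nonzero chunk $\psi_2(k)$, by combining Lemma~\ref{psi_2(n) and psi_2(k) not suffix} with a count of the four occurrences of the top letter $\ell+2$ in the decomposition~\eqref{psi_2(n) decomposition} of $\psi_2(\ell)$. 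With this, the whole chunks in the two halves of $yy$ occupy matching positions and come from the same letters, which is (2). Then (3) is immediate for $\psi_2$: once some half has a partial chunk, the common chunk decomposition forces both halves to end with a partial chunk, and these two final partial chunks are equal suffixes of $y$, hence prefixes of chunks with the same first letter, hence of equal chunks.

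It remains to establish (1). If some half of $yy$ contained no whole chunk, then $yy$ would meet at most one whole chunk, so by Lemmas~\ref{psi_2(n0) square-free} and \ref{psi_2(0n0) square-free} it would be a proper factor of some $\psi_2(n0k)$ with distinct $n,k>0$, its center lying inside the unique $0$-chunk $\psi_2(0)$. Using a short distinctive factor of $\psi_2(0)$ isolated in the proof of Lemma~\ref{psi_2(0) is a 0-chunk} (for instance $\psi_2(0)[:6]=202102$, which occurs only as a $0$-chunk prefix) to pin down the center, one finds that the second half would have to begin with a factor that cannot occur in the adjacent $n$- or $k$-chunk (whose first letter, $n+2$ or $k+2$, is too large), a contradiction. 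With (1)--(3) in hand, Theorem~\ref{chunk-identifiable condition for square-freeness} yields a square in $w$. The step I expect to be the main obstacle is~(2), specifically the claim that no $\psi_2(\ell)$ with $\ell\geq1$ occurs strictly inside another chunk: since $\psi_2$ is genuinely not $\psi_1^2$ (indeed $\psi_2(0)\neq\psi_1^2(0)$) and $\psi_2(0)$ is a long handcrafted word, the clean inductive route available for $\psi_1$ does not carry over, and one must instead isolate and computationally verify a handful of short synchronizing factors of $\psi_2(0)$ and of the images $\psi_1(m)$.
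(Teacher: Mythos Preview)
Your approach diverges from the paper's at a key structural point, and the divergence leaves a real gap. You aim to prove directly that every occurrence of $\psi_2(\ell)$ in $\psi_2(w)$ is an $\ell$-chunk (your step~(2)), but the argument you sketch does not go through: Lemma~\ref{psi_2(n) and psi_2(k) not suffix} only rules out $\psi_2(\ell)$ being a \emph{suffix} of $\psi_2(k)$, not an arbitrary factor, and the four occurrences of the top letter $\ell+2$ in $\psi_2(\ell)$ do not by themselves prevent it from sitting inside a much larger $\psi_2(k)$, which contains many more copies of $\ell+2$. Your boundary claim (``$\psi_2(\ell)$ begins and ends with nonzero letters, so it cannot straddle a chunk boundary'') is also not a valid inference as stated---nothing about the first and last letters alone forbids straddling. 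You correctly flag step~(2) as the main obstacle and then defer it to unspecified further computations; as written this is a genuine gap, not a proof.

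The paper sidesteps the whole difficulty. Instead of proving a location lemma for all $\psi_2(\ell)$, it first shows that both halves of $yy$ contain a whole \emph{$0$-chunk}---a stronger version of your~(1) that requires analyzing putative squares inside $\psi_2(0n0k0)$ (five chunks, not three), using the prefix/suffix locations of Lemma~\ref{psi_2(0) is a 0-chunk}. Once a whole $0$-chunk is anchored in each half, every nonzero whole chunk in $y$ is adjacent to a whole $0$-chunk, so either $\psi_2(0)\psi_2(\ell)$ or $\psi_2(\ell)\psi_2(0)$ lies entirely in $y$; in the first case the chunk is identified by its distinct first letter, and in the second by Lemma~\ref{psi_2(n) and psi_2(k) not suffix}. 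This yields the same chunk decomposition without ever needing the general $\ell$-chunk location statement you target.
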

\begin{proof}
Suppose $w$ is a grounded square-free word and that $\psi_2(w)$ contains a square $yy$. We will first show that there is a whole 0-chunk in both halves of the square, and then use that to show that both halves have the same chunk decomposition. We then show that if the square contains any partial chunks, then the final partial chunks of the two halves of the square come from the same letter in $w$. We know that $\psi_2$ is letter injective and $\psi_2(n)$ is square-free for all $n$. Hence, Theorem~\ref{chunk-identifiable condition for square-freeness} will imply that $w$ must contain a square, which is a contradiction.

Suppose that there are no whole 0-chunks in either half. The square must contain a whole 0-chunk, or else it would be a factor of $\psi_2(0n0)$, contradicting Lemma~\ref{psi_2(0n0) square-free}. Then the whole 0-chunk in the square must be split between the two halves and the square $yy$ is a proper factor of $\psi_2(0n0k0)$ for some $n,k\geq1$ with the center of the square lying in the middle 0-chunk. Consider the prefix $p:=\psi_2(0)[:6]$ and suffix $s:=\psi_2(0)[6:]$ of $\psi_2(0)=ps$. Then either $p$ is totally contained in the first half of the square, or $s$ is totally contained in the second half.

If $s$ is contained in the second half, then it must also occur in the first half and by Lemma~\ref{psi_2(0) is a 0-chunk}, the only place for this is as a suffix of the first 0-chunk in $\psi_2(0n0k0)$. The two occurrences of $s$ in the square are followed by the first letters of $\psi_2(n)$ and $\psi_2(k)$ in each half respectively, so the first letter of $\psi_2(n)$, $n+2$ and of $\psi_2(k)$, $k+2$ must be equal. This means that $n=k$ which is a contradiction since $w$ is square-free and cannot contain $0n0n0$.

If $p$ is contained in the first half, then it must also occur in the second half and by Lemma~\ref{psi_2(0) is a 0-chunk}, the only place for this is as a prefix of the third 0-chunk in $\psi_2(0n0k0)=[ps][\psi_2(n)][ps][\psi_2(k)][ps]$. The center of the square lies in the suffix $s$ of the middle 0-chunk, so the boundary is formed by words $s_1,s_2$ such that $s=s_1s_2$. We then get from the first half of the square that $y$ is a proper suffix of $ps\psi_2(n)ps_1$ and from the second half that $y$ is a prefix of $s_2\psi_2(k)ps$. Since $p$ occurs exactly once in each half, we can see that the second half is $y=s_2\psi_2(k)ps_1$. Then $s_2\psi_2(k)$ is a suffix of $ps\psi_2(n)$ implying that one of $\psi_2(n)$ or $\psi_2(k)$ must be a suffix of the other, so by Lemma~\ref{psi_2(n) and psi_2(k) not suffix}, $n=k$ which is a contradiction since $w$ is square-free and cannot contain $0n0n0$. Therefore, there must be a whole 0-chunk in one of the halves of the square. Then $\psi_2(0)$ occurs in both halves and by Lemma~\ref{psi_2(0) is a 0-chunk}, both halves contain a whole 0-chunk.

We now show that both halves of the square have the same chunk decomposition. Let $[\psi_2(\ell)],\ 0\leq\ell$ be any whole chunk in either half of the square. If $l=0$, then by Lemma~\ref{psi_2(0) is a 0-chunk}, this is a whole 0-chunk in both halves. If $\ell>0$, then since there is a whole 0-chunk in each half and $w$ is grounded, there must be a whole 0-chunk adjacent to this chunk. Thus, either $[ps]\psi_2(\ell)$ or $\psi_2(\ell)[ps]$ is a factor of $y$. If $[ps]\psi_2(\ell)$ is a factor of $y$, then $\psi_2(\ell)$ must be a whole $\ell$-chunk in both halves since any other chunk would start with a different letter. If $\psi_2(\ell)[ps]$ is a factor, then $\psi_2(\ell)$ must be a whole $l$-chunk in both halves since no other chunk can be a suffix of $\psi_2(\ell)$ or have $\psi_2(\ell)$ as a suffix by Lemma~\ref{psi_2(n) and psi_2(k) not suffix}. Thus, $\psi_2(\ell)$ is a whole chunk in both halves of the square, so both halves have the same chunk decomposition.

Suppose either half contains a partial chunk. Then since the halves have the same chunk decomposition, they must both end with a partial chunk and share the first letter of their final partial chunk, say $\ell$. From the definition of $\psi_2$, this means that both final partial chunks are partial $(\ell-2)$-chunks.

This verifies the conditions of Theorem~\ref{chunk-identifiable condition for square-freeness}  which implies that $w$ contains a square, a contradiction.
\end{proof}

The next three lemmas are used to prove the irreducibility condition in Theorem~\ref{tau prefix theorem}. They are analogous to Lemmas~\ref{psi_1(n) generates psi_1(n0)},~\ref{adding k+1 in psi1}, and~\ref{psi_1(n) generates psi_1(nR_k)} about $\psi_1$.

\begin{lemma}\label{psi_2(n) generates psi_2(n0)}
Let $n\geq1$. Then $\psi_2(n)$ generates $\psi_2(n0)$.
\end{lemma}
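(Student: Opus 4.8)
The plan is to follow the proof of Lemma~\ref{psi_1(n) generates psi_1(n0)} almost verbatim, with $\psi_2$ in place of $\psi_1$. Since $n0$ is square-free and grounded, Proposition~\ref{psi_2 square-free over grounded} gives that $\psi_2(n0)$ is square-free, so the only thing to prove is that $\psi_2(0)$ is irreducible in $\psi_2(n0)=\psi_2(n)\,\psi_2(0)$. I would split this into two parts: first show that the leading letter $2$ of $\psi_2(0)$ is an irreducible suffix of $\psi_2(n)\,2$, and then that the remaining word $\psi_2(0)[1:]$ is an irreducible suffix of $\psi_2(n)\,2\,\psi_2(0)[1:]=\psi_2(n0)$.

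For the first part, I would use that $\psi_2(n)=\psi_1\!\big((n+1)P_0(n+1)\big)$ and that, by Remark~\ref{structure of P0}, the word $(n+1)P_0(n+1)$ ends in $R_2R_1=010201$ for every $n\ge 1$. Applying the morphism $\psi_1$, the word $\psi_2(n)$ ends in $\psi_1(0)\psi_1(1)=202101\cdot 2010201$, i.e.\ in $2021012010201$. Hence $\psi_2(n)\,0$ ends in the square $(2010)^2=20102010$ and $\psi_2(n)\,1$ ends in the square $11$. Since every word lexicographically less than $2$ begins with $0$ or $1$, every such word $s$ introduces a square in $\psi_2(n)\,s$, so $2$ is irreducible at the end of $\psi_2(n)\,2$; as $\psi_2(n)\,2$ is a factor of the square-free word $\psi_2(n0)$, this shows $\psi_2(n)$ generates $\psi_2(n)\,2$.

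For the second part, the idea is to localize to a fixed suffix and invoke Remark~\ref{Appending to front of lex-least words}. Set $W:=\psi_1(010201)$; by the computation above this is a suffix of $\psi_2(n)$ for every $n\ge1$. I would verify computationally that $W2$ generates $W\,\psi_2(0)$, that is, that $W\,\psi_2(0)$ is square-free and $\psi_2(0)[1:]$ is irreducible in $W2\,\psi_2(0)[1:]=W\,\psi_2(0)$ (the square-freeness of $W\,\psi_2(0)$ is in any case automatic, since $W\,\psi_2(0)$ is a suffix of the square-free word $\psi_2(n0)$, and $W2$ is a factor of it). Writing $\psi_2(n)\,2=u\,W2$ for the appropriate word $u$, so that $uW=\psi_2(n)$, Remark~\ref{Appending to front of lex-least words} then yields that $u\,W2=\psi_2(n)\,2$ generates $u\,W\,\psi_2(0)=\psi_2(n0)$; that is, $\psi_2(0)[1:]$ is irreducible in $\psi_2(n0)$. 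Together with the first part, $\psi_2(0)$ is irreducible in $\psi_2(n0)$, which completes the proof.

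The routine parts are the suffix bookkeeping for $\psi_2(n)$ and the observation that the relevant square-freeness statements are inherited from $\psi_2(n0)$. The one genuinely non-obvious step is the finite computation that $W2$ generates $W\,\psi_2(0)$: this is where the explicit (and deliberately non-$\psi_1^2$) definition of $\psi_2(0)$ enters, and the whole argument hinges on $\psi_2(0)$ having been engineered so that it is forced by the short prefix $W$. A secondary point worth noting is whether the common suffix $W$ is long enough for this computation to be valid, but this is not really a constraint: any square appearing in $W\,\psi_2(0)$ would already appear in $\psi_2(n0)$, so replacing $W$ by a longer common suffix cannot change the outcome of the verification.
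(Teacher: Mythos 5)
Your reduction to square-freeness plus irreducibility, and your first step (that $2$ is irreducible at the end of $\psi_2(n)\,2$, via the suffix $\psi_1(01)=2021012010201$ and the square suffixes $(2010)^2$ and $11$), are both fine. The gap is in the second step: the finite computation you propose, namely that $W2$ generates $W\,\psi_2(0)$ for $W=\psi_1(010201)$, is false, and it fails at the sixth letter of $\psi_2(0)$. Indeed $\psi_2(0)[:6]=202102$, and to force the final $2$ one must show that $W\,202101$ has a square suffix. But $W\,202101=\psi_1(010201)\,\psi_1(0)=\psi_1(0102010)$, and $0102010$ is square-free and even-grounded, so by Proposition~\ref{psi_1 square-free over grounded} the word $W\,202101$ is square-free. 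The greedy algorithm started from $W2$ therefore places a $1$ where $\psi_2(0)$ has a $2$. The square that actually rules out that $1$ is the one exhibited in the paper via Equation~\eqref{psi_2(n) decomposition}: $\psi_2(n)\,202101=\psi_2(n)\,\psi_1(0)$ is the full square $\bigl((n+2)R_{n+2}R_{n+2}[:-2]\psi_1(R_{n+1}[:-1])\bigr)^2$, whose length grows with $n$. No fixed common suffix of the words $\psi_2(n)$ can capture it, so this letter cannot be handled by a finite local verification at all; it genuinely requires the global ``almost square'' structure of $\psi_2(n)$. This is precisely why the paper splits the verification there: it checks $P_0(3)P_1(3)$ generates $P_0(3)P_1(3)20210$ computationally, handles the next letter $2$ by the global square argument, and only then finishes with a second finite computation from $P_0(3)P_1(3)202102$ onward.

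Your closing remark that lengthening $W$ ``cannot change the outcome of the verification'' has the logic backwards for the irreducibility half of ``generates.'' A longer left context can only supply \emph{more} square suffixes, hence reject more candidate letters; a shorter context can miss exactly the square needed to reject a smaller letter, which is what happens here. (The observation is correct for the square-freeness half, but that part was never in doubt since $W\,\psi_2(0)$ is a suffix of the square-free word $\psi_2(n0)$.) To repair the proof you should adopt the paper's three-stage structure: a finite check up to $\psi_2(n)\,20210$, the global square $\psi_2(n)\,\psi_1(0)$ from Equation~\eqref{psi_2(n) decomposition} to force the $2$, and a final finite check for the remainder of $\psi_2(0)$, together with a separate computational check of the case $n=1$ (for which $\psi_2(1)$ does not have the suffix $P_0(3)P_1(3)$ used in the paper's finite checks).
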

\begin{proof}
Since $n0$ is square-free and grounded, $\psi_2(n0)$ is square-free by Proposition~\ref{psi_2 square-free over grounded}, so we only need to show that $\psi_2(0)$ is irreducible in $\psi_2(n0)$. We can computationally verify that $\psi_2(1)$ generates $\psi_2(10)$ and assume that $n\geq2$.

Since $\psi_2(n)$ has suffix $P_1(n+2)$ and $n+2\geq4$, Remark~\ref{structure of P1} implies that $\psi_2(n)$ has suffix $P_0(3)P_1(3)$. We can computationally verify that $P_0(3)P_1(3)$ generates $P_0(3)P_1(3)20210$. The next letter of $\psi_2(0)$ is a 2. Hence, we need to show that $2$ is irreducible at the end of $\psi_2(n)202102$. Clearly, it cannot be a 0, so we will show that $\psi_2(n)202101$ contains a square. Using Equation~\eqref{psi_2(n) decomposition} we have
\begin{align*}
    &\psi_2(n)\ 202101\\
    ={}&\psi_2(n)\ \psi_1(0)\\
    ={}&(n+2)R_{n+2}R_{n+2}[:-2]\psi_1(R_{n+1}[:-1])(n+2)R_{n+2}R_{n+2}[:-2]\psi_1(R_{n+1}[:-2])\ \psi_1(0)\\
    ={}&(n+2)R_{n+2}R_{n+2}[:-2]\psi_1(R_{n+1}[:-1])(n+2)R_{n+2}R_{n+2}[:-2]\psi_1(R_{n+1}[:-1]),
\end{align*}
which is a square, so $202102$ is irreducible in $\psi_2(n)202102$. We can then computationally verify that $P_0(3)P_1(3)202102$ generates $P_0(3)P_1(3)\psi_2(0)$, which implies the desired result.
\end{proof}

\begin{lemma}\label{adding k+2 in psi2}
For $0\leq k\leq n$, $k+2$ is irreducible at the end of ${\psi_2(nR_k[:-1])(k+2)}$.
\end{lemma}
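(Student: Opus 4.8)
The plan is to mirror the proof of Lemma~\ref{adding k+1 in psi1}. To prove that $k+2$ is irreducible at the end of $\psi_2(nR_k[:-1])(k+2)$ it suffices to show that for every letter $m$ with $0\le m\le k+1$ the word $w_m:=\psi_2(nR_k[:-1])\,m$ has a square ending at its last letter: any $s\prec k+2$ is a nonempty word whose first letter is at most $k+1$, so if $s$ begins with $m$ then the square in $w_m$ ends inside $s$, which is exactly what it means for $s$ to introduce a square in $\psi_2(nR_k[:-1])\,s$. So the whole task reduces to producing a square suffix of $w_m$ for each $0\le m\le k+1$.

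For the small values of $m$ I would argue directly. If $k=0$ then $w_m=\psi_2(n)\,m$; by Lemma~\ref{psi_2(n) generates psi_2(n0)} the word $\psi_2(n)$ generates $\psi_2(n0)=\psi_2(n)\,\psi_2(0)$, and since $\psi_2(0)$ begins with the letter $2$ while $\psi_2(n)$ is square-free, both $\psi_2(n)0$ and $\psi_2(n)1$ must have a square suffix, covering $m\in\{0,1\}$. If $k\ge1$ then the last letter of $R_k[:-1]$ is $0$, so $\psi_2(nR_k[:-1])$ ends with the whole $0$-chunk $\psi_2(0)$; one checks from the definition that $\psi_2(0)$ ends with $01202101202$, so appending $0$, $1$, or $2$ produces the square suffixes $2020$, $(012021)^2$, and $22$, covering $m\in\{0,1,2\}$.

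For the remaining range $3\le m\le k+1$ I would imitate the algebraic identity from Lemma~\ref{adding k+1 in psi1}. Since $m-1\le k$, the word $nR_k[:-1]$ ends with $R_{m-1}[:-1]$, and the chunk immediately before this suffix is $\psi_2(m-1)$ when $m\le k$ (because then $R_k[:-1]$ ends with $R_m[:-1]=R_{m-1}[:-1]\,(m-1)\,R_{m-1}[:-1]$, so $nR_k[:-1]$ ends with $(m-1)R_{m-1}[:-1]$) and is $\psi_2(n)$ when $m=k+1$. Either way, Remark~\ref{structure of P1}, applied to $P_1(m+1)$ or to $P_1(n+2)$, shows this chunk ends with $P_0(m)P_1(m)$, so $w_m$ has the suffix $P_0(m)P_1(m)\,\psi_2(R_{m-1}[:-1])\,m$. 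Finally, $R_{m-1}[:-1]=R_{m-2}[:-1]\,(m-2)\,R_{m-2}[:-1]$ and, since $m\ge3$, $\psi_2(m-2)=m\,P_0(m)P_1(m)$; substituting, this suffix equals $\big(P_0(m)P_1(m)\,\psi_2(R_{m-2}[:-1])\,m\big)^2$, an explicit square.

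The step I expect to be the main obstacle is the ``anchoring'' in the last paragraph: verifying that the chunk preceding $\psi_2(R_{m-1}[:-1])$ genuinely ends with $P_0(m)P_1(m)$. This relies on the nested suffix structure of the words $P_1(\cdot)$ recorded in Remark~\ref{structure of P1}, together with the fact that $\psi_2$ sends a positive letter $\ell$ to a word literally ending in $P_0(\ell+2)P_1(\ell+2)$. One must also keep track of the exceptional value $\psi_2(0)$: because $\psi_2(0)\ne 2\,P_0(2)P_1(2)$, the identity $\psi_2(m-2)=m\,P_0(m)P_1(m)$ fails at $m=2$, which is exactly why the values $m=1$ and $m=2$ are peeled off and handled through the explicit tail of $\psi_2(0)$ rather than the general square formula.
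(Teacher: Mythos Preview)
Your proof is correct and follows essentially the same approach as the paper's. The only cosmetic differences are that you invoke Lemma~\ref{psi_2(n) generates psi_2(n0)} directly for the $k=0$ case (the paper phrases this as ``use Remark~\ref{structure of P0} as in Lemma~\ref{adding k+1 in psi1}'', which amounts to the same thing), and you spell out the explicit square suffixes $2020$, $(012021)^2$, $22$ for $m\in\{0,1,2\}$ when $k\ge1$, whereas the paper just asserts their existence from the tail $01202101202$ of $\psi_2(0)$.
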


\begin{proof}
Let $0\le m\le k+1$, we will show that $w_m=\psi_2(nR_k[:-1])m$ has a square suffix. If $k=0$, then $m\leq1$. For $m=0$ we can use Remark~\ref{structure of P0} as in Lemma~\ref{adding k+1 in psi1} to show that $\psi_2(nR_k[:-1])0=\psi_2(n)0$ is a square, and for $m=1$,  $\psi_2(n)1$ has the square suffix $11$. Hence, $k+2=2$ is irreducible.

Note that for $k\ge 1$ the last letter of $R_k[:-1]$ is 0 and $\psi_2(0)$ ends with $01202101202$, so $w_m$ has a square suffix for $m<3$. Assume $m\geq3$, by definition $\psi_2(n)=(n+2)P_0(n+2)P_1(n+2)$, and according to Remark~\ref{structure of P1} $P_1(n+2)$ has suffix $P_0(m)P_1(m)$. 

If $m-1=k$, then $w_m$ has suffix $P_0(m)P_1(m)\psi_2(R_{m-1}[:-1])m$. Also, if $m-1<k$ it is easy to see that $nR_k[:-1]$ has suffix $(m-1)R_{m-1}[:-1]$. By definition $\psi_2(m-1)=(m+1)P_0(m+1)P_1(m+1)$, and again Remark~\ref{structure of P1} implies that $P_1(m+1)$ has suffix $P_0(m)P_1(m)$. Hence, $w_m$ has suffix $P_0(m)P_1(m)\psi_2(R_{m-1}[:-1])m$ for all $3\le m\le k+1$. Finally, we have that 
\begin{align*}
    &P_0(m)P_1(m)\psi_2(R_{m-1}[:-1])m\\
    ={}&P_0(m)P_1(m)\psi_2(R_{m-2}[:-1])\psi_2(m-2)\psi_2(R_{m-2}[:-1])m\\
    ={}&P_0(m)P_1(m)\psi_2(R_{m-2}[:-1])m\ P_0(m)P_1(m)\psi_2(R_{m-2}[:-1])m,
\end{align*}
which is a square.
\end{proof}

\begin{lemma}\label{psi_2(n) generates psi_2(nR_k)}
If $n\geq1$ and $0\leq k\leq n$, then $\psi_2(n)$ generates $\psi_2(nR_k)$.
\end{lemma}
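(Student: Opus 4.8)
The plan is to follow the proof of Lemma~\ref{psi_1(n) generates psi_1(nR_k)} almost verbatim, with $\psi_1$ replaced by $\psi_2$ and the letter $k+1$ replaced by $k+2$ throughout. Since $k\le n$, the word $nR_k$ is square-free and grounded, so Proposition~\ref{psi_2 square-free over grounded} gives that $\psi_2(nR_k)$ is square-free; by Remark~\ref{Appending to front of lex-least words} it then remains only to prove that $\psi_2(R_k)$ is irreducible in $\psi_2(nR_k)$. I would establish this by a double induction, the outer one on $n$ and the inner one on $k$, exactly paralleling the structure of the $\psi_1$ argument.

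For the outer base case $n=1$ one has $k\in\{0,1\}$: the case $k=0$ is Lemma~\ref{psi_2(n) generates psi_2(n0)} with $n=1$, and the case $k=1$, namely that $\psi_2(1)$ generates $\psi_2(101)$, is a finite computation. For the outer step, fix $n>1$ and assume $\psi_2(R_{k_0})$ is irreducible in $\psi_2(n_0R_{k_0})$ for all $1\le n_0<n$ and all $0\le k_0\le n_0$. Inside this, I would induct on $k$: the base case $k=0$ is again Lemma~\ref{psi_2(n) generates psi_2(n0)}, so fix $k>0$ and assume $\psi_2(R_{k_0})$ is irreducible in $\psi_2(nR_{k_0})$ for all $0\le k_0<k$.

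The inner step is where the ingredients are assembled. Write $R_k[:-1]=R_{k-1}R_{k-2}\cdots R_1R_0$. When $k=1$ this is just $R_0$, and ``$\psi_2(R_0)$ is irreducible in $\psi_2(nR_0)=\psi_2(n0)$'' is Lemma~\ref{psi_2(n) generates psi_2(n0)}, so the outer hypothesis is not even needed. When $k\ge2$, the inner hypothesis gives that $\psi_2(R_{k-1})$ is irreducible in $\psi_2(nR_{k-1})$; since $R_{k-1}$ ends in $k-1$ with $1\le k-1<n$, the outer hypothesis gives that $\psi_2(R_{k-2})$ is irreducible in $\psi_2((k-1)R_{k-2})$, and because $\psi_2(nR_{k-1})$ ends in $\psi_2(k-1)$, Remark~\ref{Appending to front of lex-least words} upgrades this to $\psi_2(R_{k-1}R_{k-2})$ being irreducible in $\psi_2(nR_{k-1}R_{k-2})$. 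Iterating, applying the outer hypothesis successively at $n_0=k-1,k-2,\dots,1$ (each of which lies in $[1,n-1]$ because $k\le n$ and $n>1$), shows that $\psi_2(n)$ generates $\psi_2(nR_k[:-1])$. Lemma~\ref{adding k+2 in psi2} then says that $k+2$ is irreducible at the end of $\psi_2(nR_k[:-1])(k+2)$; and since $\psi_2(k)=(k+2)P_0(k+2)P_1(k+2)$ is a prefix of $L(k+2)$ by Theorem~\ref{P1 theorem} (using $k+2\ge3$), the letter $k+2$ generates $\psi_2(k)$, so $\psi_2(nR_k[:-1])$ generates $\psi_2(nR_k[:-1])\psi_2(k)=\psi_2(nR_k)$. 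Transitivity of ``generates'' then yields that $\psi_2(n)$ generates $\psi_2(nR_k)$, completing both inductions.

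I do not expect a genuine conceptual obstacle: the argument is structurally identical to the proof of Lemma~\ref{psi_1(n) generates psi_1(nR_k)}, and every ingredient (Proposition~\ref{psi_2 square-free over grounded}, Lemmas~\ref{psi_2(n) generates psi_2(n0)} and~\ref{adding k+2 in psi2}, Theorem~\ref{P1 theorem}) is already available. The points that need care are the bookkeeping of the nested induction — in particular, checking that the outer hypothesis is always applicable at every stage of the iteration, which reduces to the inequality $k\le n$ together with $n>1$ — the clean termination of the chain at $R_0$, and the finite base computation $\psi_2(1)$ generates $\psi_2(101)$, which is routine since $\psi_2(0)$ and $\psi_2(1)$ are explicit words of modest length.
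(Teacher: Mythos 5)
Your proposal is correct and follows essentially the same route as the paper's proof: square-freeness via Proposition~\ref{psi_2 square-free over grounded}, then a double induction on $n$ and $k$ that peels off $R_k[:-1]=R_{k-1}\cdots R_0$ using the inner and outer hypotheses, finishes the last chunk with Lemma~\ref{adding k+2 in psi2}, and appends $\psi_2(k)$ via Theorem~\ref{P1 theorem}. The only cosmetic difference is that the paper verifies both outer base cases ($k=0$ and $k=1$ for $n=1$) computationally, whereas you derive the $k=0$ case from Lemma~\ref{psi_2(n) generates psi_2(n0)}.
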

\begin{proof}
Since $nR_k$ is square-free and grounded, Proposition~\ref{psi_2 square-free over grounded} implies that $\psi_2(nR_k)$ is square-free. It is now sufficient to show that $\psi_2(R_k)$ is irreducible in $\psi_2(nR_k)$.

We prove this inductively over $n$. The base case $n=1$ implies that $k=0$ or $k=1$. We can computationally verify that $\psi_2(1)$ generates $\psi_2(1R_0)=\psi_2(10)$ and that $\psi_2(1)$ generates $\psi_2(1R_1)=\psi_2(101)$.

Fix $n>1$ and suppose the result holds for all $1\leq n_0<n$. That is,
\begin{equation}
    \psi_2(R_k)\textrm{ is irreducible in }\psi_2(n_0R_k)\textrm{ for all }0\leq k\leq n_0\textrm{ and }1\leq n_0<n.\tag{i}\label{psi_2(n) generates psi_2(nR_k) induction-1}
\end{equation}
We will show that the result holds for $n_0=n$. That is, $\psi_2(R_k)$ is irreducible in $\psi_2(nR_k)$ for all $0\leq k\leq n$.

We can prove this intermediate step by a second induction, now over $k$. The base case is $k=0$, which holds by Lemma~\ref{psi_2(n) generates psi_2(n0)} since $R_0=0$. Now fix $k$ and suppose
\begin{equation}
    \psi_2(R_{k_0})\textrm{ is irreducible in }\psi_2(nR_{k_0})\textrm{ for all }0\leq k_0<k.\tag{ii}\label{psi_2(n) generates psi_2(nR_k) induction-2}
\end{equation}
We will show that the result holds for $k_0=k$. That is, $\psi_2(R_k)$ is irreducible in $\psi_2(nR_k)$.

We have that $\psi_2(nR_k)=\psi_2(nR_{k-1}R_{k-2}\cdots R_2R_1R_0k)$. By the second inductive hypothesis~(\ref{psi_2(n) generates psi_2(nR_k) induction-2}), $\psi_2(R_{k-1})$ is irreducible in $\psi_2(nR_{k-1})$. The last letter of $R_{k-1}$ is $k-1$, so the first inductive hypothesis~(\ref{psi_2(n) generates psi_2(nR_k) induction-1}) says that $\psi_2(R_{k-2})$ is irreducible in $\psi_2((k-1)R_{k-2})$ and so $\psi_2(R_{k-1}R_{k-2})$ is irreducible in $\psi_2(nR_{k-1}R_{k-2})$. Repeating this argument shows that $\psi_2(R_{k-1}R_{k-2}\cdots R_2R_1R_0)=\psi_2(R_k[:-1])$ is irreducible in $\psi_2(nR_k[:-1])$. Lemma~\ref{adding k+2 in psi2} implies that $k+2$ is irreducible in $\psi_2(nR_k[:-1])(k+2)$. And $\psi_2(k)=(k+2)P_0(k+2)P_1(k+2)$ is a prefix of $L(k+2)$ by Theorem~\ref{P1 theorem}, so $k+2$ generates $\psi_2(k)$, meaning that $\psi_2(k)$ is irreducible in $\psi_2(nR_k[:-1])\psi_2(k)=\psi_2(nR_k)$, which proves the result.
\end{proof}

We now prove the main theorem of this section. For $n\geq3$, define
\[T(n)=P_0(n)P_1(n)P_2(n).\]

\begin{theorem}\label{tau prefix theorem}
	For $n\geq3$, $L(n)$ has prefix $nT(n)$.
\end{theorem}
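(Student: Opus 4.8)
The statement runs parallel to Theorem~\ref{P1 theorem}, so the plan is to mirror that proof with the morphism $\psi_2$ replacing $\psi_1$ and the prefix $P_0(n-2)$ replacing $P_0(n-1)$. The starting observation is that for $n\geq3$ we have $n-2\geq1$, so $\psi_2(n-2)=nP_0(n)P_1(n)$, while $\psi_2(P_0(n-2))=P_2(n)$ by definition; hence $\psi_2((n-2)P_0(n-2))=\psi_2(n-2)\psi_2(P_0(n-2))=nP_0(n)P_1(n)P_2(n)=nT(n)$. Theorem~\ref{P1 theorem} says that $n$ generates $nP_0(n)P_1(n)=\psi_2(n-2)$, so by the transitivity of ``generates'' it suffices to prove that $\psi_2(n-2)$ generates $\psi_2((n-2)P_0(n-2))$.

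For square-freeness, note that $(n-2)P_0(n-2)$ is a prefix of $L(n-2)$ by the definition of $P_0$, hence square-free, and it is odd-grounded since $P_0(n-2)$ is a prefix of the even-grounded ruler sequence and $n-2\neq0$. Thus Proposition~\ref{psi_2 square-free over grounded} shows that $nT(n)=\psi_2((n-2)P_0(n-2))$ is square-free; the same proposition supplies square-freeness at every intermediate step below.

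For the generation claim I would argue exactly as in the proof of Theorem~\ref{P1 theorem}. By Remark~\ref{structure of P0}, $P_0(n-2)=R_{n-2}R_{n-2}[:-2]$ and $R_{n-2}[:-2]=R_{n-3}R_{n-4}\cdots R_2R_1$. Applying Lemma~\ref{psi_2(n) generates psi_2(nR_k)} with $n-2$ in place of $n$ and $k=n-2$ shows that $\psi_2(n-2)$ generates $\psi_2((n-2)R_{n-2})$, a word whose final chunk is $\psi_2(n-2)$ because $R_{n-2}$ ends in the letter $n-2$. I would then append the factors $R_{n-3},R_{n-4},\dots,R_1$ one at a time, just as in the ``repeating this argument'' step of that lemma's own proof: at the stage where the current word $\psi_2((n-2)R_{n-2}\cdots R_{m+1})$ ends in the chunk $\psi_2(m+1)$ --- which it does because its last appended factor $R_{m+1}$ ends in $m+1$ --- Lemma~\ref{psi_2(n) generates psi_2(nR_k)} with $m+1$ in place of $n$ and $m$ in place of $k$ gives that $\psi_2(m+1)$ generates $\psi_2((m+1)R_m)$, and Remark~\ref{Appending to front of lex-least words} splices this on (the preimage $(n-2)R_{n-2}\cdots R_m$ is $(n-2)$ followed by a prefix of the ruler sequence, hence square-free and grounded, so its image is square-free by Proposition~\ref{psi_2 square-free over grounded}). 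Chaining these steps through transitivity of ``generates'' yields that $\psi_2(n-2)$ generates $\psi_2((n-2)R_{n-2}R_{n-3}\cdots R_1)=\psi_2((n-2)P_0(n-2))=nT(n)$. Combined with ``$n$ generates $\psi_2(n-2)$'' from Theorem~\ref{P1 theorem}, this gives that $n$ generates $nT(n)$, i.e.\ $L(n)$ has prefix $nT(n)$.

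The main obstacle is the bookkeeping in the previous paragraph: making the iterated use of Lemma~\ref{psi_2(n) generates psi_2(nR_k)} precise by matching the last letter of each partial product with the index of the next factor $R_m$ to be appended, checking that each index pair satisfies the lemma's hypotheses, and correctly identifying at each step the word playing the role of $u$ in Remark~\ref{Appending to front of lex-least words}. One should also treat $n=3$ on its own: there $P_0(n-2)=P_0(1)=R_1$, the chain $R_{n-2}[:-2]=R_1[:-2]$ is empty, and a single application of Lemma~\ref{psi_2(n) generates psi_2(nR_k)} --- its computationally verified base case that $\psi_2(1)$ generates $\psi_2(101)$ --- already gives the result.
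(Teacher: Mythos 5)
Your proposal is correct and follows essentially the same route as the paper: identify $nT(n)=\psi_2((n-2)P_0(n-2))$, get square-freeness from Proposition~\ref{psi_2 square-free over grounded}, take the prefix $nP_0(n)P_1(n)=\psi_2(n-2)$ from Theorem~\ref{P1 theorem}, and obtain irreducibility of $P_2(n)$ from Lemma~\ref{psi_2(n) generates psi_2(nR_k)}. The only difference is cosmetic: the paper invokes the lemma twice (once for $\psi_2((n-2)R_{n-2})$ and once for the remaining prefix $\psi_2(R_{n-2}[:-2])$, mirroring the proof of Theorem~\ref{P1 theorem}), whereas you unwind the same iteration factor by factor.
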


\begin{proof}
Note that $nT(n)=nP_0(n)P_1(n)P_2(n) = \psi_{2}((n-2)P_0(n-2))$. Hence, since $(n-2)P_0(n-2)$ is square-free and grounded, Proposition~\ref{psi_2 square-free over grounded} implies that $nT(n)$ is square-free. It remains to show that $P_2(n)=\psi_2(P_0(n-2))=\psi_2(R_{n-2}R_{n-2}[:-2])$ is irreducible in $nP_0(n)P_1(n)P_2(n)$. 

We know from Theorem~\ref{P1 theorem} that $n$ generates $nP_0(n)P_1(n)$. The fact that $P_2(n)=\psi_{2}(R_{n-2}R_{n-2}[:-2])$ is irreducible in $nP_0(n)P_1(n)P_2(n)$ follows from Lemma~\ref{psi_2(n) generates psi_2(nR_k)} by the same argument used in the proof of Theorem~\ref{P1 theorem}.
\end{proof}

\begin{remark}\label{structure of P2 and tau(n) has suffix tau(3)}
Again, using Remark~\ref{structure of P0} we have that for $n\ge4$
\begin{align*}
    P_2(n) &= \psi_2(P_0(n-2))\\
        &=\psi_2(R_{n-2}[:-1](n-1)R_{n-2}[:-2])\\
        &= \psi_2(R_{n-2}[:-1]) \ \psi_2(n-2) \ \psi_2(P_0(n-3))\\
        &= \psi_2(R_{n-2}[:-1]) \ n \, P_0(n) \, P_1(n) \ P_2(n-1),
\end{align*}
Hence, by Remark~\ref{structure of P1} we see that $P_2(n)$ has $T(n-1)=P_0(n-1) \,P_1(n-1) \, P_2(n-1)$ as a suffix. This means that $T(n)$ has suffix $T(n-1)$, and applying the same argument repeatedly we have that $T(n)$ has $T(3)$ as a suffix.
\end{remark}
\section{The structure of $L(n)$}\label{section: one-letter prefixes}

In this section we prove that the word $L(n)$ reflects the structure of the ruler sequence for $n = 1$ and $n \geq 3$.
Namely, $L(n) = Y_n \, \phi(L(\varepsilon))$ for a finite prefix $Y_n$ and a morphism $\phi$.

\begin{definition*}
We say a morphism $\phi$ is \emph{$L$-commuting} over a set of words $\Delta \subset \N^*\cup \N^{\omega}$ if $L(\phi(w)) = \phi(L(w))$ for all $w \in \Delta$.
\end{definition*}

Let $\Sigma$ be the set of all nonempty even-grounded square-free words. If $\phi$ is $L$-commuting over $\Sigma$, we get in particular $$L(\phi(0)) =\phi(L(0))= \phi(L(\varepsilon)),$$ 
which lets us determine the lexicographically least square-free word with prefix $\phi(0)$, as the result of applying the morphism $\phi$ to the ruler sequence.

In Section~\ref{morphism alpha} we introduce a morphism $\alpha$ and prove that it is $L$-commuting over the set $\Sigma$ of even-grounded square-free words. In Section~\ref{L(1) section} we use Remark~\ref{Appending to front of lex-least words} and the $L$-commuting property of $\alpha$ to find the general structure of $L(1)$ and $L(n)$ for $n\ge 3$. In Section~\ref{L(2) section} we state a conjecture about the structure of $L(2)$ being given by a morphism $\gamma$.

We start by showing that the ruler morphism $\rho$ is $L$-commuting over the set of square-free words, and then use this fact to prove a result that establishes properties that are sufficient for a morphism to satisfy the $L$-commuting property over the set $\Sigma$.

\begin{theorem}\label{rho is L-commuting}
The ruler morphism $\rho$ is $L$-commuting over the set of all nonempty square-free words.
\end{theorem}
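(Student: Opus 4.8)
The plan is to show the slightly stronger fact that the square-free word $\rho(w)$ \emph{generates} $\rho(L(w))$. Since $\rho$ is non-erasing, $\rho(L(w))$ is infinite, hence equal to $L(\rho(L(w)))$, so the definition of ``generates'' yields $L(\rho(w)) = L(\rho(L(w))) = \rho(L(w))$, as required. If $w$ is infinite this is trivial, as then $L(w) = w$ and $\rho(w)$ is itself an infinite square-free word; so assume $w$ is finite and write $L(w) = w\,t$ with $t := L(w)[|w|:]$, so that $\rho(L(w)) = \rho(w)\,\rho(t)$. By Remark~\ref{Appending to front of lex-least words}, it suffices to prove (i) that $\rho$ is a square-free morphism --- whence $\rho(w)$ and $\rho(L(w))$ are square-free, since $w$ and $L(w)$ are --- and (ii) that $\rho(t)$ is irreducible in $\rho(w)\,\rho(t)$.

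For (i) I would use a short parity argument. In any word $\rho(v) = 0(v_0+1)\,0(v_1+1)\cdots$ the letter $0$ occupies exactly the even positions. If $yy$ is a square in $\rho(v)$ occupying positions $[p, p+2|y|)$, then comparing the parities of the two copies' start indices $p$ and $p+|y|$ forces $|y|$ to be even, since the first letter of $y$ is $0$ exactly when its start index is even. If $p$ is even, then $y = \rho(u)$ for the factor $u = v[p/2 : (p+|y|)/2]$, so $yy = \rho(uu)$ and $v$ contains a square. If $p$ is odd, the letters at positions $p-1$ and $p-1+|y|$ are both $0$, so $\rho(v)[p-1 : p-1+|y|] = \rho(v)[p-1+|y| : p-1+2|y|]$ is a square starting at the even position $p-1$, and the previous case applies. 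Hence $\rho$ is square-free.

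For (ii), let $s \prec \rho(t)$ be arbitrary; I must exhibit a square in $\rho(w)\,s$ ending at a letter of $s$. Since $\rho(t)$ carries a $0$ at every even index, the first index where $s$ and $\rho(t)$ differ is odd, say $2k+1$; then $s$ has prefix $\rho(t[:k])\,0$ and $s[2k+1] \le \rho(t)[2k+1] - 1 = t_k = L(w)[m]$ where $m := |w| + k$. Therefore $\rho(w)\,s$ has $\rho(L(w)[:m])\,0\,s[2k+1]$ as a prefix, and the letter $s[2k+1]$ occupies position $2m+1$, which lies inside the copy of $s$. If $s[2k+1] = 0$, this prefix ends in $00$, a square ending at position $2m+1$. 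If $s[2k+1] = c \ge 1$, put $b := c - 1 < L(w)[m]$; by the greedy characterization of $L$ (applicable since $m \ge |w|$ and $L(w)[:m]$ is square-free), the word $L(w)[:m]\,b$ has a square suffix $zz$ ending at its last letter. Applying $\rho$, which maps squares to squares and (being non-erasing) suffixes to suffixes, shows that $\rho(L(w)[:m]\,b) = \rho(L(w)[:m])\,0\,c$ has the square suffix $\rho(z)\rho(z)$, again ending at position $2m+1$; as this word is a prefix of $\rho(w)\,s$, the word $s$ introduces a square. This establishes (ii).

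I expect step (ii) to be the main obstacle, and within it the key move is to convert the \emph{defining minimality} of $L(w)$ --- that $L(w)[:m]\,b$ must carry a square suffix whenever $b < L(w)[m]$ --- into a square suffix of $\rho(L(w)[:m])\,0\,c$ using the structural properties of $\rho$; the remaining work is bookkeeping of positions and parities to ensure the square produced genuinely terminates inside the appended word.
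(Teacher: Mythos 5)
Your proof is correct, but it is organized differently from the paper's. The paper compares $v=L(\rho(x))$ and $w=\rho(L(x))$ directly by induction, two letters at a time: minimality of $v$ gives $v\preccurlyeq w$ (tacitly using that $\rho$ preserves square-freeness, which the paper treats as clear), and for the reverse direction it pulls a hypothetically smaller prefix $v[:2k+2]$ back through $\rho^{-1}$ to manufacture a square-free word with prefix $x$ that is lexicographically smaller than $L(x)$, a contradiction. You instead prove the equivalent statement that $\rho(w)$ generates $\rho(L(w))$ in the sense of Remark~\ref{Appending to front of lex-least words}: you supply an explicit parity proof that $\rho$ is a square-free morphism (a step the paper asserts without proof), and you establish irreducibility by pushing square suffixes \emph{forward} through $\rho$ --- converting the square suffix of $L(w)[:m]\,b$ guaranteed by the greedy characterization into a square suffix of $\rho(L(w)[:m])\,0\,(b+1)$, with the separate observation that $s[2k+1]=0$ already creates the square $00$. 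The two arguments are mirror images (pullback of smaller candidates versus pushforward of obstructions), exploiting the same correspondence between position $m$ of $L(x)$ and position $2m+1$ of $L(\rho(x))$; yours has the advantage of fitting squarely into the paper's ``generates/irreducible'' framework and of making the square-freeness of $\rho$ explicit, at the cost of invoking the greedy square-suffix characterization of $L$, which the paper's own proof avoids by working only with lexicographic minimality.
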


\begin{proof}
Let $x$ be any nonempty square-free word. If $|x|=\infty$, then $\rho(L(x))= \rho(x)=L(\rho(x))$.

On the other hand, if $|x|=n<\infty$, let $w=\rho(L(x))$ and $v=L(\rho(x))$. We proceed to prove that $w=v$ by induction, proving that if $w$ and $v$ agree on the first $2k$ letters, then they will also agree on the next two letters. For the Base case, it is easy to see that $w[:2n]=v[:2n]=\rho(x)$.

For the inductive step, assume that $w$ and $v$ agree on the first $2k$ letters, $k\ge n$. It is clear from the definition of these words that $w$ and $v$ are both square-free, and if $v\neq w$, then $v\prec w$.

This implies that $v[2k]\le w[2k]=\rho(L(x))[2k]=0$, so $w$ and $v$ agree at position $2k$. Now suppose toward a contradiction that $v[2k+1]<w[2k+1]$ and let $l=v[2k+1]$ and $y= L(x)[:k](l-1)$. We have
\[\rho(y)=\rho(L(x)[:k](l-1))=\rho(L(x)[:k])\,0\,l=L(\rho(x))[:2k+2]=v[:2k+2],\]
so $y$ is square-free. However \[y[k]=l-1=v[2k+1]-1<w[2k+1]-1=\rho(L(x))[2k+1]-1=L(x)[k].\]
Since $y[:k]=L(x)[:k]$, this implies that $y$ is a square-free word beginning with $x$ that is smaller than $L(x)$. This is a contradiction and so $w[2k+1] = v[2k+1]$. Therefore $w$ and $v$ agree at position $2k+1$, which proves the inductive step. 
\end{proof}

For example, Theorem~\ref{rho is L-commuting} implies that for $n\ge 1$, $\rho(L(n-1))=L(\rho(n-1))=L(0n)$. So if we determine the structure of the word $L(n-1)$, this Theorem gives us the structure of the word $L(0n)$ as the ruler morphism applied to $L(n-1)$. In particular for $n=1$ we have 
\[\rho(L(0))=\rho(\rho^{\infty}(0))=\rho^{\infty}(0)=L(01)=L(\rho(0)).\]
The ruler morphism is not $L$-commuting over the set of all words on $\N$. For example, \[L(\rho(00)) = L(0101) = 01012010\cdots \neq 01010201\cdots = \rho(0010\cdots) = \rho(L(00)).\]

\begin{theorem}\label{L-commuting over grounded words template}
Let $\Sigma$ be the set of all nonempty even-grounded square-free words.
Let $\phi$ be a non-erasing morphism satisfying the following conditions.
\begin{enumerate}
    \item For all $w\in \Sigma$, $\phi(w)$ is square-free.\label{L-commuting over grounded condition square-free}
    \item $\phi(0)$ generates $\phi(01)$.\label{L-commuting over grounded condition 0 to 01}
    \item $\phi(0n)$ generates $\phi(0n0)$ for all $n>0$.\label{L-commuting over grounded condition 0n to 0n0}
    \item $\phi(0n)^+$ generates $\phi(0 \, (n+1))$ for all $n>0$.\label{L-commuting over grounded condition 0n-plus to 0n+1}
\end{enumerate}
Then $\phi$ is $L$-commuting over $\Sigma$.
\end{theorem}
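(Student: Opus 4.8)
The plan is to prove $L(\phi(w))=\phi(L(w))$ for every nonempty even-grounded square-free word $w$ by showing that $\phi(L(w))$ is square-free and that $\phi(L(w))$ is lexicographically least among infinite square-free words with prefix $\phi(w)$. The square-freeness is immediate: $L(w)$ is square-free, and if $L(w)$ is finite then $L(w)=w$ is even-grounded and the claim follows from condition~\eqref{L-commuting over grounded condition square-free}; if $L(w)$ is infinite, every finite prefix of $L(w)$ is a nonempty even-grounded square-free word, so $\phi$ applied to it is square-free, hence $\phi(L(w))$ is square-free. So the real content is minimality, and the natural route is to mimic the induction in the proof of Theorem~\ref{rho is L-commuting}: write $w' = \phi(L(w))$, let $v = L(\phi(w))$, observe $v \preceq w'$, and prove by induction on the length of the common prefix that in fact $v = w'$.

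The key idea is to use the chunk structure of $w'=\phi(L(w))$. Suppose $v$ and $w'$ agree on a prefix and then first differ; since $w'$ is square-free and $v=L(\phi(w))\preceq w'$, at the first point of disagreement $v$ has a strictly smaller letter. I want to locate this position relative to the chunk boundaries of $w'$. Since $L(w)$ is even-grounded (here I would invoke that the fixed point of $\rho$, and more generally $L$ applied to an even-grounded word, stays even-grounded — this needs to be established or cited, and is where I expect to lean on facts about $L(\varepsilon)$), the letters of $L(w)$ at even positions are $0$ and at odd positions are positive. The four hypotheses are exactly tailored to the four local situations one can be in: at the start of a $0$-chunk following the image of a positive letter (condition~\eqref{L-commuting over grounded condition 0n to 0n0} with the $0$ that follows $n$), at the start of a chunk $\phi(n)$ for $n>0$ where the ruler sequence dictates incrementing — that is, $L(w)$ locally looks like $0n$ being replaced by $0(n+1)$ (condition~\eqref{L-commuting over grounded condition 0n-plus to 0n+1}), and at the very beginning after $\phi(0)$ (condition~\eqref{L-commuting over grounded condition 0 to 01}). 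Concretely: if $v$ agrees with $w'$ through the image of a prefix $uk$ of $L(w)$, then the next letter of $L(w)$ is determined by irreducibility, and I would argue that the corresponding chunk $\phi$-image cannot be replaced by anything lexicographically smaller without creating a square, because "$\phi(\text{current prefix of }L(w))$ generates $\phi(\text{current prefix} \cdot \text{next letter})$" — which is precisely what conditions~\eqref{L-commuting over grounded condition 0 to 01}–\eqref{L-commuting over grounded condition 0n-plus to 0n+1} assert for the three types of transitions, once combined via Remark~\ref{Appending to front of lex-least words} to transplant the "generates" relation from the local window $0n$, $0n0$, etc., to the full word $\phi(L(w))$.

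More carefully, the induction would be structured on chunks rather than on single letters. Assume $v$ and $w'$ agree on $\phi(L(w)[:m])$ for some $m \ge |w|$. Let $k=L(w)[m]$ be the next letter of $L(w)$; by definition of $L$, the prefix $L(w)[:m]$ generates $L(w)[:m+1]=L(w)[:m]\,k$, and moreover $k$ is the least letter that keeps $L(w)[:m]\,k$ square-free. I want to conclude that $\phi(L(w)[:m])$ generates $\phi(L(w)[:m]\,k)$, i.e.\ the next $|\phi(k)|$ letters of $v$ must equal $\phi(k)$. The transition from $L(w)[:m]$ to $L(w)[:m]k$ is, up to the local window of the last two letters, one of: $0 \to 01$ (only at the start), $0n \to 0n0$ (when $m$ is odd, so the last letter is a positive $n$ and the next is $0$), or $0(n-1) \to 0n$ when the ruler pattern increments — but this last case always arises as $0(n-1)$ being the current suffix and $n$ being forced, which since $n-1 = (0(n-1))^+$-style reasoning... — here I would use condition~\eqref{L-commuting over grounded condition 0n-plus to 0n+1} in the form that $\phi(0(n-1))^+$ generates $\phi(0n)$, together with the observation that the actual local suffix of $L(w)$ before position $m$ ends in $0(n-1)$ and the square forced by any smaller choice at the $\phi$-level is detected inside $\phi$ of this bounded window. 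Each of these three cases gives, via Remark~\ref{Appending to front of lex-least words} (appending the rest of $\phi(L(w)[:m])$ in front, which is square-free), that $\phi(L(w)[:m])$ generates $\phi(L(w)[:m+1])$, completing the inductive step.

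The main obstacle I anticipate is the bookkeeping in the "increment" case~\eqref{L-commuting over grounded condition 0n-plus to 0n+1}: one must be sure that when the ruler-like dynamics of $L(w)$ force a letter to jump from $n-1$ to $n$, the reason no smaller letter works is a square that is \emph{local} enough to survive under $\phi$ — i.e.\ one needs that $\phi$ of a short window already records the obstruction, so that $\phi(0(n-1))^+$ generating $\phi(0n)$ really does transplant correctly. This requires knowing that $L(w)$, restricted to the relevant window, behaves like the ruler sequence (increments happen exactly at the expected places and the "why not smaller" square lives in the last two positions), which in turn rests on even-groundedness of $L(w)$ and on the structure of $L(\varepsilon)=\rho^\infty(0)$. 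A secondary subtlety is the very first chunk: one handles $m=|w|$ through $m=$ (length of $\phi(0\cdots)$ covered by the prefix $w$) using that $\phi(w)$ itself is a prefix of $v$ by definition of $L(\phi(w))$, so the induction can start at $m=|w|$ cleanly, and the $\phi(0)$-generates-$\phi(01)$ hypothesis is only needed if $|w|=1$. Apart from these points, the argument is a direct analogue of Theorem~\ref{rho is L-commuting}, with "the next two letters are $0\,\ell$" replaced by "the next chunk is $\phi(\ell)$."
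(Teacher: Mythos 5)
Your skeleton --- induction over the chunks of $\phi(L(w))$, with even positions of $L(w)$ handled by condition~\ref{L-commuting over grounded condition 0n to 0n0} and odd positions by conditions~\ref{L-commuting over grounded condition 0 to 01} and~\ref{L-commuting over grounded condition 0n-plus to 0n+1}, transplanted to the full word by Remark~\ref{Appending to front of lex-least words} --- is the paper's. The square-freeness half is fine; the paper establishes $L(w)\in\Sigma$ exactly as you anticipate, via Theorem~\ref{rho is L-commuting}. But the step you yourself flag as ``the main obstacle'' is a genuine gap, and the fix you sketch would not work. You propose to argue that the square ruling out a smaller candidate letter is \emph{local} --- that it ``lives in the last two positions'' and that $L(w)$ ``behaves like the ruler sequence in the relevant window'' --- so that the obstruction is detected inside $\phi$ of a bounded window. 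Neither locality claim is available or needed: the suffix square that forbids a candidate $n<l:=L(w)[k+1]$ in $L(w)[:k+1]\,n$ can be arbitrarily long, and nothing in the hypotheses controls its length.

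The mechanism that is missing from your proposal is an inner induction on the candidate letter $n=1,2,\dots,l$. Base case: the current prefix of $\phi(L(w))$ ends with $\phi(0)$, so condition~\ref{L-commuting over grounded condition 0 to 01} makes $\phi(1)$ irreducible at the end of $\phi(L(w)[:k+1]\,1)$ --- note that condition~\ref{L-commuting over grounded condition 0 to 01} is therefore invoked at \emph{every} odd position of $L(w)$, not only ``if $|w|=1$'' as you suggest. Inductive step: for $0<n<l$, minimality of $L(w)$ forces a square suffix in $L(w)[:k+1]\,n$; since any morphism carries squares to squares, $\phi(L(w)[:k+1]\,n)$ has a square ending inside the final $\phi(0n)$ --- no locality required. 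Thus $\phi(0n)$ at that position is simultaneously irreducible (inner inductive hypothesis) and square-introducing, so every word $\prec\phi(0n)^+$ introduces a square, i.e.\ $\phi(0n)^+$ is irreducible; condition~\ref{L-commuting over grounded condition 0n-plus to 0n+1} then upgrades this to irreducibility of $\phi(0(n+1))$. After $l$ steps this yields irreducibility of $\phi(0l)$ and closes the case. The pivot ``irreducible and introduces a square $\Rightarrow$ the successor is irreducible'' is precisely why condition~\ref{L-commuting over grounded condition 0n-plus to 0n+1} is phrased in terms of $\phi(0n)^+$ rather than $\phi(0n)$, and it is the idea your proposal lacks.
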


\begin{proof}
Let $w\in \Sigma$. We first show that $L(w)\in\Sigma$. If $w=0$, then $L(w)$ is the ruler sequence which is even-grounded and square-free. If $w\neq0$, $|w|\geq2$ and any even-length prefix of $w$ is the image of a nonempty square-free word under $\rho$. So there is a nonempty square-free word $w_0$ such that if $|w|$ is even, $w=\rho(w_0)$, and if $|w|$ is odd, $w[:-1]=\rho(w_0)$. If $|w|$ is odd, its last letter is 0 which is irreducible, so $L(w)=L(w[:-1])$. Thus in either case, $L(w)=L(\rho(w_0))=\rho(L(w_0))$ which is even-grounded and square-free. Here, the second equality is because $\rho$ is $L$-commuting over square-free words by Theorem~\ref{rho is L-commuting}. Since $L(w)\in\Sigma$, it follows from Condition~\ref{L-commuting over grounded condition square-free} that $\phi(L(w))$ is square-free. 

Now we need to show that $\phi(L(w))$ is irreducible. We proceed by induction, assume that for some number $k \geq |w|$, $\phi(L(w)[:k+1])$ is a prefix of $L(\phi(w))$. We break this next bit down into cases, letting $m = L(w)[k]$.

Case 1:
If $m \neq 0$ then $L(w)[k-1] = 0$ and so $L(w)[:k+1]$ ends in $0m$. In this case we have from Condition~\ref{L-commuting over grounded condition 0n to 0n0} that $\phi(L(w)[:k+1])$ generates $\phi(L(w)[:k+1] \, 0)$ by Remark~\ref{Appending to front of lex-least words}. As $L(w)[:k+2] = L(w)[:k+1] \, 0$ this would mean that $\phi(L(w)[:k+2])$ is a prefix of $L(\phi(w))$, demonstrating the inductive step.

Case 2:
In the case that $m = 0$ it must follow that $l := w[k+1] \neq 0$. We have that $\phi(L(w)[:k+1])$ ends with $\phi(0)$ which by Condition~\ref{L-commuting over grounded condition 0 to 01} we get that $\phi(1)$ at the end of $\phi(L(w)[:k+1] \, 1)$ is irreducible. If $l = 1$ then we're done. If not then we enter the following argument.

Let $n$ be such that $\phi(n)$ at the end of $\phi(L(w)[:k+1] \, n)$ is irreducible and $0 < n < l$. $L(w)[:k+1] \, n$ has $w$ as prefix and is lexicographically less than $L(w)$ so $L(w)[:k+1] \, n$ contains a square and therefore $\phi(L(w)[:k+1] \, n)$ contains a square. We have that $\phi(0n)$ at the end of $\phi(L(w)[:k+1]n)$ is irreducible meaning that all words lexicographically less than $\phi(0n)$ would introduce a square. But $\phi(0n)$ also introduces a square. This means that all words less than $\phi(0n)^+$ introduce a square, so $\phi(0n)^+$ is irreducible in $\phi(L(w)[:k+1]n)^+$. Then from Condition~\ref{L-commuting over grounded condition 0n-plus to 0n+1} we get that $\phi(0(n+1))$ at the end of $\phi(L(w)[:k+1] \, (n+1))$ is irreducible. Then by induction on $n$, $\phi(L(w)[:k+1]l)$ is irreducible.\\

This argument allows us to show that $L(\phi(w))$ and $\phi(L(w))$ agree on their $(k+1)$th chunk and so provides the inductive step. The base case is simply when $k = |w|$ which is trivial since $\phi(w)$ is a prefix of both words.
\end{proof}

\subsection{The morphism $\alpha$}\label{morphism alpha}

The morphism $\alpha$ is defined as follows.

\begin{definition*}
For all $n \geq 0$, let
\[\alpha(n) = \begin{cases} 
    EFE & \text{if $n=0$} \\
	B_1 \: R_4 \: C \: B_1 \: R_4 & \text{if $n=1$} \\
	\alpha(n-1)^+ \: R_{n+3} \: C \: \alpha(n-1)^+\: R_{n+3} & \text{if $n \geq 2$},
\end{cases}
\]
where
\begin{align*}
    C &= 0102030102, \\
    B_0 &= 0301 \; \psi_1(1010)[:-3] \; \psi_2(1010)[:-6] \; \psi_2(10)[:-12] \; 301020 ,\\
    B_1 &= \rho(B_0[7:-5]),\\
    E &= 0102B_01B_0[:-9],\\
    F &= B_0[-9:] 3010302 C 0103 C^+ 02,\; \text{and} \\
    G &= 010203012. 
\end{align*}
These definitions can be referenced in Section~\ref{glossary}. The lengths of the auxiliary words are $|C| = 10$, $|B_0| = 798$, $|B_1| = 1572$, $|E| = 1592$, $|F| = 42$ and $|G| = 9$. We can computationally verify that $E$ is the largest word that is both a prefix and a suffix of $\alpha(0)$. It is of interest to note that for $n>0$, $\alpha(n)$ has prefix $B_1$ and that $B_1$ has prefix $F^{++}$, so $F$ cannot be a prefix of any $\alpha(n)$. Also, the word $G$ is useful since it is the shortest word that generates $\alpha(0)$.
\end{definition*}

Over the next two subsections, we prove that $\alpha$ satisfies the conditions of Theorem~\ref{L-commuting over grounded words template}, which will imply that it is $L$-commuting over nonempty even-grounded square-free words.

\subsubsection{Condition~\ref{L-commuting over grounded condition square-free}: $\alpha$ is square-free over grounded words}

Condition~\ref{L-commuting over grounded condition square-free} of Theorem~\ref{L-commuting over grounded words template} says that $\alpha$ is square-free over even-grounded words. In this section, we prove the following stronger property.

\begin{theorem}\label{alpha square-free over grounded words}
$\alpha$ is square-free over grounded words.
\end{theorem}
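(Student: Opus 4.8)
The plan is to follow the template already used to establish Propositions~\ref{psi_1 square-free over grounded} and~\ref{psi_2 square-free over grounded}. By the corollary of Theorem~\ref{chunk-identifiable condition for square-freeness}, it suffices to prove: whenever $w$ is grounded and square-free and $\alpha(w)$ contains a square $yy$, then $\alpha(w)$ contains a square satisfying the three properties of that theorem, which would force $w$ to contain a square --- a contradiction. Two preliminary facts are needed. First, $\alpha$ is letter-injective: $|\alpha(n)|$ is strictly increasing for $n \geq 1$ (from $|\alpha(n)| = 2|\alpha(n-1)| + 2^{n+4} + 10$), and $\alpha(0)$ is distinguished from every $\alpha(n)$ with $n \geq 1$ because each such $\alpha(n)$ is a concatenation of even-length even-grounded words, hence even-grounded, whereas $\alpha(0) = EFE$ is not grounded (since $E$ contains the non-grounded word $\psi_1(0)$). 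Second, $\alpha(\ell)$ is square-free for every letter $\ell$: this holds for $\ell = 0, 1$ by direct computation, and for $\ell \geq 2$ by induction on the recursion $\alpha(n) = \alpha(n-1)^+\, R_{n+3}\, C\, \alpha(n-1)^+\, R_{n+3}$, using that the top letter $n+3$ of $\alpha(n)$ occurs only at the end of each of the two copies of $\alpha(n-1)^+$ and each of the two copies of $R_{n+3}$; this localizes any hypothetical square, and the remaining finitely many boundary cases reduce to square-freeness of $\alpha(n-1)^+$, $R_{n+3}$, $C$ and of a few short junction words (using that $C$ is not a factor of the ruler sequence).

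Next come the ``locating'' lemmas, mirroring Lemmas~\ref{psi_1(l) is a l-chunk} and~\ref{psi_1(n) not factor of psi_1(k)} and their $\psi_2$-counterparts. The central one is Lemma~\ref{E is prefix or suffix of alpha(0)}: for $w$ grounded and square-free, $E$ occurs in $\alpha(w)$ only as a prefix or a suffix of a $0$-chunk; from this, the two occurrences of $E$ in $\alpha(0) = EFE$ pin down the chunk boundaries, so $\alpha(0)$ occurs in $\alpha(w)$ only as a whole $0$-chunk. We also need that for $\ell \geq 1$, every occurrence of $\alpha(\ell)$ in $\alpha(w)$ is an $\ell$-chunk: this uses that $\alpha(\ell)$ is even-grounded and long, so it cannot straddle a chunk boundary (no such factor is grounded), together with the analogue of Lemma~\ref{psi_1(n) not factor of psi_1(k)} showing $\alpha(\ell)$ is not a factor of $\alpha(k)$ for $k > \ell$ (via the top letters, the fractal recursion, and $C$ not being a ruler factor). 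Finally we need the analogue of Lemmas~\ref{psi_1(0n0) square-free} and~\ref{psi_2(0n0) square-free}: $\alpha(0n0)$ is square-free for all $n \geq 1$; here a computational check that $\alpha(010)$ is square-free handles $n = 1$, and for $n \geq 2$ the three occurrences of $E$ at controlled positions in $\alpha(0n0) = [EFE][\alpha(n)][EFE]$, together with control on the top letter $n+3$ of $\alpha(n)$, rule out any square.

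With these lemmas the main argument proceeds exactly as in the proofs of Propositions~\ref{psi_1 square-free over grounded} and~\ref{psi_2 square-free over grounded}. Given a square $yy$ in $\alpha(w)$: (i) each half contains a whole chunk --- if not, after disposing of the case of no whole chunks ($yy$ a factor of $\alpha(0n0)$, impossible) the center of $yy$ lies in a middle $0$-chunk of $\alpha(0n0k0)$, and matching the prefix/suffix decomposition of $\alpha(0)$ across the two halves forces the first letters of $\alpha(n)$ and $\alpha(k)$ to agree, hence $n = k$, contradicting that $w$ contains no $0n0n0$; (ii) both halves have the same chunk decomposition --- since each half has a whole chunk and $w$ is grounded, every whole chunk in a half is adjacent to a whole $0$-chunk, and the locating lemmas then identify it as the same whole chunk in the other half; (iii) the partial-chunk condition --- if either half ends in a partial chunk, then since the preceding whole chunk is a common whole $0$-chunk, the two final partial chunks are prefixes of nonzero chunks whose values coincide given the already-matched remainder of the two halves, or they are both partial $0$-chunks. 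Theorem~\ref{chunk-identifiable condition for square-freeness} then yields a square in $w$, the desired contradiction.

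The main obstacle is the locating lemma for $E$ (Lemma~\ref{E is prefix or suffix of alpha(0)}) and, tied to it, the square-freeness of $\alpha(0n0)$: $E$ has length $1592$ and is assembled from images of $\psi_1$, $\psi_2$ and small constants, so ruling out all the ways $E$ could occur straddling a chunk boundary or inside a nonzero chunk requires isolating short ``signature'' factors of $E$ --- analogues of the factor $21$ used for $\psi_1$ and of $202102$, $202101202$ used for $\psi_2$ --- that can occur only in predictable positions, then closing the remaining cases by finite computations on the explicit word $\alpha(0)$. A secondary difficulty, absent for $\psi_1$ and $\psi_2$, is that all nonzero chunks $\alpha(n)$, $n \geq 1$, begin with the common long word $B_1$, so a short prefix of a nonzero chunk does not reveal its value; the distinguishing information sits at the end (the tail $R_{n+3}$ and the unique top letter), which makes step~(iii) and the collapse $n = k$ in step~(i) more delicate than their $\psi_1/\psi_2$ counterparts.
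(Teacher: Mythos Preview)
Your overall template---reduce to Theorem~\ref{chunk-identifiable condition for square-freeness} via locating lemmas for $E$ and square-freeness of $\alpha(0n0)$---matches the paper's, but two of your steps import the $\psi_2$ argument without accommodating the asymmetry you yourself flag in your final paragraph, and as written they fail. In step~(i), ``the first letters of $\alpha(n)$ and $\alpha(k)$ agree, hence $n=k$'' is blocked by the common prefix $B_1$; in step~(iii), the final partial chunks are prefixes of nonzero $\alpha(n)$ and therefore cannot determine $n$. The paper fixes both by exploiting that the \emph{last} letter of $\alpha(n)$ is $n+3$ (Lemma~\ref{alpha shared prefix and last letter}). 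For condition~3 of Theorem~\ref{chunk-identifiable condition for square-freeness} it matches the \emph{initial} partial chunks, whose final letter identifies the chunk. For condition~1 it shows that if neither half has a whole chunk then $yy$ spans at most three chunks and hence lies in $\alpha(n0k)$ (not $\alpha(0n0k0)$); the two copies of $E$ in $\alpha(n0k)$---one in each half, since $E$ is a factor of $y$---are preceded respectively by the last letter $n+3$ of $\alpha(n)$ and the last letter $2$ of $F$, which disagree.

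A smaller issue: your strong locating claim ``every occurrence of $\alpha(\ell)$ in $\alpha(w)$ is an $\ell$-chunk'' is not proved in the paper, and your justification (``no factor straddling a chunk boundary is grounded'') is false at an $\alpha(0)\alpha(n)$ boundary, since $\alpha(0)$ ends in $12$ and the one-letter overlap $2\,\alpha(n)$ is odd-grounded. The paper avoids this strong claim: it first shows $E$ is a factor of $y$, then uses only the weaker Corollary~\ref{Ealpha(l) and alpha(l)E indicate l-chunks} (if $E\alpha(\ell)$ or $\alpha(\ell)E$ occurs, that $\alpha(\ell)$ is an $\ell$-chunk). This is enough for step~(ii) because every whole nonzero chunk in either half is adjacent to a copy of $E$ lying entirely in $y$.
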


Theorem~\ref{alpha square-free over grounded words} will be shown to follow from Theorem~\ref{chunk-identifiable condition for square-freeness}. This requires the results about $\alpha$ shown in the following lemmas.

\begin{lemma}\label{alpha shared prefix and last letter}
	Let $n>k>0$. Then $\alpha(n)$ and $\alpha(k)$ end with different letters and neither is a prefix of the other.
\end{lemma}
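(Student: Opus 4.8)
The plan is to prove the two assertions separately, each by exploiting the recursive definition of $\alpha$, with the $^+$ operation tracked carefully. The last-letter statement will come from the fact that for positive arguments $\alpha(m)$ ends with $R_{m+3}$, and the not-a-prefix statement will come from showing that $\alpha(n)$ always begins with $\alpha(k)^+$ when $k<n$, which already differs from $\alpha(k)$.

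\textbf{Last letters.} For every $m\ge 1$ the word $\alpha(m)$ ends with $R_{m+3}$: this is immediate from the base case $\alpha(1)=B_1\,R_4\,C\,B_1\,R_4$ and from $\alpha(m)=\alpha(m-1)^+\,R_{m+3}\,C\,\alpha(m-1)^+\,R_{m+3}$ for $m\ge 2$. Using $R_j=R_{j-1}R_{j-1}^+$ and $R_0=0$, a one-line induction shows the last letter of $R_j$ is $j$, so $\alpha(m)$ ends with the letter $m+3$. Since $n>k>0$ gives $n+3\neq k+3$, the words $\alpha(n)$ and $\alpha(k)$ end with different letters.

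\textbf{Neither is a prefix of the other.} The lengths $|\alpha(m)|$ are strictly increasing in $m\ge 1$, since each recursion step more than doubles the length; hence $\alpha(n)$, being longer, cannot be a prefix of $\alpha(k)$ for $n>k$, and it remains to show $\alpha(k)$ is not a prefix of $\alpha(n)$. The key step is to prove: \emph{for every $m\ge 2$ and every $j$ with $1\le j<m$, the word $\alpha(m)$ has $\alpha(j)^+$ as a prefix.} This goes by induction on $m$. For $m=2$ the only case is $j=1$, which is exactly the definition $\alpha(2)=\alpha(1)^+\,R_5\,C\,\alpha(1)^+\,R_5$. For the step $m\to m+1$: by definition $\alpha(m+1)$ has prefix $\alpha(m)^+$, handling $j=m$; for $j<m$, the inductive hypothesis gives that $\alpha(m)$ has prefix $\alpha(j)^+$, whose length is $|\alpha(j)|\le |\alpha(m)|-1$ by strict monotonicity of lengths, so $\alpha(j)^+$ is a prefix of the first $|\alpha(m)|-1$ letters of $\alpha(m)$; those letters coincide with the first $|\alpha(m)|-1$ letters of $\alpha(m)^+$, so $\alpha(j)^+$ is a prefix of $\alpha(m)^+$, hence of $\alpha(m+1)$. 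Granting this, for $n>k\ge 1$ we have $n\ge 2$, so $\alpha(n)$ has $\alpha(k)^+$ as a prefix; since $|\alpha(k)^+|=|\alpha(k)|$, if $\alpha(k)$ were also a prefix of $\alpha(n)$ we would get $\alpha(k)=\alpha(n)[:|\alpha(k)|]=\alpha(k)^+$, contradicting that $\alpha(k)$ and $\alpha(k)^+$ differ in their last letter.

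\textbf{Expected obstacle.} There is no deep obstacle; the only thing requiring care is the bookkeeping in the induction, namely keeping the ``off by one in the last letter'' under control (each $\alpha(m)^+$ agrees with $\alpha(m)$ except at position $|\alpha(m)|-1$, which lies far beyond $|\alpha(j)|$ for $j<m$) and making sure the base cases $m=2$, $k=1$ are covered by the recursion as written. The auxiliary facts about the word lengths ($|B_1|$, $|R_4|$, etc.) are only used to note strict monotonicity, so no numerical computation beyond that is needed.
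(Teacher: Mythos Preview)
Your proof is correct and follows essentially the same approach as the paper: both argue that $\alpha(m)$ ends with the last letter of $R_{m+3}$, namely $m+3$, and that $\alpha(n)$ begins with $\alpha(k)^+$ (so $\alpha(k)$ cannot also be a prefix), with length monotonicity handling the other direction. The only difference is that you spell out the induction showing $\alpha(n)$ has prefix $\alpha(k)^+$, whereas the paper asserts this in one line.
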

\begin{proof}
	By definition, $\alpha(n)$ ends with $R_{n+3}$ which has $n+3$ as its last letter, so $\alpha(n)$ and $\alpha(k)$ end with different letters. Consider that since $n>k$, $|\alpha(n)| > |\alpha(k)|$ so $\alpha(n)$ cannot be a prefix of $\alpha(k)$. Also, $\alpha(n)$ begins with $\alpha(k)^+$ so $\alpha(k)$ cannot be a prefix of $\alpha(n)$.
\end{proof}

The next three results show how $E$ can be used to restrict the placement of chunks throughout a word $\alpha(w)$ where $w$ is square-free and grounded.

\begin{lemma}\label{E is prefix or suffix of alpha(0)}
	If $w$ is a grounded square-free word, then every occurrence of $E$ in $\alpha(w)$ is a prefix or suffix of a $0$-chunk.
\end{lemma}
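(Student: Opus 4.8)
The plan is to exploit that every letter of $E$ is at most $3$ (immediate from the definitions of $E$, $B_0$, and $F$), so that every occurrence of $E$ in $\alpha(w)$ lies inside a maximal factor of $\alpha(w)$ all of whose letters are at most $3$; call such a factor a \emph{low factor}. The $0$-chunk $\alpha(0)=EFE$ is itself entirely low, of length $3226$. For $\ell\ge 1$, by contrast, $\alpha(\ell)$ contains letters $\ge 4$: it ends with $R_{\ell+3}$, whose last letter is $\ell+3\ge 4$, and it begins with $B_1$, which has prefix $F^{++}$, so (as $F$ ends in $02$) $B_1[41]=4$ while $B_1[:41]$ is low. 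Since $w$ is grounded and square-free, the chunks of $\alpha(w)$ alternate between $0$-chunks (each equal to $EFE$) and nonzero chunks $\alpha(\ell)$, $\ell\ge 1$; in particular no two $0$-chunks are adjacent, and a $0$-chunk is preceded and followed, if at all, by a nonzero chunk.

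The key step is a length bound on low factors meeting no $0$-chunk: such a factor lies inside a single nonzero chunk $\alpha(\ell)$, so it suffices to show that for every $\ell\ge 1$ every low factor of $\alpha(\ell)$ has length less than $|E|=1592$. I would argue by induction on $\ell$. The base case $\ell=1$ is a finite computation on $\alpha(1)=B_1\,R_4\,C\,B_1\,R_4$; here one also uses that the low factors internal to $B_1=\rho(B_0[7:-5])$ are short, which holds because $\rho$ at most doubles lengths and $B_0$ is a fixed word containing several $3$s. For $\ell\ge 2$, write $\alpha(\ell)=\alpha(\ell-1)^+\,R_{\ell+3}\,C\,\alpha(\ell-1)^+\,R_{\ell+3}$: each of $\alpha(\ell-1)^+$ and $R_{\ell+3}$ ends in the letter $\ell+3\ge 4$, so no low factor crosses the boundary to its right; the low factors of $\alpha(\ell-1)^+$ are exactly those of $\alpha(\ell-1)$ (the two words differ only in a final letter, which is $\ge 4$ either way) and so are short by induction; the low factors internal to $R_{\ell+3}$ have length at most $15$, since the ruler sequence has a letter $\ge 4$ at every position $\equiv 15\pmod{16}$; $C$ has length $10$; and the one remaining low factor straddling a block boundary is $C\,B_1[:41]$, of length $51$. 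Hence every low factor of $\alpha(\ell)$ has length at most $\max(M_1,51)<1592$, where $M_1$ is the (finite, computed) length of the longest low factor of $\alpha(1)$.

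It follows that any low factor long enough to contain an occurrence of $E$ must contain an entire $0$-chunk $EFE$. By maximality together with the alternation of chunks, such a low factor has the form $EFE\cdot B_1[:k]$ with $0\le k\le 41$: the chunk preceding the $0$-chunk, if any, is a nonzero chunk and ends in a letter $\ge 4$, so the factor does not extend left of $EFE$; the chunk following it, if any, is a nonzero chunk beginning with $B_1$, and $B_1[41]=4$. It then remains to verify, by a finite computation on the explicit word $EFE\cdot B_1[:41]$, that $E$ occurs in it only at positions $0$ and $|EF|=1634$. The occurrence at position $0$ is the initial $E$ of the $0$-chunk, hence a prefix of that $0$-chunk; the occurrence at position $1634$ occupies the final $E$ of the $0$-chunk, hence a suffix of it. (An occurrence lying wholly inside $EFE$ reduces to a check on $EFE$, in line with $E$ being the longest common prefix--suffix of $\alpha(0)$; an occurrence reaching from the final $E$ of $EFE$ into $B_1$ would force $E$ to have a period at most $41$, which the computation excludes.)

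The main obstacle is the inductive length bound for low factors inside nonzero chunks: one must track exactly which block boundaries of $\alpha(\ell)$ are broken by a letter $\ge 4$, and the base case $\ell=1$ requires understanding the internal low factors of $B_1$, hence of $B_0$. Once this bound is in place, the remainder follows from groundedness and two finite verifications on explicit words, on $\alpha(1)$ and on $EFE\cdot B_1[:41]$.
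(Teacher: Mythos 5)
Your argument is correct, but it takes a genuinely different route from the paper's. The paper's proof is a short parity argument: it computationally checks that $E$ occurs in $\alpha(0)=EFE$ only as a prefix and a suffix, then observes that $E$ is not grounded while $\alpha(n)$ is even-grounded for $n>0$ (so $E$ cannot sit inside a nonzero chunk), that $E$ ends in $12$ and hence has no nonempty even-grounded suffix (so it cannot straddle a $0n$ boundary), and that $\max(E)=3$ is smaller than the last letter $n+3$ of $\alpha(n)$ (so it cannot straddle an $n0$ boundary). You instead use only the bound $\max(E)=3$ together with a quantitative claim — that every maximal factor of a nonzero chunk with all letters $\le 3$ is shorter than $|E|$ — proved by induction on $\ell$ with a computed base case on $\alpha(1)$, and then a second finite verification on $EFE\,B_1[:41]$. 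Your induction is sound (the boundary analysis of $\alpha(\ell)=\alpha(\ell-1)^+R_{\ell+3}\,C\,\alpha(\ell-1)^+R_{\ell+3}$ is right: every block ends in a letter $\ge 4$ except $C$, and the only straddling low factor is $C\,B_1[:41]$), and the concluding periodicity/square-freeness argument ruling out shifted occurrences of $E$ inside $EFE\,B_1[:41]$ works since $|E|\ge 82$. The trade-off: the paper's groundedness argument is shorter and needs a single computation on $\alpha(0)$, but leans on the specific shape of $E$ (ungrounded, ending in $12$); your argument ignores the internal structure of $E$ entirely except for its maximum letter and its length, at the cost of an extra induction, a detailed accounting of where letters $\ge 4$ occur in $B_1$, and two larger finite checks.
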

\begin{proof}
	We can verify computationally that $E$ only occurs in $\alpha(0)$ as a prefix and a suffix. For $n>0$, $\alpha(n)$ is even-grounded, but $E$ is not grounded so it cannot be a factor of $\alpha(n)$. So any other occurrence of $E$ in $\alpha(w)$ must lie over the chunk boundary in $\alpha(0n)$ or $\alpha(n0)$ for some $n>0$.
	
	If $E$ lies over the chunk boundary in $\alpha(0n)$, then there must be a nonempty suffix of $E$ that is also a prefix of $\alpha(n)$. But every prefix of $\alpha(n)$ is even-grounded and $E$ has no nonempty even-grounded suffix since it ends with $1\ 2$.

	If $E$ lies over the chunk boundary in $\alpha(n0)$, then there must be a nonempty prefix of $E$ that is also a suffix of $\alpha(n)$. But $\max(E)=3$ and the last letter of $\alpha(n)$ is $n+3$ which is greater than 3.
\end{proof}

\begin{corollary}\label{alpha(0) is a 0-chunk}
	If $w$ is a grounded square-free word, then every occurrence of $\alpha(0)$ in $\alpha(w)$ is a $0$-chunk.
\end{corollary}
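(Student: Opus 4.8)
The plan is to build on Lemma~\ref{E is prefix or suffix of alpha(0)} together with the structural facts recorded in the definition of $\alpha$: that $\alpha(0)=EFE$, that $E$ occurs in $\alpha(0)$ only as its prefix and as its suffix, and that for every $n>0$ the chunk $\alpha(n)$ begins with $B_1$, which itself begins with $F^{++}$, so $F$ is never a prefix of $\alpha(n)$ when $n>0$. Throughout, $w$ is a fixed grounded square-free word, and I will only need to track the \emph{initial} factor $E$ of an occurrence of $\alpha(0)=EFE$; the trailing $E$ will play no role.

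First I would take an arbitrary occurrence of $\alpha(0)=EFE$ in $\alpha(w)$, say starting at position $p$, and apply Lemma~\ref{E is prefix or suffix of alpha(0)} to its leading factor $E$, which occupies positions $[p,\,p+|E|)$. The lemma gives two cases: this $E$ is either a prefix of some $0$-chunk or a suffix of some $0$-chunk. If it is a prefix of a $0$-chunk $X$, then $X$ — being an occurrence of $\alpha(0)$ — starts at position $p$ as well; since $X$ and our occurrence are both equal to $\alpha(0)$ and hence have the same length, they are literally the same factor occurrence, so our occurrence of $\alpha(0)$ \emph{is} the $0$-chunk $X$, which is exactly the conclusion. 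It remains to rule out the second case.

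Suppose instead that this $E$ is a suffix of a $0$-chunk $X$. Since $X=\alpha(0)=EFE$ and $E$ occurs in $\alpha(0)$ only as prefix and suffix, $X$ ends exactly at position $p+|E|$. Our occurrence of $\alpha(0)=EFE$ continues past $X$: its middle factor $F$ sits in positions $[p+|E|,\,p+|E|+|F|)$, which lie strictly beyond $X$ because $|EFE|=3226>1592=|E|$. Because $w$ is grounded, the letter of $w$ producing $X$ is followed by a nonzero letter $m$, and such a letter must exist, for otherwise $\alpha(w)$ would end at position $p+|E|$, which is too short to contain all of our occurrence. Hence the chunk of $\alpha(w)$ immediately after $X$ is $\alpha(m)$ with $m>0$, which has prefix $B_1$ and therefore prefix $F^{++}$; since $|F^{++}|=|F|=42\le 1572=|B_1|$, the letters in positions $[p+|E|,\,p+|E|+|F|)$ spell $F^{++}$. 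But our occurrence of $EFE$ forces those letters to spell $F$, and $F\neq F^{++}$ since their last letters differ. This contradiction eliminates the second case.

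The only real obstacle is keeping the position bookkeeping honest in the second case — namely confirming that the chunk after $X$ genuinely exists and is long enough to carry all of $F^{++}$ — but both points follow immediately from the grounded hypothesis and the stated lengths $|E|=1592$, $|F|=42$, $|B_1|=1572$, $|EFE|=3226$.
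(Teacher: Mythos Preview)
Your proof is correct. Both you and the paper start from Lemma~\ref{E is prefix or suffix of alpha(0)}, but the ways you finish differ slightly. The paper applies the lemma to \emph{both} copies of $E$ in the occurrence of $\alpha(0)=EFE$, argues that one must be a prefix and the other a suffix of a $0$-chunk, and then observes that since $|F|$ is smaller than every chunk length, no additional chunk can fit between them, forcing the occurrence to be a single $0$-chunk. You instead work only with the leading $E$: in the ``suffix'' case you look at the chunk immediately following and invoke the fact (recorded in the paper) that every $\alpha(m)$ with $m>0$ begins with $F^{++}\neq F$. Your route avoids juggling two $E$'s at once and reuses exactly the observation the paper later exploits in Corollary~\ref{Ealpha(l) and alpha(l)E indicate l-chunks}; the paper's route has the mild advantage of needing only a length comparison rather than the specific prefix $F^{++}$. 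Both are short and equally valid.
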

\begin{proof}
	Any occurrence of $\alpha(0)=EFE$ in $\alpha(w)$ begins and ends with $E$. There are only two occurrences of $E$ in $\alpha(0)$ so by Lemma~\ref{E is prefix or suffix of alpha(0)}, one $E$ must be a prefix of a 0-chunk and the other must be a suffix of a 0-chunk. $F$ is shorter than every chunk, so no other chunk can be contained in it. Therefore, this must be a whole 0-chunk.
\end{proof}

\begin{corollary}\label{Ealpha(l) and alpha(l)E indicate l-chunks}
	Let $w$ be a grounded square-free word and $l>0$. If $E\alpha(l)$ or $\alpha(l)E$ is a factor of $\alpha(w)$, then that occurrence of $\alpha(l)$ is an $l$-chunk.
\end{corollary}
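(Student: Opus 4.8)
The plan is to reduce the corollary to Lemma~\ref{E is prefix or suffix of alpha(0)} and Lemma~\ref{alpha shared prefix and last letter}. Suppose first that $E\alpha(l)$ is a factor of $\alpha(w)$. By Lemma~\ref{E is prefix or suffix of alpha(0)}, this occurrence of $E$ is either a prefix or a suffix of a $0$-chunk $\alpha(0)=EFE$ of $\alpha(w)$, and I would split into these two cases. Note that for $l>0$ we have $|\alpha(l)|\ge|B_1|=1572>42=|F|$, a fact I use several times below. The case in which $\alpha(l)E$ is a factor is the mirror image and I will indicate only the changes.

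If $E$ is a prefix of the $0$-chunk, then inside $\alpha(w)$ the letters immediately following $E$ begin with $FE$ (the remainder of that chunk) followed by whatever comes next, so $\alpha(l)$ is a prefix of $FE\cdots$; since $|\alpha(l)|>|F|$ this forces $F$ to be a prefix of $\alpha(l)$. But by the remark following the definition of $\alpha$, for $l>0$ the word $\alpha(l)$ begins with $B_1$, which begins with $F^{++}$, and $F^{++}\ne F$ (same length, differing last letter), so $F$ is not a prefix of $\alpha(l)$ -- a contradiction. Hence $E$ must be a suffix of the $0$-chunk. In that case the letters of $\alpha(w)$ immediately following $E$ start a new chunk (one must exist, since $\alpha(l)$ is nonempty), say an $m$-chunk $\alpha(m)$; because $w$ is grounded and the $0$-chunk just precedes it, the letter $m$ of $w$ producing it sits in a ``nonzero" position, so $m>0$. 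Now $\alpha(l)$ is a prefix of $\alpha(m)$ followed by whatever comes after, hence either $\alpha(l)$ is a prefix of $\alpha(m)$ or $\alpha(m)$ is a prefix of $\alpha(l)$; by Lemma~\ref{alpha shared prefix and last letter}, for positive $l\ne m$ neither can happen, so $l=m$. Since $|\alpha(l)|=|\alpha(m)|$, the occurrence of $\alpha(l)$ coincides exactly with that $m$-chunk, i.e.\ it is an $l$-chunk.

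For $\alpha(l)E$, the same dichotomy applies. If $E$ is a suffix of a $0$-chunk $EFE$, then $\alpha(l)$ is a suffix of $\cdots EF$, and since $|\alpha(l)|>|F|$ the word $F$ is a suffix of $\alpha(l)$; but $F$ ends in the block $02$, while by Lemma~\ref{alpha shared prefix and last letter} the last letter of $\alpha(l)$ is $l+3\ge 4$ -- a contradiction. So $E$ is a prefix of a $0$-chunk; then the chunk immediately preceding it is an $m$-chunk with $m>0$ (again by groundedness, and it must exist since $\alpha(l)$ is nonempty), $\alpha(l)$ is a suffix of $\cdots\alpha(m)$, so one of $\alpha(l),\alpha(m)$ is a suffix of the other, which by the ``different last letter" clause of Lemma~\ref{alpha shared prefix and last letter} forces $l=m$ and identifies the occurrence of $\alpha(l)$ with the preceding $m$-chunk.

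I expect no serious difficulty here; the only points requiring care are (i) invoking groundedness at exactly the right moment to guarantee that the adjacent chunk is a positive-letter chunk, so that Lemma~\ref{alpha shared prefix and last letter} (stated only for positive indices) applies, and (ii) the boundary subtlety that if the relevant $0$-chunk were the first or last chunk of $\alpha(w)$ there would be no adjacent chunk at all -- which is immediately ruled out because $\alpha(l)$ and $E$ are nonempty and sit on that side of the $0$-chunk.
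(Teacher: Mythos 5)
Your proposal is correct and follows essentially the same route as the paper's proof: locate the $E$ via Lemma~\ref{E is prefix or suffix of alpha(0)}, rule out one of the prefix/suffix placements using the fact that $F$ can be neither a prefix nor a suffix of $\alpha(l)$ (via $F^{++}$ versus $F$, respectively the last letters), and then identify the adjacent nonzero chunk as an $l$-chunk by Lemma~\ref{alpha shared prefix and last letter}. Your write-up is merely more explicit about the boundary and groundedness details than the paper's brief version.
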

\begin{proof}
	If $E\alpha(l)$ occurs in $\alpha(w)$, $E$ is followed by the prefix $F^{++}$ of $\alpha(l)$, so $E$ cannot be followed by $F$ and this cannot be the prefix of a 0-chunk. Thus, $E$ must be a suffix of a 0-chunk by Lemma~\ref{E is prefix or suffix of alpha(0)}. So a nonzero chunk begins at the start of $\alpha(l)$. By Lemma~\ref{alpha shared prefix and last letter}, this must be an $l$-chunk.
		
	An analogous argument uses the fact that $F$ cannot be the suffix of any $\alpha(l)$ to show the result for $\alpha(l)E$.
\end{proof}

The following is the final result that we need for proving Theorem~\ref{alpha square-free over grounded words}, that $\alpha$ is square-free over grounded words.

\begin{proposition}\label{alpha(0n0) square-free}
$\alpha(0n0)$ is square-free for all $n>0$.
\end{proposition}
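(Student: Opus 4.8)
The plan is to follow the template of Lemmas~\ref{psi_1(0n0) square-free} and~\ref{psi_2(0n0) square-free}. Since $\alpha(0)=EFE$, the word has chunk decomposition $\alpha(0n0)=[EFE]\,[\alpha(n)]\,[EFE]$, so a square $yy$ in it is either confined to a single chunk or straddles one of the two chunk boundaries. After disposing of the small cases ($n\in\{1,2\}$, where $\alpha(n)$ has its special form) by direct computation, it remains to prove, for $n\ge 3$, that (a) $\alpha(n)$ is square-free and (b) no square of $\alpha(0n0)$ straddles a chunk boundary.

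For (a) I would induct on $n$, the cases $n\le 2$ being computational. For $n\ge 3$, the recursion $\alpha(n)=\alpha(n-1)^+R_{n+3}\,C\,\alpha(n-1)^+R_{n+3}$ shows the maximal letter $n+3$ occurs exactly four times in $\alpha(n)$: at the end of each copy of $\alpha(n-1)^+$ (since $\max(\alpha(n-1))=n+2$) and at the end of each copy of $R_{n+3}$. A square in $\alpha(n)$ contains an even number of these. If none, it lies inside $\alpha(n-1)[:-1]$, $R_{n+3}[:-1]$, or $C\,\alpha(n-1)[:-1]$, each square-free by the inductive hypothesis, by square-freeness of ruler prefixes, and by a short computation on $C\,\alpha(n-1)$. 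If two or four, then corresponding occurrences of $n+3$ have incompatible neighbours — among $\alpha(n-1)[:-1]$, $R_{n+3}$, and $C$, which pairwise disagree close to the shared $n+3$ — and comparing these contexts with the length of $y$ forced by their positions gives a contradiction, exactly as in Case~2 of the proof of Lemma~\ref{psi_2(n0) square-free}.

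For (b), suppose $yy$ straddles a boundary. Because $0n0$ is grounded and square-free, Lemma~\ref{E is prefix or suffix of alpha(0)} forces every occurrence of $E$ in $\alpha(0n0)$ to be a prefix or suffix of one of the two $0$-chunks, so there are exactly four; the letter following such an occurrence is $F$, the prefix $F^{++}$ of $\alpha(n)$, or nothing, and since $F\neq F^{++}$ the factor $E\,F^{++}$ occurs exactly once — at the first boundary. Likewise $\max(EFE)=3$, so $n+3$ occurs only inside the $\alpha(n)$-chunk; at its internal occurrences it is followed by a prefix of $R_{n+3}$ or of $C$, neither of which begins with $G=\alpha(0)[:9]=010203012$, whereas at its last occurrence it is followed by $E$, which does. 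Hence $(n+3)\,G$ occurs exactly once — at the second boundary.

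Finally I would rule out the straddling square. If the centre of $yy$ lies strictly to one side of a boundary and $yy$ is wide enough to span the unique factor located there, that factor lies inside a single half of $yy$ and so occurs twice, a contradiction; this leaves finitely many ``narrow'' positions, handled by computation. If the centre coincides with a boundary, one half of $yy$ is simultaneously a suffix of $\alpha(0)\,\alpha(n)$ and a prefix of the even-grounded word $EFE$: at the second boundary that forces the half to end in $n+3$, impossible since $\max(EFE)=3<n+3$, and at the first boundary it forces a proper suffix of $EFE$ to begin with a prefix of $\alpha(n)=B_1\cdots$, which a short computation excludes. I expect this last step — the boundary case analysis, together with the bookkeeping for (a) — to be the main obstacle: one must extract the correct short prefixes and suffixes of the bulky words $E$, $F$, $B_0$, $B_1$, $C$ and of the ruler prefixes, and check that no short suffix of $\alpha(n)$ recombines with a short prefix of $EFE$ to reproduce a boundary factor elsewhere, even though every individual check is finite.
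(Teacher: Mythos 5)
Your skeleton (induct for square-freeness of $\alpha(n)$ via the four occurrences of the maximal letter, then analyse boundary-straddling squares in $\alpha(0n0)$) matches the paper's, but two of your key claims are false and the hardest case slips through both halves of your argument. In part (a), the assertion that the contexts of the four occurrences of $n+3$ ``pairwise disagree close to the shared $n+3$'' is the opposite of the truth: by Lemma~\ref{a, S1 suffix}, $R_{n+3}[6:]$ is a suffix of $\alpha(n-1)^+$, so the left contexts of consecutive occurrences of $n+3$ agree for $2^{n+3}-6$ letters, and the right contexts $R_{n+3}$ and $C$ agree for $5$ more. The actual contradiction (Step~2 of Lemma~\ref{sigma alphan+ square-free}) is that the total available agreement, $2^{n+3}-6+5=2^{n+3}-1$, falls short of the required period $2^{n+3}$ by exactly one letter; this near-miss counting is the crux and is absent from your sketch. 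Relatedly, square-freeness of $C\,\alpha(n-1)$ is not ``a short computation'' for arbitrary $n$ --- it is its own four-step induction in the paper and must be carried along in your inductive hypothesis.

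In part (b), the dichotomy ``either $yy$ spans the unique landmark factor, or it sits in a bounded window handled by computation'' is a false dichotomy. A square can straddle a boundary, extend arbitrarily far into the $n$-chunk, and still begin \emph{after} the start of the landmark $E\,F^{++}$, so it contains neither landmark inside a single half and is not of bounded length. The concrete escapee is the candidate square $x\,\alpha(n)\,z$ containing the whole $n$-chunk with $x$ a grounded suffix of $\alpha(0)$ and $z$ a grounded prefix of $\alpha(0)$: since $\alpha(n)$ is grounded while $E$ ends in $12$, one computes that $x=2$ and $z$ is a prefix of $01020301$, giving a square of length $|\alpha(n)|+O(1)$ that grabs only one letter of the first $0$-chunk and at most eight of the second. (Note also that $EFE$ is not even-grounded, contrary to your last paragraph.) The paper disposes of this case by observing that $zx$ would have to occupy the exact centre of $\alpha(n)$, which is the middle of $C=0102030102$, and checking that none of the four grounded candidates $02$, $0102$, $010202$, $01020302$ occurs there; your proposal needs some counterpart to this argument before it closes.
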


The proof requires Lemmas~\ref{S_1,n suffix} to \ref{alpha(n0) square-free}. Lemmas~\ref{S_1,n suffix} to \ref{S1,n occurrences} show some results about the structure of $\alpha(n)$. Lemmas~\ref{sigma alphan+ square-free} and \ref{alpha(n) square-free} show that $\alpha(n)$ is square-free for all $n$. Finally, Lemmas~\ref{alpha(0n) square-free} and \ref{alpha(n0) square-free} show that $\alpha(0n)$ and $\alpha(n0)$ are square-free, which is then used to prove Proposition~\ref{alpha(0n0) square-free}.

\begin{lemma}\label{S_1,n suffix}
For all $n \geq 1$, $0203R_3^+R_4^+\cdots R_{n+2}^+R_{n+3}$ is a suffix of $\alpha(n)$.
\end{lemma}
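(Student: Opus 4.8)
The plan is to induct on $n\geq 1$, using the recursive definition of $\alpha$ together with the elementary fact that the successor operation $w\mapsto w^+$ changes only the final letter of $w$.

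\emph{Base case $n=1$.} Here the product $R_3^+R_4^+\cdots R_{n+2}^+$ degenerates to the single factor $R_3^+$, so the claim is that $0203\,R_3^+\,R_4$ is a suffix of $\alpha(1)=B_1\,R_4\,C\,B_1\,R_4$. Since $\alpha(1)$ ends in $R_4$ and $|0203\,R_3^+|=12<|B_1|$, this is equivalent to checking that $B_1$ ends in $0203\,R_3^+=020301020104$. This follows by a direct computation from the definitions: $B_1=\rho(B_0[7:-5])$, and unwinding the definitions of $B_0$, $\psi_1$, and $\psi_2$ shows that $B_0[7:-5]$ ends in $120103$, whence $B_1$ ends in $\rho(120103)=020301020104$.

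\emph{Inductive step.} Suppose $\alpha(n)$ has suffix $S_n:=0203\,R_3^+\,R_4^+\cdots R_{n+2}^+\,R_{n+3}$; we must show $\alpha(n+1)$ has suffix $S_{n+1}=0203\,R_3^+\,R_4^+\cdots R_{n+3}^+\,R_{n+4}$. The last letter of $S_n$ is the last letter of $R_{n+3}$, and since $R_{n+3}^+$ is by definition $R_{n+3}$ with its last letter incremented, we have $S_n^+=0203\,R_3^+\cdots R_{n+2}^+\,R_{n+3}^+$; as $(\cdot)^+$ affects only the last letter, $\alpha(n)^+$ inherits the suffix $S_n^+$ from the induction hypothesis (and automatically $|S_n^+|=|S_n|\leq|\alpha(n)|$, so there is no issue of the suffix overrunning $\alpha(n)^+$). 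Now $n+1\geq 2$, so $\alpha(n+1)=\alpha(n)^+\,R_{n+4}\,C\,\alpha(n)^+\,R_{n+4}$, which ends in $\alpha(n)^+\,R_{n+4}$ and hence in $S_n^+\,R_{n+4}$. But $S_n^+\,R_{n+4}$ is exactly $S_{n+1}$, completing the induction.

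There is essentially no obstacle here. The inductive step is immediate once one observes that $S_n^+$ differs from $S_n$ in precisely the way $R_{n+3}^+$ differs from $R_{n+3}$, so that tacking $R_{n+4}$ onto the known suffix of $\alpha(n)^+$ recovers $S_{n+1}$ verbatim. The only point requiring care is the finite verification in the base case, which is a routine computation with the explicitly given words $B_0$, $\psi_1(1010)$, $\psi_2(1010)$, and $\psi_2(10)$.
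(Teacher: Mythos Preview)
Your proof is correct and follows essentially the same inductive argument as the paper's own proof. The only difference is cosmetic: the paper simply states that the base case $n=1$ ``can be checked directly,'' whereas you unwind the definitions of $B_1$ and $B_0$ to identify the specific suffix $120103$ of $B_0[7:-5]$ that yields $0203R_3^+$ under $\rho$.
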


\begin{proof}
We proceed by induction. For $n=1$ we can check directly that $0203R_{3}^+R_{4}$ is a suffix of $\alpha(1)$. For the inductive step, assume that for some $k \geq 1$, we have that $0203R_3^+\cdots R_{k+2}^+R_{k+3}$ is a suffix of $\alpha(k)$ and recall that $$\alpha(k+1) = \alpha(k)^+ \: R_{k+4} \: C\: \alpha(k)^+\: R_{k+4}.$$ Since $\alpha(k)$ ends with $0203R_3^+\cdots R_{k+2}^+R_{k+3}, $ we have that $\alpha(k+1)$ must end with $0203R_3^+\cdots R_{k+2}^+R_{k+3}^+R_{k+4}$, which concludes the proof. 
\end{proof}

\begin{lemma}\label{a, S1 suffix}
For all $n\ge 1$, $R_{n+4}[i:]$ is a suffix of $\alpha(n)^+$ if and only if $i \geq 6$.
\end{lemma}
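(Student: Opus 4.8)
The plan is to line up two explicit self-similar factorizations — one for $R_{n+4}$, one for the tail of $\alpha(n)^+$ — and read off exactly where they start to agree. First I would iterate the ruler identity $R_{m+1}=R_mR_m^+$ all the way down to $R_2=0102$, which gives $R_{n+4}=R_2R_2^+R_3^+R_4^+\cdots R_{n+3}^+=0102\,0103\,R_3^+R_4^+\cdots R_{n+3}^+$ (using $R_2^+=0103$). In particular $R_{n+4}$ begins with the eight letters $0,1,0,2,0,1,0,3$, so $R_{n+4}[5]=1$ and $R_{n+4}[6:]=03\,R_3^+R_4^+\cdots R_{n+3}^+$, a word of length $2^{n+4}-6$.

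Next I would invoke Lemma~\ref{S_1,n suffix}: $\alpha(n)$ ends with $0203\,R_3^+R_4^+\cdots R_{n+2}^+R_{n+3}$, so $\alpha(n)^+$, which differs from $\alpha(n)$ only in its last letter, ends with $0203\,R_3^+R_4^+\cdots R_{n+2}^+R_{n+3}^+$, a word of length $2^{n+4}-4$. Since $\alpha(n)$ has further letters before this block (indeed $|\alpha(n)|>2^{n+4}$), the letter of $\alpha(n)^+$ immediately preceding the block is defined, and it is the ``$2$'' of the $0203$.

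For the ``if'' direction I would observe that the tails $R_3^+R_4^+\cdots R_{n+3}^+$ appearing in the two factorizations coincide, so $R_{n+4}[6:]=03\,R_3^+\cdots R_{n+3}^+$ is precisely the suffix $0203\,R_3^+\cdots R_{n+3}^+$ of $\alpha(n)^+$ with its leading $02$ stripped off; hence $R_{n+4}[6:]$ is a suffix of $\alpha(n)^+$, and so is $R_{n+4}[i:]$ for every $i\geq 6$, being a suffix of $R_{n+4}[6:]$. For the ``only if'' direction I would compare the suffixes of length $2^{n+4}-5$: that of $\alpha(n)^+$ is $203\,R_3^+\cdots R_{n+3}^+$ (drop the first letter of the block above), whereas $R_{n+4}[5:]=103\,R_3^+\cdots R_{n+3}^+$; their leading letters $2$ and $1$ disagree, so $R_{n+4}[5:]$ is not a suffix of $\alpha(n)^+$, and therefore neither is $R_{n+4}[i:]$ for any $i\leq 4$, since each such word has $R_{n+4}[5:]$ as a suffix.

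The only point requiring genuine care is the bookkeeping — verifying that the two self-similar expansions really do share the common tail $R_3^+\cdots R_{n+3}^+$, so that the mismatch lands exactly at index $5$ of $R_{n+4}$; everything else is a length count. It is also worth spot-checking the base case $n=1$ directly, since Lemma~\ref{S_1,n suffix} is itself verified there by a direct computation.
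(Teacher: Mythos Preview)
Your proof is correct and follows essentially the same approach as the paper: both arguments expand $R_{n+4}$ via the ruler recursion to $01020103\,R_3^+R_4^+\cdots R_{n+3}^+$, invoke Lemma~\ref{S_1,n suffix} to see that $\alpha(n)^+$ ends with $0203\,R_3^+\cdots R_{n+3}^+$, and then compare the two to find that they share the tail $03\,R_3^+\cdots R_{n+3}^+$ but disagree one letter earlier. (One small wording slip: the sentence about ``the letter of $\alpha(n)^+$ immediately preceding the block'' being the `$2$' of $0203$ is garbled, since that $2$ is inside the block; but your subsequent explicit comparison of the length-$(2^{n+4}-5)$ suffixes is correct and is all that is needed.)
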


\begin{proof}
We can write $R_{n+4}$ as 
\begin{align*}
    R_{n+4} &= R_3R_3^+R_4^+\cdots R_{n+2}^+R_{n+3}^+ \\
    &= 01020103R_3^+R_4^+\cdots R_{n+2}^+R_{n+3}^+.
\end{align*}
Also, it follows from Lemma~\ref{S_1,n suffix} that $\alpha(n)^+$ ends with $0203R_3^+\cdots R_{n+2}^+R_{n+3}^+$.  These two words are identical starting with the $03R_3^+$, but not including any letters before.  Therefore, $R_{n+4}[i:]$ is a suffix of $\alpha(n)^+$ if and only if $i \geq 6$.
\end{proof}

\begin{lemma}\label{S1,n occurrences}
For all $n \geq 1$, $\alpha(n)$ ends in $n+3$, and does not contain any letter greater than $n+3$. For $n\geq2$, $\alpha(n)$ contains exactly four occurrences of $n+3$.
\end{lemma}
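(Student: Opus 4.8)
The plan is to prove the first two assertions — that $\alpha(n)$ ends in the letter $n+3$ and contains no letter larger than $n+3$ — by induction on $n\ge 1$, and then to read off the count of four occurrences of $n+3$ (for $n\ge 2$) from the very same bookkeeping, with no separate argument needed.

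For the inductive step ($n\ge 2$) I would work from the recursion $\alpha(n)=\alpha(n-1)^+\,R_{n+3}\,C\,\alpha(n-1)^+\,R_{n+3}$. The induction hypothesis says $\alpha(n-1)$ ends in $(n-1)+3=n+2$ and has no letter exceeding $n+2$. Since the successor operation $w\mapsto w^+$ changes only the final letter, $\alpha(n-1)^+$ ends in $n+3$, contains no letter exceeding $n+3$, and contains the letter $n+3$ exactly once. The word $R_{n+3}$ has the same three properties: by definition it is the prefix of the ruler sequence up to and including the first occurrence of $n+3$, so it ends in $n+3$ and every earlier letter is strictly smaller. Because $\max(C)=\max(0102030102)=3<n+3$, concatenating the five factors shows $\alpha(n)$ ends in $n+3$ and has maximum letter $n+3$ (completing the induction), and moreover that $n+3$ occurs in $\alpha(n)$ exactly $1+1+0+1+1=4$ times — once in each copy of $\alpha(n-1)^+$, once in each copy of $R_{n+3}$, and none in $C$ — which is the last assertion.

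The base case is $n=1$, where $\alpha(1)=B_1\,R_4\,C\,B_1\,R_4$. Here $R_4$ ends in $4$, $\max(R_4)=4$, and $\max(C)=3$, so the only thing to verify is that $B_1$ contributes no letter larger than $4$; since $B_1=\rho(B_0[7:-5])$ and $\rho$ raises the maximum letter by exactly one, this reduces to $\max(B_0)\le 3$, which is immediate from the definition of $B_0$ (each of the blocks $0301$, $\psi_1(1010)$, $\psi_2(1010)$, $\psi_2(10)$, $301020$ has maximum letter at most $3$) or can be confirmed by a one-line computation. I expect this base-case accounting to be the only even mildly delicate part of the proof; the inductive step is a direct consequence of the self-similar form of $\alpha$ together with the fact that $w\mapsto w^+$ touches only the last letter. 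Note that the lemma deliberately does not claim a count for $n=1$: the number of occurrences of $4$ in $\alpha(1)$ equals $2$ plus twice the number of occurrences of $3$ in $B_0[7:-5]$, which need not be $4$.
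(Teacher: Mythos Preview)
Your proposal is correct and follows essentially the same inductive approach as the paper. The only difference is cosmetic: the paper checks both $n=1$ and $n=2$ by direct computation and starts the inductive step at $k\ge 2$, whereas you verify only $n=1$ (by analyzing $B_1$ explicitly) and let the inductive step handle $n\ge 2$, which also yields the four-occurrence count immediately.
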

\begin{proof}
This can be proved using induction. For the base case, it can be checked by direct computation that $\alpha(1)$ and $\alpha(2)$ satisfy the lemma.

For the inductive step, assume that for some $k \geq 2$, $\alpha(k)$ satisfies the lemma. Since $C$ does not contain any occurrence of $k+4$ or higher letters and
\[\alpha(k+1) = \alpha(k)^+R_{k+4} C \alpha(k)^+R_{k+4},\]  
it is clear that our assumption implies that $\alpha(k+1)$ contains exactly four occurrences of $k+4$, no higher letters, and ends with $k+4$.
\end{proof}

\begin{lemma}\label{sigma alphan+ square-free}
For all $n\geq1$, $C \alpha(n)^+$ is square-free. 
\end{lemma}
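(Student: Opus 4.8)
The plan is to argue by induction on $n$, using the recursive structure $\alpha(n+1) = \alpha(n)^+ R_{n+4}\, C\, \alpha(n)^+ R_{n+4}$ together with the ``chunk-identification'' machinery already available. For the base case $n=1$ (and possibly $n=2$), I would simply verify computationally that $C\alpha(1)^+$ and $C\alpha(2)^+$ are square-free using the Main--Lorentz algorithm, as the paper does repeatedly. For the inductive step, suppose $C\alpha(k)^+$ is square-free and consider a hypothetical square $yy$ in $C\alpha(k+1)^+ = C\,\alpha(k)^+\, R_{k+4}\, C\, \alpha(k)^+\, R_{k+4}^+$. The key structural facts I would invoke are Lemma~\ref{S1,n occurrences} (that $\alpha(k+1)$ contains exactly four occurrences of its maximal letter $k+4$, with no larger letters), and Lemmas~\ref{S_1,n suffix} and~\ref{a, S1 suffix} describing precisely how $R_{k+4}$ and $\alpha(k)^+$ overlap near the boundaries.

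The heart of the argument is a case analysis on how many of the four occurrences of $k+4$ the square $yy$ can contain; since a square contains an even number of occurrences of any letter, there are either $0$, $2$, or $4$ such occurrences. If $yy$ contains all four, then unwinding the square forces $R_{k+4}^+$ (the final block) to coincide with a prefix of what follows the first $R_{k+4}$, namely $C$ — but $R_{k+4}^+$ begins with $01$ and has length far exceeding $|C|=10$, and one checks the required prefix agreement fails, exactly in the style of Lemma~\ref{psi_2(n0) square-free}. If $yy$ contains exactly two occurrences of $k+4$, they must be consecutive among the four, and the possible centers are tightly constrained; using Lemma~\ref{a, S1 suffix} (the letter $k+4$ inside $R_{k+4}$ is preceded by $03R_3^+$ within $\alpha(k)^+$, whereas the standalone $R_{k+4}$'s copy is preceded differently) one sees the two halves cannot be translates, a contradiction. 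If $yy$ contains no occurrence of $k+4$, then it lies entirely within one of the pieces $C\,\alpha(k)^+\, R_{k+3}[:-1]$ or its translate, hence within a word of the form $C\alpha(k)^+$ followed by a short square-free tail; here I would reduce to the inductive hypothesis that $C\alpha(k)^+$ is square-free, handling the short overhang from $R_{k+4}$ by noting it introduces no new long repetitions (again a small computational check on the ``seam'' suffices, or one observes $R_{k+4}$ is itself square-free and shares only a short prefix with $C\alpha(k)^+$).

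The main obstacle I anticipate is the no-large-letter case: ruling out squares that straddle the central $R_{k+4}\,C$ joint without containing $k+4$ itself. There the letters involved are all bounded by $k+3$, so the ``maximal letter'' counting trick gives nothing, and one must instead exploit finer synchronization — most cleanly by applying Theorem~\ref{chunk-identifiable condition for square-freeness} to $\alpha$ restricted to the relevant grounded words, or by invoking that $C\alpha(k)^+$ and $\alpha(k)^+R_{k+4}C$ share only controlled overlaps. I would expect the cleanest route is to set up a short auxiliary claim identifying all occurrences of a suitable ``marker'' factor (analogous to how $202$ is used for $\psi_2$, or $1(n+1)$ for $\psi_1$) — here a natural candidate is a factor forced to sit at a $C$-boundary or at the unique interface between $\alpha(k)^+$ and $R_{k+4}$ — and then showing any square must have its center at such a marker, from which the translate-structure immediately contradicts Lemma~\ref{a, S1 suffix}. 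Everything else is bookkeeping with the recursion and a handful of finite square-freeness checks.
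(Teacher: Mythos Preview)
Your overall strategy---induction on $n$ using the recursion for $\alpha$ and tracking occurrences of the maximal letter $k+4$---is exactly the paper's. But your execution has two concrete errors that would derail the write-up.

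First, you miscount: $C\alpha(k+1)^+ = C\,\alpha(k)^+\,R_{k+4}\,C\,\alpha(k)^+\,R_{k+4}^+$ contains \emph{three} occurrences of $k+4$, not four (the final $R_{k+4}^+$ ends in $k+5$, and $R_{k+4}$ contains $k+4$ only as its last letter). So the cases are $0$ or $2$ occurrences, and your ``all four'' case does not exist.

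Second, and more importantly, you have the difficulty exactly backwards. The $0$-occurrence case is trivial, not the main obstacle: any factor avoiding $k+4$ lies in one of the four maximal $(k+4)$-free segments $C\alpha(k)^+[:-1]$, $R_{k+4}[:-1]$, $C\alpha(k)^+[:-1]$, $R_{k+4}^+$, each of which is square-free by the inductive hypothesis or as a factor of the ruler sequence. In particular your worry about ``squares that straddle the central $R_{k+4}\,C$ joint without containing $k+4$'' is vacuous, since $R_{k+4}$ ends in $k+4$. The actual content is entirely in the $2$-occurrence case, where you are vague; there are two sub-cases (which consecutive pair of $k+4$'s is hit), and each needs the careful length-and-overlap bookkeeping via Lemma~\ref{a, S1 suffix} that the paper carries out. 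The paper organizes this not by counting $k+4$'s globally but by proving in four steps that each successive prefix $w_1\cdots w_i$ of the six-block decomposition is square-free; this is cleaner because at each step only squares crossing the newest boundary need to be ruled out, and those automatically contain the relevant $k+4$.
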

\begin{proof}
We proceed by induction. We can check computationally that the claim holds for $n=1$. 

For the inductive step assume that  for some $k \geq 1$, $C \alpha(k)^+$ is square-free and suppose that the word $w=C \alpha(k+1)^+$ contains a square. We can think of $w$ as the concatenation of 6 factors:
$$w=w_1w_2w_3w_4w_5w_6:=[C][\alpha(k)^+][ R_{k+4}][C][\alpha(k)^+][R_{k+4}^+].$$
By the inductive hypothesis we have that $w_1w_2=w_4w_5$ is square-free. We divide the rest of the proof in four steps.\\

\textbf{Step 1:} Proving that $w_1w_2w_3$ is square-free. Suppose that $w_1w_2w_3$ contains a square. We know that the square must cross the boundary between $w_2$ and $w_3$, and hence it includes the last letter of $w_2=\alpha(k)^+$, which by Lemma~\ref{S1,n occurrences} is $k+3 + 1 = k+4$.  The only other occurrence of $k+4$ in $w_1w_2w_3$ is the last letter of $w_3=R_{k+4}$, so the square ends in $k+4$. Hence, the second half of the square is the entire $R_{k+4}$, and so $R_{k+4}$ is a suffix of $\alpha(k)^+$, which contradicts Lemma~\ref{a, S1 suffix}. This completes the proof of our first claim.
 \\
 
\textbf{Step 2:} Proving that $w_1w_2w_3w_4$ is square-free. Suppose that $w_1w_2w_3w_4$ contains a square. By the previous step, the square must cross the boundary between $w_3$ and $w_4$, and hence it includes the letter $w_3[-1]=k+4$. Since $k+4$ does not occur in $w_4=C$, and the only other occurrence of $k+4$ in this word is as the last letter of $w_2=\alpha(k)^+$, each half of the square must have length equal to $|w_3| = 2^{k+4}$.
 The largest common prefix between the ruler sequence and $C$ is $C[:5]$. So at most these five letters can appear in each half of the square after the occurrences of $k+4$. On the other hand, Lemma~\ref{a, S1 suffix} implies that at most a suffix of $w_3=R_{k+4}$ with length $2^{k+4}-6$ can appear as a suffix of $w_2=\alpha(k)^+$, and so at most this many letters can appear in each half of the square up to the occurrences of $k+4$.

Therefore, each half of the square contains at most $2^{k+4}-6+5 = 2^{k+4} -1$ letters, which just falls short of the number required.  Therefore no square can exist in $w_1w_2w_3w_4$.\\

\textbf{Step 3:} Proving that $w_1w_2w_3w_4w_5$ is square-free. Suppose that it contains a square. By the previous step the square can not be contained in $w_1w_2w_3w_4$. Also, by the inductive hypothesis the square can not be contained in $w_4w_5$. So the square must contain a nonempty suffix of $w_3=R_{k+4}$, all of $w_4=C$, and a nonempty prefix of $w_5=\alpha(k)^+$. In particular it includes $k+4$, the last letter of $w_3$. Also, note that this word includes only three occurrences of $k+4$ ($w_2[-1]$, $w_3[-1]$ and $w_5[-1]$), hence the square must contain only two of them. 

If the square contains $w_2[-1]$ and $w_3[-1]$, then the second half of the square contains the whole factor $w_4=C$ right after $w_3[-1]$. This implies that $C$ must also appear right after $w_2[-1]$, and so $C$ must be a prefix of $w_3=R_{k+4}$, which is a contradiction. On the other hand, if the square contains $w_3[-1]$ and $w_5[-1]$, then the second half of the square must be $w_4w_5=C\alpha(n)^+$ and the first half must be contained in $w_3=R_{k+4}$. This is impossible since $C$ is not contained in $R_{k+4}$.\\

\textbf{Step 4:} Proving that $w_1w_2w_3w_4w_5w_6$ is square-free. Suppose that it contains a square. The square must contain $w_5[-1]=k+4$ and since $w_6=R_{k+4}^+$ does not contain $k+4$, the occurrence of $k+4$ in the first half of the square must be at $w_3[-1]$. Note that the first half of the square cannot contain the whole $w_4=C$, because this is not a prefix of $w_6=R_{n+4}$. This implies that $w_5=\alpha(n)^+$ is totally contained in the second half of the square. Hence, since $|R_{k+4}|<|\alpha(n)^+|$ we have that $w_3=R_{n+4}$ is a suffix of $w_5=\alpha(n)^+$, which contradicts Lemma~\ref{a, S1 suffix}.

Therefore we conclude that $w_1w_2w_3w_4w_5w_6$ is square-free, which completes the proof of the lemma.
\end{proof}

\begin{lemma}\label{alpha(n) square-free}
$\alpha(n)$ is square-free for all $n \geq 0$.
\end{lemma}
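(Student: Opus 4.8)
The plan is induction on $n$, with the base cases $\alpha(0)$ and $\alpha(1)$ handled by direct computation. For $n\geq2$ the key observation is that $\alpha(n)$ and $\alpha(n)^+$ differ only in their last letter, and $\alpha(n)^+$ is already known to be square-free: it is a suffix of $C\alpha(n)^+$, which is square-free by Lemma~\ref{sigma alphan+ square-free}. Hence every square in $\alpha(n)$ must use the final letter, i.e.\ it is a suffix $yy$ of $\alpha(n)$ ending at position $|\alpha(n)|-1$. So the entire problem reduces to ruling out a square suffix of $\alpha(n)$.

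To do this I would first pin down the occurrences of the largest letter. By Lemma~\ref{S1,n occurrences}, $\alpha(n-1)$ has maximum letter $n+2$, so $\alpha(n-1)^+$ contains $n+3$ exactly once (at its end), and $R_{n+3}$ contains $n+3$ exactly once (at its end); writing $\alpha(n)=\alpha(n-1)^+\,R_{n+3}\,C\,\alpha(n-1)^+\,R_{n+3}$ this yields exactly four occurrences of $n+3$, at positions $q_1<q_2<q_3<q_4=|\alpha(n)|-1$. Since the last letter of the suffix square $yy$ is $n+3$, the last letter of its first half is also $n+3$, so that letter sits at one of $q_1,q_2,q_3$; equivalently $|y|\in\{q_4-q_1,\,q_4-q_2,\,q_4-q_3\}$.

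The three cases are then dispatched as follows. If $q_4-|y|\in\{q_1,q_2\}$, a direct length count (using $|\alpha(n)|=2|\alpha(n-1)|+2^{n+4}+10$) gives $|y|>q_i+1$, so the first half of $yy$ would begin at a negative index, which is impossible. If $q_4-|y|=q_3$, then $|y|=|R_{n+3}|=2^{n+3}$ and the second half of $yy$ is precisely the trailing block $R_{n+3}$; since $|\alpha(n-1)|\geq 2^{n+3}$ (a one-line estimate from the recursion, or from Lemma~\ref{S_1,n suffix}), the first half of $yy$ lies entirely inside the second occurrence of $\alpha(n-1)^+$, whence $R_{n+3}$ is a suffix of $\alpha(n-1)^+$, contradicting Lemma~\ref{a, S1 suffix}. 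Therefore $\alpha(n)$ has no square suffix, completing the inductive step.

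I do not expect a serious obstacle: the heavy lifting is already contained in Lemmas~\ref{a, S1 suffix}, \ref{S1,n occurrences}, and \ref{sigma alphan+ square-free}. The only mildly delicate point is the bookkeeping of the four positions $q_i$ and the accompanying length inequalities, which must be arranged so that the two ``impossible by length'' cases and the single ``reduce to Lemma~\ref{a, S1 suffix}'' case are cleanly separated; and one must not forget the computational base cases $\alpha(0)$ and $\alpha(1)$, since the reduction genuinely uses $n\geq2$ (it relies on $\alpha(n-1)$ having maximum letter $n+2$, which fails for $\alpha(0)$).
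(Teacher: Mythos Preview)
Your proposal is correct and follows essentially the same route as the paper: induction with computational base cases $\alpha(0),\alpha(1)$; the reduction to a square \emph{suffix} via Lemma~\ref{sigma alphan+ square-free}; tracking the four occurrences of $n+3$ via Lemma~\ref{S1,n occurrences}; and resolving the short case with Lemma~\ref{a, S1 suffix} and the long case(s) by a length count. The only cosmetic difference is that you split into three cases $q_1,q_2,q_3$ while the paper groups by the number of occurrences of $n+3$ in the square (two vs.\ four), which collapses your $q_1$ and $q_2$ into one; your $q_1$ case is in fact vacuous since the two halves of $yy$ must contain equally many copies of $n+3$.
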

\begin{proof}
We proceed by induction. We can check computationally that $\alpha(0)$ and $\alpha(1)$ are square-free.

For the inductive step assume that $\alpha(k)$ is square-free for some $k \geq 1$. We can think of $w=\alpha(k+1)$ as the concatenation of 5 factors:
\[w=w_1w_2w_3w_4w_5:=[\alpha(k)^+][ R_{k+4}][C][\alpha(k)^+][R_{k+4}].\]
Suppose $w$ contains a square. By Lemma~\ref{sigma alphan+ square-free}, $w^+=\alpha(k+1)^+$ is square-free. So $w[:-1]$ is square-free and the square in $w$ is a suffix, containing $w_5[-1]=k+4$. Also, the square must contain $w_4[-1]=k+4$, since $w_5=R_{k+4}$ is square-free. Now, since $w$ contains exactly four occurrences of $k+4$ we have the following cases:

Case 1: the square contains only $w_4[-1]$ and $w_5[-1]$. In this case the second half of the square would have to be the whole factor $w_5=R_{k+4}$. Hence the first half of the square would have $R_{k+4}$ as a suffix of $w_4=\alpha(n)^+$ which contradicts Lemma~\ref{S1,n occurrences}.

Case 2: the square contains all four occurrences of $k+4$ which are $w_1[-1]$, $w_2[-1]$, $w_4[-1]$, and $w_5[-1]$. In this case $k+4$ must be the final letter in each half of the square, and so the first half of the square would be a suffix of $w_1w_2$. This is not possible since the second half of the square would be $w_3w_4w_5$ which is longer than $w_1w_2$.

Therefore we conclude that $w=\alpha(k+1)$ is square-free, which completes the inductive step.
\end{proof}

\begin{lemma}\label{alpha(0n) square-free}
$\alpha(0n)$ is square-free for all $n\geq1$.
\end{lemma}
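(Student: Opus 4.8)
The plan is to argue by contradiction, exploiting that $\alpha(0n) = [\alpha(0)]\,[\alpha(n)]$ has just one chunk boundary. Since $\alpha(0)$ and $\alpha(n)$ are each square-free by Lemma~\ref{alpha(n) square-free}, any square $yy$ in $\alpha(0n)$ must straddle the boundary at position $b := |\alpha(0)|$. The case $n=1$ (a fixed finite word of length a few thousand) can be checked square-free directly, so fix $n \geq 2$ and suppose $\alpha(0n)$ contains such a straddling square $yy$, with first half starting at position $p < b$ and center $c = p + |y|$.

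The first step is to rule out $yy$ containing a whole copy of $E$. Since $0n$ is grounded and square-free, Lemma~\ref{E is prefix or suffix of alpha(0)} shows that the only occurrences of $E$ in $\alpha(0n)$ are the prefix and the suffix of the $0$-chunk $\alpha(0) = EFE$, i.e.\ at positions $0$ and $b - |E|$. If $yy$ contained $E$ as a factor, then so would both halves, producing occurrences of $E$ that differ by $|y|$; matching these against the only two available positions forces the square to begin at position $0$ with $y = EF$, so $\alpha(0n)$ would begin with $EFEF$. But $\alpha(0n)$ begins with $EFE\,\alpha(n)$, and $\alpha(n)$ has prefix $B_1$, which has prefix $F^{++}$, so we would get $F = F^{++}$, impossible since $F$ and $F^{++}$ differ in their last letter. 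A symmetric argument rules out the other placement. Hence $yy$ contains no whole $E$, so its overlap with $\alpha(0)$ — a suffix of $\alpha(0)$ too short to contain $E$ — is a \emph{proper} suffix $t$ of $E$, which in particular ends with the factor $1\,2$ unless $|t| = 1$.

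The second step is a dichotomy on the largest letter $M$ of $yy$. We have $\max(\alpha(0)) = 3$, and the maximal prefix of $\alpha(n)$ all of whose letters are $\leq 3$ is one and the same word $Q$ for every $n \geq 1$: it is inherited through the recursion $\alpha(n) = \alpha(n-1)^+ R_{n+3}\, C\, \alpha(n-1)^+ R_{n+3}$, since $\,^+$ alters only the last letter and $\alpha(n-1)$ is far longer than $Q$. Thus every letter $\geq 4$ of $\alpha(0n)$ sits at a position $\geq b + |Q|$. If $M \leq 3$, then $yy$ is a factor of the fixed finite word $\alpha(0)\,Q$, which is verified square-free computationally. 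If $M \geq 4$, then both halves of $yy$ contain $M$, hence both reach past position $b + |Q|$; in particular $c > b + |Q| > b$, so the second half lies entirely inside $\alpha(n)$, whereas the first half still starts at $p < b$. Then the word $y$, being a factor of the even-grounded word $\alpha(n)$, begins with the suffix $t$ of $\alpha(0)$; since no even-grounded word contains the factor $1\,2$ (whichever of its two positions is even would have to carry a $0$), this forces $|t| = 1$, i.e.\ $p = b - 1$ and $t = 2$. Writing $y = 2u$ with $u = \alpha(n)[:|y|-1]$, the square now says exactly that $\alpha(n)$ begins with $u\,2\,u$, where $|u| = c - b > |Q|$ and $u$ contains a letter $\geq 4$.

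The main obstacle is disposing of this last configuration: a prefix $u$ of $\alpha(n)$, containing a letter $\geq 4$, that occurs again in $\alpha(n)$ one position later. I would rule it out by tracking the occurrences of the top letter $M$ of $u$ in $\alpha(n)$ — using that $n+3$ occurs exactly four times in $\alpha(n)$ (Lemma~\ref{S1,n occurrences}) and that $R_m$ is never a suffix of $\alpha(m-1)^+$ (Lemma~\ref{a, S1 suffix}) — essentially rerunning, with the extra leading letter $2$ carried along, the case analysis that establishes that $\alpha(n)$ itself is square-free (Lemmas~\ref{sigma alphan+ square-free} and \ref{alpha(n) square-free}). Everything else reduces to a handful of computational checks — the base case $n = 1$, the square-freeness of $\alpha(0)\,Q$, and the placement of $E$ (Lemma~\ref{E is prefix or suffix of alpha(0)}) — together with the elementary parity remark about the factor $1\,2$.
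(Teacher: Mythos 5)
Your reduction is sound and, modulo repackaging, follows the same two-case skeleton as the paper: a square straddling the single chunk boundary either stays within a bounded neighbourhood of that boundary (your case $M\le 3$, handled by a finite computation on $\alpha(0)Q$ --- legitimate, since $Q$ is indeed the same prefix of $B_1$ for every $n\ge 1$), or its second half lies entirely in the grounded word $\alpha(n)$, in which case the first half can reach back only to the final letter $2$ of $\alpha(0)$ (because every longer suffix of $\alpha(0)$ ends in $12$ and so is not grounded). Your preliminary elimination of $E$ is harmless but not actually needed for either case. The problem is the last step. You correctly reduce everything to the claim that $\alpha(n)$ has no prefix of the form $u\,2\,u$, i.e.\ that $2\,\alpha(n)$ has no square prefix --- and then you only sketch how you would prove it (``essentially rerunning \ldots the case analysis'' of Lemmas~\ref{sigma alphan+ square-free} and~\ref{alpha(n) square-free} ``with the extra leading letter $2$ carried along''). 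That is the crux of the lemma, and a plan to redo a five-step induction with an extra letter attached is not a proof; in particular it is not clear that the occurrence-counting of $n+3$ goes through verbatim once the word is shifted by one position.

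The gap closes almost for free with tools you already cite. Since $C$ ends in $2$ and $C\,\alpha(n)^+$ is square-free by Lemma~\ref{sigma alphan+ square-free} (indeed $C\,\alpha(n)^+$ is a factor of $\alpha(n+1)$), the word $2\,\alpha(n)^+$ is square-free, hence so are $2\,\alpha(n)[:-1]$ and $2\,\alpha(n)[1:]=\alpha(n)$. Any square in $2\,\alpha(n)$ must therefore contain both the first and the last letter, i.e.\ be the entire word; but
\[
|2\,\alpha(n)| \;=\; 1+2\,|\alpha(n-1)|+2\,|R_{n+3}|+10
\]
is odd, so no such square exists. This is exactly how the paper finishes, and substituting it for your sketched final step would make your argument complete.
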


\begin{proof}
From Lemma~\ref{alpha(n) square-free} we have that any square in $\alpha(0n)$ must cross into both chunks. We consider two cases based on the length of the square.

Suppose there is a square $yy$ in $\alpha(0n)$ such that $|y|\leq|\alpha(0)|$. Then the square's total length would be at most $|\alpha(0)|\times2=6452$ letters. Computationally, we can see that $\alpha(3)$ and $\alpha(4)$ share their first 13029 letters. Since $\alpha(n)$ is a prefix of $\alpha(n+1)$, all $\alpha(n)$ for $n\geq 3$ have the same first 13029 letters. Thus, $\alpha(0n)$ has the same first $|\alpha(0)|+13029=16255$ letters for all $n \geq 3$.  Checking with a computer, we find that $\alpha(01)$, $\alpha(02)$, and $\alpha(03)$ are square-free, so the first 16255 letters of $\alpha(0n)$ are square-free for all $n$. Since the square must intersect the 0-chunk and has length at most $|\alpha(0)|\times2$, the square must be contained in the first $|\alpha(0)|\times3=9678$ letters. This is a contradiction so no square $|yy|$ with length $|y|\leq|\alpha(0)|$ can occur in $\alpha(0n)$.

Now suppose $|y|>|\alpha(0)|$.  Then the second half of the square is entirely contained in $\alpha(n)$.  However, $\alpha(n)$ is grounded and $\alpha(0)$ ends with 12.  This means the first half of the square can't contain more than the last letter of $\alpha(0)$ so it remains to show that $2\alpha(n)$ is square-free.

Since
\[C[-1]\alpha(n)^+ = 2\alpha(n)^+\]
is a factor of $\alpha(n+1)$, it must be square-free. So we can show that decreasing the last letter by one does not introduce a square.  We know that $2\alpha(n)[1:]$ and $2\alpha(n)[:-1]$ are square-free, so a square would need to be the entire word. However,
\[|2\alpha(n)| = 1 + 2|\alpha(n-1)| + 2|R_{n+3}| + 10,\] which is odd so it is impossible for the entire word to be a square.
\end{proof}

\begin{lemma}\label{alpha(n0) square-free}
$\alpha(n0)$ is square-free for all $n\geq1$. 
\end{lemma}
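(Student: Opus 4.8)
The plan is to follow the same strategy as in the proofs of Lemmas~\ref{sigma alphan+ square-free},~\ref{alpha(n) square-free}, and~\ref{alpha(0n) square-free}: write $\alpha(n0)=\alpha(n)\,\alpha(0)$, note that any square must cross the chunk boundary since both $\alpha(n)$ and $\alpha(0)$ are square-free by Lemma~\ref{alpha(n) square-free}, and then run a counting argument on the largest letter, which is $n+3$ for $n\ge 1$ by Lemma~\ref{S1,n occurrences}. The case $n=1$ does not fit the recursive pattern used below (and Lemma~\ref{S1,n occurrences} does not assert four occurrences of $n+3$ when $n=1$), so I would handle it computationally: $\alpha(10)$ has $6412$ letters, and its square-freeness follows from the Main--Lorentz algorithm. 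So assume $n\ge 2$.

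For $n\ge 2$ we have $\alpha(n)=\alpha(n-1)^+\,R_{n+3}\,C\,\alpha(n-1)^+\,R_{n+3}$, and by Lemmas~\ref{S1,n occurrences} and~\ref{alpha(n) square-free} the letter $n+3$ occurs in $\alpha(n)$ exactly four times, as the last letter of each block other than $C$; call these occurrences $p_1<p_2<p_3<p_4$. Also $n+3$ does not occur in $\alpha(0)$, since $\max(\alpha(0))=3$. A square $yy$ crossing the boundary contains the position $p_4=|\alpha(n)|-1$, and since it contains an even number of copies of $n+3$ it contains either exactly $\{p_3,p_4\}$ or all four. In the first case $|y|=p_4-p_3=2^{n+3}$; writing $\ell\ge 1$ for the length of the portion of $yy$ lying in $\alpha(0)$ (the first half lies inside $\alpha(n)$), a short offset computation shows that the last $\ell$ letters of the first half coincide with the first $\ell$ letters of the second $R_{n+3}$-block, so $y_1=y_2$ forces $R_{n+3}[:\ell]=\alpha(0)[:\ell]$; but the ruler sequence begins $010201\cdots$ while $\alpha(0)=EFE$ begins $010203\cdots$, hence $\ell\le 5$. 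Comparing the remaining letters of $y_1=y_2$ then shows $\alpha(n-1)^+$ has $R_{n+3}[\ell:]$ as a suffix, contradicting Lemma~\ref{a, S1 suffix} applied to $n-1$ (which would require $\ell\ge 6$). In the second case the pairing $p_1\leftrightarrow p_3$ and $p_2\leftrightarrow p_4$ forces $|y|=|\alpha(n-1)|+2^{n+3}+10$; aligning $y_1=y_2$ along the two copies of $n+3$ that each half contains, the part of $y$ following its second $n+3$ reads, off the first half, as the block $C$ followed by a prefix of $\alpha(n-1)^+$, and off the second half as a prefix of $\alpha(0)$; but $C=0102030102$ while $\alpha(0)$ begins $0102030120\cdots$, a contradiction at index $8$. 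Hence $\alpha(n0)$ contains no square.

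I expect the main obstacle to be purely the index bookkeeping: verifying that a crossing square contains $p_4$ and hence $\{p_3,p_4\}$ or all four, and pinning down the block decompositions of $y_1$ and $y_2$ in each case; this is entirely parallel to Lemmas~\ref{sigma alphan+ square-free} and~\ref{alpha(n) square-free}. The only genuinely new point is that the rigidity of the suffixes of $\alpha(n-1)^+$ recorded in Lemma~\ref{a, S1 suffix} conflicts with the fixed initial segments of $\alpha(0)=EFE$. The only computations required are the two short prefix comparisons ($\alpha(0)[:10]$ against $C$, and $\alpha(0)[:6]$ against the ruler sequence, both visible directly from the definitions of $E$, $B_0$, and $C$), together with the $n=1$ base case.
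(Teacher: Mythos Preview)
Your argument is correct but proceeds differently from the paper. The paper first uses that $\alpha(n)$ is grounded for $n\geq 1$ while $\alpha(0)$ becomes ungrounded at its ninth letter; since the first half of any crossing square lies in $\alpha(n)$ and hence is grounded, the square can reach at most eight letters into $\alpha(0)$. This reduces the problem to showing $\alpha(n)\,01020301$ is square-free. Case~1 is then handled by noting that $01020301=C[:8]$, so the relevant word is a prefix of $C\alpha(n)^+$, square-free by Lemma~\ref{sigma alphan+ square-free}; Case~2 falls to a pure length count ($2|y|=|\alpha(n)|+10$ exceeds the available $|\alpha(n)|+8$). You skip the groundedness bound and work directly with the block structure: Case~1 via Lemma~\ref{a, S1 suffix}, Case~2 via comparing the prefix of $C$ with the prefix of $\alpha(0)$. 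The paper's route is shorter because it reuses Lemma~\ref{sigma alphan+ square-free} wholesale; yours is more self-contained but needs more offset tracking. One point you should make explicit in Case~2: since $2|y|=|\alpha(n)|+10$ and the square starts at position $s\geq 0$, the overlap with $\alpha(0)$ has length $\ell=s+10\geq 10$, which is what guarantees that index~$8$ is actually reached in your $C$ versus $\alpha(0)$ comparison (and incidentally is the same inequality driving the paper's length contradiction).
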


\begin{proof}
We can check by computer that $\alpha(10)$ is square-free.  Assume $n\geq2$ from now on.

Since $\alpha(0)$ and $\alpha(n)$ are square-free, any square in $\alpha(n0)$ must cross into both chunks and include the $n+3$ at the end of $\alpha(n)$. Since $\max(\alpha(0))=3$, the $n+3$ in each half must come from $\alpha(n)$.  As a result, we know the entire first half of the square is in $\alpha(n)$.  Additionally, $\alpha(0)$ becomes ungrounded at the ninth letter and $\alpha(n)$ is grounded, so the square can't extend past the eighth letter of $\alpha(0)$. Thus this proof simplifies to proving
\[\alpha(n)01020301\]
is square-free.  From Lemma~\ref{S1,n occurrences}, $\alpha(n)$ contains 4 occurrences of $n+3$, so the full square must either contain the last two or all four.

Case 1: The square contains only the third and fourth occurrences of $n+3$, so the square is contained in
\[C\alpha(n-1)^+R_{n+3}01020301 = C\alpha(n-1)^+R_{n+3}C[:8],\]
which is a prefix of $C\alpha(n)^+$, which is square-free by Lemma~\ref{sigma alphan+ square-free}.

Case 2: The square contains all occurrences of $n+3$, so it appears in $$\alpha(n-1)^+R_{n+3}C \alpha(n-1)^+R_{n+3}C[:8].$$  The second and fourth occurrences of $n+3$ are separated by a distance of $$|C| + |\alpha(n-1)^+| + |R_{n+3}| = 10 + |\alpha(n-1)^+| + |R_{n+3}|,$$ so the entire square would need to have twice this length.  However, the whole word has length $$ |\alpha(n-1)^+| + |R_{n+3}| + |C| + |\alpha(n-1)^+| + |R_{n+3}| + |C [:8]| = 18 + 2|\alpha(n-1)^+| +2|R_{n+3}|, $$ which is less than the required length of square.

Since both cases are ruled out, no square can exist in $\alpha(n0)$. 
\end{proof}

We can now prove that $\alpha(0n0)$ is square-free:

\begin{proof}[Proof of Proposition~\ref{alpha(0n0) square-free}]
We can prove via computation that $\alpha(010)$ and $\alpha(020)$ are square-free.  Assume $n\geq3$ from here. 

Using Lemmas~\ref{alpha(0n) square-free} and \ref{alpha(n0) square-free}, it follows that if a square exists in $\alpha(0n0)$, then it includes the entire $n$-chunk as well as a part of each $0$-chunk. We can check using a computer that $|\alpha(0)| = 3226$ and $|\alpha(3)|=13030$. For any $p>q>1$, $|\alpha(p)|>|\alpha(q)|$.  Therefore, $|\alpha(n)|\geq |\alpha(3)|>|\alpha(0)|$.

Case 1: Suppose the boundary between the halves of the square appears either in a $0$-chunk or between chunks.  Then the half of the square entirely in a $0$-chunk would have a length at most 3226.  The other half would also have the same length, but we know this half needs to contain the entire $n$-chunk, which has length greater than 3226.  Thus this case is impossible.

Case 2: Now suppose the boundary lies within the $n$-chunk. Let $x$ be the nonempty suffix of the first 0-chunk contained in the square, and $z$ be the nonempty prefix of the last 0-chunk contained in the square. The occurrence of $x$ in the second half of the square begins in $\alpha(n)$. If it extends into the last 0-chunk, then the whole square has length less than
\[|xzxz|\leq4\times|\alpha(0)|=12904<13030=|\alpha(3)|\leq|\alpha(n)|.\]
This is not possible since the square contains all of $\alpha(n)$. Thus, there are nonempty words $y$ such that the square can be written as
\[x][yzxy][z,\]
where $yzxy = \alpha(n)$. Since all of $\alpha(n)$ is grounded, $x$ and $z$ are both grounded.  We can check by computer that the longest grounded prefix of $\alpha(0)$ is 01020301, and its longest grounded suffix is 2.  It follows that $x = 2$ and $z$ is a prefix of 01020301.  This means that $|zx| \leq 9$ and $zx$ must appear in the exact center of $\alpha(n)$.  The middle 10 letters of $\alpha(n)$ are $C = 0102030102$, so $zx$ is located at the center of $C$, meaning that it must be grounded. Thus, the only possible values for $zx$ are 02, 0102, 010202, and 01020302.  Clearly none of these appear at the center of $C$, so this square cannot exist.
\end{proof}

We can now use the above results to prove that $\alpha$ is square-free over grounded words.

\begin{proof}[Proof of Theorem~\ref{alpha square-free over grounded words}]
Suppose $w$ is a square-free grounded word and $\alpha(w)$ contains a square $yy$. We will first show that $E$ must be a factor of $y$, and then use that to show that both halves of the square have the same chunk decomposition. We then show that each half of the square contains a whole chunk, and that if the square contains any partial chunks, then the initial partial chunks of the two halves of the square come from the same letter. It is clear from its definition that $\alpha$ is letter-injective, and we know from Lemma~\ref{alpha(n) square-free} that $\alpha(n)$ is square-free for all letters $n$. Hence, Theorem~\ref{chunk-identifiable condition for square-freeness} will imply that $w$ contains a square, which is a contradiction.
	
The square $yy$ contains a whole $0$-chunk, since otherwise it would be a factor of $\alpha(0n0)$, contradicting Proposition~\ref{alpha(0n0) square-free}. Since $\alpha(0)=EFE$, there are at least two whole occurrences of $E$ in $yy$. At least one of these occurrences must be completely contained in one half of the square. Thus, $E$ is a factor of $y$.
	
Let $[\alpha(l)],\ 0\leq l$ be any whole chunk in either half of the square. We will show that the corresponding occurrence of $\alpha(l)$ in the other half is also a whole $l$-chunk. If $l=0$, then by Corollary~\ref{alpha(0) is a 0-chunk}, $\alpha(l)$ must be a whole $0$-chunk in both halves. If $l>0$, then since $w$ is grounded and $E$ is a factor of $y$, there must be a whole occurrence of $E$ adjacent to this chunk and entirely contained in this half of the square. Thus, either $E\alpha(l)$ or $\alpha(l)E$ is a factor of $y$. Then by Corollary~\ref{Ealpha(l) and alpha(l)E indicate l-chunks}, $\alpha(l)$ is a whole $l$-chunk in both halves of the square, so both halves have the same chunk decomposition.
    
Suppose neither half of the square contains a whole chunk. Then $yy$ cannot span over more than three chunks. Since $\alpha(0n0)$ is square-free by Proposition~\ref{alpha(0n0) square-free}, $yy$ must be a proper factor of $\alpha(n0k)$. Then $yy$ overlaps all three chunks because $\alpha(n0)$ and $\alpha(0k)$ are square-free. By Lemma~\ref{E is prefix or suffix of alpha(0)}, there are exactly two occurrences of $E$ in $\alpha(n0k)$ and each must be in a different half of the square since $E$ is a factor of $y$. But the letter before the $E$ in the first half is the last letter of $\alpha(n)$, which is $n+3$, and the letter before the $E$ in the second half is the last letter of $F$, which is $2$. We cannot have $E$ as a prefix of $y$, or else the square would not overlap the first of the three chunks. This is a contradiction so one half of the square must contain a whole chunk. Since both halves have the same chunk decomposition, both halves contain a whole chunk.
    
Suppose either half of the square contains a partial chunk. Then since the halves have the same chunk decomposition, they must both begin with a partial chunk. We will show that the halves share their initial partial chunk. The initial partial chunks end with the same letter, so by Lemma~\ref{alpha shared prefix and last letter}, they must be equal chunks or one of them must be a $0$-chunk. But since the halves of the square have the same chunk decomposition and contain a whole chunk, their first whole chunks are equal. So the final partial chunks are either both $0$-chunks or equal nonzero chunks.
		
This verifies the conditions of Theorem~\ref{chunk-identifiable condition for square-freeness}  which implies that $w$ contains a square, a contradiction.
\end{proof}

\subsubsection{Conditions~\ref{L-commuting over grounded condition 0 to 01}, \ref{L-commuting over grounded condition 0n to 0n0}, and \ref{L-commuting over grounded condition 0n-plus to 0n+1}}
In this section we prove that $\alpha$ satisfies the remaining conditions of Theorem~\ref{L-commuting over grounded words template}. Condition~\ref{L-commuting over grounded condition 0 to 01} ($\alpha(0)$ generates $\alpha(01)$) can be verified via direct computation. In the following result we prove that $\alpha$ satisfies Condition~\ref{L-commuting over grounded condition 0n to 0n0}.

\begin{theorem}
For all $n > 0$, $\alpha(0n)$ generates $\alpha(0n0)$.
\end{theorem}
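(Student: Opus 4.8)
The plan is to use Remark~\ref{Appending to front of lex-least words} twice to strip the problem down to a finite verification plus two explicit square suffixes forced by the recursive shape of $\alpha(n)$. Since $\alpha(0n0)=\alpha(0n)\,\alpha(0)$, Remark~\ref{Appending to front of lex-least words} reduces the claim to two things: that $\alpha(0n0)$ is square-free, which is exactly Proposition~\ref{alpha(0n0) square-free}, and that $\alpha(0)$ is irreducible in $\alpha(0n)\,\alpha(0)$. (Note $\alpha(0n)$ is itself square-free by Lemma~\ref{alpha(0n) square-free}.)

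For the irreducibility, I would peel off the short prefix $G=010203012=\alpha(0)[:9]$. Since $G$ is, by direct computation, the shortest word generating $\alpha(0)$, we have that $G$ generates $\alpha(0)=G\,\alpha(0)[9:]$; applying Remark~\ref{Appending to front of lex-least words} with $u=\alpha(0n)$ and using that $\alpha(0n)\alpha(0)=\alpha(0n0)$ is square-free, we conclude that $\alpha(0n)\,G$ generates $\alpha(0n)\alpha(0)=\alpha(0n0)$. So it remains to show that $\alpha(0n)$ generates $\alpha(0n)\,G$; as $\alpha(0n)G$ is a prefix of the square-free word $\alpha(0n0)$, this amounts to showing $G$ is irreducible in $\alpha(0n)G$. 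Chaining the two ``generates'' relations (which compose, since they both say $L$ of the words agree) then gives the theorem.

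To prove $G$ is irreducible in $\alpha(0n)G$, unwind the definition: for every index $i$ with $0\le i\le 8$ and every letter $c<G[i]$, the word $\alpha(0n)\,G[:i]\,c$ must contain a square ending at its last letter — it cannot end earlier, as $\alpha(0n)G$ is square-free. At the even positions $G[i]=0$, so nothing is required; and almost every remaining case gives an immediate short square suffix ($\alpha(0n)00$, $\alpha(0n)0101$, $\alpha(0n)010200$, $\alpha(0n)010202$, $\alpha(0n)01020300$, $\alpha(0n)010203011$ end in $00,0101,00,0202,00,11$ respectively). Only two cases are not immediate: $\alpha(0n)\,010201$ and $\alpha(0n)\,010203010$, which I would handle for $n\ge 2$ using the recursion $\alpha(n)=\alpha(n-1)^+\,R_{n+3}\,C\,\alpha(n-1)^+\,R_{n+3}$, Lemma~\ref{a, S1 suffix} (which gives that $R_{n+3}[6:]$ is a suffix of $\alpha(n-1)^+$), and the elementary facts $R_{n+3}[:6]=010201$, $C=010203010\cdot 2$, and $\alpha(0)$ ends in $2$. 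Indeed, $\alpha(0n)$ ends with $\alpha(n-1)^+R_{n+3}$, hence with $R_{n+3}[6:]\cdot R_{n+3}[:6]\cdot R_{n+3}[6:]=R_{n+3}[6:]\cdot 010201\cdot R_{n+3}[6:]$, so $\alpha(0n)\,010201$ ends with the square $\bigl(R_{n+3}[6:]\cdot 010201\bigr)^2$; and $\alpha(0)[-1]\cdot\alpha(n)\cdot 010203010=\bigl(2\cdot\alpha(n-1)^+\,R_{n+3}\,010203010\bigr)^2$, obtained by splitting the middle block $C=010203010\cdot 2$ of $\alpha(n)$ and moving its trailing $2$ to the front, where it meets the final $2$ of $\alpha(0)$. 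The base case $n=1$, for which $\alpha(1)=B_1R_4CB_1R_4$ is not of the recursive form, would be settled by a direct computation checking that $\alpha(01)$ generates $\alpha(010)$.

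The main obstacle is the second nontrivial square suffix: one must pin down the exact alignment, namely that the first period ends precisely at $C[8]$ inside the middle block $C$ of $\alpha(n)$ and begins precisely at the last letter of the preceding $\alpha(0)$, which depends on the computational fact that $\alpha(0)$ ends in $2$ together with careful length bookkeeping using $|R_{n+3}|=2^{n+3}$ and the $^+$-operation. Once those alignments are verified the square is forced, and the rest of the argument is routine.
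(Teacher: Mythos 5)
Your proposal is correct and follows essentially the same route as the paper's proof: reduce to showing that $G$ is irreducible in $\alpha(0n)G$ (using Proposition~\ref{alpha(0n0) square-free} together with the fact that $G$ generates $\alpha(0)$), dispose of the trivial reductions via short square suffixes, and exhibit exactly the same two squares $\bigl(R_{n+3}[6:]\,010201\bigr)^2$ and $\bigl(2\,\alpha(n-1)^+R_{n+3}\,010203010\bigr)^2$ for the two nontrivial cases, with $n=1$ checked by direct computation. The only difference is presentational: you spell out the trivial cases and the appeal to Remark~\ref{Appending to front of lex-least words} more explicitly than the paper does.
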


\begin{proof}
The case $n=1$ can be verified via direct computation, so we assume $n\ge 2$. By Theorem~\ref{alpha(0n0) square-free} $\alpha(0n0)$ is square-free, hence since the word $G$ generates $\alpha(0)$, it is enough to show that $G$ is irreducible in $\alpha(0n)G$. Indeed, consider $$\alpha(0n) \, G = \alpha(0n) \, 01020\underline{3}01\underline{2}.$$ 
It is clear that the only letters in $G$ that could potentially be reduced are the underlined ones. The 3 could only be reduced to 1, in which case note that
\begin{align*}
\alpha(0n) \, 010201 &= \cdots \alpha(n) \, 010201 \\
&= \cdots \alpha(n-1)^+R_{n+3}C\alpha(n-1)^+R_{n+3} \, 010201 \\
&= \cdots \alpha(n-1)^+R_{n+3}010201\\
&=\cdots R_{n+3}[6:]R_{n+3}010201 &&\textrm{(by Lemma~\ref{a, S1 suffix})}\\
&=\cdots R_{n+3}[6:]010201R_{n+3}[6:]010201,
\end{align*}
which contains a square, so the 3 in $G$ is irreducible.

Now, the last 2 in $G$ could only be reduced to 0, and in this case note that
\begin{align*}
\alpha(0n) \, 010203010 &= \cdots 2 \, \alpha(n) \, 010203010 \\
&= \cdots 2 \, \alpha(n-1)^+R_{n+3}C\alpha(n-1)^+R_{n+3} \, 010203010 \\
&= \cdots 2 \, \alpha(n-1)^+R_{n+3}0102030102\alpha(n-1)^+R_{n+3} \, 010203010,
\end{align*}
which contains a square, so the last letter of $G$ is irreducible.
\end{proof}

Before proving that $\alpha$ satisfies Condition~\ref{L-commuting over grounded condition 0n-plus to 0n+1} of Theorem~\ref{L-commuting over grounded words template} we need to establish the following lemmas.
\begin{lemma}\label{sigma irreducible}
$C$ is irreducible in $\alpha(n)^+R_{n+4}C$ for all $n>0$.
\end{lemma}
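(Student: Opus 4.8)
The plan is to unwind the definition of irreducibility directly. I must show that every word $s\prec C$ introduces a square in $\alpha(n)^+R_{n+4}\,s$, i.e.\ creates a square ending at one of the letters $s$ contributes. Since $C=0102030102$ begins with $0$ and the alphabet is $\N$, any $s\prec C$ must agree with $C$ on a prefix $C[:i]$ for some $i$ with $1\le i\le 9$ and then satisfy $s[i]=m$ for some letter $m<C[i]$. Because $\alpha(n)^+R_{n+4}\,C[:i]\,m$ is a prefix of $\alpha(n)^+R_{n+4}\,s$, it suffices to prove the finite claim: for every $i\in\{1,\dots,9\}$ and every letter $m$ with $0\le m<C[i]$, the word $\alpha(n)^+R_{n+4}\,C[:i]\,m$ has a (nonempty) square suffix ending at its final letter $m$.

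This reduces to a short case analysis driven entirely by the explicit value of $C$. For $i\in\{2,4,6,8\}$ we have $C[i]=0$, so there is nothing to check. Whenever $C[i]\neq 0$, the preceding letter satisfies $C[i-1]=0$, so for $m=0$ the word ends in the two-letter block $C[i-1]\,m=00$, a square; this handles all $m=0$ cases. The remaining pairs are $(i,m)\in\{(3,1),(5,1),(5,2),(9,1)\}$: for $(3,1)$ and $(9,1)$ the word ends in $0101=(01)^2$, and for $(5,2)$ it ends in $0202=(02)^2$, the square lying wholly inside the appended letters in each instance.

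The only case with real content is $(i,m)=(5,1)$, where $\alpha(n)^+R_{n+4}\,C[:5]\,1=\alpha(n)^+R_{n+4}\cdot010201$, and this is where the weight of the argument sits. Here I would invoke Lemma~\ref{a, S1 suffix}, which says $R_{n+4}[6:]$ is a suffix of $\alpha(n)^+$; write $\alpha(n)^+=X\,R_{n+4}[6:]$. Since $n+4\ge 5$ we also have $R_{n+4}[:6]=010201$, hence $R_{n+4}=010201\,R_{n+4}[6:]$, and therefore
\[
    \alpha(n)^+R_{n+4}\cdot010201
    = X\,R_{n+4}[6:]\cdot010201\,R_{n+4}[6:]\cdot010201
    = X\bigl(R_{n+4}[6:]\,010201\bigr)^2,
\]
whose suffix $\bigl(R_{n+4}[6:]\,010201\bigr)^2$ is a nonempty square ending at the appended letter $1$. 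Combining all the cases completes the proof.

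The one genuine obstacle is recognizing this last configuration: one must see both that the length-$(2^{n+4}-6)$ tail $R_{n+4}[6:]$ already sits at the end of $\alpha(n)^+$ (Lemma~\ref{a, S1 suffix}) and that $R_{n+4}$ opens with exactly $010201$, so that prepending the five letters $C[:5]$ and then appending $1$ reproduces a full repeated block $R_{n+4}[6:]\,010201$. Every other verification is immediate from the explicit form of $C$.
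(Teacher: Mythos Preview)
Your proof is correct and follows essentially the same approach as the paper: both reduce to the single nontrivial case where the $3$ in $C$ is replaced by $1$, and both resolve it by invoking Lemma~\ref{a, S1 suffix} together with $R_{n+4}[:6]=010201$ to exhibit the square $\bigl(R_{n+4}[6:]\,010201\bigr)^2$. The only difference is that you explicitly enumerate the trivial cases $(i,m)$ producing short squares like $00$, $0101$, $0202$, whereas the paper dispatches them in one sentence.
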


\begin{proof}
Recall that $C = 0102030102$. Clearly, the only letter that is reducible within $C$ is the 3, which could only be made a 1.  In this case we would have $\alpha(n)^+ R_{n+4} 010201$. Since $n\ge 1$, Lemma~\ref{a, S1 suffix} says that $R_{n+4}[6:]$ is a suffix of $\alpha(n)^+$. Also, $R_{n+4}[:6]=010201$ for all $n$. Therefore
\[\alpha(n)^+R_{n+4}010201=\cdots R_{n+4}[6:]010201R_{n+4}[6:]010201\]
which contains a square. Hence $C$ is irreducible.
\end{proof}

\begin{lemma}\label{sigma generates alpha(k)+}
$C$ generates $C\alpha(n)^+$ for all $n>0$.
\end{lemma}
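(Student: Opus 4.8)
The plan is to induct on $n$. For the base case $n=1$ I would verify by direct computation that the first $|C\alpha(1)^+|$ letters of $L(C)$ are exactly $C\alpha(1)^+$. For the inductive step, fix $k\geq 1$, assume $C$ generates $C\alpha(k)^+$, and use the recursion
\[
C\alpha(k+1)^+ = C\alpha(k)^+\,R_{k+4}\,C\,\alpha(k)^+\,R_{k+4}^+
\]
to extend the relation one block at a time. Each extension is justified either by Lemma~\ref{sigma irreducible} or the inductive hypothesis together with Remark~\ref{Appending to front of lex-least words} (which moves an irreducibility statement into a longer left-context), or by a direct argument about ruler blocks; the square-freeness side conditions needed to invoke Remark~\ref{Appending to front of lex-least words} are all instances of Lemma~\ref{sigma alphan+ square-free} (with $n=k+1$) or of Lemma~\ref{alpha(n) square-free} applied to $\alpha(k+1)$, since every word that appears along the way is a prefix of $C\alpha(k+1)^+$ or a factor of $\alpha(k+1)$.

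In order, the chain of "generates" steps (each left word being the previous step's right word) is: (1) $C$ generates $C\alpha(k)^+$, the inductive hypothesis; (2) $C\alpha(k)^+$ generates $C\alpha(k)^+R_{k+4}$; (3) $C\alpha(k)^+R_{k+4}$ generates $C\alpha(k)^+R_{k+4}\,C$, since Lemma~\ref{sigma irreducible} gives that $C$ is irreducible in $\alpha(k)^+R_{k+4}C$ and Remark~\ref{Appending to front of lex-least words} carries this into the left-context $C\alpha(k)^+R_{k+4}$; (4) $C\alpha(k)^+R_{k+4}\,C$ generates $C\alpha(k)^+R_{k+4}\,C\alpha(k)^+$, because this word ends in $C\alpha(k)^+$ so the inductive hypothesis and Remark~\ref{Appending to front of lex-least words} apply; and (5) $C\alpha(k)^+R_{k+4}\,C\alpha(k)^+$ generates $C\alpha(k)^+R_{k+4}\,C\alpha(k)^+R_{k+4}^+ = C\alpha(k+1)^+$. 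Since "$p$ generates $q$" just asserts $L(p)=L(q)$, chaining these equalities yields that $C$ generates $C\alpha(k+1)^+$, completing the induction.

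The two substantive steps are (2) and (5), the irreducibility of the ruler blocks $R_{k+4}$ and $R_{k+4}^+$ in their contexts. For (2) the key point is that $R_{k+4}$ is a prefix of the ruler sequence $L(\varepsilon)$: since $L(\varepsilon)$ is the lexicographically least square-free word, decreasing the letter in any position $i$ of $L(\varepsilon)$ yields a word $L(\varepsilon)[:i]\,m$ that contains a square, and because $L(\varepsilon)[:i]$ is square-free that square ends at position $i$ and so has length at most $i+1$. Hence for \emph{any} word $W$, lowering the $i$-th letter of the copy of $R_{k+4}$ appended to $W$ reproduces that square inside the appended block, so $R_{k+4}$ is irreducible in $WR_{k+4}$ --- in particular in $C\alpha(k)^+R_{k+4}$. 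Step (5) reduces to (2): if $s\prec R_{k+4}^+$ then either $s\prec R_{k+4}$, handled exactly as in (2) (irreducibility of a suffix is preserved when the left-context is lengthened, cf.~Remark~\ref{Appending to front of lex-least words}), or $R_{k+4}$ is a prefix of $s$, in which case $C\alpha(k)^+R_{k+4}\,C\alpha(k)^+\,s$ begins with the square $\bigl(C\alpha(k)^+R_{k+4}\bigr)^2$ and therefore contains a square.

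I expect the main obstacle to be organizational rather than conceptual: one must track, at each stage of the chain, which suffix is being appended and which prefix of $C\alpha(k+1)^+$ must be known square-free to invoke Remark~\ref{Appending to front of lex-least words}, and step (3) in particular inherits from Lemma~\ref{sigma irreducible} the fact (Lemma~\ref{a, S1 suffix}) that $\alpha(k)^+$ ends with the length-$(2^{k+4}-6)$ suffix $R_{k+4}[6:]$ of the ruler prefix, which is what forces a square when the unique reducible letter $3$ of $C$ is lowered to $1$. The one genuinely new ingredient is the observation that a ruler prefix is irreducible in every left-context, which makes steps (2) and (5) essentially automatic.
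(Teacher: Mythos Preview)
Your proposal is correct and follows essentially the same approach as the paper: induction on $n$ with a computational base case, the same five-block decomposition of $C\alpha(k+1)^+$, the same appeals to Lemma~\ref{sigma irreducible} and the inductive hypothesis for the two $C\alpha(k)^+$ blocks, and the same two-case argument (``reduce by $1$'' gives the square $(C\alpha(k)^+R_{k+4})^2$, ``reduce by more'' is handled by irreducibility of the ruler prefix) for $R_{k+4}^+$. The only cosmetic difference is that the paper justifies square-freeness of $C\alpha(k+1)^+$ by observing it is a factor of $\alpha(k+2)$, whereas you invoke Lemma~\ref{sigma alphan+ square-free} directly with $n=k+1$; both work.
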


\begin{proof}
We proceed by induction. We can check in the case $n=1$, that $C\alpha(1)^+$ is a prefix of $L(C)$ by direct computation.

For the inductive step, assume that $C$ generates $C\alpha(k)^+$ for some $k \geq 1$.  First note that \[C \alpha(k+1)^+=C\alpha(k)^+R_{k+4}C\alpha(k)^+R_{k+4}^+\]
is square-free, since it is a factor of $\alpha(k+2)$. Then we just need to show that $\alpha(k+1)^+$ is irreducible. 

From the inductive hypothesis $\alpha(k)^+$ is irreducible after $C$ and since $R_{k+4}$ is a prefix of the ruler sequence, it is also irreducible. Now, Lemma~\ref{sigma irreducible} implies that $C$ is irreducible after $C\alpha(k)^+R_{k+4}$ and from the inductive hypothesis again we conclude that that $\alpha(k)^+$ is irreducible after $C\alpha(k)^+R_{k+4}C$. Finally, the last letter in $R_{k+4}^+$ cannot be reduced by 1 because it would create the square $(C\alpha(k)^+R_{k+4})^2$, and cannot be reduced by more than 1 because $R_{k+4}$ is irreducible. Therefore $\alpha(k)^+R_{k+4}C\alpha(k)^+R_{k+4}^+=\alpha(k+1)^+$ is irreducible in $C \alpha(k+1)^+$, which concludes the proof.
\end{proof}

We note that Lemma~\ref{sigma generates alpha(k)+} immediately describes the structure of $L(012)$.
\begin{corollary}\label{L(012)}
$L(012)=01201\lim_{n\to\infty}\rho^{-1}(\alpha(n))$.
\end{corollary}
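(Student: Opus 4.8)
The plan is to recognize that $L(012)$ is governed by the dynamics of $L(C)$ via Remark~\ref{Appending to front of lex-least words}, and then to use Lemma~\ref{sigma generates alpha(k)+} to identify the resulting word. First I would observe that $01201$ has suffix $C[:5]$, and more precisely that $012$ is a prefix of the ruler sequence whose continuation is forced; by a short direct computation (using the greedy algorithm) one checks that $012$ generates $01201C[5:]=01201\,0102\,=C$ — that is, that $C = 0102030102$ is a prefix of $L(012)$. Hence $L(012) = L(C)$, or more carefully, $012$ generates $C$, so $L(012)=L(C)$ by definition of "generates."

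Next I would invoke Lemma~\ref{sigma generates alpha(k)+}: for every $n>0$, $C$ generates $C\alpha(n)^+$. Since $\alpha(n)$ is a prefix of $\alpha(n+1)$ for all $n$ (by the recursive definition of $\alpha$, as $\alpha(n+1)$ begins with $\alpha(n)^+$, and $\alpha(n)^+$ shares the prefix $\alpha(n)[:-1]$ with $\alpha(n)$... more precisely one should track this carefully), the words $C\alpha(n)^+$ form a nested sequence of prefixes of $L(C)$ whose lengths tend to infinity. Therefore $L(C) = \lim_{n\to\infty} C\alpha(n)^+$ in the usual sense of the limit of an increasing sequence of prefixes. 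It remains only to rewrite this limit in the form stated. Since $\rho(n) = 0(n+1)$, we have $\rho^{-1}$ acting on words of the right shape by $\rho^{-1}(0(n+1)) = n$; one checks that $C$ and each $\alpha(n)$ lie in the image of $\rho$, so $\rho^{-1}(C\alpha(n)^+)$ makes sense. Applying $\rho^{-1}$ and noting $\rho^{-1}(C) = \rho^{-1}(0102030102)$, together with $L(012)=L(C)$, gives the claimed identity $L(012)=01201\lim_{n\to\infty}\rho^{-1}(\alpha(n))$ after matching up the finite prefix $01201$ with $\rho^{-1}$ applied to the appropriate initial segment. (The shift between $\alpha(n)^+$ and $\alpha(n)$ in the limit is harmless, since only the last letter differs and the sequence is increasing.)

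The main obstacle I anticipate is purely bookkeeping: making precise the nesting $C\alpha(n)^+ \sqsubseteq C\alpha(n+1)^+$ as prefixes (one must check that lowering the last letter of $\alpha(n)$ to form $\alpha(n)^+$... rather, that $\alpha(n)$ is genuinely a prefix of $\alpha(n)^+$-free continuation, i.e. that $\alpha(n+1)^+$ extends $\alpha(n)^+$), and then correctly transporting the statement through $\rho^{-1}$ so that the finite prefix $01201$ appears exactly as written. None of this is deep — it is a matter of carefully lining up the definitions of $\alpha$, $\rho$, and the notation $R_n$ — but it is where the care is needed. The genuinely substantive input, that $C$ generates $C\alpha(n)^+$ for all $n$, is already furnished by Lemma~\ref{sigma generates alpha(k)+}, so the corollary is essentially immediate once $L(012)=L(C)$ is established.
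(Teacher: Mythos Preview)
Your argument contains a genuine error at the very first step: you claim that $012$ generates $C$, i.e.\ that $L(012)=L(C)$. But $C=0102030102$ does not begin with $012$ (its third letter is $0$, not $2$), so $L(012)$ and $L(C)$ already differ at index~$2$. The computation ``$01201C[5{:}]=01201\,0102=C$'' is simply wrong: $C[5{:}]=30102$, and in any case $012010102$ has nine letters while $C$ has ten. Consequently the chain of reasoning that follows---taking the limit of $C\alpha(n)^+$ and then ``transporting through $\rho^{-1}$''---never connects to $L(012)$ at all.

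The paper's proof supplies exactly the missing link you need. One works not with $012$ but with $\rho(012)=010203$, which \emph{is} a prefix of $C$; a short check shows $010203$ generates $C$, so $L(\rho(012))=L(C)=C\lim_{n\to\infty}\alpha(n)$ by Lemma~\ref{sigma generates alpha(k)+}. The crucial ingredient is then Theorem~\ref{rho is L-commuting}, which gives $L(\rho(012))=\rho(L(012))$. Applying $\rho^{-1}$ to both sides (legitimate because $\alpha(n)$ is even-grounded) yields $L(012)=\rho^{-1}(C)\lim_{n\to\infty}\rho^{-1}(\alpha(n))=01201\lim_{n\to\infty}\rho^{-1}(\alpha(n))$. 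You gestured at $\rho^{-1}$ in your final paragraph, but without the $L$-commuting property of $\rho$ there is no bridge between $L(012)$ and $L(C)$; that theorem is the substantive step you omitted.
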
\begin{proof}
First note that $\rho^{-1}$ is well defined on $\alpha(n)$ since it is even-grounded. Since $010203$ generates $C$, Lemma~\ref{sigma generates alpha(k)+} implies that $L(\rho(012))=L(010203)=L(C)=C\lim_{n\to\infty}\alpha(n)$. Since $\rho$ is $L$-commuting over square-free words by Theorem~\ref{rho is L-commuting}, $L(\rho(012))=\rho(L(012))$. Thus, $L(012)=\rho^{-1}(C)\rho^{-1}(lim_{n\to\infty}\alpha(n))=01201\lim_{n\to\infty}\rho^{-1}(\alpha(n))$.
\end{proof}

Finally, we prove that $\alpha$ satisfies Condition~\ref{L-commuting over grounded condition 0n-plus to 0n+1}.

\begin{theorem}
$\alpha(0n)^+$ generates $\alpha(0(n+1))$ for all $n>0$.
\end{theorem}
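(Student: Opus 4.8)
\section*{Proof proposal}

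The plan is to reduce everything, via Remark~\ref{Appending to front of lex-least words}, to a statement about $\alpha(n)^+$ alone and then peel off a suffix. Write $\alpha(0(n+1)) = \alpha(0)\,\alpha(n+1)$ and $\alpha(0n)^+ = \alpha(0)\,\alpha(n)^+$, where
\[
\alpha(n+1) = \alpha(n)^+\,R_{n+4}\,C\,\alpha(n)^+\,R_{n+4}.
\]
By Lemma~\ref{alpha(n) square-free}, $\alpha(n+1)$ is square-free, so every prefix of it is square-free, and by Lemma~\ref{alpha(0n) square-free} the word $\alpha(0)\,\alpha(n+1) = \alpha(0(n+1))$ is square-free. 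Hence, by the ``useful property'' in Remark~\ref{Appending to front of lex-least words}, it is enough to prove that $\alpha(n)^+$ generates $\alpha(n+1)$: prepending $\alpha(0)$ then gives that $\alpha(0n)^+ = \alpha(0)\,\alpha(n)^+$ generates $\alpha(0)\,\alpha(n+1) = \alpha(0(n+1))$.

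To show $\alpha(n)^+$ generates $\alpha(n+1)$, I would peel the suffix $R_{n+4}\,C\,\alpha(n)^+\,R_{n+4}$ off in four stages, each using that the prefix of $\alpha(n+1)$ in question is square-free:
\begin{enumerate}
    \item[(A)] $R_{n+4}$ is irreducible after $\alpha(n)^+$, so $\alpha(n)^+$ generates $\alpha(n)^+R_{n+4}$;
    \item[(B)] $C$ is irreducible after $\alpha(n)^+R_{n+4}$, which is exactly Lemma~\ref{sigma irreducible};
    \item[(C)] $\alpha(n)^+$ is irreducible after $\alpha(n)^+R_{n+4}C$; this follows from Lemma~\ref{sigma generates alpha(k)+} ($C$ generates $C\alpha(n)^+$) together with Remark~\ref{Appending to front of lex-least words}, since $\alpha(n)^+R_{n+4}C\alpha(n)^+$ is a square-free prefix of $\alpha(n+1)$;
    \item[(D)] $R_{n+4}$ is irreducible after $\alpha(n)^+R_{n+4}C\alpha(n)^+$, which is identical in form to (A).
\end{enumerate}
Chaining these four ``generates'' steps (which is legitimate because $L(p)=L(ps)$ is transitive in $s$) yields that $\alpha(n)^+$ generates $\alpha(n+1)$.

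For stages (A) and (D), I would prove the stronger statement that $R_{n+4}$ is irreducible after \emph{any} word $w$ for which $wR_{n+4}$ is square-free, using only the block structure of the ruler sequence. Let $s\prec R_{n+4}$; then $s$ has a prefix $R_{n+4}[:i]\,c$ where $c < v := R_{n+4}[i]$, so $v\ge 1$ and, since $2^{v}\mid (i+1)$ while $c+1\le v$, we get $2^{c+1}\mid (i+1)$. Thus the last $2^{c+1}$ letters of $R_{n+4}[:i+1]$ form an aligned block of size $2^{c+1}$, which in the ruler sequence equals $\rho^{c+1}(k) = R_{c+1}[:-1]\,(k+c+1)$ for some letter $k$, i.e.\ it equals $R_{c+1}$ up to its final letter. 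Replacing that final letter by $c$ and using $R_{c+1}[:-1] = R_c\,R_c[:-1]$ (from $R_{c+1} = R_c\,R_c^+$) gives $R_{c+1}[:-1]\,c = R_c\,R_c$, so $R_{n+4}[:i]\,c$ ends with the square $R_cR_c$. This square of length $2^{c+1}\le i+1$ lies entirely within the appended letters, hence is a square suffix of $w\,R_{n+4}[:i]\,c$ ending within $s$; so every $s\prec R_{n+4}$ introduces a square, and together with square-freeness of $wR_{n+4}$ this gives $w$ generates $wR_{n+4}$. Applying this with $w=\alpha(n)^+$ and $w=\alpha(n)^+R_{n+4}C\alpha(n)^+$ (whose extensions by $R_{n+4}$ are prefixes of $\alpha(n+1)$) settles (A) and (D).

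I expect the main subtlety to be purely organizational: making sure the square-freeness hypotheses required to invoke Remark~\ref{Appending to front of lex-least words} on top of Lemmas~\ref{sigma irreducible} and \ref{sigma generates alpha(k)+} are all in place, which they are because the relevant words are prefixes of the square-free word $\alpha(n+1)$. No separate base case is needed, since Lemmas~\ref{sigma irreducible}, \ref{sigma generates alpha(k)+}, \ref{alpha(n) square-free}, and \ref{alpha(0n) square-free} all apply at the indices used here and the ruler argument for (A) and (D) is purely combinatorial; so the argument runs uniformly for all $n>0$.
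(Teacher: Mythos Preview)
Your proposal is correct and follows essentially the same route as the paper's proof: reduce via Remark~\ref{Appending to front of lex-least words} and Lemma~\ref{alpha(0n) square-free} to showing $\alpha(n)^+$ generates $\alpha(n+1)$, then peel off $R_{n+4}$, $C$, $\alpha(n)^+$, $R_{n+4}$ using Lemmas~\ref{sigma irreducible} and~\ref{sigma generates alpha(k)+}. The only difference is cosmetic: where the paper dispatches (A) and (D) in one phrase (``$R_{n+4}$ is a prefix of the ruler sequence''), you spell out explicitly why any prefix of the ruler sequence is irreducible, which is fine but not needed given the Guay-Paquet--Shallit result already cited.
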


\begin{proof}
From Lemma~\ref{alpha(0n) square-free} we know that $\alpha(0(n+1))$ is square-free, so it is enough to show that $\alpha(n)^+$ generates $\alpha(n+1)$. To show this, recall that \[\alpha(n+1)=\alpha(n)^+R_{n+4}C\alpha(n)^+R_{n+4}.\]
Consider that $\alpha(n)^+$ generates $\alpha(n)^+ \, R_{n+4} \, C$, because $R_{n+4}$ is a prefix of the ruler sequence and $C$ is irreducible by Lemma~\ref{sigma irreducible}. Similarly, Lemma~\ref{sigma generates alpha(k)+} implies that $\alpha(n)^+ \, R_{n+4}$ is irreducible after $C$. Therefore $\alpha(n)^+$ generates $\alpha(n+1)$.
\end{proof}

\subsubsection{Conclusion}

We have proved that the morphism $\alpha$ satisfies all the requirements of Theorem~\ref{L-commuting over grounded words template}, hence we have the following result.

\begin{theorem}\label{alpha lcommuting}
$\alpha$ is $L$-commuting over $\Sigma$, the set of all nonempty even-grounded square-free words.
\end{theorem}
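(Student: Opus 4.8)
The plan is to obtain this as an immediate consequence of Theorem~\ref{L-commuting over grounded words template}: that result reduces the $L$-commuting property over $\Sigma$ to checking that $\alpha$ is non-erasing together with its four numbered conditions, and every one of these has already been established for $\alpha$ in this section. So I would simply assemble them in order.

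Non-erasing is clear from the definition, since $\alpha(0) = EFE$ and $\alpha(n) = \alpha(n-1)^+\,R_{n+3}\,C\,\alpha(n-1)^+\,R_{n+3}$ for $n \ge 2$ (and likewise the $n=1$ image) are all nonempty. For Condition~\ref{L-commuting over grounded condition square-free}, note that $\Sigma$ consists of even-grounded, hence grounded, square-free words, so $\alpha(w)$ is square-free for every $w \in \Sigma$ by Theorem~\ref{alpha square-free over grounded words}, which in fact gives the stronger statement over all grounded words. Condition~\ref{L-commuting over grounded condition 0 to 01} (that $\alpha(0)$ generates $\alpha(01)$) is verified by direct computation, as remarked at the start of this subsection. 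Conditions~\ref{L-commuting over grounded condition 0n to 0n0} and \ref{L-commuting over grounded condition 0n-plus to 0n+1} (that $\alpha(0n)$ generates $\alpha(0n0)$, and that $\alpha(0n)^+$ generates $\alpha(0(n+1))$, for all $n>0$) are precisely the two theorems proved immediately above.

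With the hypotheses of Theorem~\ref{L-commuting over grounded words template} all in place, that theorem applies and yields exactly the conclusion that $\alpha$ is $L$-commuting over $\Sigma$. There is no genuine obstacle remaining here — the substance of the argument lies entirely in the earlier results, and in particular in Theorem~\ref{alpha square-free over grounded words}, whose proof required the chunk-identifiability criterion (Theorem~\ref{chunk-identifiable condition for square-freeness}) together with the structural lemmas pinning down where the words $E$, $F$, $\alpha(0)$, and the maximal letters $n+3$ can occur inside $\alpha(w)$. The present theorem is a bookkeeping step that collects those ingredients.
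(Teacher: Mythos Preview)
Your proposal is correct and follows exactly the same approach as the paper: the paper's own proof of Theorem~\ref{alpha lcommuting} is simply the one-line observation that $\alpha$ has been shown to satisfy all the hypotheses of Theorem~\ref{L-commuting over grounded words template}, and you have accurately listed where each condition was established.
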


\begin{corollary}\label{L(C)=alpha(L(E))}
$L(G) = L(\alpha(0)) = \alpha(L(\varepsilon))$.
\end{corollary}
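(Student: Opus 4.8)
The corollary is a quick consequence of Theorem~\ref{alpha lcommuting}, so the plan is mostly bookkeeping: put the right word into the domain $\Sigma$, apply the $L$-commuting property, and then translate between $\alpha(0)$ and $G$.

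First I would verify that the single-letter word $0$ belongs to $\Sigma$, the set of nonempty even-grounded square-free words: it is nonempty and square-free, and it is even-grounded because its unique letter occupies the even index $0$ and equals $0$, the requirement on odd indices being vacuous. Second, I would note that $L(0)=L(\varepsilon)$: the ruler sequence $L(\varepsilon)=\rho^{\infty}(0)$ already begins with $0$, so it is simultaneously the lexicographically least square-free word and the lexicographically least square-free word with prefix $0$; equivalently, the letter $0$ is (vacuously) irreducible as a first letter, so $\varepsilon$ generates $0$.

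Given these two observations, specializing Theorem~\ref{alpha lcommuting} to $w=0$ yields
\[
    L(\alpha(0))=\alpha(L(0))=\alpha(L(\varepsilon)),
\]
which is the right-hand equality of the corollary. For the left-hand equality, I would invoke the fact recorded in the definition of $\alpha$ that $G$ generates $\alpha(0)$; since $G$ is a prefix of $\alpha(0)=EFE$ (indeed $G=E[:9]$), this is precisely the statement $L(G)=L(\alpha(0))$. Concatenating the two equalities gives the claim.

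There is no genuine obstacle left at this stage: all of the real content sits in Theorem~\ref{alpha lcommuting}, that is, in the verification carried out in the preceding subsections that $\alpha$ meets the four hypotheses of Theorem~\ref{L-commuting over grounded words template}, together with the computational check that $G$ generates $\alpha(0)$. Beyond citing those, the only things to be careful about are the membership $0\in\Sigma$ and the identity $L(0)=L(\varepsilon)$, both of which are immediate.
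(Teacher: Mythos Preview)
Your proof is correct and follows the same approach as the paper's own proof: use that $G$ generates $\alpha(0)$ for the first equality, and apply Theorem~\ref{alpha lcommuting} with $w=0\in\Sigma$ (together with $L(0)=L(\varepsilon)$) for the second. The paper's version is simply terser, omitting the explicit verifications of $0\in\Sigma$ and $L(0)=L(\varepsilon)$ that you spell out.
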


\begin{proof}
Recall that $G$ generates  $\alpha(0)$, so $L(G) = L(\alpha(0))$. The other equality follows directly from Theorem~\ref{alpha lcommuting}, since $0\in\Sigma$.
\end{proof}

\subsection{Structure of $L(1)$ and $L(n)$ for $n\ge 3$}\label{L(1) section}

The following result will reduce the task of proving the square-freeness of a word formed by a finite prefix followed by $\alpha(L(\varepsilon))$ to a finite computation.

\begin{lemma}\label{lemma for adding a prefix to alpha of ruler}
Let $w$ be a finite square-free word. If $w\alpha(L(\varepsilon))$  contains a square, then that square contains no letter greater than $\max(w \alpha(0))$.
\end{lemma}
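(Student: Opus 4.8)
The plan is to exploit the recursive structure of $\alpha$ together with the fact that $\alpha(L(\varepsilon))$ is built from the chunks $\alpha(0), \alpha(1), \alpha(2), \dots$, each of which contains only boundedly large letters until the index grows. Concretely, write $\alpha(L(\varepsilon)) = [\alpha(L(\varepsilon)_0)][\alpha(L(\varepsilon)_1)]\cdots$ as a concatenation of chunks, and recall from Lemma~\ref{S1,n occurrences} that $\max(\alpha(\ell)) = \ell+3$ for $\ell \geq 1$ (and $\max(\alpha(0)) = 3$). The key observation is that in the ruler sequence $L(\varepsilon)$, the first occurrence of the letter $\ell$ is at position $2^\ell - 1$, so the letter value $\ell + 3$ first appears in $w\alpha(L(\varepsilon))$ only after a long prefix consisting of chunks $\alpha(0), \dots, \alpha(\ell-1)$. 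I would argue that any square containing a letter larger than $M := \max(w\alpha(0))$ would have to reach "deep" into $\alpha(L(\varepsilon))$, and show that such a square is impossible because of how sparsely large letters occur.

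The main steps, in order: First I would fix a square $yy$ in $w\alpha(L(\varepsilon))$ and suppose for contradiction that $yy$ contains some letter value $v > M$; since $\max(w\alpha(0)) = M$ and every letter of $w\alpha(L(\varepsilon))$ beyond the first $0$-chunk lies in some chunk $\alpha(\ell)$ with $\ell \geq 1$ contributing maximum $\ell + 3$, there is a least $\ell \geq 1$ with $\ell + 3 > M$, i.e. $\ell + 3 = M + 1$ if $M \geq 3$, and $v$ can only occur in chunks $\alpha(\ell')$ with $\ell' \geq \ell$. Second, I would locate the relevant occurrences of the letter $\ell + 3$: by the structure of the ruler sequence, the positions where $L(\varepsilon)$ has a letter $\geq \ell$ are spaced at least $2^\ell$ apart (in fact the letter-$\ell$ chunk $\alpha(\ell)$ occurs at positions governed by the ruler pattern), so consecutive occurrences of $\alpha(\ell)$-or-larger chunks in $\alpha(L(\varepsilon))$ are separated by long stretches of small chunks. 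Third — and this is where I expect to lean on the earlier lemmas (Lemmas~\ref{alpha(n) square-free}, \ref{sigma alphan+ square-free}, and the recursion $\alpha(n) = \alpha(n-1)^+ R_{n+3} C\, \alpha(n-1)^+ R_{n+3}$) — I would show that a square whose two halves each contain an occurrence of the value $\ell + 3$ forces the two halves to be "chunk-aligned" in the sense of Theorem~\ref{chunk-identifiable condition for square-freeness}, and then either contradicts square-freeness of $L(\varepsilon)$ via that theorem, or is too short to contain two occurrences of $\ell+3$ and hence must lie inside a single chunk $\alpha(\ell')$ or inside $w\alpha(0n)$ — both already known to be square-free. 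Finally, if $yy$ contains only one occurrence of the value $v$ it cannot be a square at all, so every square is confined to a region whose maximum letter is $\leq M$.

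The hard part will be step three: controlling how a square can be positioned relative to a high-value chunk $\alpha(\ell')$ without invoking a clean synchronization statement. The delicate case is when $yy$ straddles a single occurrence of $\alpha(\ell')$ with its center inside, so that the value $\ell'+3$ appears (say) four times inside $\alpha(\ell')$ and the square picks out two of them in a mismatched way — this is essentially the same bookkeeping carried out in the proofs of Lemmas~\ref{alpha(n0) square-free} and \ref{sigma alphan+ square-free}, comparing $|R_{\ell'+3}|$, $|\alpha(\ell'-1)^+|$, and $|C|$ to rule out the alignment on length grounds, and I would expect to reuse those length inequalities almost verbatim. Once the square is pinned to a bounded region (either a single chunk, or $w$ together with the first few chunks, which share a common prefix for all large $n$ exactly as in the proof of Lemma~\ref{alpha(0n) square-free}), the maximum-letter bound $\max(w\alpha(0))$ follows, possibly after absorbing a handful of small chunks whose letters are all $\leq 3 \leq M$ (using $M \geq \max(\alpha(0)) = 3$).
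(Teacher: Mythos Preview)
Your proposal has a genuine gap in step three, which is the heart of the argument. You plan to derive the contradiction either (a) by forcing chunk-alignment and invoking Theorem~\ref{chunk-identifiable condition for square-freeness} to conclude that $L(\varepsilon)$ contains a square, or (b) by length and positioning arguments borrowed from Lemmas~\ref{alpha(n0) square-free} and~\ref{sigma alphan+ square-free}. Neither route works here. For (a): the square $yy$ begins in $w$, not in $\alpha(L(\varepsilon))$, so $yy$ is not a factor of $\alpha(v)$ for any word $v$ and Theorem~\ref{chunk-identifiable condition for square-freeness} is not set up to produce a square in $L(\varepsilon)$ from this configuration. For (b): the length bookkeeping in those lemmas concerns squares lying inside $\alpha$ applied to a short grounded word such as $n0$ or $0n0$; it says nothing about a square that starts in an arbitrary prefix $w$ and extends arbitrarily far into $\alpha(L(\varepsilon))$, so there is no finite case analysis to reuse ``almost verbatim''.

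The paper's contradiction is different and does not go through Theorem~\ref{chunk-identifiable condition for square-freeness}. Set $l = \max(y)$. Since $l > \max(w\alpha(0))$, any occurrence of $l$ in the first half lies in some $i$-chunk of $\alpha(L(\varepsilon))$ with $i \geq 1$, and that $i$-chunk is preceded by a whole $0$-chunk entirely inside the first half (because the square reaches back into $w$). Starting from this $0$-chunk and using only Corollary~\ref{alpha(0) is a 0-chunk} and Lemma~\ref{alpha shared prefix and last letter}, one propagates chunk boundaries to show that the corresponding occurrence of $l$ in the second half also lies in a (whole) $i$-chunk. Now there are two distinct $i$-chunks inside $yy$, and by the structure of the ruler sequence there must be an $(i+1)$-chunk between them; its last letter is $i+4 > i+3 \geq l$, contradicting $l = \max(y)$. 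The missing ingredient in your outline is exactly this ``between two $i$'s there is an $i{+}1$'' step: the contradiction comes from the maximality of $l$, not from square-freeness of $L(\varepsilon)$ or from length inequalities.
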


\begin{proof}
Suppose toward a contradiction that there is a square $yy$ with a letter greater than $\max(w \alpha(0))$. Since $w$ and $\alpha(L(\varepsilon))$ are square-free, $yy$ must cross the boundary between these two factors. Choose $n$ to be some letter such that $\max(\alpha(n))$ is greater than any letter in $y$. Then $yy$ must be contained in $w \, \alpha(R_{n}[:-1]\,n) = w \, \alpha(R_{n})$.

Let $l:=\max(y)>\max(w \alpha(0))$ and choose some occurrence of $l$ in the first half of the square. Since $l$ is neither contained in $w$ nor $\alpha(0)$, and since $R_n$ is even-grounded, this occurrence of $l$ is from an $i$-chunk which is after a $0$-chunk. Also, this $0$-chunk must be totally contained within the first half of the square, since the square involves $w$.

Let $s$ be the suffix of the first half of the square starting right after $w$. By our previous reasoning, $s$ contains the whole first 0-chunk. We claim that the occurrences of $s$ at the end of each half of the square $yy$ have the same chunk decomposition. Indeed, let $s_1$ and $s_2$ be the occurrences of $s$ in the first and second half of the square respectively. Consider the first whole occurrence of $\alpha(0)$ in $s_1$ and $s_2$, Lemma~\ref{alpha(0) is a 0-chunk} implies that these occurrences of $\alpha(0)$ are indeed $0$-chunks.

Now recall that by Lemma~\ref{alpha shared prefix and last letter}, for $n>k>0$, we have that $\alpha(n)$ and $\alpha(k)$ end with different letters and neither is a prefix of the other. Hence, it is not hard to see that if  for some $n>0$, there is a whole $n$-chunk in $s_1$ either right before or immediately after this $0$-chunk, then $s_2$ must also have this $n$-chunk in the same position. Inductively we have that $s_1$ and $s_2$ have the same chunk decomposition.

The first half of the square has no initial partial chunk. If the occurrence of $l$ in the first half of the square is in the final partial chunk, then that chunk ends with a suffix of $w$ (which is the initial partial chunk of the second half). But by Lemma~\ref{S1,n occurrences}, the last letter of the chunk is its largest letter which is at least $l$. This is a contradiction since $l>\max(w)$, so $l$ occurs in a whole $i$ chunk. Since $s_1$ and $s_2$ have the same chunk decomposition, $l$ occurs in an $i$-chunk in both halves of the square.

From our knowledge of the ruler sequence, for any two occurrences of $i$ within $R_n$ there exists an $i+1$ between them, and so $\alpha(i+1)[-1]$ is contained within $yy$. Finally, from Lemma~\ref{S1,n occurrences} we have that $\alpha(i+1)[-1] > \alpha(i)[-1]\geq l$, which contradicts our choice of $l$.
\end{proof}

\begin{remark}\label{adding a prefix to rho of alpha of ruler}
We can see from the properties of the ruler sequence that Lemma~\ref{alpha(0) is a 0-chunk}, Lemma~\ref{S1,n occurrences}, and Lemma~\ref{alpha shared prefix and last letter} apply analogously to the morphism $\rho\circ\alpha$. Then the proof of Lemma~\ref{lemma for adding a prefix to alpha of ruler} can be easily adapted to show that if $w$ is square-free, then any square in $w\rho(\alpha(L(\varepsilon)))$ contains no letter greater than $\max(w\rho(\alpha(0)))$. This will be used in Lemma~\ref{Arho(alpha(L(0))) is square-free} to prove that $A\rho(\alpha(L(\varepsilon)))$ is square-free.
\end{remark}

Now we can prove Theorem~\ref{structure of L(1)}.

\begin{theoremL(1)}
Let $Y_1$ be the $5177$-letter prefix of $L(1)$.
Then $L(1) = Y_1 \, \alpha(L(\varepsilon))$.
\end{theoremL(1)}

\begin{proof}
We first show that $Y_1\alpha(L(\varepsilon))$ is square-free. Indeed, suppose that $Y_1\alpha(L(\varepsilon))$ contains a square. Since $Y_1$ and $\alpha(L(\varepsilon))$ are square-free, the square must start in $Y_1$. We can verify by computation that $Y_1\alpha(R_2)$ is square-free, so the square must end after $\alpha(R_2)$. Hence it contains $\alpha(2)[-1]=5$. But by Lemma~\ref{lemma for adding a prefix to alpha of ruler}, the square cannot contain any letter larger than $\max(Y_1\alpha(0))=4$. This is a contradiction so $Y_1\alpha(L(\varepsilon))$ is square-free.

We can check by direct computation that $L(1) = L(Y_1 G)$. Then using Remark~\ref{Appending to front of lex-least words} with $p=G$, $w=L(G)$, and $u=Y_1$, and Corollary~\ref{L(C)=alpha(L(E))}, we obtain that $$L(1)=L(Y_1 G) = Y_1 \, L(G) = Y_1 \alpha(L(\varepsilon)).$$
\end{proof}

The structure of $L(n)$ for $n \geq 3$ is similar to that of $L(1)$, although the prefix is different and the morphism $\alpha$ is replaced with the composition $\rho \circ \alpha$. 
From Theorem~\ref{tau prefix theorem}, we know that $L(n)$ has prefix $nT(n)$ which has length exponential in $n$. This is followed by $A$, a constant word of length $13747$ which can be easily found computationally. It is noteworthy that $A$ has prefix $\psi_2(0)^+$.

\begin{theoremL(n)}
For all $n \geq 3$, $L(n) = Y_n\, \rho(\alpha(L(\varepsilon)))$, where $Y_n=n \, T(n) \, A$.
\end{theoremL(n)}

In order to prove that $n$ generates $nT(n)A\rho(\alpha(\varepsilon))$, we need first to show that it is square-free and then show that $A\rho(\alpha(\varepsilon))$ is irreducible. We begin with some lemmas used to prove the square-free condition.


\begin{lemma}\label{ntau(n)A is square-free}
For $n\geq3$, $nT(n)A$ is square-free.
\end{lemma}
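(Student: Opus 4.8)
The plan is to reduce the statement to ruling out squares that cross the boundary between $T(n)$ and $A$, to dispose of all \emph{short} such squares by a single finite computation, and to dispose of the \emph{long} ones by a ruler‑type argument on the largest letter. Two structural inputs will drive everything. By the proof of Theorem~\ref{tau prefix theorem} we have $nT(n)=\psi_2\bigl((n-2)P_0(n-2)\bigr)$, which is square-free, and a direct computation shows that the fixed word $A$ is square-free; hence any square in $nT(n)A$ must straddle the $T(n)$–$A$ boundary. Furthermore, since $R_{n-2}$ ends in $0\,(n-2)$, we can write $P_0(n-2)=R_{n-2}R_{n-2}[:-2]=R_{n-2}[:-2]\,0\,(n-2)\,R_{n-2}[:-2]$, so that $(n-2)P_0(n-2)=v\,0\,v$ with $v:=(n-2)R_{n-2}[:-2]$; applying $\psi_2$ and using that $A$ begins with $\psi_2(0)^{+}$ gives
\[
 nT(n)\,A \;=\; X\,\psi_2(0)\,X\,\psi_2(0)^{+}\,Z,\qquad X:=\psi_2(v),\quad A=\psi_2(0)^{+}Z,
\]
which has the same shape as the recursion defining $\alpha$, with $\psi_2(0)$ in the role of the central block $C$; in particular the last letter of $\psi_2(0)$ is $2$ while that of $\psi_2(0)^{+}$ is $3$, and the two words agree elsewhere.

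For the short squares I would fix the least $n_0$ with $|T(n_0)|>|A|$ and verify by direct computation that $nT(n)A$ is square-free for every $3\le n\le n_0$. By Remark~\ref{structure of P2 and tau(n) has suffix tau(3)} (iterated), $T(n)$ has $T(n_0)$ as a suffix for every $n\ge n_0$, so for such $n$ the suffix of $nT(n)$ of length $|A|$ is a fixed word $W$ independent of $n$, and $WA$ is a suffix of the square-free word $n_0T(n_0)A$. Hence no crossing square can lie inside the last $|A|$ letters of $nT(n)$ followed by $A$; in particular every remaining crossing square has length greater than $|A|$, and from here on I assume $n>n_0$ (enlarging $n_0$ if convenient).

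For the long squares, let $yy$ be a crossing square and $\ell=\max(yy)$. Because $\max(A)$ is a fixed small constant, and stretches of letters at most $\max(A)$ inside $\psi_2$ of a ruler-like word have length bounded independently of $n$ (and $A$ is fixed), a square with $\ell\le\max(A)$ would be short — already handled — so $\ell>\max(A)$ and every occurrence of $\ell$ in $yy$ lies inside $nT(n)=\psi_2((n-2)P_0(n-2))$. For $\ell$ below the top few letter values I would show, transporting the corresponding property of the ruler prefix $P_0(n-2)$ through $\psi_2$ by means of the chunk-localization lemmas (Lemmas~\ref{psi_2(0) is a 0-chunk} and~\ref{psi_2(n) and psi_2(k) not suffix}), that any two occurrences of $\ell$ in $\psi_2((n-2)P_0(n-2))$ are separated by a strictly larger letter; since each half of $yy$ contains $\ell$, the square contains at least two occurrences of $\ell$ and hence a letter larger than $\ell$, a contradiction. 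The remaining values of $\ell$ — principally $\ell=n$, whose eight occurrences in $nT(n)$ sit four to a block inside the two copies of $\psi_2(n-2)=nP_0(n)P_1(n)$ and come in pairs at difference $2^{n}$ — I would dispose of using the rigid arithmetic of those positions together with the displayed near-square form: the period $m$ of $yy$ is forced into a bounded list of values built from $|X|$, $2^{n}$, and the internal offset of the $n$'s; periods too small to reach the factor $A$ are impossible because every occurrence of $n$ is more than $|A|$ letters from the end of $nT(n)$; and for each remaining period the two halves of the putative square disagree at the position where a copy of $\psi_2(0)$ is matched against a copy of $\psi_2(0)^{+}$, exactly as in the square-freeness proofs for $\alpha$ (Lemmas~\ref{sigma alphan+ square-free} and~\ref{alpha(n) square-free}). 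I expect this last step — establishing the \emph{ruler‑sandwich} property through $\psi_2$ and eliminating the top letters by the arithmetic/mismatch argument — to be the main obstacle; everything else is a citation (square-freeness of $nT(n)$), a finite computation (square-freeness of $A$, of $n_0T(n_0)A$, and of the short words $nT(n)A$ for $n\le n_0$), or the structural identity $nT(n)A=X\psi_2(0)X\psi_2(0)^{+}Z$.
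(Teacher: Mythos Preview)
Your setup matches the paper's: both pieces are square-free so any square crosses the boundary, small $n$ are handled by computation, and for large $n$ one works with the largest letter $\ell$ in the square and the identity $nT(n)=\psi_2((n-2)P_0(n-2))$. After that the routes diverge, and the step you flag as ``the main obstacle'' is exactly where your argument is not yet a proof.

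The paper never needs a ruler-sandwich lemma for $\psi_2$. Instead it uses one clean observation you wrote down but did not exploit: since $A$ begins with $\psi_2(0)^+$ and, by Lemma~\ref{psi_2(0) is a 0-chunk}, the word $\psi_2(0)^+$ cannot occur anywhere in $nT(n)=\psi_2((n-2)P_0(n-2))$, the half $y$ (whose first copy lies entirely in $nT(n)$, because all occurrences of $\ell>\max(A)=5$ are there) cannot contain $\psi_2(0)^+$. This confines the whole square to $nT(n)\psi_2(0)[:-1]$. Writing this as $\psi_2\bigl((n-2)R_{n-2}[:-1](n-2)R_{n-2}[:-1]\bigr)[:-1]$ and noting that $R_{n-2}[:-1](n-2)R_{n-2}[:-1]=R_{n-1}[:-1]$ is square-free, Proposition~\ref{psi_2 square-free over grounded} forces the square to reach into the \emph{first} $\psi_2(n-2)$ chunk and hence to contain the entire middle $\psi_2(n-2)$; this immediately gives $\ell=n$. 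One is then left with a short case analysis on whether the square contains $4$, $6$, or $8$ of the eight occurrences of $n$, each case dying by a length or mismatch count.

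By contrast, your plan requires establishing that in $\psi_2((n-2)P_0(n-2))$ any two occurrences of an intermediate letter $\ell$ are separated by a larger one. This is not automatic: for example $\ell=n-1$ is the maximum letter of each $\psi_2(n-3)$ chunk, so inside such a chunk there are consecutive $\ell$'s with nothing larger between them, and your ``top few values'' will in fact be an unbounded list unless you add further structure. The paper's confinement trick sidesteps all of this and jumps straight to $\ell=n$; I would replace your ruler-sandwich step with that argument.
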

\begin{proof}
From Theorem~\ref{tau prefix theorem} we know that $nT(n)$ is square-free and we can verify that $A$ is square-free computationally, so any square would have to overlap both factors. We can also computationally check the cases $n=3,4,5$, so assume $n\geq6$ and suppose that $nT(n)A$ contains a square $yy$.

From Remark~\ref{structure of P2 and tau(n) has suffix tau(3)} we have that $T(n)$ has suffix $T(6)$. We can computationally verify that $T(6)A$ is square-free, so the square contains $T(6)$. Since $\max(T(6)A)=6$, then let $k$ be the largest letter in the square, we have that $k\geq6$. Also, $\max(A)=5$, so all occurrences of $k$ are in $nT(n)$. Since both halves contain at least one letter $k$, then the center of the square lies in $nT(n)$.

Recall that $A$ begins with $\psi_2(0)^+$ which never occurs in $nT(n)=\psi_2((n-2)P_0(n-2))$ by Lemma~\ref{psi_2(0) is a 0-chunk}. Since the first half is contained in $nT(n)$, $y$ cannot contain $\psi_2(0)^+$. The second half of the square starts in $nT(n)$, but it cannot contain all of $\psi_2(0)^+$ at the beginning of $A$. Therefore, the square is a factor of
\[nT(n)\psi_2(0)[:-1]=\psi_2((n-2)R_{n-2}[:-1](n-2)R_{n-2}[:-1])[:-1].\]
We first consider that $R_{n-2}[:-1](n-2)R_{n-2}[:-1]=R_{n-1}[:-1]$ is square-free, so by Proposition~\ref{psi_2 square-free over grounded}, $\psi_2(R_{n-2}R_{n-2}[:-1])$ is square-free meaning that the square intersects the first chunk, $\psi_2(n-2)$.

This means that the square contains all of the middle occurrence of ${\psi_2(n-2)}$ which contains four occurrences of $n$. Since $yy$ contains $n$ and ${\max(nT(n))}=n$ we have $n=k$. Also, there are exactly 8 occurrences of $n$ in $nT(n)A$, 4 in each occurrence of $\psi_2(n-2)$. The square $yy$ must include the last 4 occurrences of $n$ and either none of the earlier ones, just the last 6 $n$'s, or all 8 $n$'s. Recall that
\begin{multline*}
   nT(n)\psi_2(0)[:-1]=\\\underline{\psi_2(n-2)}\ \psi_2(R_{n-2}[:-2])\ \psi_2(0)\ \underline{\psi_2(n-2)}\ \psi_2(R_{n-2}[:-2])\ \psi_2(0)[:-1] 
\end{multline*}
and from Equation~\eqref{psi_2(n) decomposition} in Section~\ref{section P prefixes},
\[\psi_2(n-2)=\underline{n}\ \underline{R_n}\ R_n[:-2]\ \psi_1(R_{n-1}[:-1])\ \underline{n}\ \underline{R_n}\ R_n[:-2]\ \psi_1(R_{n-1}[:-2]),\]
which shows the locations of the 4 occurrences of $n$ in $\psi_2(n-2)$.

If $yy$ contains only the last 4 occurrences of $n$, then $|y|=|nR_nR_n[:-2]\psi_1(R_{n-1}[:-1])|$. But $yy$ contains $\psi_2(0)\psi_2(n-2)$ which has length more that twice the length of $nR_nR_n[:-2]\psi_1(R_{n-1}[:-1])$. This is a contradiction.

If $yy$ contains only the last 6 occurrences of $n$, we consider the first and second $n$ in each half. The first two $n$'s in the first half occur together as $nR_n$. But the first two $n$'s in the second half occur in $\underline{R_n}\ R_n[:-2]\ \psi_1(R_{n-1}[:-1])\ \underline{n}$. Clearly, $nR_n[:-2]\psi_1(R_{n-1}[:-1])n\neq nR_n$, so this is a contradiction.

If $yy$ contains all 8 occurrences of $n$, then it starts at the first letter of $nT(n)$. Then $y=\psi_2(n-2)\psi_2(R_{n-2}[:-2])\psi_2(0)$, but then $yy=nT(n)\psi_2(0)$ is not a factor of $nT(n)\psi_2(0)[:-1]$. So this is also a contradiction.
\end{proof}

\begin{lemma}\label{Arho(alpha(L(0))) is square-free}
$A\rho(\alpha(L(\varepsilon)))$ is square-free.
\end{lemma}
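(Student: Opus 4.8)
The plan is to reuse, almost verbatim, the square-freeness argument from the proof of Theorem~\ref{structure of L(1)}, replacing the morphism $\alpha$ by $\rho\circ\alpha$ and Lemma~\ref{lemma for adding a prefix to alpha of ruler} by its stated analogue, Remark~\ref{adding a prefix to rho of alpha of ruler}. First I would record that $A$ is square-free (a finite check, and in any case $A$ is a factor of $L(n)$ for $n\ge 3$) and that $\rho(\alpha(L(\varepsilon)))$ is square-free. The latter holds because $\alpha(L(\varepsilon)) = L(\alpha(0))$ is square-free by Corollary~\ref{L(C)=alpha(L(E))}, and $\rho$ maps square-free words to square-free words: every image $\rho(w)$ is even-grounded, so each square in it has even period and hence pulls back along $\rho$ to a square in $w$. (Equivalently, $\rho(\alpha(L(\varepsilon))) = \rho(L(\alpha(0))) = L(\rho(\alpha(0)))$ by Theorem~\ref{rho is L-commuting}, and $\rho(\alpha(0))$ is square-free by computation.) Consequently, if $A\rho(\alpha(L(\varepsilon)))$ contained a square $yy$, it could lie neither wholly inside $A$ nor wholly inside $\rho(\alpha(L(\varepsilon)))$, so $yy$ would straddle the boundary; in particular $yy$ would begin in $A$.

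Second, I would apply Remark~\ref{adding a prefix to rho of alpha of ruler} with $w = A$, which bounds the letters of $yy$: no letter of $yy$ exceeds $\max(A\rho(\alpha(0)))$. Since $\max(\alpha(0)) = 3$ we have $\max(\rho(\alpha(0))) = 4$, and $\max(A) = 5$, so $yy$ contains no letter greater than $5$; in particular it contains no $6$.

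Third, I would pin down the relevant finite prefix. Take $R_2 = L(\varepsilon)[:4] = 0102$, so that $\alpha(R_2) = \alpha(0)\,\alpha(1)\,\alpha(0)\,\alpha(2)$ is the prefix of $\alpha(L(\varepsilon))$ through its first $\alpha(2)$-chunk; since $\alpha(2)$ ends with $R_5$ and $R_5$ ends in $5$, the word $\rho(\alpha(R_2))$ ends with $\rho(5) = 06$, so its last letter is $6$. I would then verify by computation that the explicit finite word $A\,\rho(\alpha(R_2))$ is square-free. As $yy$ begins in $A$ and $A\,\rho(\alpha(R_2))$ is square-free, $yy$ cannot be a factor of $A\,\rho(\alpha(R_2))$, so it must extend strictly beyond the end of $\rho(\alpha(R_2))$, and hence it contains the last letter $6$ of $\rho(\alpha(R_2))$. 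This contradicts the bound from the previous step, so no such square exists and $A\rho(\alpha(L(\varepsilon)))$ is square-free.

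The only parts requiring effort are the two square-freeness verifications — for $A$ (already available) and for $A\,\rho(\alpha(R_2))$, a word of length $|A| + 2\,|\alpha(R_2)|$, i.e.\ a few tens of thousands of letters, well within range of the Main--Lorentz algorithm — together with the letter bound, which is exactly Remark~\ref{adding a prefix to rho of alpha of ruler} and is already in hand. I do not anticipate a genuine obstacle: the argument is a direct transcription of the square-freeness step in the proof of Theorem~\ref{structure of L(1)}, the one point worth checking being that the $\rho\circ\alpha$ analogues of Lemmas~\ref{alpha(0) is a 0-chunk}, \ref{S1,n occurrences}, and \ref{alpha shared prefix and last letter} used inside Remark~\ref{adding a prefix to rho of alpha of ruler} really do hold, which is immediate from the structure of the ruler sequence since $\rho$ is letter-injective and each $\rho(\alpha(\ell))$ still ends in its unique largest letter.
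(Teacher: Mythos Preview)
Your proposal is correct and follows essentially the same approach as the paper's proof: both use Remark~\ref{adding a prefix to rho of alpha of ruler} to bound the letters of any square by $\max(A\rho(\alpha(0)))=5$, observe that a square must overlap $A$, computationally verify that $A\rho(\alpha(R_2))$ is square-free, and derive a contradiction from the presence of the letter $6$ in $\rho(\alpha(R_2))$. Your write-up is slightly more detailed in justifying that $\rho(\alpha(L(\varepsilon)))$ is square-free and in computing the maxima, but the argument is the same.
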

\begin{proof}
Suppose $A\rho(\alpha(L(\varepsilon)))$ contains a square. Since $A$ is square-free, we can use Remark~\ref{adding a prefix to rho of alpha of ruler} to see that the square contains no letter greater than $\max(A\rho(\alpha(0)))=5$. Since $\rho(\alpha(L(\varepsilon)))$ is square-free, the square overlaps $A$. For all letters $n\ge0$, $\max(\rho(\alpha(n)))=n+4$, so the square would need to be contained in $A\rho(\alpha(R_2))$ which is a prefix of $A\rho(\alpha(L(\varepsilon)))$ that contains 6. We can computationally verify that $A\rho(\alpha(R_2))$ is square-free which is a contradiction.
\end{proof}

\begin{theorem}\label{L(n>2) square-free condition}
For all $n\geq3$, $nT(n)A\rho(\alpha(L(\varepsilon)))$ is square-free.
\end{theorem}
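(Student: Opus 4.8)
The plan is to locate a square in $nT(n)A\rho(\alpha(L(\varepsilon)))$ so precisely that it is forced to contain an impossibly long factor of $nT(n)$ using only small letters. Suppose $yy$ is such a square. Since $nT(n)A$ is square-free by Lemma~\ref{ntau(n)A is square-free} and $\rho(\alpha(L(\varepsilon)))$ is square-free, $yy$ crosses the boundary between these two factors; since $A\rho(\alpha(L(\varepsilon)))$ is square-free by Lemma~\ref{Arho(alpha(L(0))) is square-free}, $yy$ also crosses the boundary between $nT(n)$ and $A$. Hence $yy$ begins inside $nT(n)$, ends inside $\rho(\alpha(L(\varepsilon)))$, and contains all of $A$; in particular $|y|>|A|=13747$.

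Next I would pin down the midpoint of $yy$. It cannot lie in $nT(n)$: otherwise the first half $y$ is a factor of $nT(n)$, and being equal to the second half it contains $A$, so the prefix $\psi_2(0)^+$ of $A$ would occur in $nT(n)=\psi_2((n-2)P_0(n-2))$, contradicting Lemma~\ref{psi_2(0) is a 0-chunk}. It cannot lie in $\rho(\alpha(L(\varepsilon)))$ either: otherwise $A$, hence $\psi_2(0)^+$, would be a factor of the even-grounded word $\rho(\alpha(L(\varepsilon)))$, whereas $\psi_2(0)^+$ is not even-grounded. So the midpoint lies in $A$. To force it to the front of $A$, I would check that $\psi_2(0)^+$ occurs exactly once in $nT(n)A\rho(\alpha(L(\varepsilon)))$, namely as the prefix of $A$: it is not a factor of $nT(n)$ (Lemma~\ref{psi_2(0) is a 0-chunk}) nor of $\rho(\alpha(L(\varepsilon)))$ (not even-grounded), it cannot straddle the $nT(n)$--$A$ boundary (again by Lemma~\ref{psi_2(0) is a 0-chunk} together with the last letters of $nT(n)$), and since only the shortest suffixes of $\psi_2(0)^+$ are even-grounded, a short finite computation on $A$ rules out the remaining cases. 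As $\psi_2(0)^+$ is a factor of $yy$ it cannot sit inside a single copy of $y$, so it straddles the midpoint; hence the midpoint of $yy$ is at a position $c$ of $A$ with $c<|\psi_2(0)|$.

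Then I would write the first half as $y=u\,A[0{:}c]$ with $u$ a suffix of $nT(n)$ and the second half as $y=A[c{:}]\,v$ with $v$ a nonempty prefix of $\rho(\alpha(L(\varepsilon)))$. Equating lengths gives $|u|=|A|-2c+|v|$, so $|u|>|A|-2|\psi_2(0)|$; and since the first $|A|-c$ letters of the second form of $y$ are $A[c{:}]$, they have maximum at most $\max(A)=5$. Comparing with the first form, $nT(n)$ has a factor of length greater than $|A|-2|\psi_2(0)|=13749-2\cdot 199$ in which every letter is at most $5$. For small $n$ (one checks this includes $3\le n\le 6$) this is absurd since $|nT(n)|$ is already smaller than this bound. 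For the remaining $n$ it contradicts the fact that every factor of $\psi_2((n-2)P_0(n-2))$ with all letters at most $5$ has length bounded by a constant far below the bound: such a factor contains no whole $\psi_2(j)$-chunk with $j\ge 4$, and in $(n-2)P_0(n-2)$ the letters $\ge 4$ occur with gaps of at most $15$; applying the same idea one level down through $\psi_2=\psi_1\circ\psi_1$ (where $\psi_1(i)$ contains the letter $6$ once $i\ge 5$, and letters $\ge 5$ occur in $(j+1)P_0(j+1)$ with gaps of at most $31$) yields the required uniform bound.

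The earlier steps are routine once Lemmas~\ref{ntau(n)A is square-free}, \ref{Arho(alpha(L(0))) is square-free}, and \ref{psi_2(0) is a 0-chunk} are available; the obstacle I expect is the final structural bound for large $n$. Making explicit the maximum length of a factor of $nT(n)$ all of whose letters are at most $5$ requires a two-level analysis of $\psi_1$ and $\psi_2$ interlaced with the ruler sequence, and is made fiddly by the fact that $\psi_2(0)$ is much longer than the other low chunks $\psi_2(1),\psi_2(2),\psi_2(3)$, so the bookkeeping must track partial chunks carefully.
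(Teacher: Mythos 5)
Your argument is correct in outline but takes a genuinely different route from the paper's. Both proofs begin the same way: by Lemmas~\ref{ntau(n)A is square-free} and \ref{Arho(alpha(L(0))) is square-free} the square must contain all of $A$ and spill into both $nT(n)$ and $\rho(\alpha(L(\varepsilon)))$. The paper then picks two markers, the prefix $A[:254]$ and the suffix $A[-88:]$ of $A$, shows each occurs exactly once in the whole word, and observes that one of them must lie inside a single half $y$ and hence occur twice; no analysis of the midpoint or of letter sizes is needed. You instead use the single marker $\psi_2(0)^+$ to pin the midpoint into the first $199$ letters of $A$, and convert the resulting length equation into the claim that $nT(n)$ contains a factor of length greater than $|A|-2|\psi_2(0)|=13349$ all of whose letters are at most $5=\max(A)$. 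That claim is indeed contradictory: for $3\le n\le 6$ one has $|nT(n)|\le 6761$, and for $n\ge 7$ the two-level gap analysis you sketch does go through --- between consecutive letters $\ge 4$ of $(n-2)P_0(n-2)$ the image under $\psi_2$ has length $8\cdot 199+4\cdot 62+2\cdot 162+394=2558$, and the admissible suffix of a $\psi_2(j)$-chunk with $j\ge 4$ (everything after its last letter $\ge 6$) has length $62+587=649$, so every factor of $nT(n)$ with all letters at most $5$ has length at most roughly $3210$, far below $13349$. So the ``obstacle'' you flag is real bookkeeping but not a gap. Two small repairs: the cleanest reason $\psi_2(0)^+$ cannot straddle the $nT(n)$--$A$ boundary is that, being a prefix of $A$, such an occurrence would create the square $(\psi_2(0)^+[:k])^2$ in $nT(n)A$, contradicting Lemma~\ref{ntau(n)A is square-free} (your appeal to Lemma~\ref{psi_2(0) is a 0-chunk} only covers occurrences with at least six letters landing in $nT(n)$); and $|A|=13747$, not $13749$. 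As for the trade-off: the paper's marker $s=A[-88:]$ still requires a structural argument about occurrences of $4$ inside $\psi_1$-chunks to exclude it from $nT(n)$, so both routes end up doing a comparable $\psi_1$/$\psi_2$ analysis --- yours needs a quantitative length bound where the paper's needs only a qualitative non-occurrence, but in exchange you verify uniqueness of one marker rather than two.
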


\begin{proof}
Suppose that there is a square $yy$ in $nT(n)A\rho(\alpha(L(\varepsilon)))$. We have from Lemmas~\ref{ntau(n)A is square-free} and~\ref{Arho(alpha(L(0))) is square-free} that $nT(n)A$ and $A\rho(\alpha(L(\varepsilon)))$ are both square-free. So the square $yy$ must contain all of $A$ and overlap some nonempty suffix of $nT(n)$ and some nonempty prefix of $\rho(\alpha(L(\varepsilon)))$.

Consider the prefix $p := {A[:254]}$ and the suffix $s := {A[-88:]}$, and define $w$ such that $A=pws$. Since $A$ is totally contained in $yy$, then at least one of $p$ or $s$ must be totally contained in $y$. We will show that $p$ and $s$ each occur exactly once in $nT(n)A\rho(\alpha(L(\varepsilon)))=nT(n)pws\rho(\alpha(L(\varepsilon)))$, which leads to a contradiction, since at least one of $p$ or $s$ must appear in both halves of the square.

First we show that $p$ appears exactly once in $nT(n)A\rho(\alpha(L(\varepsilon)))$. We can verify computationally that $p$ occurs exactly once in $A$. Also, since $p$ begins with $\psi_2(0)^+$ which  by Lemma\ref{psi_2(0) is a 0-chunk} never occurs in $nT(n)$, then $p$ cannot occur in $nT(n)$. Moreover, if $p$ occurred over the boundary between $nT(n)$ and $A$, then since $p$ is a prefix of $A$, there would be a square in $nT(n)A$ which is not true according to Lemma~\ref{ntau(n)A is square-free}. Finally, since $p$ ends with 12 and $\rho(\alpha(L(\varepsilon)))$ is even-grounded we conclude that $p$ occurs neither in $\rho(\alpha(L(\varepsilon)))$ nor on the boundary between $A$ and $\rho(\alpha(L(\varepsilon)))$.

Secondly, we show that $s$ appears exactly once in $nT(n)A\rho(\alpha(L(\varepsilon)))$. It can be checked that $s$ occurs exactly once in $A$. To show that $s$ is not contained in $nT(n)=\psi_2((n-2)P_0(n-2))$ we use the following properties of $s$, which can be verified computationally: $s$ is even-grounded, $\psi_2(0)$ does not contain $s$, $\max(s)=4$ and $s$ contains 7 occurrences of 4. Since $s$ is even-grounded and for all $n\ge 0$, $\psi_2(n)$ begins and ends with nonzero letters, $s$ cannot lie over a $\psi_2$ chunk boundary. So if $s$ occurs in $nT(n)$, it is within $\psi_2(k)$ for some $k>0$. We can computationally verify that $s$ does not occur in $\psi_2(1)$ or $\psi_2(2)$, so assume $k>2$. Consider that $\psi_2(k)=\psi_1((k+1)P_0(k+1))=\psi_1((k+1)R_{k+2}[:-2])$ and that $\psi_1(\ell)$ contains no 4's when $\ell<3$, two 4's and no 5's when $\ell=3$, and contains 5's when $\ell>3$. So if $s$ is contained in $\psi_2(k)$, it must contain at least two whole occurrences of $\psi_1(3)$, and no whole occurrence of $\psi_1(4)$. But since $k>2$, any two occurrences of 3 in $(k+1)R_{k+2}[:-2]$ have an occurrence of 4 between them. So $s$ cannot be contained in $nT(n)$.

 Note that $A$ begins with 2 and recall from Remark~\ref{structure of P2 and tau(n) has suffix tau(3)} that $T(n)$ has suffix $T(3)$ which ends with a 1. Hence, since  $s$ is grounded, $s$ cannot lie over the boundary between $nT(n)$ and $A$. Also, if $s$ occurred over the boundary between $A$ and $\rho(\alpha(L(\varepsilon)))$, then since $s$ is a suffix of $A$, there would be a square in $A\rho(\alpha(L(\varepsilon)))$, which is not true according to Lemma~\ref{Arho(alpha(L(0))) is square-free}. 

Finally we show that $s$ cannot occur in $\rho(\alpha(L(\varepsilon)))$. Since $s$ is even-grounded and has even length, $\rho^{-1}(s)$ is well-defined. Hence, it is enough to show that $\rho^{-1}(s)$ does not occur in $\alpha(L(\varepsilon))$. The first two letters of $\rho^{-1}(s)$ are 13. For $n\geq0$, $\alpha(n)$ begins with zero, so 13 cannot occur over a chunk boundary in $\alpha(L(\varepsilon))$. Also, for $n\geq1$, $\alpha(n)$ is grounded so it does not contain 13. We can verify directly that $\rho^{-1}(s)$ does not occur in $\alpha(0)$, so $s$ does not occur in $\rho(\alpha(L(\varepsilon)))$.
\end{proof}

\begin{proposition}\label{L(n>2) prefix condition}
For $n\geq3, L(n)$ has prefix $nT(n)A\rho(G)$.
\end{proposition}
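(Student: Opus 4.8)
The plan is to establish directly that $n$ generates $nT(n)A\rho(G)$ for every $n\ge 3$, which is exactly the assertion that $L(n)$ has prefix $nT(n)A\rho(G)$. Since Theorem~\ref{tau prefix theorem} already gives that $n$ generates $nT(n)$, it suffices to show that $nT(n)$ generates $nT(n)A\rho(G)$; then the transitivity of ``generates'' finishes the job. For the latter I would invoke Remark~\ref{Appending to front of lex-least words}, which asks for two inputs: that $nT(n)A\rho(G)$ is square-free, and that some fixed (independent of $n$) suffix of $nT(n)$ generates that suffix followed by $A\rho(G)$.

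Square-freeness comes essentially for free. Because $G$ generates $\alpha(0)$, the word $G$ is a prefix of $\alpha(0)$, so $\rho(G)$ is a prefix of $\rho(\alpha(0))$, which is a prefix of $\rho(\alpha(L(\varepsilon)))$ since $\alpha(0)$ is a prefix of $\alpha(L(\varepsilon))$. Hence $nT(n)A\rho(G)$ is a prefix of $nT(n)A\rho(\alpha(L(\varepsilon)))$, and the latter is square-free by Theorem~\ref{L(n>2) square-free condition}. For the generating condition I would use Remark~\ref{structure of P2 and tau(n) has suffix tau(3)}, which states that $T(3)$ is a suffix of $T(n)$ for all $n\ge 3$; write $nT(n)=uT(3)$ for the appropriate prefix $u$. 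One then checks by a finite computation that $T(3)$ generates $T(3)A\rho(G)$: the word $T(3)$ is square-free (being a factor of $L(3)$), and $T(3)$, $A$, and $\rho(G)$ all have small, explicit lengths, so the greedy algorithm of the introduction computes enough of $L(T(3))$ to confirm that it begins with $T(3)A\rho(G)$. Granting this, Remark~\ref{Appending to front of lex-least words}, applied with $p=T(3)$, $s=A\rho(G)$, and the prefix $u$ (using the square-freeness just noted), yields that $uT(3)=nT(n)$ generates $nT(n)A\rho(G)$. Composing with ``$n$ generates $nT(n)$'' gives the proposition.

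There is no genuine obstacle here: the statement is a recombination of Theorems~\ref{tau prefix theorem} and~\ref{L(n>2) square-free condition}, Remarks~\ref{Appending to front of lex-least words} and~\ref{structure of P2 and tau(n) has suffix tau(3)}, and a single finite verification at $n=3$. The only point that requires a moment's care is recognizing that $\rho(G)$ occurs as a prefix of $\rho(\alpha(L(\varepsilon)))$; this is what makes the square-freeness of $nT(n)A\rho(G)$ an immediate consequence of Theorem~\ref{L(n>2) square-free condition} and what lets the whole argument, beyond citations, collapse to the computation that $T(3)$ generates $T(3)A\rho(G)$.
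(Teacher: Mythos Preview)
Your proposed finite verification fails: $T(3)$ does \emph{not} generate $T(3)A$. Recall that $A$ begins with $\psi_2(0)^+$, so for $T(3)$ to generate $T(3)A$ you would need $T(3)\psi_2(0)$ to have a square suffix (forcing the last letter of $\psi_2(0)$, a $2$, up to $3$). But $3T(3)\psi_2(0)=\psi_2(1010)=\psi_2(10)^2$, so $T(3)\psi_2(0)=\psi_2(10)^2[1{:}]$; any square suffix of this is a square in $\psi_2(1010)$ that ends at the last letter and starts at some position $i\ge 1$. Since $\psi_2(010)$ and $\psi_2(101)$ are square-free (Proposition~\ref{psi_2 square-free over grounded}), any square in $\psi_2(1010)$ must start in the first $\psi_2(1)$-chunk, so $1\le i\le 61$ with $i$ even. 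Equating halves forces $\psi_2(10)[0{:}i/2]=\psi_2(10)[261-i/2{:}261]$, in particular $\psi_2(10)[261-i/2]=\psi_2(10)[0]=3$; the only $3$ among the last $30$ letters of $\psi_2(0)$ sits at position $172$, giving the single candidate $i=54$, and there the two length-$27$ words already differ at their third letter. Hence $T(3)\psi_2(0)$ has no square suffix, $L(T(3))$ continues with $\psi_2(0)$ rather than $\psi_2(0)^+$, and your one finite check is false.

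This is exactly the subtlety the paper's proof is organised around. The square that forces the bump to $\psi_2(0)^+$ is the \emph{entire} word $nT(n)\psi_2(0)=\psi_2((n-2)R_{n-2}[:{-}1])^2$; it begins with the letter $n$ and is not visible from the suffix $T(3)$ alone. The paper therefore first argues (for each $n$) that $nT(n)$ generates $nT(n)\psi_2(0)^+$, using this $n$-dependent square together with the irreducibility of $\psi_2(0)$ inherited from Lemma~\ref{psi_2(n) generates psi_2(n0)}, and only \emph{then} passes to the common suffix $T(3)\psi_2(0)^+$ for the finite computation ``$T(3)\psi_2(0)^+$ generates $T(3)A\rho(G)$''. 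Your square-freeness argument and your use of Remark~\ref{Appending to front of lex-least words} are fine; what is missing is this intermediate step, and it cannot be absorbed into a single $n$-independent computation starting from $T(3)$.
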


\begin{proof}
We know from Theorem~\ref{tau prefix theorem} that $L(n)$ has prefix $nT(n)$. First we show that $L(n)$ has prefix $n \, T(n) \, \psi_2(0)^+$. Indeed, using Remark~\ref{structure of P2 and tau(n) has suffix tau(3)}, $T(n)$ has suffix $T(3)$ which has suffix $\psi_2(1)$, which according to Lemma~\ref{psi_2(n) generates psi_2(n0)} generates $\psi_2(10)$. This means that $\psi_2(0)$ at the end of $n \, T(n) \, \psi_2(0)$ is irreducible. However, $\psi_2(0)$ also introduces a square:
\begin{align*}
   n \, T(n)\psi_2(0)&=n\,P_0(n)\,P_1(n)\,\,\psi_2(R_{n-2}R_{n-2}[:-2])\,\psi_2(0)\\
   &=\psi_2(n-2)\,\,\,\,\,\,\,\,\,\,\,\,\psi_2(R_{n-2}[:-1])\,\,\psi_2(n-2)\,\psi_2(R_{n-2}[:-2])\,\psi_2(0)\\
   &=\psi_2(n-2)\,\,\,\,\,\,\,\,\,\,\,\,\psi_2(R_{n-2}[:-1])\,\,\psi_2(n-2)\,\psi_2(R_{n-2}[:-1])\\
   &= \psi_2((n-2) \, R_{n-2}[:-1])^2.
\end{align*}
Therefore $n \, T(n) \, \psi_2(0)^+$ is irreducible. It is also square-free by Proposition~\ref{L(n>2) square-free condition}, since $A$ has prefix $\psi_2(0)^+$.

Now, from Remark~\ref{structure of P2 and tau(n) has suffix tau(3)} we know that $n \, T(n) \, \psi_2(0)^+$ has suffix $T(3) \, \psi_2(0)^{+}$. A computer can then verify that this generates $T(3)A\rho(G)$. Since $G$ is a prefix of $\alpha(0)$ we obtain from Proposition~\ref{L(n>2) square-free condition} that $n \, T(n) \, A \, \rho(G)$ is square-free, which concludes the proof.
\end{proof}

We can now prove that $L(n)=Y_n\rho(\alpha(L(\varepsilon)))$.
\begin{proof}[Proof of Theorem~\ref{L(n) for large n}]
From Proposition~\ref{L(n>2) prefix condition} we get $L(n) = L(n \, T(n) \, A \, \rho(G))=L(Y_n\rho(G))$. Also, Theorem~\ref{rho is L-commuting} implies that $L(\rho(G))=\rho(L(G))$, so by Corollary~\ref{L(C)=alpha(L(E))} we have that
\[Y_nL(\rho(G))=Y_n\rho(L(G))= Y_n \rho(\alpha(L(\varepsilon))),\]
which is square-free by Proposition~\ref{L(n>2) square-free condition}. Hence, Theorem~\ref{L(n) for large n} follows from Remark~\ref{Appending to front of lex-least words} with $p=\rho(G)$, $u=Y_n$ and $w=L(\rho(G))$.
\end{proof}

\subsection{The structure of $L(2)$}\label{L(2) section}

In this section, we briefly describe a conjectured structure for $L(2)$ that is similar to the structures of $L(n)$ in the previous sections.

First note that $R_n$ can be written recursively as
\begin{align*}
  R_1 &= 01,\\
  R_n &= R_{n-1} R_{n-1}^+.
\end{align*}
Define
\begin{align*}
  b_2 &= 0102012021012, \\
  b_n &= b_{n-1}b_{n-1}^+R_{n-1}R_{n-1}^+ = b_{n-1}b_{n-1}^+R_{n}.
\end{align*}
Also, let $c_3$ be the $261$-letter word:
\begin{align*}
  c_3 = {}
  & 0102012021012010201202102010210120102012021012010201301020103 \\
  & 0102012021012010201202101301020103010201202101201020120230102 \\
  & 0103010201202101201020120301020103010203010302010203010201030 \\
  & 1020301030201202101201020120210120230102010301020120210120102 \\
  & 01202101301020103, \\
  c_n = {}
  & c_{n-1} c_{n-1}^+ R_{n-1} R_{n-1}^+ b_{n-1} b_{n-1}^+ R_{n-1} R_{n-1}^+ \\
  = {} & c_{n-1} c_{n-1}^+ R_{n} b_{n}.
\end{align*}
Notice that for all $n$, $c_n$ has $c_{n-1}$ as a prefix. Thus, $c_n$ has all previous $c_k$ ($3\le k<n$) as prefixes and we have
\begin{align*}
\lim_{n \to \infty}c_n&=c_{3}\ c_{3}^+R_{4}b_{4}\ c_{4}^+R_{5}b_{5}\ c_{5}^+R_{6}b_{6}\cdots\\
&=c_3c_3^+\ R_4b_4\ c_3c_3^+\ R_4b_4^+R_5b_5\ c_4c_4^+\ R_5b_5^+R_6b_6\cdots\\
&=\underline{c_3c_3^+}\ R_4b_4\ \underline{c_3c_3^+}\ R_4b_4^+R_5b_5\ \underline{c_3c_3^+}\ R_4b_4\ \underline{c_3c_3^+}\ R_4b_4^+R_5b_5^+R_6b_6\cdots,
\end{align*}
which gives rise to the following morphism:

\begin{definition*}
For all $n \geq 0$, $\gamma(n)$ is the morphism defined by
\begin{align*}
\gamma(0) &= c_3 c_3^+ \\
\gamma(n) &= R_4 b_4^+ \; R_5 b_5^+ \cdots R_{n+2} b_{n+2}^+ \; R_{n+3} b_{n+3}.
\end{align*}
\end{definition*}
From the structure of $c_n$ and $\gamma$, we can see that
\[\lim_{n \to \infty}c_n=\gamma(L(\varepsilon)).\]

\begin{conjectureL(2)}
$\displaystyle{L(2) = 2 \lim_{n \to \infty} c_n = 2 \gamma(L(\varepsilon))}$.
\end{conjectureL(2)}

\section{Extending from known words}\label{section: extending}

In this section we give two results establishing conditions for when $L(uv)=uL(v)$ for words $u$ and $v$. If $uv$ is square-free, then so is $L(uv)$. Thus, by Remark~\ref{Appending to front of lex-least words}, $uL(v)$ being square-free is a necessary and sufficient condition for $L(uv)=uL(v)$ when $uv$ is square-free. The following result lets us use our knowledge of $L(n)$ to show that we can omit the square-free condition when $u$ and $v$ are letters $\geq3$, and Theorem~\ref{sufficient check} demonstrates a test for the case when $uv$ is not square-free and has a particular structure. The proof of the latter theorem does not use Theorem~\ref{L(n) for large n}.

\begin{lemma}\label{n_1n_2 not in L(n_2)}
For all $n_1,n_2\geq0$, if the word $n_1n_2$ is not a factor of $L(n_2)$, then $L(n_1n_2)=n_1L(n_2)$.
\end{lemma}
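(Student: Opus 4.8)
The plan is to apply Remark~\ref{Appending to front of lex-least words} with $p = n_2$, $w = L(n_2)$, and $u = n_1$. By that remark, once we know that $n_1 L(n_2)$ is square-free, we immediately get $L(n_1 n_2) = L(n_1 L(n_2)) = n_1 L(n_2)$, since $n_2$ generates $L(n_2)$ by definition of $L$. So the entire content of the lemma reduces to showing that $n_1 L(n_2)$ is square-free, under the hypothesis that $n_1 n_2$ is not a factor of $L(n_2)$.

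To prove $n_1 L(n_2)$ is square-free, I would argue by contradiction: suppose $n_1 L(n_2)$ contains a square $yy$. Since $L(n_2)$ is square-free (it is a lexicographically least square-free word, and $n_2$, being a single letter, is square-free), the square $yy$ cannot lie entirely within $L(n_2)$; hence it must begin at the first letter, i.e.\ with the prepended letter $n_1$. Thus $y$ starts with $n_1$, and the square has the form $yy$ where the second copy of $y$ also begins with the letter $n_1$. The second occurrence of $y$ starts at some position $i \geq 1$ inside $n_1 L(n_2)$, so in particular position $i$ of $n_1 L(n_2)$ is the letter $n_1$; since $i \geq 1$, this position lies inside $L(n_2)$, and the letter immediately before it is $y[-1] = (n_1 L(n_2))[i-1]$, which is the last letter of the first copy of $y$. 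But the first copy of $y$ is $n_1$ followed by a prefix of $L(n_2)$, so its last letter equals $L(n_2)[|y|-2]$ when $|y|\geq 2$, and when $|y|=1$ the square is just $n_1 n_1$, forcing $L(n_2)$ to start with $n_1$, i.e.\ $n_2 = n_1$ and $n_1 n_2 = n_1 n_1$ occurs in $L(n_2)$ — a contradiction. For $|y| \geq 2$: the two letters at positions $i-1, i$ of $n_1 L(n_2)$ are $y[-1]$ and $y[0] = n_1$, and since $i-1 \geq 0$ with $i \geq 1$, the pair $(y[-1], n_1)$ occurs as consecutive letters inside $L(n_2)$ starting at position $i-1$ — wait, I need $y[-1]$ to equal $n_2$.

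The cleaner way to close this: the first copy of $y$ is $n_1 \cdot L(n_2)[0 : |y|-1] = n_1 \cdot L(n_2)[0:|y|-1]$, and since $L(n_2)$ begins with $n_2$, if $|y| \geq 2$ then $y[1] = n_2$, so $y = n_1 n_2 \cdots$. The second copy of $y$ begins at position $|y|$ of $n_1 L(n_2)$, which is position $|y|-1 \geq 1$ of $L(n_2)$; the first two letters of this second copy are $y[0] y[1] = n_1 n_2$. Hence $n_1 n_2$ occurs in $L(n_2)$ at position $|y|-1$, contradicting the hypothesis. This settles the case $|y| \geq 2$, and the case $|y| = 1$ was handled above (it gives $n_2 = n_1$, whence $n_1 n_2 = n_2 n_2$ occurs in $L(n_2)$ provided $L(n_2)$ has length $\geq 2$, which it does as it is infinite). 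Therefore $n_1 L(n_2)$ contains no square, and the lemma follows from Remark~\ref{Appending to front of lex-least words}. There is no serious obstacle here; the only point requiring care is correctly bookkeeping the two copies of $y$ against the prepended letter, and observing that a square forces the factor $n_1 n_2$ (or $n_1 n_1$) to reappear inside $L(n_2)$.
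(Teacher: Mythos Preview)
Your argument has a genuine gap in the case $n_1 = n_2$. You attempt to show that $n_1 L(n_2)$ is square-free, but when $n_1 = n_2$ this is simply false: the prefix $n_1 n_1$ is a square. Your handling of the $|y|=1$ case is where the error surfaces. You correctly deduce that a length-$1$ square forces $n_2 = n_1$, but then you assert that $n_1 n_2 = n_1 n_1$ ``occurs in $L(n_2)$'', and later repeat that $n_2 n_2$ ``occurs in $L(n_2)$ provided $L(n_2)$ has length $\geq 2$''. This is incorrect: $L(n_2)$ is square-free, so $n_2 n_2$ never occurs in it. Hence there is no contradiction with the hypothesis, and the case $n_1 = n_2$ is not ruled out---indeed it is always compatible with the hypothesis, and the very next theorem in the paper ($L(nn) = nL(n)$) relies on exactly this case.

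Because $n_1 L(n_2)$ is not square-free when $n_1 = n_2$, Remark~\ref{Appending to front of lex-least words} does not apply: that remark explicitly requires $uw$ to be square-free. The paper's proof avoids this difficulty by not going through Remark~\ref{Appending to front of lex-least words} at all. Instead it observes that $n_1 L(n_2)$ has no square factors outside the prefix $n_1 n_2$ (your $|y|\geq 2$ argument establishes precisely this), so both $L(n_1 n_2)$ and $n_1 L(n_2)$ are admissible candidates for the lex-least word with prefix $n_1 n_2$; then $L(n_1 n_2) \preccurlyeq n_1 L(n_2)$ by definition, and strict inequality would give $L(n_1 n_2)[1:] \prec L(n_2)$, contradicting minimality of $L(n_2)$. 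Your $|y|\geq 2$ analysis is fine and can be reused, but you need to replace the appeal to Remark~\ref{Appending to front of lex-least words} with this direct minimality comparison (or else split off and separately handle $n_1 = n_2$).
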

\begin{proof}
Since $n_1n_2$ never occurs in $L(n_2)$ and $L(n_2)$ is square-free, $n_1L(n_2)$ is square-free unless $n_1=n_2$ in which case its only square factor is the prefix $n_1n_1$. Thus, $L(n_1n_2)$ and $n_1L(n_2)$ are both infinite words beginning with $n_1n_2$ whose only square factors are contained in the prefix $n_1n_2$. So by the definition of $L$, $L(n_1n_2)\preccurlyeq n_1L(n_2)$. If $L(n_1n_2)\prec n_1L(n_2)$, then $L(n_1n_2)[1:]\prec L(n_2)$, which is a contradiction since these are both infinite square-free words beginning with $n_2$. Therefore, $L(n_1n_2)=n_1L(n_2)$.
\end{proof}

This immediately describes all words of the form $L(nn)$:

\begin{theorem}
For all $n\geq0$, $L(nn)=nL(n)$.
\end{theorem}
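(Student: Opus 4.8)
The plan is to invoke Lemma~\ref{n_1n_2 not in L(n_2)} with $n_1 = n_2 = n$. That lemma states that if $n_1 n_2$ is not a factor of $L(n_2)$, then $L(n_1 n_2) = n_1 L(n_2)$; specialized to the present case it asserts $L(nn) = nL(n)$ provided that $nn$ is not a factor of $L(n)$. So the entire task reduces to verifying this one hypothesis.

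To check it, I would recall that for a single letter $n$ the prefix $n$ is trivially square-free, since a word of length one contains no square. Hence, by the defining property of $L$ — namely that the only square factors of $L(w)$ are contained in $w$ — the word $L(n)$ is square-free. Since $nn$ is itself a square, it cannot occur as a factor of the square-free word $L(n)$. This establishes the hypothesis of Lemma~\ref{n_1n_2 not in L(n_2)}, and the identity $L(nn) = nL(n)$ follows immediately.

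There is essentially no obstacle here: the theorem is an immediate corollary of Lemma~\ref{n_1n_2 not in L(n_2)} together with the elementary observation that $L(n)$ is square-free. The only minor point to keep in mind is the degenerate case $n_1 = n_2$ in the lemma, but the lemma (and its proof) was stated precisely to cover this situation — in that proof the word $n_1 L(n_2) = nL(n)$ has as its unique square factor the prefix $nn$, which is exactly the prefix of $L(nn)$ — so nothing further is required.
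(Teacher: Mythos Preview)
Your proof is correct and follows exactly the same approach as the paper: observe that $L(n)$ is square-free so cannot contain the square $nn$, then apply Lemma~\ref{n_1n_2 not in L(n_2)}.
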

\begin{proof}
Since $L(n)$ is square-free, it does not contain $nn$. The result then follows from Lemma~\ref{n_1n_2 not in L(n_2)}.
\end{proof}

\begin{theorem}\label{L(n1n2)=n1L(n2)}
For all $n_1 \geq 3$ and $n_2 \geq 3$, we have $L(n_1n_2) = n_1L(n_2)$.
\end{theorem}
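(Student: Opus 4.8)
The plan is to reduce to Lemma~\ref{n_1n_2 not in L(n_2)}: it suffices to show that $n_1 n_2$ is not a factor of $L(n_2)$ whenever $n_1, n_2 \geq 3$. I would prove the cleaner uniform statement that, for every $n_2 \geq 3$, no length-$2$ factor of $L(n_2)$ has both letters $\geq 3$. By Theorem~\ref{L(n) for large n} we have $L(n_2) = n_2\, T(n_2)\, A\, \rho(\alpha(L(\varepsilon)))$, and (as in the proof of Theorem~\ref{tau prefix theorem}) $n_2\, T(n_2) = \psi_2((n_2-2)P_0(n_2-2))$. Every length-$2$ factor of $L(n_2)$ lies inside $n_2T(n_2)$, inside $A$, inside $\rho(\alpha(L(\varepsilon)))$, or straddles one of the two junctions between these blocks, so I would treat these locations in turn.

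The easy locations: $\rho(\alpha(L(\varepsilon)))$ is even-grounded, being an image under $\rho$, so it has no two consecutive nonzero letters; $A$ is a fixed finite word with $\max(A)=5$, so a short computation confirms $A$ has no length-$2$ factor with both letters $\geq 3$ (the only candidates being $34,35,43,45,53,54$, since $A$ is square-free), and this is anyway automatic once $n_2 \geq 6$; and since $A$ begins with $\psi_2(0)^+$ it begins with $2$, whereas $\rho(\alpha(L(\varepsilon)))$ begins with $0$, so a factor straddling the junction between $n_2T(n_2)$ and $A$ has second letter $2$ and one straddling the junction between $A$ and $\rho(\alpha(L(\varepsilon)))$ has second letter $0$. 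This leaves $n_2 T(n_2) = \psi_2((n_2-2)P_0(n_2-2))$, the image under $\psi_2$ of the grounded square-free word $(n_2-2)P_0(n_2-2)$, for which I would prove the lemma: \emph{if $w$ is a grounded square-free word then neither $\psi_1(w)$ nor $\psi_2(w)$ has a length-$2$ factor with both letters $\geq 3$.} Granting the lemma, $n_1n_2$ with $n_1,n_2\geq3$ occurs nowhere in $L(n_2)$, and Lemma~\ref{n_1n_2 not in L(n_2)} gives $L(n_1n_2)=n_1L(n_2)$.

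The lemma is proved by the usual chunk analysis, handling $\psi_1$ first and then $\psi_2$ using it. Inside a $0$-chunk, $\psi_1(0)=202101$ has maximum letter $2$ and $\psi_2(0)$ has maximum letter $3$, so a length-$2$ factor with both letters $\geq 3$ would be $33$, impossible by square-freeness. Inside a $k$-chunk with $k>0$, write $\psi_1(k)=(k+1)P_0(k+1)$ and $\psi_2(k)=(k+2)P_0(k+2)P_1(k+2)$: in each case the leading letter is immediately followed by $0$; $P_0(k+1)$, resp.\ $P_0(k+2)$, is even-grounded by Remark~\ref{structure of P0}; and for $\psi_2$ the factor $P_1(k+2)=\psi_1(P_0(k+1))$ is covered by the $\psi_1$ part of the lemma, while its junction with $P_0(k+2)$ reads $12$ (as $P_0(k+2)$ ends in $1$ and $P_1(k+2)$ begins with $\psi_1(0)=202101$); so no length-$2$ factor with both letters $\geq 3$ occurs inside a $k$-chunk. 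Across chunk boundaries, every $\psi_1(k)$ ends in $1$, and $\psi_2(k)$ ends in $1$ for $k>0$ and in $2$ for $k=0$, so the first letter of a factor crossing a boundary is at most $2$. This exhausts the cases. The only real work is the bookkeeping of the first and last letters of the chunks and of the internal structure of $\psi_2(k)$ for $k>0$, and that is the main obstacle; the other verifications are immediate.
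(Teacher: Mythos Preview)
Your proposal is correct and follows essentially the same route as the paper: reduce to Lemma~\ref{n_1n_2 not in L(n_2)}, decompose $L(n_2)$ via Theorem~\ref{L(n) for large n}, and then do a chunk analysis of $\psi_2$ (and, inside nonzero $\psi_2$-chunks, of $\psi_1$) to rule out any factor $n_1n_2$ with both letters $\geq 3$. Your version is arguably tidier: you package the chunk analysis as a standalone lemma about $\psi_1(w)$ and $\psi_2(w)$ for grounded square-free $w$, and you explicitly treat the fixed word $A$ and the two junctions with it, which the paper's own proof glosses over (it writes $Y_{n_2}=n_2T(n_2)$, omitting $A$); the missing checks are easy but your write-up is more complete on that point.
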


\begin{proof}
By Lemma~\ref{n_1n_2 not in L(n_2)}, it is sufficient to show that the word $n_1n_2$ never appears in $L(n_2)$.

By Theorem~\ref{L(n) for large n}, $L(n_2) = Y_{n_2} \rho(\alpha(L(\varepsilon)))$. Since $\rho(\alpha(L(\varepsilon)))$ is grounded, it cannot contain $n_1n_2$. Also, the first letter of $\rho(\alpha(L(\varepsilon)))$ is 0, so $n_1n_2$ cannot lie over the boundary. For the prefix we have
\[Y_{n_2}=n_2T(n_2)=\psi_2((n_2-2)P_0(n_2-2)),\]
hence it is enough to show that $n_1n_2$ does not occur in any $\ell$-chunk $\psi_2(\ell)$, nor over any chunk boundary.

Since $\max(\psi_2(0))=3$, $n_1n_2$ could only occur in a 0-chunk if $n_1n_2=33$, which is not a factor of $\psi_2(0)$.
For the case $\ell\geq1$, $\psi_2(\ell)=\psi_1((\ell+1)P_0(\ell+1))$. Hence, it is sufficient to show that $n_1n_2$ cannot occur in any $\psi_1(\ell)$ nor over the chunk boundary of any $\psi_1(0\ell)$ or $\psi_1(\ell0)$. Indeed, since $\psi_1(0)=202101$, and all other $\psi_1(\ell)$ are grounded, then $n_1n_2$ cannot occur in any $\psi_1(\ell)$. Also, $\psi_1(\ell)$ ends with a 1 for all $\ell\geq0$, so $n_1n_2$ cannot occur in $\psi_1(0\ell)$ or in $\psi_1(\ell0)$.

Finally, to show that $n_1n_2$ does not occur over any chunk boundary of $\psi_2$ recall that for $\ell\geq1$, $\psi_2(\ell)$ has suffix
\[P_1(\ell+2)=\psi_1(P_0(\ell+1))=\psi_1(R_{\ell+2}[:-2]),\]
which has suffix $\psi_1(1)$, which ends with a 1, so $n_1n_2$ cannot lie over a $\psi_2(\ell0)$ chunk boundary. Also, $\psi_2(0)$ ends with 2 so $n_1n_2$ cannot lie over a $\psi_2(0\ell)$ chunk boundary.

Therefore, $n_1n_2$ cannot occur anywhere in $n_1L(n_2)$, except as a prefix. And so $L(n_1n_2)=n_1L(n_2)$.
\end{proof}

Experiments suggest the following related result.

\begin{conjecture}
For all $n \geq 3$, we have $L(n 1) = n L(1)$ and $L(n 2) = n L(2)$.
\end{conjecture}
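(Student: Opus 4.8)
The natural approach is through Lemma~\ref{n_1n_2 not in L(n_2)}: since $n\ge 3$ we are never in the degenerate case $n=n_2$, so it suffices to prove that $n1$ is not a factor of $L(1)$ and that $n2$ is not a factor of $L(2)$, and the two stated equalities follow at once. The first claim can be settled unconditionally from Theorem~\ref{structure of L(1)}; the second reduces, modulo Conjecture~\ref{structure of L(2)}, to an analogous but more delicate argument.

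\emph{The word $n1$ in $L(1)$.} By Theorem~\ref{structure of L(1)}, $L(1)=Y_1\,\alpha(L(\varepsilon))$, and $\alpha(L(\varepsilon))$ is the concatenation of the chunks $\alpha(\ell)$ dictated by the ruler sequence. A direct check of the finite word $Y_1$ shows that $n1$ is not a factor of $Y_1$ for any $n\ge 3$, and since $\alpha(L(\varepsilon))$ begins with $0$, no occurrence of $n1$ straddles the seam between $Y_1$ and $\alpha(L(\varepsilon))$. Inside $\alpha(L(\varepsilon))$: for $\ell\ge 1$ the chunk $\alpha(\ell)$ is even-grounded, so it cannot contain $n1$ at all, because $n$ and $1$ are both nonzero while one of any two consecutive positions is even and hence carries a $0$; for $\ell=0$, $\max(\alpha(0))=3$, so the only candidate is $31$, which a direct inspection of $\alpha(0)=EFE$ rules out. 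Finally, every $\alpha(\ell)$ begins with $0$, so $n1$ cannot straddle an internal chunk boundary of $\alpha(L(\varepsilon))$ either. Hence $n1$ is not a factor of $L(1)$, and Lemma~\ref{n_1n_2 not in L(n_2)} gives $L(n1)=nL(1)$ for all $n\ge 3$.

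\emph{The word $n2$ in $L(2)$.} Assuming Conjecture~\ref{structure of L(2)}, $L(2)=2\,\gamma(L(\varepsilon))$ with $\gamma(L(\varepsilon))$ a concatenation of chunks $\gamma(\ell)$. The outline is the same, but the chunks $\gamma(\ell)$ are not grounded (the factors $b_m$ carry positive letters at even positions), so the parity argument above is unavailable. I would instead isolate the combinatorial lemma: in every ruler prefix $R_m$ and in every $b_m$ (hence also in $b_m^+$), each occurrence of the letter $2$ is immediately preceded by $0$ or $1$. For $R_m$ this is the standard fact that every positive letter in the ruler sequence is flanked by zeros; for $b_m$ it follows by induction on $m$ from the base case $b_2=0102012021012$, using $b_m=b_{m-1}b_{m-1}^+R_m$, the fact that $^+$ alters only the last letter (which for $m\ge 3$ is $m-1\ge 2$, never a $2$), and a check of the two internal seams $b_{m-1}\mid b_{m-1}^+$ and $b_{m-1}^+\mid R_m$, whose right-hand blocks both begin $01$. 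Granting this, $n2$ with $n\ge 3$ cannot lie inside any $R_m$, $b_m$, or $b_m^+$, nor across the seams $R_m\mid b_m^+$ and $b_m^+\mid R_{m+1}$ inside a chunk $\gamma(\ell)$ with $\ell\ge 1$, since the right-hand block always begins $01$ while the left-hand letter there is $\ge 3$. Inside $\gamma(0)=c_3c_3^+$ only $n\in\{3,4\}$ is possible: $42$ is impossible because $\max(c_3)=3$ and the single $4$ is the final letter of $c_3^+$, followed by a $0$, while $32$ not being a factor of $c_3$ is a finite check. Since every $\gamma$-chunk begins $01$ (both $c_3$ and $R_4$ do), no $n2$ straddles an internal chunk boundary, and none straddles the leading $2$ of $L(2)$ (that would force $n=2$). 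Therefore $n2$ is not a factor of $L(2)$, and Lemma~\ref{n_1n_2 not in L(n_2)} yields $L(n2)=nL(2)$.

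\emph{Main obstacle.} The serious issue is the $L(n2)$ part: we have no unconditional description of $L(2)$, so that half is genuinely conditional on Conjecture~\ref{structure of L(2)} — there is no direct way to forbid the factor $n2$ without knowing the structure of $L(2)$. A secondary, purely technical difficulty is that $\gamma$ fails to be grounded, so the one-line parity argument that handles $n1$ must be replaced by the ``$2$ is preceded by $0$ or $1$'' lemma above; the seam cases in its inductive proof, together with the finite computations for $c_3$ and $\alpha(0)$, are the parts most likely to need care.
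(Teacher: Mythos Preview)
The paper does not prove this statement: it is listed as a Conjecture immediately after Theorem~\ref{L(n1n2)=n1L(n2)}, with only the remark that ``Experiments suggest the following related result,'' and no argument is offered. So there is no proof in the paper to compare against.

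That said, your approach via Lemma~\ref{n_1n_2 not in L(n_2)} is exactly the method the paper uses for the neighbouring Theorem~\ref{L(n1n2)=n1L(n2)}, and your argument for the $L(n1)$ half looks like a complete unconditional proof. The structural facts you invoke --- that $\alpha(\ell)$ is even-grounded for $\ell\ge 1$, that $\max(\alpha(0))=3$, and that every $\alpha(\ell)$ begins with $0$ --- are all established or used in the paper, so the only outstanding items are the finite checks: that $31$ does not occur in $\alpha(0)$, and that neither $31$ nor $41$ occurs in $Y_1$ (since $\max(Y_1\alpha(0))=4$ by the proof of Theorem~\ref{structure of L(1)}, these are the only cases). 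If those checks pass, you have actually resolved half of the conjecture. Given that the authors clearly had this technique in hand and still stated the claim as a conjecture, I would urge you to actually run the computation on $Y_1$ rather than assert it.

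Your treatment of the $L(n2)$ half is honest about its status: without a proven structure theorem for $L(2)$ there is no way to exclude the factor $n2$ globally, and you correctly reduce everything to Conjecture~\ref{structure of L(2)}. The inductive ``every $2$ in $b_m$ is preceded by $0$ or $1$'' lemma is sound and handles the non-groundedness of the $\gamma$-chunks cleanly. So this half remains open exactly to the extent that Conjecture~\ref{structure of L(2)} does --- which is precisely the obstacle you identified.
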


For example, it appears that $L(31) = 3L(1)$ and $L(32) = 3L(2)$. Since Theorem~\ref{rho is L-commuting} implies that $L(0n)=\rho(L(n-1))$ for all $n>0$, we have a proven or conjectural description of $L(w)$ for all 2-letter words $w$ except for $L(1n)$ when $n>1$ and $L(2n)$ when $n\not\in\{0,2\}$. However, it does appear that these words also have structures related to the ruler sequence and to the other words discussed in this paper.\\

The rest of this section deals with the case when $w=uv$ is not square-free and is a particular decomposition of $w$. The next lemma describes this decomposition.

\begin{lemma}\label{w=pqs}
Let $w$ be any nonempty finite word containing a square. Then there is a unique decomposition $w=psq$ such that $sq$ is the maximal square-free suffix of $w$, and $p[-1]s$ is the maximal square prefix of $p[-1]sq$.
\end{lemma}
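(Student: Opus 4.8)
The plan is to build the decomposition explicitly from two nested maximality choices and then observe that both choices are forced, which simultaneously yields existence and uniqueness.

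First I would let $v$ denote the longest square-free suffix of $w$. This is well defined and unique: for each length $0\le k\le |w|$ there is exactly one suffix of $w$ of that length, the set of those lengths for which the corresponding suffix is square-free is nonempty (single letters are square-free) and bounded by $|w|$, so its maximum singles out $v$. Since $w$ contains a square, $v$ is a proper suffix, so $w=pv$ for a nonempty word $p$. Next I would consider $u:=p[-1]v$, the suffix of $w$ of length $|v|+1$. By maximality of $v$ this word is not square-free, hence contains a square; but $u[1:]=v$ is square-free, so every square in $u$ must involve the letter $p[-1]$ and therefore be a prefix of $u$. Thus $u$ has at least one square prefix, and I would take $t$ to be the longest one — again unique, because the square prefixes of a fixed word are totally ordered by length. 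As $t$ is a square it has length at least $2$, so setting $s:=t[1:]$ gives $t=p[-1]s$ with $s$ nonempty; defining $q$ by $u=tq$ and cancelling the first letter yields $v=sq$. Then $w=pv=psq$, and by construction $sq=v$ is the maximal square-free suffix of $w$ while $p[-1]s=t$ is the maximal square prefix of $p[-1]sq=u$, so this decomposition satisfies both conditions.

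For uniqueness, I would take any decomposition $w=p's'q'$ meeting the two conditions. The first condition says $s'q'$ is the maximal square-free suffix of $w$, so $s'q'=v$; since $w=p'(s'q')$ this forces $p'=w[:|w|-|v|]=p$, and in particular $p'[-1]=p[-1]$. The second condition then says $p[-1]s'$ is the maximal square prefix of $p[-1]s'q'=p[-1]v=u$, so $p[-1]s'=t$, whence $s'=t[1:]=s$ and consequently $q'=v[|s|:]=q$. Hence the triple $(p,s,q)$ is unique.

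I do not expect a genuine obstacle here. The only points needing care are the two "this maximum is well defined" remarks (which rest on the fact that suffixes, resp. prefixes, of a fixed word of a given length are unique, so the relevant families are totally ordered by length) and the observation that prepending a single letter to the maximal square-free suffix necessarily produces a square and that square is necessarily anchored at the first letter. Once these are made explicit, everything else is immediate bookkeeping.
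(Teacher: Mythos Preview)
Your proof is correct and follows essentially the same approach as the paper's: take the maximal square-free suffix to determine $p$ (and $sq$), observe that prepending $p[-1]$ forces a square which must be anchored at the first letter, and take the maximal such square prefix to split off $s$ from $q$. You spell out the uniqueness argument and the bookkeeping more explicitly than the paper does, but the underlying argument is identical.
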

\begin{proof}
Any single letter is square-free, so $w$ is guaranteed to have some square-free suffix. Each suffix of $w$ has a different length, so the maximal square-free suffix, $sq$ is unique. Since $w$ contains a square, $sq$ is a proper suffix of $w$, so $p$ is nonempty and unique.

If $p[-1]sq$ is square-free, then it would be a square-free suffix larger than $sq$ which is a contradiction. Any square in $p[-1]sq$ cannot be contained in $sq$ which is square-free, so $p[-1]sq$ has a square prefix. No two distinct prefixes of have the same length, so the maximal square prefix of $p[-1]sq$ is unique.
\end{proof}

\begin{remark*}
In the $w=pqs$ decomposition in Lemma~\ref{w=pqs}, $p$ and $s$ are always nonempty, while $q$ can be empty. The last letter of $w$ is always square-free, so $sq$ is always nonempty. Since $w$ contains a square, we cannot have $sq=w$ so $p$ is nonempty. Since $p[-1]$ is a single letter, it cannot be a square, so $s$ must be nonempty.
\end{remark*}

\begin{example*}
For $w=012323045$, we have that $p=012$, $s=323$, $q=045$.

For $w = 1121123210$, we have that
$p = 1121$,
$s = 1$, and
$q = 23210$.

For $w = 11011$, we have that
$p = 1101$,
$s = 1$, and
$q = \varepsilon$.
\end{example*}

\begin{theorem}\label{sufficient check}
Let $w$ be any nonempty finite word containing a square. Write $w=psq$ such that $sq$ is the maximal square-free suffix of $w$, and $p[-1]s$ is the maximal square prefix of $p[-1]sq$. Then $L(psq)=pL(sq)$ if and only if $L(psq)[:2|ps|]=(pL(sq))[:2|ps|]$.
\end{theorem}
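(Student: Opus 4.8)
The forward implication is trivial: if $L(psq)=pL(sq)$ then the two words agree on every prefix. For the converse, assume $L(psq)[:2|ps|]=(pL(sq))[:2|ps|]$; the plan is to prove the two inequalities $pL(sq)\preccurlyeq L(psq)$ and $L(psq)\preccurlyeq pL(sq)$, which together give $L(psq)=pL(sq)$, since two distinct infinite words are always $\prec$-comparable. Both $L(psq)$ and $pL(sq)$ have prefix $psq$ (the latter because $L(sq)$ has prefix $sq$). Moreover $sq$ is square-free by Lemma~\ref{w=pqs}, hence so is $L(sq)$, so every square factor of $pL(sq)=p\,L(sq)$ must meet the prefix $p$, i.e.\ it starts at a position in $\{0,\dots,|p|-1\}$.

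The inequality $pL(sq)\preccurlyeq L(psq)$ does not use the hypothesis: the suffix $L(psq)[|p|:]$ is an infinite word with prefix $sq$, and since every square factor of $L(psq)$ lies in the prefix $psq$, every square factor of this suffix lies in positions $[|p|,|psq|)$, i.e.\ inside $sq$; so by minimality of $L(sq)$ we get $L(sq)\preccurlyeq L(psq)[|p|:]$, hence $pL(sq)\preccurlyeq L(psq)$. For $L(psq)\preccurlyeq pL(sq)$ it suffices to show that \emph{every square factor of $pL(sq)$ is contained in $psq$}, for then $pL(sq)$ is an admissible competitor for $L(psq)$ and minimality applies. Suppose instead that some square factor $yy$ of $pL(sq)$ is not contained in $psq$; let $i$ be its left endpoint (so $i<|p|$ by the previous paragraph), $r$ its right endpoint (so $r>|psq|$), and $\ell=|y|$. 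The crux will be the claim that $r\le 2|ps|$. Granting it, $yy$ lies within $pL(sq)[:2|ps|]$, which equals $L(psq)[:2|ps|]$ by hypothesis, so $yy$ is a square factor of $L(psq)$ ending at position $r>|psq|$ — contradicting that all square factors of $L(psq)$ lie in $psq$. Hence no such $yy$ exists, which proves $L(psq)\preccurlyeq pL(sq)$, and combining the two inequalities finishes the proof.

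It remains to establish the claim $r\le 2|ps|$. Since $pL(sq)[i:]=p[i:]\,L(sq)$ and $L(sq)$ starts with $sq$, the word $pL(sq)[i:]$ starts with $p[i:]\,sq=w[i:]$, a suffix of $w$, and because $yy$ overhangs $w$ we may write $yy=w[i:]\,o$ with $o$ the nonempty prefix of the tail $L(sq)[|sq|:]$. First one shows $\ell<|w[i:]|$: otherwise a half $y$ of the square would contain $w[i:]$ as a prefix while also occurring entirely inside $L(sq)$ as the second half, so the nonempty square $p[-1]s$ (a factor of $w[i:]$, since $i\le|p|-1$) would be a factor of $L(sq)$, hence contained in the square-free prefix $sq$ — impossible. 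Consequently $w[i:]$, being a prefix of the $\ell$-periodic word $yy$ of length greater than $\ell$, has period $\ell$; and by periodicity $o=yy[|w[i:]|-\ell:\ell]$, which lies inside the prefix $w[i:]$ of $yy$, so $o$ is a factor of $w$, and $o[0]=L(sq)[|sq|]$ is the greedy continuation of $L(sq)$ past $sq$.

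The second step invokes the two maximality conditions of Lemma~\ref{w=pqs} together with this periodicity. The suffix $p[-1]sq$ of $w[i:]$ inherits period $\ell$, so when $|p[-1]sq|\ge 2\ell$ it has a square prefix of length $2\ell$, and maximality of $p[-1]s$ as the longest square prefix of $p[-1]sq$ forces $2\ell\le|s|+1$, whence $r=i+2\ell\le(|p|-1)+(|s|+1)\le 2|ps|$, as desired. The remaining regime, where $p[-1]sq$ is too short to contain a full $\ell$-square, is the delicate one: there I would combine the square-freeness of $sq$ (which limits how long the tail of $L(sq)$ can keep following the period $\ell$ before producing a new square, and hence bounds $|o|$), a Fine--Wilf comparison of the period $\ell$ with the period $(|s|+1)/2$ of the square $p[-1]s$, and the bound $i\le|p|-1$, again concluding $r\le 2|ps|$. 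I expect this last step — the periodicity and Fine--Wilf bookkeeping across the sub-cases determined by the relative sizes of $\ell$, $|s|$, and $|sq|$ — to be the main obstacle; everything preceding it is the clean structural reduction.
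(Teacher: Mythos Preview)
Your overall strategy is sound and is a slight reframing of the paper's proof: you reduce to showing that every square in $pL(sq)$ ends by position $2|ps|$, while the paper runs an induction on the position $n\ge 2|ps|$ and derives a contradiction from a hypothetical square suffix. The core estimate is the same in both.

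However, your proof has a genuine gap. Your ``easy case'' $|p[-1]sq|\ge 2\ell$ yields $r\le |ps|\le|psq|$, which contradicts the standing assumption $r>|psq|$; so that case is vacuous, and \emph{every} square you are trying to bound falls into the ``delicate'' regime you leave unfinished. The Fine--Wilf route you sketch is not needed and would be hard to close.

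The fix is immediate and is exactly what the paper does: the argument you gave for $\ell<|w[i:]|$ proves the sharper bound $i+\ell<|ps|$ verbatim. Indeed, if $i+\ell\ge|ps|$ then the first half $y=pL(sq)[i:i+\ell]$ contains $pL(sq)[i:|ps|]=p[i:]s$, hence the square $p[-1]s$; and since $i+\ell\ge|ps|\ge|p|$ the identical second half lies entirely inside the square-free word $L(sq)$, a contradiction. Thus $\ell<|ps|-i$, whence
\[
r=i+2\ell< i+2(|ps|-i)=2|ps|-i\le 2|ps|,
\]
which is precisely your claim, with no case split. This is the content of the paper's inequality $|y|<k+|s|$ (with $k=|p|-i$). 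Once you replace your weak bound by this one, the rest of your argument goes through.
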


In other words, to verify that $L(psq)=pL(sq)$, it is sufficient to verify that they match for their first $2|ps|$ letters. Note that this is potentially useful because $L(sq)$ is the maximal square-free tail of $pL(sq)$. Before proving Theorem~\ref{sufficient check}, we look at a few examples.
\begin{example*}
For $w=012323045$, since $ps=012323$  the theorem implies that $L(012323045)=012L(323045)$ if and only if their first $2|ps|=12$ letters match. Since $|w|=9$, and both words have $w$ as a prefix, it is sufficient to compute the next three letters of each. In this case, both have $010$ as their next letters and so we conclude that $L(012323045)=012L(323045)$.

For $w = 1121123210$, since $ps= 11211$ we obtain that $L(1121123210)=1121L(123210)$ are equal if and only if they match for the first $2|ps|=10$ letters. Since $|w|=10$, no further computations are necessary.

For $w = 11011$, since $ps=11011$ we have that
$L(11011)=1101L(1)$ if and only if they match for the first $2|ps|=10$ letters. However, in this case $L(11011)[:10]=1101120102\neq1101101201=1101L(1)[:10]$. 
\end{example*}

\begin{proof}[Proof of Theorem~\ref{sufficient check}]
The forward direction is trivial. We prove the other direction by induction. The base case is our supposition that $L(w)[:2|ps|] = (pL(sq))[:2|ps|]$. Now suppose $L(w)[:n] = (pL(sq))[:n]$ for some $n \geq 2|ps|$. We will prove that $L(w)[:n+1]=(pL(sq))[:n+1]$. We let $a = L(w)[n]$ and $b = (pL(sq))[n]=L(sq)[n-|p|]$, then we need to show that $a=b$.

Suppose that $a < b$, which implies that $(pL(sq))[:n]a$ has a square suffix. By the inductive hypothesis, $(pL(sq))[:n]a=L(w)[:n]a=L(w)[:n+1]$. Since $L(psq)$ and $pL(sq)$ both begin with $w$, we have that $n+1>|w|$, since otherwise $a=b$ is a letter of $w$. Also, since $L$ does not introduce new squares, $L(w)[:n+1]$ cannot have a square suffix. Therefore we cannot have $a<b$.

Now suppose that $a > b$, which analogously to the previous case implies that $L(w)[:n]b$ has a square suffix, say $yy$. By the inductive hypothesis, $L(w)[:n]b=(pL(sq))[:n]b=(pL(sq))[:n+1]$. Since $sq$ is square-free, $L(sq)$ is square-free, so the square must start in the prefix $p$. Let $k$ be the length of the suffix of $p$ contained in the square. Then $1\leq k\leq|p|$ and we have
\[\big(pL(sq)\big)[:n]b=p\big(L(sq)[:n-|p|]\big)b\]
so the square has length
\[|yy|=k+n-|p|+1\geq k+2|ps|-|p|+1=k+|p|+2|s|+1.\]
Note that the first half of the square cannot contain all of $s$, otherwise it would contain $p[-1]s$ which is a square. This in turn would imply that the second half contains $p[-1]s$ and is a factor of $L(sq)$ which is square-free.
Therefore, the first half of the square ends within $ps[:-1]$, and so $|y|<k+|s|$.

This implies that $|yy|=2|y|<2k+2|s|\leq k+|p|+2|s|$ which is a contradiction since $|yy|=k+|p|+2|s|+1$. Therefore, $a>b$ is neither possible.

This proves the inductive step, hence $L(w)[:n]=(pL(sq))[:n]$ for all $n$, as wanted.
\end{proof}

\section{Inducing factors}\label{section: inducing factors}

In this section we consider the following problem.
Given a finite square-free word $w$, find
a word $p$ (not necessarily square-free) such that $p$ generates $pw$, i.e.\ $L(p)=L(pw)$.

\begin{definition*}
Let $w$ be a finite square-free word. For $0\leq j<|w|$ and $0 \leq k < w[j]$, we call a nonempty word of the form 
\[r_{j,k}(w) = w[:j]k\]
a \emph{restriction} of $w$. That is, $r_{j,k}(w)$ is obtained by starting with $w[:j+1]$ and decreasing the last letter by some amount. Let $m(w)$ be the total number of square-free restrictions of $w$, and relabel the square-free restrictions in the lexicographic order as $r_0(w),\dots,r_{m-1}(w)$, this is called the \emph{restriction sequence} of $w$.
\end{definition*}

\begin{example*}
For the word $w=2021$, we have 
\[r_{0,0}=0,\ r_{0,1}=1,\ r_{2,0}=200,r_{2,1}=201\textrm{, and }r_{3,0}=2020.\]
Taking only the square-free $r_{j,k}$ and sorting them lexicographically, we obtain that the restriction sequence of $w$ is  $r_0=0$, $r_1=1$, $r_2=201$.
\end{example*}

From the definition of lexicographic order, the set of restrictions is always totally ordered by $\prec$.
For the rest of this section, we will just write $r_{j,k}$, $r_i$, and $m$ where the dependence on $w$ is inferred by context.

Intuitively, the restriction sequence of $w$ does the following: Whenever we extend a word with $L$, we follow the ruler sequence until we can't anymore due to a square. So, to generate $w$ with $L$, we need to have exactly the right squares coming up at the right positions to deviate from the ruler sequence and spell out $w$ instead. The restriction sequence for $w$ is composed of all the words that will provide those necessary squares.

In other words, the restriction sequence is the collection of all square-free words that are not longer than $w$, and are lexicographically less than $w$. We then design $p$ so that $pr_i$ contains a square for all $i$. Thus, $p$ will generate the lexicographically least word that is greater than all $r_i$, which is $w$.

\begin{definition*}
Let $w$ be a finite square-free word, and let $v_i(w) = \max(w)+i+1$ and $V_i(w)=v_i\cdots v_1v_0$. For $i \leq m=m(w)$, we define words $x_i(w)$ by
\begin{align*}
    x_0 &= v_0, \\
    x_i &= v_i x_{i-1} r_{i-1} x_{i-1},
\end{align*}
where $v_i=v_i(w)$, $V_i=V_i(w)$, and $x_i=x_i(w)$. This dependence on $w$ will be inferred by context.

Note that for all $j \leq i$, $x_i$ has suffix $x_j$ and so $x_m$ has all $x_j$'s as suffixes. Also, for all $i$, $\max(r_i)\leq\max(w)<v_0<v_1<v_2<\cdots$ and $\max(x_i)=v_i$.
\end{definition*}

\begin{example*}
Continuing with the example $w=2021$, since $\max(w)=2$, we have $v_i=i+3$ for $0\leq i\leq m=3$. Then
\begin{align*}
    x_0&=v_0&=3,\\
    x_1&=v_1x_0r_0x_0&=4303,\\
    x_2&=v_2x_1r_1x_1&=5\ 4303\ 1\ 4303,\\
    x_3&=v_3x_2r_2x_2&=6\ 5430314303\ 201\ 5\ 4303\ 1\ 4303.
\end{align*}
We can write $x_3$ with spacing suggestive of the next lemma:
\begin{align*}
    x_3&=6543\cdot 03\cdot 143\cdot 03\cdot 201543\cdot 03\cdot 143\cdot 03\\
    &=V_3\cdot r_0V_0\cdot r_1V_1\cdot r_0V_0\cdot r_2V_2\cdot r_0V_0\cdot r_1V_1\cdot r_0V_0.
\end{align*}
\end{example*}

\begin{lemma}\label{structure of x_i}
Let $w$ be a finite square-free word, and let $\phi_w$ be the morphism defined on the alphabet $\{0,1,\dots,m\}$ by $\phi_w(k) = r_k V_k$ for letters $k$. Then for $0\leq i\leq m$, $x_i = V_i\phi_w(R_i[:-1])$.
\end{lemma}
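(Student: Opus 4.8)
The claim is a clean identity about the words $x_i$ defined recursively, and the natural approach is induction on $i$ using the recursion $x_i = v_i x_{i-1} r_{i-1} x_{i-1}$ together with the recursion for $R_i$ coming from the ruler sequence. The plan is to first handle the base case $i = 0$, where $x_0 = v_0 = V_0$ and $R_0[:-1] = \varepsilon$, so $\phi_w(R_0[:-1]) = \varepsilon$ and indeed $x_0 = V_0 \phi_w(\varepsilon) = V_0$. Then, assuming $x_{i-1} = V_{i-1}\phi_w(R_{i-1}[:-1])$, I would substitute this into the recursion and match it against the claimed formula for $x_i$.

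**Key steps.** The crucial combinatorial input is how $R_i[:-1]$ decomposes. Recall $R_i = R_{i-1} R_{i-1}^+$, so $R_i[:-1] = R_{i-1} R_{i-1}^+[:-1] = R_{i-1} (R_{i-1}[:-1] \, i)$... more carefully, $R_{i-1}^+$ is $R_{i-1}$ with last letter raised by one, i.e.\ $R_{i-1}^+ = R_{i-1}[:-1](i)$ since $R_{i-1}$ ends in $i-1$; hence $R_{i-1}^+[:-1] = R_{i-1}[:-1]$. But $R_i$ ends in $i$, so $R_i[:-1] = R_{i-1} R_{i-1}[:-1]$. Wait — I need the index of the ``new'' letter: the first occurrence of $i-1$ in $R_i$... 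Let me instead use that $R_i[:-1]$, read as a word to which $\phi_w$ is applied, satisfies $R_i[:-1] = R_{i-1}\,(i-1)\,R_{i-1}[:-1]$, because $R_i = \rho^i(0)$ has the self-similar structure where the letter $i-1$ appears exactly at the midpoint-like position and $R_i[:-1]$ omits only the final letter $i$. Then $\phi_w(R_i[:-1]) = \phi_w(R_{i-1})\,\phi_w(i-1)\,\phi_w(R_{i-1}[:-1]) = \phi_w(R_{i-1}[:-1])\,r_{i-1}V_{i-1}\,(r_{i-1}V_{i-1})^{-1}\cdots$ — the point is to expand $\phi_w(R_{i-1}) = \phi_w(R_{i-1}[:-1])\,\phi_w(i-1) = \phi_w(R_{i-1}[:-1])\,r_{i-1}V_{i-1}$, and then $V_i\phi_w(R_i[:-1])$ telescopes. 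One multiplies through by $V_i$ on the left, uses $V_i = v_i V_{i-1}$, and recognizes $V_{i-1}\phi_w(R_{i-1}[:-1]) = x_{i-1}$ appearing twice, separated by $r_{i-1}$, with a leading $v_i$ — which is exactly $v_i x_{i-1} r_{i-1} x_{i-1} = x_i$.

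**Main obstacle.** The genuine work is pinning down the decomposition $R_i[:-1] = R_{i-1}\,(i-1)\,R_{i-1}[:-1]$, or equivalently identifying $\phi_w(R_i[:-1])$ in terms of $\phi_w$ applied to $R_{i-1}$-pieces. The cleanest route is to observe $R_i = R_{i-1}R_{i-1}^+$ and $R_{i-1}^+ = R_{i-1}[:-1]\,i$ (since $R_{i-1}[-1] = i-1$), so $R_i[:-1] = R_{i-1}\,R_{i-1}[:-1]$. Then $\phi_w(R_i[:-1]) = \phi_w(R_{i-1})\,\phi_w(R_{i-1}[:-1])$, and writing $\phi_w(R_{i-1}) = \phi_w(R_{i-1}[:-1])\,\phi_w(i-1) = \phi_w(R_{i-1}[:-1])\,r_{i-1}V_{i-1}$ gives
\[
V_i\,\phi_w(R_i[:-1]) = v_i\,V_{i-1}\phi_w(R_{i-1}[:-1])\,r_{i-1}\,V_{i-1}\phi_w(R_{i-1}[:-1]) = v_i\,x_{i-1}\,r_{i-1}\,x_{i-1} = x_i,
\]
using the inductive hypothesis $x_{i-1} = V_{i-1}\phi_w(R_{i-1}[:-1])$ twice. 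This completes the induction. I do not expect any subtlety beyond carefully justifying the factorization $R_{i-1}^+ = R_{i-1}[:-1]\,i$, which follows from the Notation establishing $R_{i-1}$ ends in the letter $i-1$.
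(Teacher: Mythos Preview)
Your proposal is correct and follows essentially the same approach as the paper: induction on $i$, with the inductive step driven by the decomposition $R_i[:-1] = R_{i-1}[:-1]\,(i-1)\,R_{i-1}[:-1]$ (equivalently $R_i[:-1] = R_{i-1}\,R_{i-1}[:-1]$), together with $V_i = v_i V_{i-1}$ and $\phi_w(i-1) = r_{i-1}V_{i-1}$. The only cosmetic difference is direction---the paper starts from $x_{i+1} = v_{i+1}x_i r_i x_i$ and expands to reach $V_{i+1}\phi_w(R_{i+1}[:-1])$, whereas you start from $V_i\phi_w(R_i[:-1])$ and collapse to $x_i$---but the computations are identical.
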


\begin{proof}
We proceed by induction. For the base case,
\[
	x_0 = v_0 = V_0 = V_0 \phi_w(\varepsilon) = V_0 \phi_w(R_0[:-1]).
\]
For the inductive step, suppose $x_i = V_i \phi_w(R_i[:-1])$ for some $i<m$. Then
\begin{align*}
    x_{i+1} &= v_{i+1} x_i r_i x_i \\
    &= v_{i+1} V_i\phi_w(R_i[:-1])\ r_i\ V_i \phi_w(R_i[:-1]) \\
    &= V_{i+1} \phi_w(R_i[:-1])\ \phi_w(i)\ \phi_w(R_i[:-1]) \\
    &= V_{i+1} \phi_w(R_i[:-1]iR_i[:-1])\\
    &= V_{i+1} \phi_w(R_{i+1}[:-1]).
    \qedhere
\end{align*}
\end{proof}
Finally we present the main result of this section, which implies that for every finite square-free word $w$, there exists a prefix $p$ that generates $pw$. Indeed, it states that such prefix is given by $p=x_m$.

\begin{theorem}\label{x_m generates x_mw}
Let $w$ be a finite square-free word. Then $x_m$ generates $x_mw$.
\end{theorem}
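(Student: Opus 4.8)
The plan is to show $w$ is irreducible in $x_m w$ and that $x_m w$ is square-free (the latter follows since $w$ is square-free, $x_m$ is square-free by construction via the $v_i$-letters being strictly larger than everything before, and the large letters $v_0 > \max(w)$ prevent any square straddling the boundary — here I would use that every letter $v_i$ occurs in $x_m$ with a controlled pattern, mirroring Lemma~\ref{S1,n occurrences}-type reasoning). So the heart of the argument is irreducibility: I must show that for every word $s \prec w$, the word $x_m s$ contains a square ending in $s$.

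First I would recall from the Notation/Definitions that $\prec$ on words means every $s \prec w$ either is a strict lexicographic predecessor at some coordinate, or $x_m w$ is reached exactly; and that the restriction sequence $r_0, \dots, r_{m-1}$ is precisely the list of all square-free words of length $\le |w|$ that are $\prec w$. The key observation is: if $s \prec w$, then either $s$ has a prefix that already contains a square (so $x_m s$ trivially has a square ending in $s$, as long as that square lies within $s$ — need a tiny bit of care, but a square in $s$ is a square in $x_m s$ ending in $s$), or $s$ is square-free of length $\le |w|$ and hence $s = r_i$ for some $i < m$, or $s$ properly extends some $r_i$ — in which case $s$ has $r_i$ as a prefix but then must contain a square (since $r_i$ is a maximal-in-its-position square-free restriction... actually more carefully: any square-free word $\prec w$ of any length either has length $\le |w|$, putting it on the restriction list, or has length $> |w|$, but then its length-$(|w|{+}1)$ prefix is $\prec w$ and square-free, contradicting that $w$ is the... no — I would instead argue directly that it suffices to introduce a square in $x_m r_i$ for each $i$, because any $s \prec w$ not containing a square has some $r_i$ as a prefix). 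Thus it is enough to prove: for each $i < m$, $x_m r_i$ contains a square ending inside $r_i$.

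The main computation is then the following. By the suffix structure, $x_m$ has $x_{i+1}$ as a suffix, and $x_{i+1} = v_{i+1} x_i r_i x_i$. So $x_m r_i$ has suffix $x_i r_i x_i r_i$? No — it has suffix $x_{i+1} r_i = v_{i+1} x_i r_i x_i r_i$, which contains the square $(x_i r_i)(x_i r_i) = (x_i r_i)^2$, and this square ends exactly at the end of $r_i$, i.e.\ at a letter of $r_i$. Hence $r_i$ introduces a square in $x_m r_i$. Actually I should double-check the edge case $i = m-1$ where $x_m$ itself \emph{is} $v_m x_{m-1} r_{m-1} x_{m-1}$, so $x_m r_{m-1}$ has suffix $x_{m-1} r_{m-1} x_{m-1} r_{m-1} = (x_{m-1} r_{m-1})^2$, a square ending at a letter of $r_{m-1}$. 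Good — the same formula works throughout. So for every $i < m$, $x_m r_i$ contains the square $(x_i r_i)^2$ as a suffix.

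Finally I would assemble: since every $s \prec w$ either contains a square already (hence $x_m s$ does, ending in $s$) or is square-free, in which case either $s = r_i$ for some $i$ or some $r_i$ is a prefix of $s$, and in all these cases $x_m s$ contains a square ending at a letter of $s$ (using that $(x_i r_i)^2$ ends at the end of $r_i \preccurlyeq s$, and that appending more letters after $r_i$ keeps that square present and still ending within $s$) — we conclude $w$ is irreducible in $x_m w$. Combined with square-freeness of $x_m w$, Remark~\ref{Appending to front of lex-least words} gives that $x_m$ generates $x_m w$. The step I expect to be the real obstacle is the bookkeeping for square-freeness of $x_m w$ itself: one must check no square straddles the junction between $x_m$ and $w$ and no square lives entirely inside $x_m$, which requires tracking exactly where each large letter $v_j$ appears (each $v_j$ appears in $x_m$ a controlled number of times, with $v_j$ the unique maximum of $x_j$), analogous to the $\alpha(n)$ square-freeness lemmas; the irreducibility half, by contrast, is essentially the one-line identity above.
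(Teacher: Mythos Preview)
Your irreducibility argument is correct and is essentially the paper's: every square-free $s \prec w$ has some restriction $r_i$ as a prefix, and since $x_m$ has suffix $x_{i+1} = v_{i+1} x_i r_i x_i$, the word $x_m s$ contains the square $(x_i r_i)^2$ ending inside $s$.

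The gap is the other half, which you leave as a sketch. The one-line justification---that $v_0 > \max(w)$ blocks squares across the boundary---does not work as stated: $v_0$ occurs $2^m$ times in $x_m$, so a square $yy$ crossing into $w$ can easily have both copies of $v_0$ inside $x_m$, with the midpoint of $yy$ deep in $x_m$. You flag this later as ``the real obstacle'' but never resolve it, and the analogy with Lemma~\ref{S1,n occurrences} (counting occurrences of a maximal letter) is not how the paper closes it. The paper's argument is also more economical than the global square-freeness you aim for: it proceeds letter by letter, so it only has to rule out square \emph{suffixes} of $x_m w[{:}j{+}1]$ and never needs to know whether $x_m$ itself is square-free. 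For such a suffix $yy$, Lemma~\ref{structure of x_i} writes $x_m = V_m \phi_w(R_m[{:}-1])$ in chunks $\phi_w(k) = r_k V_k$; since $v_0 \in y$ forces the midpoint into $x_m$, the word $y$ ends with $v_0\, w[{:}j{+}1]$. Locating $v_\ell = \max(y)$ via the ruler-sequence pattern of chunk indices then pins the second half to begin with $V_\ell$ inside an $\ell$-chunk, so the first half ends with $v_0 r_\ell$ from that same chunk. Comparing suffixes gives $r_\ell = w[{:}j{+}1]$, impossible since a restriction of $w$ is never a prefix of $w$. This contradiction genuinely uses the $\phi_w$-chunk decomposition together with the ruler structure of the indices in $R_m[{:}-1]$; tracking how many times each $v_j$ appears is not by itself enough to reach it.
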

\begin{proof}
We will show that for all $0\leq j<|w|$, $x_mw[:j]$ generates $x_mw[:j+1]$. This means that $L(x_mw[:j])=L(x_mw[:j+1])$, which proves the desired result:
\[L(x_m)=L(x_mw[:0])=L(x_mw[:1])=\dots=L(x_mw[:|w|])=L(x_mw).\]

To show that $L(x_mw[:j])=L(x_mw[:j+1])$ we need to prove the last letter of $x_mw[:j+1]$, which is $w[j]$, is irreducible and does not introduce a square.

First we prove the irreducibility condition. Let $0\leq j<|w|$ and $\ell<w[j]$, we need to show that ${x_mw[:j]\ell}$ has a square suffix. Indeed, since $0\leq j<|w|$ and $0\leq\ell<w[j]$, we have that $w[:j]\ell=r_{j,\ell}(w)$. If $w[:j]\ell$ contains a square, since $w$ is square-free, this must be a square suffix, and we are done. Otherwise, if $w[:j]\ell$ is square-free, then $w$ has a restriction $r_i=w[:j]\ell$ for some $i<m$. Hence $x_m$ has suffix $x_{i+1}$, which means that $x_m w[:j]\ell$ has suffix
\[x_{i+1}\ w[:j]\ell = v_{i+1}x_ir_ix_i\ r_i,\]
which also has a square suffix. Therefore $w[j]$ is irreducible in $x_mw[:j+1]$.

Now we prove that $w[j]$ does not introduce a square in $x_mw[:j]$. Suppose toward a contradiction that $x_m w[:j+1]$ has square suffix $yy$. Since $w$ is square-free, the square must start in the prefix $x_m$, and so $y$ contains the last letter of $x_m$ which is $x_0=v_0$. Hence, since $v_0>\max(w)$, $y$ cannot be completely contained in $w$. Therefore, the second half of the square starts in $x_m$ and contains all of $w[:j+1]$. This implies that $y$ has suffix $v_0w[:j+1]$.

By Lemma~\ref{structure of x_i},
\[x_m = V_m\phi_w(R_m[:-1])=V_m\cdot r_0V_0\cdot r_1V_1\cdot r_0V_0\cdot r_2V_2\cdots r_0V_0,\]
where $\phi_w(k)=r_kV_k$ for letters $k<|w|$. Since $y[-1]=w[j]\le \max(w)$ and all letters in any $V_k$ are greater than $\max(w)$, we have that the last letter of the first half of the square is in a factor $r_i$ for some $i$. Therefore, there are no partial $V_k$'s in the second half. 

Since $\max(\phi_w(k))=v_k>\max(w)$, the largest letter in $y$ is some letter $V_k[0]=v_k$ occurring $k$-chunk. Let $v_\ell=\max(y)$, then every occurrence of $v_\ell$ in the second half of the square is as the first letter of $V_\ell$ in an $\ell$-chunk. In order to contain $v_\ell$, the first half of the square must overlap a $k$-chunk  with $k\ge \ell$. From the structure of the ruler sequence, we know that the latest $k$-chunk with $k\ge \ell$ before the first $\ell$-chunk in the second half is an $(\ell+1)$-chunk. The first half cannot contain the letter $v_{\ell+1}$ in this $(\ell+1)$-chunk, but it must include the letter $v_\ell$. Therefore, $v_\ell$ is the first letter of the first half and since there are no partial $V_k$'s in the second half, then $y$ has prefix $V_\ell$. The second half then begins in the middle of an $\ell$-chunk at the beginning of the factor $V_\ell$ and so the first half has suffix $r_\ell$ from the same $\ell$-chunk. Before this factor $r_\ell$ there is another chunk, which always ends with $v_0$ and so $y$ has suffix $v_0r_\ell$. 

We have proved that the words $v_0w[:j+1]$ and $v_0r_\ell$ are both suffixes of $y$. Since $v_0>\max(w[:j+1])$ and $v_0>\max(r_\ell)$, we have that $r_\ell=w[:j+1]$. But this is a contradiction since the restriction $r_\ell$ cannot be a prefix of $w$. This proves that no such square exists and the result follows.
\end{proof}
\begin{example*}
Again using $w = 2021$, we can verify that
\[x_3 = 654303143032015430314303\textrm{ generates }x_3 w,\]
for example the suffix $303$ prevents a $0$, the suffix $430314303$ prevents a $1$, so since $2$ does not introduce a square, it can be located after this suffix. Similarly, we can continue checking that the whole word $w$ is the lexicographically least extension of $x_3$.
\end{example*}

\section{Glossary}\label{glossary}
A list of all the important mathematical objects in the paper, along with their definitions.

\subsection{Sequences of Words}

$R_n$ is defined for all letters $n\geq0$, and is the ruler sequence up to the first appearance of $n$. We can also define $R_n$ by $R_n = \rho^n(0)$, or inductively by $R_0 = 0$ and $R_n = R_{n-1} R_{n-1}^+$. $R_n$ is always even-grounded, the length of $R_n$ is $2^n$, and $\max(R_n) = n$.

$b_n$ is defined for letters $n\geq2$, and is defined inductively by $b_2 = 0102012021012$ and $b_n = b_{n-1} b_{n-1}^+ R_n$. $b_n$ is never grounded, the length of $b_n$ is $2^{n-2}(4n+5)$, and $\max(b_n) = n$.

$c_n$ is defined for letters $n\geq3$, and is defined inductively by $c_n = c_{n-1} c_{n-1}^+ R_n b_n$ with a base case $c_3$ of length 261. $c_n$ is never grounded, the length of $c_n$ is $2^{n-3}(4n^2+22n+159)$, and $\max(c_n) = n$.

\subsection{Functions on Letters}

$P_0(n)$ is defined for all letters $n\geq0$, and is the largest prefix of the ruler sequence such that $n P_0(n)$ is a prefix of $L(n)$.
The length of $P_0(n)$ is $2^{n+1}-2$, so $P_0(n)=R_{n+1}[:-2]$.

$P_1(n)$ is defined for letters $n\geq3$, and is equal to $\psi_1(P_0(n-1))$. The length of $P_1(n)$ is $(4n+1)2^{n-1}-5$.

$P_2(n)$ is defined for letters $n\geq3$, and is equal to $\psi_2(P_0(n-2))$. The length of $P_2(n)$ is $(4n^2+14n+149)2^{n-2}-193$.

$T(n)$ is defined for letters $n\geq3$, and is equal to $P_0(n) P_1(n) P_2(n)$. The length of $T(n)$ is $(4n^2+22n+159)2^{n-2}-200$.

\subsection{Morphisms}

The ruler morphism $\rho$ is defined by $\rho(n) = 0(n+1)$.

$\psi_1$ is defined by $\psi_1(0) = 202101$ and $\psi_1(n) = (n+1) P_0(n+1)$ for $n > 0$. The length of $\psi_1(n)$ is $2^{n+2}-1$ for $n>0$.

$\psi_2$ is defined by $\psi_2(n) = (n+2) \, P_0(n+2) \, P_1(n+2)$ for $n > 0$, with $\psi_2(0)$ a specific word of length 199. For $n>0$, the length of $\psi_2(n)$ is $(4n+13)2^{n+1}-6$.

$\alpha$ is defined by
\[\alpha(n) = \begin{cases} 
    EFE & \text{if $n=0$} \\
	B_1 \: R_4 \: C \: B_1 \: R_4 & \text{if $n=1$} \\
	\alpha(n-1)^+ \: R_{n+3} \: C \: \alpha(n-1)^+\: R_{n+3} & \text{if $n \geq 2$}
\end{cases}
\]
for constants $C$, $B_0$, $B_1$, $E$, and $F$.

\subsection{Constants}
$\varepsilon$ is the empty word.

$C = 0102030102$ is a grounded word of length 10 which is used in the inductive definition of $\alpha$. While not mathematically relevant, we would be remiss not to note that coding $C$ into letters yields the word $\textsf{abacadabac}$, which is amusingly similar to $\textsf{abracadabra}$.

$B_0 = 0301 \; \psi_1(1010)[:-3] \; \psi_2(1010)[:-6] \; \psi_2(10)[:-12] \; 301020$ is a non-grounded word of length 798 which is used in the definition of $\alpha(0)$.

$B_1 = \rho(B_0[7:-5])$ is a grounded word of length 1572 which is used in the definition of $\alpha(1)$, which is the base case for the inductive definition of $\alpha$.

$E=0102B_01B_0[:-9]$ and
$F=B_0[-9:] 3010302 C 0103 C^+ 02$. These are useful in the structure of $\alpha$ because $EFE=\alpha(0)$.

$G = 010203012$ is a word of length 9, which is the shortest prefix that generates $\alpha(0)$ and appears in various proofs.

$A$ is a word of length 13747 with the property that $n T(n) A$ is a prefix of $L(n)$ for all $n \ge 3$.

\section*{Acknowledgements}
We thank the organizers of the Polymath Jr.\ program 2021 for creating the environment that allowed this collaboration.
We also thank our colleagues Alycia Doucette, Bridget Duah, Bill Feng, Mordechai Goldberger, Luke Hammer, Ziqi He, Amanda Lamphere, Mary Olivia Liebig, Jacob Micheletti, Adil Oryspayev, Sara Salazar, Shiyao Shen, Wangsheng Song, and Thomas Sottosanti, who contributed in coding, generating data, and presenting results.


\end{document}